\newcounter{TmpEnumi}
\numberwithin{equation}{section}
\def\today{\number\day\space\ifcase\month\or  
 January\or February\or
   March\or April\or May\or June\or  
    July\or August\or September\or
   October\or November\or December\fi\ 
     \number\year}
\theoremstyle{definition}
\newtheorem{thm}{Theorem}[section]
\newtheorem{lem}[thm]{Lemma}
\newtheorem{prp}[thm]{Proposition}
\newtheorem{dfn}[thm]{Definition}
\newtheorem{cor}[thm]{Corollary}
\newtheorem{rmk}[thm]{Remark}
\newtheorem{ntn}[thm]{Notation}
\newtheorem{eme}[thm]{Example}
\newtheorem{qst}[thm]{Question}
\newcommand{\beq}{\begin{equation}}
\newcommand{\eeq}{\end{equation}}
\newcommand{\beqr}{\begin{eqnarray*}}
\newcommand{\eeqr}{\end{eqnarray*}}
\newcommand{\bal}{\begin{align*}}
\newcommand{\eal}{\end{align*}}
\newcommand{\bei}{\begin{itemize}}
\newcommand{\eei}{\end{itemize}}
\newcommand{\limi}[1]{\lim_{{#1} \to \infty}}
\newcommand{\af}{\alpha}
\newcommand{\bt}{\beta}
\newcommand{\dt}{\delta}
\newcommand{\ep}{\varepsilon}
\newcommand{\ld}{\lambda}
\newcommand{\ph}{\varphi}
\newcommand{\K}{\mathcal{K}}
\newcommand{\N}{{\mathbb{Z}}_{> 0}}
\newcommand{\Nz}{{\mathbb{Z}}_{\geq 0}}
\newcommand{\Ped}{{\operatorname{Ped}}}
\newcommand{\Cu}{{\operatorname{Cu}}}
\newcommand{\id}{{\operatorname{id}}}
\newcommand{\spec}{{\operatorname{sp}}}
\newcommand{\card}{{\operatorname{card}}}
\newcommand{\Aut}{{\operatorname{Aut}}}
\newcommand{\EQT}{{\operatorname{EQT}}}
\newcommand{\QT}{{\operatorname{QT}}}
\newcommand{\ET}{{\operatorname{ET}}}
\newcommand{\Dom}{{\operatorname{Dom}}}
\newcommand{\rc}{{\operatorname{rc}}}
\newcommand{\cF}{{\mathcal{F}}}
\newcommand{\cK}{{\mathcal{K}}}
\newcommand{\cM}{{\mathcal{M}}}
\newcommand{\cW}{{\mathcal{W}}}
\newcommand{\cZ}{{\mathcal{Z}}}
\newcommand{\dirlim}{\varinjlim}
\newcommand{\andeqn}{\qquad {\mbox{and}} \qquad}
\newcommand{\ca}{C*-algebra}
\newcommand{\hm}{homomorphism}
\newcommand{\hsa}{hereditary C*-subalgebra}
\newcommand{\cfn}{continuous function}
\newcommand{\I}{\infty}
\newcommand{\Lem}[1]{Lemma~\ref{#1}}
\newcommand{\Def}[1]{Definition~\ref{#1}}
\title[The relative radius of comparison]{The relative radius of comparison of the crossed product of a non-unital C*-algebra by a finite group }   
\date{\today}
\author{M. Ali Asadi-Vasfi}
\address{ M. Ali Asadi-Vasfi, 
The Fields Institute, 222 College Street, Toronto,
ON M5T 3J1 Canada
}
\author{George A. Elliott}
\address{George A. Elliott, 
Department of Mathematics, University of Toronto, Toronto, ON M5S 2E4 Canada
}
\keywords{simple C*-algebras, crossed product,  radius of comparison, relative radius of comparison, Cuntz semigroup}
\subjclass[2010]{Primary 46L05; 46L55;
 Secondary 19K14; 46L80; 06B35; 06F05.}
\begin{document}

\maketitle
\begin{abstract}
In this paper, we prove results on the relative radius of comparison of  C*-algebras and their crossed products, focusing on the non-unital setting.
More precisely,
let  $A$ be a   stably finite simple non-type-I (not necessarily unital) C*-algebra, 
let $G$ be a finite group, and
let $\alpha \colon G \to {\operatorname{Aut}} (A)$ be an action
 which has the weak tracial Rokhlin property.   
 Let $a$ be a non-zero positive element 
in
$A^\alpha \otimes \mathcal{K}$.
Then we show that 
the radius of comparison of 
$\Cu (A^\alpha)$ relative to  $[a]$
is bounded above by
the radius of comparison of 
$\Cu (A)$ relative to $[a]$. 
If further $A$ is exact and $a$ is 
in the Pedersen ideal  of
$A^\alpha \otimes \mathcal{K}$, then 
the radius of comparison of 
$\Cu (A\rtimes_{\alpha} G)$
relative to  $[a]$ is equal to
its radius of comparison relative to 
 $[p\cdot a]$, scaled by
 $1/|G|$, where $p$ is the averaging projection in the multiplier algebra of 
$(A \otimes \mathcal{K}) \rtimes_{\alpha \otimes \id} G$. Moreover,
the radius of comparison of 
$\Cu (A\rtimes_{\alpha} G)$
relative to  $[a]$ is bounded above by  $1/|G|$  times 
the radius of comparison of $\Cu (A)$
relative to  $[a]$. 
We also prove that
the inclusion of $A^{\alpha}$ in $A$
induces an isomorphism from the purely positive part of the
Cuntz semigroup ${\operatorname{Cu}} (A^{\alpha})$ to the fixed point
of the purely positive part of 
${\operatorname{Cu}} (A)$. 
An important consequence of our results is that they apply to non-unital C*-algebras and give new insights into comparison theory of C*-algebras and their crossed products.  
\end{abstract}

\tableofcontents
\section{Introduction}
Over the last decade, major advances in comparison theory for C*-algebras have culminated.  This theory extends the Murray-von Neumann comparison theory for factors. Blackadar
in \cite{B10} called it  ``\emph{the Fundamental Comparability Question}". 
The associated ideas, in particular the radius of comparison, play a very important role in the Elliott program for the classification of C*-algebras \cite{CE08, ET08, Tom08}.
The radius of comparison of a unital stably finite C*-algebra $A$, 
 denoted by $\operatorname{rc} (A)$,  is a numerical invariant introduced by Andrew Toms in Section~6 of \cite{Tom06}  to distinguish  non-isomorphic simple separable unital AH algebras that share the same Elliott invariant (see Section~5  of \cite{Tom09} for additional applications of the radius of comparison).
 This invariant 
 is based on the Cuntz semigroup and the quasitraces on $A$. Namely, it measures the failure of quasitraces to predict comparison of positive elements.
 It is 
well known that the radius of comparison of $C(X)$ for a compact space $X$ is bounded above by $\frac{1}{2} \cdot \dim(X)$
and bounded below by 
$\max \{ 
0, \tfrac{1}{2} \cdot (\dim (X) -7) \}$ (see \cite{EN13,Ph23}).
 This justifies why the radius of comparison can be viewed as a non-commutative analogue of the covering dimension in topology.   
 We refer to \cite{AA20} for 
 estimates on
 the radius of comparison 
 of $C(X) \otimes A$, where 
 $X$ is a compact space and 
$A$ is a unital stably finite C*-algebra.

Even during the Elliott classification program, which has now successfully classified simple separable amenable 
 C*-algebras 
 (see \cite{GLN20, KR95, Ph00, TWW17}), using invariants related to K-theory, attention began to  shift to non-classifiable C*-algebras—those with a non-zero radius of comparison. 
 For instance, for a compact space $X$,
 the radius of comparison of
 $C(X) \rtimes_{h} \mathbb{Z}$, where $h$
 a minimal homeomorphism
of $X$, was
 studied in 
  \cite{HPTInPre} and later in \cite{Ni22}, and when 
$h$ is a subshift, in \cite{GK10}.
  Additionally,
the radius of comparison of 
 $C(X) \rtimes_{T} G$, where 
 $G$ is a countable amenable group
 and 
 $T$ is an action of $G$ on $X$, has been investigated 
in  the work of Hirshberg and Phillips~\cite{HP22}. 
  
In the light of these examples involving  topological dynamics, one may ask for an
   understanding of the relationship between the radius of comparison of the crossed product, the fixed point algebra, and the underlying 
C*-algebra when we have a C*-dynamical system. This question is very interesting and important even for finite group actions on unital C*-algebras. 
For instance, 
for an action 
$\alpha \colon G \to \Aut(A)$ of a finite group $G$ 
on a stably finite simple infinite-dimensional unital C*-algebra $A$, it is shown in 
\cite{AGP19} that 
if $\alpha$ has the weak tracial Rokhlin property in the sense of Definition~3.2 of \cite{AGP19}, originally introduced in \cite{ArchThes, HO13}, then
the radius of comparison of the crossed product $A \rtimes_{\alpha} G$, the fixed point algebra 
$A^{\alpha}$, and the radius of comparison of $A$ satisfy
\begin{equation}\label{Result1}
{\operatorname{rc}} (A^{\alpha}) \leq {\operatorname{rc}} (A)
\qquad\text{ and }\qquad
{\operatorname{rc}} \big( A \rtimes_{\alpha} G \big)
 \leq \frac{1}{\card (G)} \cdot {\operatorname{rc}} (A).
\end{equation}
Also, in \cite{ASV23}, it is proved  that if $\alpha$ is weakly tracially strictly approximately  inner in the sense of Definition~4.1 of \cite{ASV23}, and 
$A \rtimes_{\alpha} G$ is simple, then
\begin{equation}\label{AppInner}
\operatorname{rc} (A) \leq   \operatorname{rc} \big( A \rtimes_{\alpha} G \big) \leq \operatorname{rc} (A^{\alpha}).
\end{equation}
In particular, if $G$ is nontrivial, then both 
(\ref{Result1}) and (\ref{AppInner}) cannot hold at the same time unless all the radii of comparison are zero. 
 
Less is known about the radius of comparison of crossed products of simple non-classifiable 
C*-algebras by non-finite groups, as these can behave unpredictably. For example, in \cite{H24}, Hirshberg has constructed a simple unital separable AH-algebra 
$A$ with $\operatorname{rc} (A)>0$ such that $A$ admits a properly outer automorphism 
$\alpha$ and
$\operatorname{rc} (A \rtimes_{\alpha} \mathbb{Z})=0$. 

An important question, proposed in \cite{AGP19}, is whether the  result in
(\ref{Result1}) above is still valid in the setting of non-unital C*-algebras (see the discussion after Question~7.5 of \cite{AGP19}). Complications in answering this question include the additional complexity associated with the definition of the
weak tracial Rokhlin property for non-unital C*-algebras
and, also, what to substitute for the conventional definition of the radius of comparison. 
An appropriate version of the weak tracial Rokhlin property for 
non-unital simple C*-algebras
was introduced in Definition~3.1 of \cite{FG20}. 
Additionally, the definition of the
radius of comparison was extended to non-unital C*-algebras
by
Blackadar, Robert, Tikuisis, Toms, and Winter
 in Definition~ 3.3.2 of \cite{BRTW12}, using more algebraic terms. The idea is to use the Cuntz class of a positive full element in $A \otimes \cK$ instead of the Cuntz class of the unit of $A$. For a C*-algebra $A$ and a full element $a$ in $(A \otimes \cK)_+$, this version of the radius of comparison,
  is called  ``\emph{the radius of comparison of $\Cu (A)$  relative to $[a]_A$}'' or sometimes simply ``\emph{the relative radius of comparison of $A$}'' as $\Cu(A)$ is determined by $A$ (see 
Definition~\ref{DefRcCuAlgebraic}
for an algebraic version of it
and Definition~\ref{DefRcCu}
for a non-algebraic one). It is denoted by
$\rc \big(\Cu (A), \ [ a ]_A \big)$ or sometimes $\rc \big(\Cu (A),\ [ a ] \big)$ when there is no confusion related to the Cuntz class of $a$. 
 It is known that for a unital 
C*-algebra $A$ with all quotients stably finite, we have 
$\rc \big(\Cu (A), \ [ 1_A ]_A \big) = \operatorname{rc} (A)$ (see Proposition~3.2.3 of \cite{BRTW12}).

In this paper, we prove the following fact. Let  $A$ be a
 stably finite simple  non-type-I (not necessarily unital)  C*-algebra and let $\alpha \colon G \to \Aut (A)$ be an action of a finite group $G$ on $A$ with the weak tracial Rokhlin property. Let $a \in (A^{\alpha} \otimes \K)_{+} \setminus \{0\}$. 
Let $\iota \colon A^\alpha \to A$ and 
$\kappa \colon A \to A\rtimes_{\alpha} G$ 
denote the inclusion maps. First, we prove that 
for purely positive elements (recalled in Definition~\ref{D_9421_Pure}) of the fixed point algebra $A^\alpha$,  Cuntz comparison over $A$ and Cuntz comparison over $A^\alpha$ are equivalent (see Proposition~\ref{W.T.R.P.inject}). This allows us to establish some relations among the relative radius of comparison of the crossed product by $\alpha$, the fixed point algebra $A^\alpha$, and the underlying C*-algebra $A$,  as well as for their Cuntz semigroups. Namely, we show in Theorem~\ref{RcofFixedPoint} that
\begin{equation}\label{EQ1.2024.09.10}
\rc \big(\Cu (A^\alpha),  \ [a] \big)  \leq \rc \big(\Cu(A),\ [\iota ( a )] \big).
\end{equation}
This generalizes Theorem~4.1 of \cite{AGP19}.
 If further, $A$ is exact, 
 $a \in \Ped(A^\alpha \otimes \mathcal{K}) \setminus \{0\}$,
 and 
$p= \frac{1}{\card (G)} \cdot \sum_{g} u_g$ in 
$\cM \big((A \otimes \mathcal{K}) \rtimes_{\alpha \otimes \id} G\big)$, then we prove in Proposition~\ref{RCandCardinal} and Theorem~\ref{MainThmRC}(\ref{MainThmRC.1}) that
 \begin{align*}
  \frac{1}{\card (G)}  \cdot 
  \rc  \Big(
  \Cu (A \rtimes_{\alpha} G), \  [p \cdot (\kappa \circ \iota) (a)]
   \Big) 
  &=
  \rc  \Big(\Cu(A \rtimes_{\alpha} G), \
  (\kappa \circ \iota) (a)  \Big)  
  \\&=  \frac{1}{\card (G)} \cdot
  \rc \big(\Cu (A^\alpha),\  [a]\big).
  \end{align*}
 By this and (\ref{EQ1.2024.09.10}), we show in Theorem~\ref{MainThmRC}(\ref{MainThmRC.2}) that
\[
\rc \Big(\Cu (A \rtimes_{\alpha} G), \
  [ (\kappa \circ \iota) (a)]  \Big) 
  \leq 
  \frac{1}{\card (G)}  \cdot 
  \rc \big(
  \Cu (A), \ [\iota ( a )]
  \big).
  \] 
  Further, in close analogy with comparison theory, we prove a  stronger result regarding the Cuntz semigroup of the fixed point algebra. Namely, we show 
  in Theorem~\ref{Thm.05.22.2019} that the inclusion map 
   $\iota \colon A^\alpha \to A $ induces an isomorphism of ordered semigroups 
$\Cu (\iota)$ from the purely positive part of the Cuntz semigroup of the fixed point algebra together with zero,
i.e., $\Cu_+ (A^{\alpha}) \cup  \{0\}$,
 to the 
$\alpha$-invariant elements of the purely positive part of the Cuntz semigroup of $A$
 together with  zero, i.e., 
$\Cu_+ (A)^{\alpha}  \cup  \{0\}$.
One of the most useful and important consequences of these results is that we can apply them to a broader class of C*-algebras, more precisely, the non-unital ones. 
This opens the door to  analyzing structural properties for a wider class of C*-algebras and deepening the understanding of their Cuntz semigroups and associated comparison theory as these computations are often considered to be too complicated for C*-algebras without strict comparison. 
Our work also has points of contact with the work of Hirshberg and Phillips \cite{HP24E} where they define 
the local radius of comparison function on the positive cone of the $K_0$-group. 
  \subsection*{Organization of the paper}
   Section~\ref{SecPreliminaries} is devoted to necessary preliminaries and reviewing certain known theorems on the relative radius of comparison based on  
  the Cuntz semigroup, extended 2-quasitraces, and the Pedersen ideal for easy reference and to ensure that the reader is familiar the essential terminology. Also, some of the material in this section is new
or at least cannot be found in the literature. 
  Section \ref{Cu(FixedPoint)} focuses on both the injectivity and surjectivity of the map
  $\Cu (\iota) \colon \Cu (A^{\alpha}) \to \Cu (A)^{\alpha}$ on the purely positive part of the domain when $A$ is not necessarily unital. We conclude this section with some results on $\Cu (A \rtimes_\alpha G)$ and its purely positive part when $\alpha$ has the (non-unital) weak tracial Rokhlin property. In Section~\ref{ReRc}, we prove our results related to the relative radius of comparison of the crossed product and its relation with the fixed point algebra and the underlying algebra. Finally, in Section~\ref{Example}, we present some examples to show the application of our theorems. 
\section{Preliminaries}\label{SecPreliminaries}
In this section, we will  describe the contents of the paper in more detail. More precisely, we begin with setting up  notation and defining the key ingredients in our results, such as the Cuntz semigroup, functionals and extended quasitraces, and the relative radius of comparison, as the main statements of our main theorems are not trivial
and require a few technical preliminaries.
 Also, we review the main  theorems in comparison theory that we need  for the
 convenience of reader.

\begin{ntn}
We use the following notation.
If $A$ is a \ca,  we write 
$A_{\mathrm{sa}}$
for the set of selfadjoint elements
of $A$
and 
$A_{+}$ for the
set of positive elements of $A$. 
For $a \in  A_{\mathrm{sa}}$, we write $a= a_+ - a_{-}$ as its unique decomposition with
$a_+, a_{-}\geq 0$ and 
$a_+ a_{-}=0$. 
 We denote by $\cM (A)$ the multiplier algebra, the maximal unital extension of 
$A$ in which $A$ is an essential ideal. 
Further, we denote by $A^+$ the minimal unitization of a C*-algebra $A$.
\end{ntn}

We recall the definition of stable finiteness. 
\begin{dfn}
A unital C*-algebra 
$A$ is called \emph{finite} if $1_A$ is finite. 
 A non-unital C*-algebra $A$ is considered finite if its minimal unitization is finite. We say that 
$A$ is \emph{stably finite} if the matrix algebra 
$M_n(A)$ is finite for every $n \in \N$. 
\end{dfn}

\begin{rmk}\label{FiniteProj}
A C*-algebra which is  stably finite in the above sense contains no infinite projections. But the converse is not true in general (see V.2.2.14 of \cite{BlaBo}). It is true for simple C*-algebras (see V.2.3.6 of \cite{BlaBo}). 
\end{rmk}

Part (\ref{Cuntz_def_property_a}) 
of the following definition is originally from~\cite{Cun78}. 
Since we need to use Cuntz subequivalence
with respect to different subalgebras, we include $A$ in the notation to avoid confusion.
\subsection{Cuntz subequivalence}\label{Sec_CuSub}
Let $A$ be a \ca.
\begin{enumerate}
\item\label{Cuntz_def_property_a}
For $a, b \in (A \otimes \cK)_{+}$,
we say that $a$ is {\emph{Cuntz subequivalent to~$b$ in~$A$}},
written $a \precsim_{A} b$,
if there exists a sequence $(w_n)_{n = 1}^{\infty}$ in $A \otimes \cK$
such that
\[
\limi{n} w_n b w_n^* = a.
\]
We say that $a$ and $b$ are {\emph{Cuntz equivalent in~$A$}},
written $a \sim_{A} b$,
if $a \precsim_{A} b$ and $b \precsim_{A} a$.
This relation is an equivalence relation,
and we write $ [a]_A$ for the equivalence class of~$a$.
We define the positively ordered monoid
\[
\Cu (A) = (A \otimes \cK)_{+} / \sim_A,
\]
together with the commutative semigroup operation
$[a]_A +  [b]_A
 = [a \oplus b]_A$ coming from 
  an isomorphism $M_2(\cK) \to \cK$
and the partial order
$[a]_A \leq [b]_A$
if $a \precsim_{A} b$.
We write $0$ for~$[0]_A$.
\item\label{Cuntz_def_property_e}
Let $A$ and $B$ be C*-algebras
and let $\ph \colon A \to B$ be a \hm.
We use the same letter for the induced maps
$M_n (A) \to M_n (B)$
for $n \in \N$ and
$A \otimes \cK \to B \otimes \cK$.
We define
 $\Cu (\ph) \colon \Cu (A) \to \Cu (B)$
by $[a]_A \mapsto [\ph (a)]_B$
for  $a \in (A \otimes \cK)_{+}$.
\end{enumerate}
\begin{rmk}
We use the usual identifications 
\[
A \subset A \otimes M_n  \subset A \otimes \cK.
\]
If $a, b \in A_+$ and $a \precsim_A b$, then there is a sequence $(z_n)_{n=1}^{\infty}$ in $A$ such that 
\[
\lim_{n \to \infty} z_n b z_n^* = a.
\]
\end{rmk}
\begin{dfn}
Let $A$ be a \ca,
let $a \in A_{+}$,
and let $\ep \geq  0$.
Let $f \colon [0, \infty) \to
[0, \infty)$  be the function
$f (t) = \max (0, \, t - \ep) = (t - \ep)_{+}$.
Then, by functional calculus, define $(a - \ep)_{+} = f (a)$.
\end{dfn}
Part (\ref{PhiB.Lem.18.4.11}) of the following lemma is called 
\emph{Rørdam’s lemma} and is Proposition 2.4  of~\cite{Ror92}.
Part  (\ref{KR00.lem.22}) 
 is Lemma~2.2(iii) of~\cite{KR00}
 or Proposition 2.18 of \cite{Thi17}. 
 This shows that 
Cuntz comparison remains invariant when passing to a hereditary sub-C*-algebra.
 Parts (\ref{PhiB.Lem.18.4.8}) and (\ref{PhiB.Lem.18.4.10.a}) 
  are taken from Lemma 2.5 of ~\cite{KR00}. Part (\ref{PhiB.Lem.18.4.15})
  easily follows from (\ref{PhiB.Lem.18.4.8}) and (\ref{PhiB.Lem.18.4.10.a}). 
   For (\ref{PhiB.Lem.18.4.6}), we refer to  the discussion after Definition~2.3 of~\cite{KR00} and Proposition 2.3(ii) of \cite{ERS11}.
  Part (\ref{PhiB.Lem.18.4.15}) is Lemma 1.5 of \cite{Ph14}.
  Part (\ref{LemdivibyNorm}) follows from
  (\ref{PhiB.Lem.18.4.10.a}) and the fact that $\| y a y^* -
 y \left(a - \tfrac{\ep}{\|y\|^2}\right)_+ y^* \|<\ep$.
 Part (\ref{Item.largesub.lem1.7}) is Lemma~1.7 of \cite{Ph14}.
\begin{lem} 
\label{PhiB.Lem.18.4}
 Let $A$ be a \ca .
 \begin{enumerate}
 \item\label{PhiB.Lem.18.4.11}
 Let $a, b \in A_{+}$.
 Then the following relations are equivalent:
 \begin{enumerate}
 \item\label{PhiB.Lem.18.4.11.a}
 $a \precsim_A b$.
 \item\label{PhiB.Lem.18.4.11.b}
 $(a - \ep)_{+} \precsim_A b$ for all $\ep > 0$.
 \item\label{PhiB.Lem.18.4.11.c}
 For every $\ep > 0$ there is $\dt > 0$ such that
 $(a - \ep)_{+} \precsim_A (b - \dt)_{+}$.
 \end{enumerate}
 \item\label{KR00.lem.22}
Let $B$ be a hereditary C*-subalgebra of $A$ and  let $a, b\in B_+$. Then 
$a \precsim_B b$ if and only if $a \precsim_A b$.
 \item\label{PhiB.Lem.18.4.8}
 Let $a \in A_{+}$ and let $\ep_1, \ep_2 > 0$.
 Then
 \[
 \big( ( a - \ep_1)_{+} - \ep_2 \big)_{+}
  = \big( a - ( \ep_1 + \ep_2 ) \big)_{+}.
 \] 
  \item\label{PhiB.Lem.18.4.10}
 Let $a, b \in A_{+}$ and let $\ep>0$.
 If $\| a - b \| < \ep$, then:
 \begin{enumerate}
 \item\label{PhiB.Lem.18.4.10.a}
  $(a - \ep)_{+} \precsim_A b$.
\item\label{Item_9420_LgSb_1_6}
For any $\ld > 0$,
we have $(a - \ld - \ep)_{+} \precsim_A (b - \ld)_{+}$.
\end{enumerate}
 \item\label{PhiB.Lem.18.4.6}
 Let $c \in A$ and let $\lambda \geq 0$.
 Then 
 \[
 (c^* c - \lambda)_{+} \sim_A (c c^* - \lambda)_{+}.
 \] 
 \item\label{PhiB.Lem.18.4.15}
 Let $a, b \in A$ be positive,
 and let $\af, \bt \geq 0$.
 Then
 \[
 \big( (a + b ) - (\af + \bt) \big)_{+}
    \precsim_A (a - \af)_{+} + (b - \bt)_{+}
    \precsim_A (a - \af)_{+} \oplus (b - \bt)_{+}.
 \] 
 \item\label{LemdivibyNorm}
 Let $y \in A \setminus \{0\}$ and let
$a \in A_+$. Then for any $\ep> 0$,
\[
(y a y^* - \ep)_+ 
\precsim_A
 y \left(a - \tfrac{\ep}{\|y\|^2}\right)_+ y^*.
\]
\item\label{Item.largesub.lem1.7}
Let $\ep > 0$. Let $a, b \in A$ satisfy $0 \leq a \leq b$. 
Then 
$(a - \ep)_+ \precsim_A (b - \ep)_+$.
 \end{enumerate}
 \end{lem}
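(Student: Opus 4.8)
The plan is to treat this lemma as a toolbox of standard Cuntz-comparison facts: the genuinely analytic statements I would quote verbatim from the literature, and the remaining parts I would derive from them using only continuous functional calculus. Concretely, I would attribute part~(\ref{PhiB.Lem.18.4.11}) (R\o rdam's lemma) to \cite{Ror92}, part~(\ref{KR00.lem.22}) to \cite{KR00,Thi17}, the norm estimate~(\ref{PhiB.Lem.18.4.10.a}) to \cite{KR00}, part~(\ref{PhiB.Lem.18.4.6}) to \cite{KR00,ERS11}, and part~(\ref{Item.largesub.lem1.7}) to \cite{Ph14}. The remaining parts~(\ref{PhiB.Lem.18.4.8}), (\ref{Item_9420_LgSb_1_6}), (\ref{PhiB.Lem.18.4.15}) and~(\ref{LemdivibyNorm}) are then short consequences of these.

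Part~(\ref{PhiB.Lem.18.4.8}) I would settle purely at the level of scalar functions. Writing $f(t)=(t-\ep_1)_+$ and $g(s)=(s-\ep_2)_+$, I would verify that $g\circ f$ agrees with $t\mapsto (t-(\ep_1+\ep_2))_+$ on $[0,\infty)$ by splitting into the three regimes $t\le \ep_1$, $\ep_1<t<\ep_1+\ep_2$ and $t\ge \ep_1+\ep_2$, and then apply the continuous functional calculus to $a$. For part~(\ref{Item_9420_LgSb_1_6}), the hypothesis $\|a-b\|<\ep$ gives the operator inequality $0\le a\le b+\ep\cdot 1$, so part~(\ref{Item.largesub.lem1.7}), applied with the cut $\lambda+\ep$, yields
\[
(a-\lambda-\ep)_+\precsim_A \big((b+\ep\cdot 1)-(\lambda+\ep)\big)_+=(b-\lambda)_+,
\]
the scalar $\ep$ being taken in the unitization and the resulting subequivalence brought back to $A$ by part~(\ref{KR00.lem.22}). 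I would resist the temptation to argue by a Lipschitz bound $\|(a-\lambda)_+-(b-\lambda)_+\|\le\|a-b\|$, since truncation is not operator-Lipschitz.

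For part~(\ref{LemdivibyNorm}), set $\dt=\ep/\|y\|^2$. Since $t-(t-\dt)_+=\min(t,\dt)\le\dt$, one has $0\le a-(a-\dt)_+\le\dt$, hence
\[
\big\| y a y^* - y (a-\dt)_+ y^* \big\|
= \big\| y\big(a-(a-\dt)_+\big) y^* \big\|
\le \|y\|^2\,\big\| a-(a-\dt)_+ \big\|
\le \|y\|^2\dt=\ep.
\]
Applying part~(\ref{PhiB.Lem.18.4.10.a}) to $b=yay^*$ and $c=y(a-\dt)_+y^*$ then gives the claim; the non-strict boundary in the norm estimate is absorbed by first cutting at $\ep+\et$, applying part~(\ref{PhiB.Lem.18.4.10.a}), rewriting $((yay^*-\ep)_+-\et)_+=(yay^*-\ep-\et)_+$ by part~(\ref{PhiB.Lem.18.4.8}), and letting $\et\downarrow 0$ via part~(\ref{PhiB.Lem.18.4.11}).

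Part~(\ref{PhiB.Lem.18.4.15}) is where the only genuine subtlety lies, and it is the step I expect to be the main obstacle. The second inequality is the standard subequivalence $x+y\precsim_A x\oplus y$ for positive $x,y$. For the first inequality I would start from the operator estimate, valid in the unitization,
\[
a+b\le (a-\af)_+ + (b-\bt)_+ + (\af+\bt)\cdot 1,
\]
which follows from $a-\af\le (a-\af)_+$ and $b-\bt\le (b-\bt)_+$, and then cut by the scalar $\af+\bt$ using part~(\ref{Item.largesub.lem1.7}); a final appeal to part~(\ref{KR00.lem.22}) transports the resulting subequivalence from the unitization back to $A$. The delicate point—and the reason one cannot simply compare positive parts—is that $u\le v$ does not imply $u_+\le v_+$ at the operator level, so the argument must run through Cuntz subequivalence and the truncation identities rather than a naive order comparison. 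Since every hard analytic input is quoted, what remains is purely the bookkeeping of the constants and of strict-versus-non-strict cuts, which part~(\ref{PhiB.Lem.18.4.11}) lets us handle uniformly.
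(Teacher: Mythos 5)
Your proposal is correct and takes essentially the same route as the paper, whose entire proof of this lemma is the attribution paragraph preceding it: parts (\ref{PhiB.Lem.18.4.11}), (\ref{KR00.lem.22}), (\ref{PhiB.Lem.18.4.8}), (\ref{PhiB.Lem.18.4.10.a}), (\ref{PhiB.Lem.18.4.6}), and (\ref{Item.largesub.lem1.7}) are quoted from \cite{Ror92}, \cite{KR00}, \cite{Thi17}, \cite{ERS11}, and \cite{Ph14}, and the remaining parts are noted to follow by short functional-calculus arguments, just as you derive them. If anything you are more careful than the paper on part (\ref{LemdivibyNorm}), where the paper asserts the strict bound $\big\| y a y^* - y \big(a - \tfrac{\ep}{\|y\|^2}\big)_+ y^* \big\| < \ep$ although in general one only gets $\leq \ep$; your regularization, cutting at $\ep + \et$ and letting $\et \downarrow 0$ via parts (\ref{PhiB.Lem.18.4.8}) and (\ref{PhiB.Lem.18.4.11}), correctly repairs this, and your alternative derivations of parts (\ref{Item_9420_LgSb_1_6}) and (\ref{PhiB.Lem.18.4.15}) from the order inequality and part (\ref{Item.largesub.lem1.7}), rather than from the norm estimate and part (\ref{PhiB.Lem.18.4.10.a}) as the paper indicates, are equally valid one-line variants.
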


\subsection{The category $\Cu$}
The following definition is from \cite{CowEllIva08CuInv}.  (See Theorem~1 on page 163 of \cite{CowEllIva08CuInv}.)
\begin{dfn}
Let $(P, \leq)$ be a partially ordered set. 
Let $x, y \in P$. We say that $x$ is \emph{compactly contained} in $y$, denoted by $x \ll y$, if for any increasing sequence 
$(y_n)_{n \in \N}$ in $P$ with supremum $y$, we have
$x \leq y_{n_0}$ for some $n_0 \in \N$.
A \emph{$\Cu$-semigroup} is a positively ordered abelian monoid~$S$ with the following properties:
\begin{enumerate}
\item[(O1)]
Every increasing sequence in $S$ has a supremum.
\item[(O2)]
For every element $x$ in $S$, there exists a sequence $(x_n)_{n \in \N}$ in $S$ such that $x_0 \ll x_1 \ll x_2 \ll \cdots$ and such that $x = \sup_n x_n$.  
\item[(O3)] 
For every $x', x, y', y  \in S$ satisfying $x' \ll x$ and $y' \ll y$, we have $x' + y' \ll x + y$.
\item[(O4)]
For every increasing sequences $(x_n)_{n \in \N}$ and $(y_n)_{n \in \N}$ in $S$, we have 
\[
\sup_n (x_n+y_n) = \sup_nx_n + \sup_ny_n.
\]
\end{enumerate} 
\end{dfn}
It is shown in \cite{CowEllIva08CuInv} that the Cuntz semigroup $\Cu(A)$ of any C*-algebra is a $\Cu$-semigroup. 
(See Theorem~1 on page 170 of \cite{CowEllIva08CuInv}.) 
Other axioms, such as O5, O6 (Section 4 of \cite{Rob13}), and O7, have been defined but are not needed here.
The Cuntz semigroup is often complicated to compute in particular for C*-algebras which are not simple or do not have strict comparison. The reader is encouraged to see
\cite{APT18, APT11, GP24} for  many aspects of the Cuntz semigroup
and  \cite{Rob13Cu(X)}  
for the Cuntz semigroup of $C(X)$ when the space $X$ has dimension at most two. 
\begin{dfn}\label{Cu-morphismDef}
Let $S$ and $T$ be $\Cu$-semigroups and let
  $\varphi \colon S\to T$ be a monoid morphism.  We say that
 $\varphi$ is a \emph{$\Cu$-morphism} if
the following conditions hold:
\begin{enumerate}
\item\label{Cu-morphismDef.a}
For every $x, y \in S$ with 
$x \leq y$, we have 
$\varphi (x) \leq \varphi (y)$. 
\item\label{Cu-morphismDef.b}
For every $x, y \in S$ with 
$x \ll y$, we have 
$\varphi (x) \ll \varphi (y)$. 
\item\label{Cu-morphismDef.c}
For every sequence $(x_n)_{n \in \N}$ in $S$ with $x_1 \leq x_2 \leq x_3 \leq \cdots$, we have 
\[
\varphi (\sup_n x_n) = \sup_n \varphi (x_n).
\]  
\end{enumerate}  
\end{dfn}
\begin{dfn}
Let $S$ be a $\Cu$-semigroup and let $T \subseteq S$ be a submonoid of $S$. We say that $T$ is \emph{sub-$\Cu$-semigroup} if 
\begin{enumerate}
\item 
$T$ is a $\Cu$-semigroup with the inherited order.
\item
The inclusion map $T \to S$ is a $\Cu$-morphism. 
\end{enumerate}
\end{dfn}
Recall that a \emph{completely positive map} $\varphi$ from 
a C*-algebra $A$ to $B$ is said to have 
\emph{ order zero} if $\varphi(a)\varphi(b) = 0$ for all 
$a, b \in A_{\mathrm{sa}}$ satisfying $ab = 0$. 

The following remark follows from Proposition~3.4 of \cite{BTZ19}.  It is also Proposition~3.22 and Proposition~3.23 of \cite{Thi17}.
\begin{rmk}\label{Cucpc}
Let $A$, $B$ be C*-algebras and let
  $\varphi \colon A \to B$ 
  be a completely positive contractive  order zero map. 
  Then:
  \begin{enumerate}
  \item\label{Cucpc.1}
  $\varphi$
  naturally induces a map 
  $\Cu(\varphi) \colon \Cu(A) \to \Cu(B)$ which preserves addition, order, the zero element and the suprema of increasing sequences. But,  this map does not preserve  compact containment in general.
  \item\label{Cucpc.2}
If, furthermore, $\varphi$ is a
homomorphism, then $\Cu(\varphi)$ also preserves  compact containment, and therefore, $\Cu(\varphi)$ is a $\Cu$-morphism.
\end{enumerate}
\end{rmk}
\begin{ntn}
Let $\alpha \colon G \to \Aut (A)$ be an action of a discrete group $G$ on a C*-algebra $A$.
\begin{enumerate}
\item
We let $A\rtimes_{\alpha} G$ denote the crossed product.
For $g \in G$, we let $u_g$ denote the element of  $A\rtimes_{\alpha} G$ represented by the
function from $G$ to $A$ which takes the value $1$ at $g$ and $0$ at the other elements of $G$ if $A$ is unital. If $A$ is not unital,  we
extend the action to an action 
$\alpha^{+} \colon G \to \Aut(A^+)$ on the
unitization $A^+$ of $A$ by 
$\alpha_{g}^+(a + \lambda \cdot 1) = 
\alpha_g(a) + \lambda \cdot 1$.
Then write $u_g$ as above.
It is clear that the products $a u_g$, with $a \in A$, are in $A\rtimes_{\alpha} G$. 
\item
The fixed point algebra, denoted by $A^{\alpha}$, is
\[
A^{\alpha} = \big\{ a \in A \colon
\alpha_g (a) = a \mbox{ for all } g \in G \big\}.
\]
\item
We denote by $\Cu (A)^\alpha$ the fixed point semigroup, given by   
 \[
 \Cu (A)^\alpha 
 = \{ x \in \Cu (A) \colon \Cu(\alpha_g) (x) = x \text{ for all } g\in G \}.
\]
 \end{enumerate}
\end{ntn} 
$\Cu (A)^\alpha$ is a submonoid of 
$\Cu(A)$ that is closed under passing to suprema of increasing sequences. It is  not known in general whether 
$\Cu (A)^\alpha$ is a sub-$\Cu$-semigroup of $\Cu(A)$. However, if $\alpha\colon G \to \Aut(A)$ is an action of a finite group $G$ on a  stably finite simple  non-type-I (not necessarily unital)  C*-algebra $A$ with the weak tracial Rokhlin property, then 
$\Cu (A)^\alpha$
a sub-$\Cu$-semigroup of $\Cu(A)$. See Proposition~\ref{CuCrosFix}(\ref{CuCrosFix.2}).
\subsection{Functionals and extended
2-quasitraces}
\begin{dfn}
Let $S$ be a semigroup in the category $\Cu$.
A \emph{functional}
on $S$ is a map 
$\lambda \colon S \to [0, \infty]$ such that: 
\begin{enumerate}
\item 
$\lambda (0)= 0$ and 
$\lambda (x + y) = \lambda (x) + \lambda (y)$ for all 
$x , y \in S$. 
\item
For any $x, y \in S$ with $x \leq y$, we have 
$\lambda (x ) \leq \lambda (y)$.
\item
For any sequence $(x_n)_{n=1}^{\infty}$ in $S$ with 
$x_n \leq x_{n+1}$ for all $n \in \N$,
we have 
\[
\lambda (\sup_n x_n) = \sup_n \lambda (x_n).
\]
\end{enumerate}
\end{dfn}
We use $\cF(S)$ to denote the functionals on $S$.
\begin{dfn}
Let $A$ be a C*-algebra. 
Let $\tau \colon (A \otimes \cK)_+ \to [0, \infty]$ be a function. 
We say that $\tau$ is a \emph{extended 2quasitrace}  if the following conditions hold:
\begin{enumerate}
\item
$\tau(x x^*) = \tau(x^* x)$ for all $x \in A \otimes \cK$. 
\item
$\tau (0) = 0$ and 
$\tau(t\cdot a) = t\cdot \tau(a)$ for all $a \in (A \otimes \cK)_+$ and all $t \in (0, \infty)$. 
\item 
$\tau(a+b) = \tau(a)+\tau(b)$ for all
 $a, b\in (A \otimes \cK)_+$ with  $ab=ba$. 
\end{enumerate}
\end{dfn}
We denote by 
$\EQT_2 (A)$ the set of the lower semicontinuous extended 2-quasitraces on $(A \otimes \cK)_+$.
We denote by 
$\ET (A)$ the set of the lower semicontinuous extended traces on $(A \otimes \cK)_+$.
It follows from  Remark  4.5 of \cite{ERS11} that if $A$ is exact then
$\EQT_2 (A) = \ET(A)$. Also, it follows from Corollary 6.10 of \cite{Thi17} that extended quasitraces are order-preserving, i.e., if $a, b \in (A\otimes \cK)_+$ with 
$a \leq b$, then $\tau (a) \leq \tau (b)$ for $\tau \in \EQT_2 (A)$. 

In the following definition, the notation 
$\Dom_{1/2} (\tau)$ is defined in Definition~2.22 of \cite{BK04}. 
\begin{dfn}\label{Domdef}
Let $A$ be a C*-algebra and 
let $\tau \in \EQT_2 (A)$. Set 
\[
\Dom_{1/2} (\tau)= \{ a \in A \otimes \cK \colon \tau (a a^*) <\infty \}.
\] 
\begin{enumerate}
\item\label{Domdef.a}
We say that $\tau$ is \emph{bounded} if 
$\Dom_{1/2} (\tau)= (A \otimes \cK)_+$.
\item\label{Domdef.b}
We say that $A$ is \emph{densely defined} if 
 $\Dom_{1/2} (\tau)$ is dense in 
 $(A \otimes \cK)_+$. 
\end{enumerate}
\end{dfn}

Recall the following trivial
extended 2-quasitraces,
$\tau_{\infty}, \tau_{0} \colon  (A \otimes \cK)_+ \to [0, \infty]$:
\[
\tau_{\infty} (a) = 
\begin{cases}
             0  & \text{if } a=0 \\
             \infty  & \text{if } \mbox a\neq  0
       \end{cases}
       \qquad
      \mbox{ and }
        \qquad 
       \tau_{0} (a) = 0 \quad
       \mbox{for $a \in (A \otimes \cK)_+$}.
\]

Let $\tau \in \EQT_2 (A)$ be bounded.
Set 
\[
N(\tau) = \sup \{ \tau (a) \colon a \in (A \otimes \cK)_+ \mbox{ and } \| a \| \leq 1 \}.
\] 
Then it follows from Remark~2.27(V) of \cite{BK04} that
\begin{enumerate}
\item 
$N(\tau) < \infty$,
\item
$\tau (a) \leq N(\tau) \cdot \| a \|$ for all $a \in (A \otimes \cK)_+$, and 
\item
$| \tau (a) - \tau(b) | \leq N(\tau)
\cdot
 \| a - b\|$ for all $a, b\in (A \otimes \cK)_+$. 
\end{enumerate}
So $\tau$ extends
uniquely to a uniformly continuous map
 $\tau_{\mathrm{e}} \colon A \otimes \cK \to \mathbb{C}$ 
given by 

\[
\tau_{\mathrm{e}} (a + i b) = \tau (a_+) - \tau (a_{-}) + i \cdot \tau (b_+) - i \cdot \tau (b_{-}),
\]
for $a, b \in (A\otimes \cK)_{\mathrm{sa}}$.
 It is clear that
 $\tau_{\mathrm{e}}$ is linear on every commutative C*-subalgebra of $A \otimes \cK$. By abuse of notation, we usually write $\tau$ instead of $\tau_{\mathrm{e}}$. 
 
 It is known that a simple C*-algebra $A \otimes \cK$
is stably finite if and only if there exists a faithful densely defined 
$\tau$ in $\EQT_2 (A)$ (see Remark~3.7 of \cite{BK04}). 
\begin{dfn}
Let $A$ be a \ca. 
For every $\tau \in  \EQT_2 (A)$, we define a map
$d_{\tau} \colon  (A \otimes \cK)_+ \to  [0, \infty]$ by the following formula:
\[
d_{\tau} (a) = \lim_{n \to \infty} \tau (a^{1/n}).
\]
\end{dfn}
It follows from Proposition 
\ref{Pro.4.2.ESR11} that 
for every $\tau \in \EQT_2 (A)$, and 
every $a, b \in (A \otimes \cK)_+$ with $a \precsim_A b$, we have $d_{\tau} (a) \leq d_\tau (b)$. 

The following statement is  Proposition 4.2 of  \cite{ERS11}; briefly, the association 
$\tau \to  d_\tau$ defines a bijection between $\EQT_2(A)$
and $\cF(\Cu(A))$, extending the work of Blackadar and Handelman in \cite{BH82}.
\begin{prp}\label{Pro.4.2.ESR11}
Let $A$ be a \ca.
\begin{enumerate}
\item
Given $\tau \in \EQT_2 (A)$. The function 
$\lambda_{\tau} ([a])= \sup_n \tau (a^{1/n})$, for $a \in (A \otimes \cK)_+$,
is well defined and gives us 
 a functional $\Cu (A)$, i.e., an element of  $\cF(\Cu(A))$.
\item
Given $\lambda \in \cF(\Cu(A))$. The function 
\[
\tau_{\lambda} (a)= \int_{0}^{\infty} \lambda \left([ (a - t)_+ ] \right) dt,
\quad
\mbox{for $a \in (A \otimes \cK)_+$,}
\]
is a lower semicontinuous extended 2-quasitrace, i.e., an element in $\EQT_2 (A)$.
\item
The map $\theta \colon \EQT_2 (A) \to \cF (\Cu (A))$, defined by $\tau \to \lambda_{\tau}$,
is bijective.
\end{enumerate}
\end{prp}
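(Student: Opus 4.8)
The plan is to regard a functional on $\Cu(A)$ as a dimension (rank) function and an extended $2$-quasitrace as a trace, and to prove that the two notions are exchanged by the \emph{layer-cake identity}
\[
\tau (a) = \int_0^\infty d_\tau \big( (a - t)_+ \big)\, dt,
\]
which will serve as the technical backbone for all three parts. For (1), I would first show that $d_\tau$ is a Cuntz invariant. Monotonicity with respect to $\leq$ is immediate from operator monotonicity of $t \mapsto t^{1/n}$ together with the fact (quoted from \cite{Thi17}) that $\tau$ is order preserving, while the trace property $d_\tau (cc^*) = d_\tau (c^* c)$ follows by approximating $t^{1/n}$ by polynomials $p$ with $p(0) = 0$ and using that $\tau \big( (cc^*)^k \big) = \tau (xx^*) = \tau (x^* x) = \tau \big( (c^* c)^k \big)$ for $x = c (c^* c)^{(k-1)/2}$, which is just the quasitrace identity $\tau (xx^*) = \tau (x^* x)$. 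Combining these two properties with R\o rdam's lemma (\Lem{PhiB.Lem.18.4}, part (\ref{PhiB.Lem.18.4.11})) yields $a \precsim_A b \Rightarrow d_\tau (a) \leq d_\tau (b)$, so $\lambda_\tau$ is well defined and order preserving on $\Cu (A)$; additivity holds because $a \oplus b$ splits into two orthogonal summands on which the quasitrace is additive, and preservation of suprema of increasing sequences is where lower semicontinuity of $\tau$ enters, via the description of suprema in $\Cu (A)$ through the relations $(a - \ep)_+ \precsim_A a_k$ and the identity $d_\tau (a) = \sup_{\ep > 0} d_\tau \big( (a - \ep)_+ \big)$.

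For (2), I would verify the three defining properties of an extended $2$-quasitrace for $\tau_\lambda$. The trace property is essentially free: by part (\ref{PhiB.Lem.18.4.6}) of \Lem{PhiB.Lem.18.4} one has $(x^* x - t)_+ \sim_A (xx^* - t)_+$ for every $t \geq 0$, so the two integrands $\lambda ([(x^* x - t)_+])$ and $\lambda ([(xx^* - t)_+])$ agree pointwise in $t$ and hence the integrals agree. Homogeneity $\tau_\lambda (ca) = c \cdot \tau_\lambda (a)$ follows from the substitution $t \mapsto ct$ in the integral. Additivity on commuting elements and lower semicontinuity are the substantive points: restricting to the commutative C*-subalgebra $C^* (a, b)$, the functional $\lambda$ is represented by a (possibly infinite) Radon measure $\mu$, and running the layer-cake identity in reverse gives $\tau_\lambda (f) = \int f \, d\mu$ for positive $f \in C^* (a, b)$, whence additivity is linearity of the integral and lower semicontinuity follows from Fatou's lemma combined with the sup-preservation built into $\lambda$.

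For (3), both injectivity and surjectivity are obtained by showing the two constructions are mutually inverse. The identity $\tau_{\lambda_\tau} = \tau$ is precisely the layer-cake formula. For $\lambda_{\tau_\lambda} = \lambda$, I would compute $d_{\tau_\lambda} (a) = \sup_n \tau_\lambda (a^{1/n})$ directly: after normalizing $\| a \| \leq 1$ (legitimate since both sides are scale invariant), the elements $(a^{1/n} - t)_+$ and $(a - t^n)_+$ have the same open support and so are Cuntz equivalent, so the integrand equals $\lambda \big( [ (a - t^n)_+ ] \big)$, which increases to $\lambda ([a])$ as $n \to \infty$ because $t^n \searrow 0$ and $\lambda$ preserves suprema; monotone convergence then gives $d_{\tau_\lambda} (a) = \lambda ([a])$. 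The main obstacle I anticipate is the analytic content packaged into the layer-cake identity and into the additivity and lower semicontinuity of $\tau_\lambda$: each of these ultimately requires representing the restriction of $\tau$ (respectively $\lambda$) to a commutative subalgebra as integration against a measure and controlling this representation when $\tau$ is merely densely defined rather than bounded. Once that measure-theoretic dictionary is established, the remaining steps are the formal manipulations indicated above.
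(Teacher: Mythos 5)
The paper does not actually prove this proposition: it is quoted verbatim as Proposition~4.2 of \cite{ERS11}, so there is no internal proof to match. Your proposal is, in outline, a correct reconstruction of the standard argument behind the cited result (the Blackadar--Handelman correspondence \cite{BH82} extended to the lower semicontinuous, non-unital setting): part~(1) via the dimension-function properties of $d_\tau$ together with R{\o}rdam's lemma, part~(2) via the measure representation of $\lambda$ on commutative subalgebras, and part~(3) via the layer-cake identity in one direction and the substitution $(a^{1/n}-t)_+ \sim_A (a-t^n)_+$ with monotone convergence in the other. Both key computations in~(3) are exactly the ones used in \cite{ERS11}, and your identification of where lower semicontinuity of $\tau$ enters (in proving $d_\tau(a)=\sup_{\ep>0} d_\tau\big((a-\ep)_+\big)$, hence sup-preservation of $\lambda_\tau$) is accurate.

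Two refinements are worth recording. First, lower semicontinuity of $\tau_\lambda$ does not need Fatou or any measure theory: if $\|a-a_n\|<\ep$, then Lemma~\ref{PhiB.Lem.18.4}(\ref{Item_9420_LgSb_1_6}) gives $(a-t-\ep)_+ \precsim_A (a_n-t)_+$ for every $t$, so
\[
\int_{\ep}^{\infty} \lambda\big(\big[(a-s)_+\big]\big)\, ds \;\leq\; \tau_\lambda(a_n),
\]
and letting $\ep \to 0$ (monotone convergence in $\ep$) yields $\tau_\lambda(a) \leq \liminf_n \tau_\lambda(a_n)$ directly. Second, the measure-theoretic dictionary you flag as the main obstacle does require an argument that the set function $U \mapsto \lambda([f_U])$ on open subsets of the spectrum extends to a Borel measure; here one should note that $C^*(a,b)$ is separable, so the spectrum is second countable and inner regularity plus countable additivity on open sets follow from the sup-preservation of $\lambda$ along increasing sequences. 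With those two points made precise, your sketch fills in the proof the paper delegates to \cite{ERS11}.
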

\subsection{The relative radius of comparison}
In this paper, we focus on 
 the relative radius of comparison, namely
the radius of comparison relative to a particular full element in the Cuntz semigroup of a C*-algebra.  
 This means we are examining how the comparison properties behave in relation to that specific element.  
\begin{dfn}
Let $S$ be a semigroup in the category $\Cu$
and let $x \in S$.
 We denote by $\infty \cdot x$ the supremum $\sup_n n x$. We say that $x$ is \emph{full} if
$\infty \cdot x$ is the largest element of $S$. 
\end{dfn}
 \begin{dfn}
Let  $A$ be  a \ca{} and let $a \in \cM (A)$.
The
ideal of $A$ generated by $a$, denoted by $\mathrm{Span}( AaA)$, is
\[
\mathrm{Span}(AaA)= \Bigg\{\sum_{j=1}^{m} a_j a c_j \colon m\in \N \text{ and } a_j, c_j \in
 A  \text{ for all } j=1, 2, \ldots ,m\Bigg\}.
\]
 We say $a$ is \emph{full} in $A$ if the closed two-sided
ideal of $A$ generated by $a$ is all of $A$ (i.e., $\overline{\mathrm{Span}(AaA)}=A$). 
A corner of $A$ is a C*-subalgebra of the form $pAp$ where $p$ is a projection
in $\cM (A)$. A corner is said to be full if $\overline{\mathrm{Span}(ApA)} = A$.
\end{dfn}
Let $A$ be a C*-algebra and let $a \in (A \otimes \cK)_+$.
Then it is easy to see that $a$ is full in $A \otimes \cK$
if and only if $[ a ]_A$ is full. 
Also, it is clear that if the C*-algebra $A$ is simple, then every non-zero element is full. Our focus in this paper is on simple C*-algebras, so  every non-zero element is full. 

The following definition is contained in Definition~3.3.2 of \cite{BRTW12}. 
\begin{dfn}\label{DefRcCuAlgebraic}
Let $S$ be a semigroup in the category $\Cu$ and let $e \in S$ be  full.
\begin{enumerate}
\item
Let $r\in (0, \infty)$. We say that 
$S$  has \emph{$r$-comparison relative to $e$}, if whenever $x, y \in S$
satisfy 
$\lambda ( x ) + r \cdot \lambda (e)
 \leq
\lambda( y )$
for all $\lambda \in \cF (S)$,
then $x \leq y$. 
\item
 The \emph{radius of comparison of $S$ relative to
$e$}, denoted by
 $\rc (S, e )$, is 
 \[
 \rc (S, e)= \inf \left\{r \in (0, \infty) \colon 
S~ \mbox{has $r$-comparison relative to $e$ } 
\right\}.
 \]
 If there is no such $r$, then $ \rc (S, e)=\infty$.
\end{enumerate}
\end{dfn}

By Proposition~\ref{Pro.4.2.ESR11}, Definition~\ref{DefRcCu} is equivalent the following definition. 
\begin{dfn}
\label{DefRcCu}
Let $A$ be a \ca{} and let $a \in (A \otimes \K)_+$ be  full.
\begin{enumerate}
\item
Let $r\in (0, \infty)$. We say that 
$\Cu (A)$ has \emph{$r$-comparison relative to $[ a ]_A$}, if whenever $x, y \in (A \otimes \K)_+$
satisfy 
$d_\tau ( x ) + r \cdot d_\tau (a)
 \leq
d_\tau ( y )$
for all $\tau \in \EQT_2(A)$,
then $x \precsim_{A} y$. 
\item
The \emph{radius of comparison of $\Cu (A)$ relative to
$[ a ]_A$}, denoted by
 $\rc \big(\Cu (A), \ [ a ]_A \big)$, is 
 \[
 \rc \big(\Cu (A), \ [ a ]_A \big)= \inf \Big\{r \in (0, \infty) \colon 
\Cu (A) ~\mbox{has $r$-comparison relative to $[ a ]_A$} 
\Big\}.
 \]
\end{enumerate}
\end{dfn}

In Definition~3.3.2 of \cite{BRTW12},
the notation used for the radius of comparison of 
$\Cu (A)$ relative to $[ a ]$  is $r_{A, a}$ but we use 
$\rc \big(\Cu (A), \ [ a ]_A \big)$ here or sometimes 
$\rc \big(\Cu (A),\ [ a ] \big)$ when there is no confusion regarding the Cuntz class of $a$. 
 It is also shown in Proposition~3.2.3 of \cite{BRTW12} that for a unital 
C*-algebra $A$ with all quotients stably finite, 
$\rc \big(\Cu (A), [ 1_A ]\big)$ is equal to 
$\operatorname{rc} (A)$, which is the conventional radius of comparison defined in \cite{Tom06}.
We caution the reader that while the conventional radius of
comparison, $\operatorname{rc} (A)$, changes (as the unit does) when transitioning to a matrix algebra over $A$ (see Proposition~6.2(ii) of \cite{Tom06}), the relative radius of comparison does not.
Also, for comparison properties in the category $\Cu$ which are more algebraic, we refer to \cite{BP18}. 
\subsection{The Pedersen ideal and extended 2-quasitraces }
\begin{rmk}\label{PedIdeal}
It is shown in \cite{Ped66} that for every C*-algebra $A$, there is a dense two-sided ideal of $A$, denoted by $\Ped (A)$, 
 which is contained in every dense two-sided ideal of $A$. 
 $\Ped(A)$ is called the \emph{Pedersen ideal of $A$}.  
 Also, it follows from Proposition 5.6.2 of \cite{PedBo} that  for every $a \in A$, we have:
 \begin{enumerate}
 \item\label{PedIdeal.a}
 $a^* a \in  \Ped(A) 
 \Longleftrightarrow
 aa^* \in \Ped(A) 
\Longleftrightarrow 
 a \in \Ped(A).$
 \item\label{PedIdeal.b}
 If $a \in \Ped (A)$, then 
  $\overline{aAa} \subseteq \Ped (A)$. In particular, $C^*(a) \subseteq \Ped (A)$. 
 \end{enumerate}
 See more details about the proof of part (\ref{PedIdeal.a}) in
Proposition 3.4 of \cite{IvKu19}
and part (\ref{PedIdeal.b}) in II.5.2.4. of \cite{BlaBo}. 
 \end{rmk}
The following lemma provides connections between 
$\EQT_2 (A)$,  $\Ped (A)$, and $\Dom_{ \tfrac{1}{2} } (\tau)$. 
\begin{lem}\label{DomLem}
Let $A$ be a C*-algebra and let $\tau \in \EQT_2 (A)$. 
Then: 
\begin{enumerate}
\item\label{DomLem.1}
$\Dom_{ \tfrac{1}{2} } (\tau)= \{ 0\}$ if and only if 
$\tau = \tau_{\infty}$.  
\item\label{DomLem.2}
$\Dom_{ \tfrac{1}{2} } (\tau_0)= A$. 
\item\label{DomLem.3}
For $\tau \in \EQT_2 (A)$, the set $\Dom_{ \tfrac{1}{2} } (\tau)$ is a two-sided $*$-ideal of $A$.
\item\label{DomLem.4}
If $A$ is a simple C*-algebra, then, for every $\tau \in \EQT_2 (A) \setminus 
\{\tau_{\infty}\}$, we have 
\[
\Ped (A \otimes \cK) \subseteq \Dom_{ \tfrac{1}{2} } (\tau).
\] 
\item\label{DomLem.5}
If $A$ is a simple C*-algebra, then
every $\tau \in \EQT_2 (A) \setminus 
\{\tau_{\infty}\}$ is densely defined. 

\end{enumerate}
\end{lem}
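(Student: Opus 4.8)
The five assertions are naturally proved in the order stated: parts~(\ref{DomLem.1})--(\ref{DomLem.3}) concern a fixed $\tau \in \EQT_2(A)$, while parts~(\ref{DomLem.4})--(\ref{DomLem.5}) bring in simplicity. Parts~(\ref{DomLem.1}) and~(\ref{DomLem.2}) are immediate from the definitions. For~(\ref{DomLem.2}), since $\tau_{0}$ vanishes identically we have $\tau_{0}(aa^*) = 0 < \infty$ for every $a$, so $\Dom_{1/2}(\tau_{0}) = A \otimes \cK$. For~(\ref{DomLem.1}), if $\tau = \tau_{\infty}$ then $\tau_{\infty}(aa^*) < \infty$ forces $aa^* = 0$, i.e.\ $a = 0$, whence $\Dom_{1/2}(\tau_{\infty}) = \{0\}$; conversely, if $\Dom_{1/2}(\tau) = \{0\}$ then for any $b \in (A \otimes \cK)_+ \setminus \{0\}$ I would write $b = b^{1/2}(b^{1/2})^*$ with $b^{1/2} \neq 0$, so that $b^{1/2} \notin \Dom_{1/2}(\tau)$ gives $\tau(b) = \infty$, and together with $\tau(0) = 0$ this says $\tau = \tau_{\infty}$.

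For part~(\ref{DomLem.3}) I would check closure under $*$, under left multiplication, and under addition. The $*$-closure is immediate from the quasitrace identity $\tau(aa^*) = \tau(a^*a)$, which shows $a \in \Dom_{1/2}(\tau) \iff a^* \in \Dom_{1/2}(\tau)$. For left multiplication, given $x \in A \otimes \cK$ and $a \in \Dom_{1/2}(\tau)$, I would set $y = x(aa^*)^{1/2}$, so that $yy^* = (xa)(xa)^*$ and $y^*y = (aa^*)^{1/2} x^*x (aa^*)^{1/2} \le \|x\|^2 aa^*$; the quasitrace identity, monotonicity, and homogeneity then give $\tau\big((xa)(xa)^*\big) = \tau(yy^*) = \tau(y^*y) \le \|x\|^2 \tau(aa^*) < \infty$, and combined with $*$-closure this makes $\Dom_{1/2}(\tau)$ a two-sided ideal. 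The only non-routine point, and the step I expect to be the \emph{main obstacle}, is closure under addition, since a quasitrace is additive only on commuting elements. Here I would use that $\tau$ is a \emph{$2$-quasitrace} by passing to $2\times 2$ matrices under the identification $M_2(A \otimes \cK) \cong A \otimes \cK$: for $a,b \in \Dom_{1/2}(\tau)$ put $c = \begin{pmatrix} a & b \\ 0 & 0 \end{pmatrix}$, so that $cc^* = \diag(aa^* + bb^*,\, 0)$ and $c^*c = \begin{pmatrix} a^*a & a^*b \\ b^*a & b^*b \end{pmatrix}$. The trace identity in $M_2$ gives $\tau(aa^* + bb^*) = \tau(cc^*) = \tau(c^*c)$, and the estimate $c^*c \le \diag(2a^*a,\, 2b^*b)$ (the difference factors as $d^* \left(\begin{smallmatrix} 1 & -1 \\ -1 & 1 \end{smallmatrix}\right) d \ge 0$ with $d = \diag(a,b)$), together with monotonicity and additivity on the commuting diagonal elements, yields $\tau(aa^* + bb^*) \le 2\tau(aa^*) + 2\tau(bb^*) < \infty$. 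Finally the operator inequality $(a+b)(a+b)^* \le 2(aa^* + bb^*)$ and monotonicity give $\tau\big((a+b)(a+b)^*\big) < \infty$, i.e.\ $a + b \in \Dom_{1/2}(\tau)$; closure under scalar multiplication is trivial from homogeneity.

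For parts~(\ref{DomLem.4}) and~(\ref{DomLem.5}), assume $A$ is simple, so $A \otimes \cK$ is simple, and let $\tau \neq \tau_{\infty}$. By part~(\ref{DomLem.1}), $\Dom_{1/2}(\tau) \neq \{0\}$, and by part~(\ref{DomLem.3}) it is a nonzero two-sided ideal of $A \otimes \cK$; its norm closure is then a nonzero closed two-sided ideal of the simple algebra $A \otimes \cK$, hence all of $A \otimes \cK$, so $\Dom_{1/2}(\tau)$ is dense. Since $\Ped(A \otimes \cK)$ is contained in every dense two-sided ideal (\Rmk{PedIdeal}), this gives $\Ped(A \otimes \cK) \subseteq \Dom_{1/2}(\tau)$, proving~(\ref{DomLem.4}). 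Part~(\ref{DomLem.5}) is then immediate, since the density of $\Dom_{1/2}(\tau)$ just established is precisely the statement that $\tau$ is densely defined.
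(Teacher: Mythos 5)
Your proposal is correct, and for parts (\ref{DomLem.1}), (\ref{DomLem.2}), (\ref{DomLem.4}), and (\ref{DomLem.5}) it follows exactly the paper's route: (\ref{DomLem.1}) and (\ref{DomLem.2}) from the definitions, and then, for a simple $A$ and $\tau \neq \tau_{\infty}$, the chain ``$\Dom_{1/2}(\tau) \neq \{0\}$ by (\ref{DomLem.1}), it is an ideal by (\ref{DomLem.3}), hence dense by simplicity, hence contains $\Ped(A \otimes \cK)$ by minimality of the Pedersen ideal.'' (Incidentally, your reading $\Dom_{1/2}(\tau_0) = A \otimes \cK$ is the correct one; the ``$=A$'' in the statement is a slip, since by Definition~\ref{Domdef} the domain lives in $A \otimes \cK$.)

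The one genuine divergence is part (\ref{DomLem.3}): the paper disposes of it with a citation to Proposition~2.24 of \cite{BK04}, whereas you prove it from scratch. Your argument is sound: the $*$-closure from $\tau(aa^*) = \tau(a^*a)$; the ideal property via $y = x(aa^*)^{1/2}$, the trace identity, monotonicity (which the paper records from Corollary~6.10 of \cite{Thi17}), and homogeneity; and, for the delicate additive closure, the $2 \times 2$ matrix trick with $c = \bigl(\begin{smallmatrix} a & b \\ 0 & 0\end{smallmatrix}\bigr)$, the factorization of $\diag(2a^*a, 2b^*b) - c^*c$ as $d^* \bigl(\begin{smallmatrix} 1 & -1 \\ -1 & 1 \end{smallmatrix}\bigr) d \geq 0$, and additivity on the orthogonal (hence commuting) diagonal corners. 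This is essentially the standard argument behind the cited result, so what your version buys is self-containedness at the cost of length; what it quietly assumes is the compatibility of $\tau$ with the identification $M_2(A \otimes \cK) \cong A \otimes \cK$ (including $\tau(\diag(z,0)) = \tau(z) = \tau(\diag(0,z))$, which you can extract from the identity $\tau(xx^*) = \tau(x^*x)$ applied to suitable off-diagonal $x$, e.g.\ $x = \bigl(\begin{smallmatrix} 0 & 0 \\ z^{1/2} & 0\end{smallmatrix}\bigr)$). Making that one point explicit would close the only small gap between your write-up and a complete proof.
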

\begin{proof}
Parts (\ref{DomLem.1}) and (\ref{DomLem.2}) follow from the definition.   
Part (\ref{DomLem.3}) follows from Proposition 2.24 of \cite{BK04}. 

To prove (\ref{DomLem.4}), let  $\tau \in \EQT_2 (A) \setminus 
\{\tau_{\infty}\}$. It follows from 
(\ref{DomLem.1}) that $ \Dom_{ \tfrac{1}{2} } (\tau) \neq \{ 0 \}$. Since $A$ is simple, it follows from (\ref{DomLem.3}) that $\overline{ \Dom_{ \tfrac{1}{2} } (\tau)} = A \otimes \cK$. Therefore, using the fact that
 $\Ped(A \otimes \cK)$ 
 is the smallest dense ideal of $A \otimes \cK$, we get 
$\Ped (A \otimes \cK) \subseteq \Dom_{ \tfrac{1}{2}} (\tau).$

Part (\ref{DomLem.5}) easily follows from simplicity of $A$ and (\ref{DomLem.1}) and (\ref{DomLem.3}). 

\end{proof}
The following lemma is used in the proof of Theorem~\ref{TrcaeofProj}. 
\begin{lem}\label{BonPed}
Let $A$ be a simple C*-algebra, let $\tau \in \EQT_2(A)  \setminus 
\{\tau_{\infty}\}$, and
 let $a \in \Ped (A \otimes \cK)$.
Then the restriction of $\tau$ to 
the positive part of $\overline{a (A \otimes \cK) a}$ is bounded. 
\end{lem}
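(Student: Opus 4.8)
The plan is to exhibit $B := \overline{a(A \otimes \cK)a}$ as a subalgebra of $\Dom_{1/2}(\tau)$ and then to read off boundedness of the restriction directly from the definitions. Throughout I treat $a$ as positive, so that $B$ is the hereditary C*-subalgebra of $A \otimes \cK$ generated by $a$; the sole property of $a$ that enters is its membership in $\Ped(A \otimes \cK)$.

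First I would establish the chain $B \subseteq \Ped(A \otimes \cK) \subseteq \Dom_{1/2}(\tau)$. The left-hand inclusion is immediate from \Rmk{PedIdeal}(\ref{PedIdeal.b}), applied with $A \otimes \cK$ in place of $A$: since $a \in \Ped(A \otimes \cK)$, one gets $\overline{a(A \otimes \cK)a} \subseteq \Ped(A \otimes \cK)$. The right-hand inclusion is where simplicity enters: $A$ is simple and $\tau \neq \tau_\infty$, so \Lem{DomLem}(\ref{DomLem.4}) gives $\Ped(A \otimes \cK) \subseteq \Dom_{1/2}(\tau)$. Composing the two inclusions yields $B \subseteq \Dom_{1/2}(\tau)$.

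It remains to convert this into boundedness. Unravelling $\Dom_{1/2}(\tau) = \{x \in A \otimes \cK : \tau(xx^*) < \infty\}$, the inclusion $B \subseteq \Dom_{1/2}(\tau)$ says precisely that $\tau(cc^*) < \infty$ for every $c \in B$, i.e.\ $\Dom_{1/2}(\tau|_B) = B$; in particular every $b \in B_+$ has $\tau(b) = \tau\big(b^{1/2}(b^{1/2})^*\big) < \infty$, since $b^{1/2} \in B$. Viewing $\tau|_B$ as a lower semicontinuous extended $2$-quasitrace on the C*-algebra $B$, this is exactly the statement that $\tau|_B$ is \emph{bounded} in the sense of \Def{Domdef}(\ref{Domdef.a}), whereupon the finiteness of $N(\tau|_B) = \sup\{\tau(b) : b \in B_+,\ \|b\| \leq 1\}$ follows from the consequence of Remark~2.27(V) of \cite{BK04} recorded after \Def{Domdef}. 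This is the desired conclusion.

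I expect the one genuinely non-formal point to be this last upgrade, from ``$\tau$ is finite on each positive element of $B$'' to the uniform bound $N(\tau|_B) < \infty$ over the norm-one positive elements: no single element of $C^*(a)$ dominates an arbitrary norm-one $b \in B_+$, so the bound cannot be obtained by a direct domination argument, and it instead rests on the automatic-boundedness phenomenon for finitely-valued quasitraces encoded in Remark~2.27(V) of \cite{BK04}. Making that citation applicable also requires regarding $\tau|_B$ as an extended $2$-quasitrace on $B$ in its own right---using the identification $A \otimes \cK \cong (A \otimes \cK) \otimes \cK$, under which $B$ remains hereditary and $\tau$ restricts consistently---so that the framework of \cite{BK04} applies to $B$ verbatim.
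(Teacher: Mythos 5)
Your proposal is correct and follows essentially the same route as the paper's proof: the identical two-step inclusion $\overline{a(A\otimes\cK)a} \subseteq \Ped(A\otimes\cK) \subseteq \Dom_{1/2}(\tau)$ via \Rmk{PedIdeal}(\ref{PedIdeal.b}) and \Lem{DomLem}(\ref{DomLem.4}), after which boundedness is read off from the definition of $\Dom_{1/2}(\tau)$. You additionally spell out the upgrade from finiteness on each positive element to the uniform bound $N(\tau|_B)<\infty$ via Remark~2.27(V) of \cite{BK04}, which the paper leaves implicit in the discussion following \Def{Domdef} (and uses later in \Thm{TrcaeofProj}); this is a welcome clarification but not a different argument.
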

\begin{proof}
Using  Remark \ref{PedIdeal}(\ref{PedIdeal.b}) at the first step and using Lemma \ref{DomLem}(\ref{DomLem.4}) at the second step, we get, for all $\tau \in \EQT_2(A)  \setminus 
\{\tau_{\infty}\}$, 
\[
\overline{a (A \otimes \cK) a} \subseteq \Ped (A \otimes \cK)
\subseteq \Dom_{ \tfrac{1}{2} } (\tau).
\] 
Now, the result follows from the definition of 
$\Dom_{ \tfrac{1}{2} } (\tau)$. 
\end{proof}
%
%
\section{$\Cu (\iota) \colon \Cu_+ (A^{\alpha}) \cup \{ 0 \}\to \Cu_+ (A)^{\alpha} \cup \{ 0 \}$ is an isomorphism}
\label{Cu(FixedPoint)}
In the this section, we will try to demystify  Cuntz comparison over $A$ and $A^{\alpha}$
when $A$ is a simple infinite-dimensional (not necessarily unital) C*-algebra and
  $\alpha$ is an action of a finite group $G$ on $A$ with the  weak
tracial Rokhlin property. 
The definition of the weak tracial Rokhlin property for non-unital C*-algebras was given in \cite{FG20}). The definition of the radius of comparison for non-unital C*-algebras was given in \cite{BRTW12}. Our objective is  to determine the validity of the results presented in \cite{AGP19} for non-unital C*-algebras.
This involves some technical difficulties arising from the definition of the weak tracial Rokhlin property  when dealing with non-unital C*-algebras.
\begin{rmk}\label{Nonempty_fixed_point}
Let $A$ be a non-zero \ca{} and 
let $\alpha \colon G \to \Aut (A)$ be an action of 
a finite group $G$ on $A$.
So, there exists $a \in A_+ \setminus \{0\}$.
Since $a \leq \sum_{g \in G } \alpha_g (a)$, it follows that  
$\sum_{g \in G} \alpha_g (a) \in ( A^{\alpha} )_+ \setminus \{0\}$.
In other words, $A^\alpha$ is a non-zero C*-subalgebra of $A$.
\end{rmk}
%
The weak tracial Rokhlin property in the case of non-unital C*-algebras is defined as follows. 
\begin{dfn}\label{W.T.R.P.def.non}
Let $G$ be a finite group,
 let $A$ be a simple
not necessarily unital \ca,
 and let  $\alpha \colon  G \to  \Aut (A)$  be an action of
$G$ on  $A$. We say that  $ \alpha $ has the
 \emph{weak tracial Rokhlin property} if for every $\ep > 0$, 
every finite set $F \subset A$, and every pair of positive 
elements $x, y \in  A$ with $\| x \| = 1$, 
there exist orthogonal positive contractions
$f_g \in G$ in $A$, for $g \in G$, such that, 
 with $f = \sum_{g \in G} f_g$, the following conditions hold:
\begin{enumerate}
\item\label{Def.w.t.r.p.a.non}
$\| a f_g - f_g a \| < \ep$ for all $g \in G $ and all $a \in F$.
\item\label{Def.w.t.r.p.b.non}
$\| \alpha_{g}( f_h ) - f_{gh} \|  < \ep$ for all $g , h\in G$.
\item\label{Def.w.t.r.p.c.non}
$(y^2-  y f y - \ep)_+ \precsim_A x$.
\item\label{Def.w.t.r.p.d.non}
$\|  f x f  \| > 1 - \ep$.
\end{enumerate}
\end{dfn}
\begin{rmk}\label{Rmk2024.08.27}
Adopt the notation from the above definition. 
 It is shown in Proposition 3.2 of \cite{FG20} that
 finite group actions with the weak tracial Rokhlin property are  pointwise outer, and so $A$ is not elementary. Thus, by  Corollary IV.1.2.6 of \cite{BlaBo}, $A$ is not of type I.
Sometimes, we highlight these properties like being  
non-type I or  infinite-dimensional for emphasis.
\end{rmk}
In the following lemma,  we show that the element $f$ in Definition \ref{W.T.R.P.def.non} can be required to belong to the fixed point algebra $A^\alpha$, but  this results in replacing orthogonality of the elements $f_g$ by approximate orthogonality. 
This lemma  generalizes  Lemma 1.11 of \cite{AV21} and 
Lemma~3.3 of \cite{AGP19}, building on the same basic ideas. 
\begin{lem}\label{invariant.contractions}
Let $G$ be a finite group,
 let $A$ be a  simple  \ca,
 and let  $\alpha \colon  G \to  \Aut (A)$  be an action of
$G$ on  $A$.  Then  $\alpha$ has the
 weak tracial Rokhlin property in the sense of \Def{W.T.R.P.def.non} if and only if
 the following condition holds.
 
 For every $\ep > 0$, 
every finite set $F \subset A$, and every pair of positive
elements $x, y  \in  A$ with $\| x \| = 1$, 
there exist  positive contractions
$f_g \in G$ in $A$, for $g \in G$, such that, 
with $f = \sum_{g \in G} f_g$, the following conditions hold:
 \begin{enumerate}
 \item \label{W.T.R.P.66}
$\|f_{g} f_{h}\| <  \ep$ for all $g, h \in G$ with $g \neq h$.
\item \label{W.T.R.P.11}
$\| a f_g - f_g a \| < \ep$ for all $g \in G $ and all $a \in F$.
\item \label{W.T.R.P.22}
$\| \alpha_{g}( f_h ) - f_{gh} \|  < \ep$ for all $g , h\in G$.
\item \label{W.T.R.P.33}
$(y^2 - y f y - \ep)_+ \precsim_A x$.
\item \label{W.T.R.P.44}
 $\|  f x f  \| > 1 - \ep$. 
\item \label{W.T.R.P.55}
 $f \in A^{\alpha} $ and $\| f \| = 1$.
\setcounter{TmpEnumi}{\value{enumi}}
\end{enumerate}
\end{lem}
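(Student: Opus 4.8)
The plan is to prove the two implications separately; the substantive direction is that the weak tracial Rokhlin property of \Def{W.T.R.P.def.non} implies the displayed condition, and its engine is a group average. I record the easy direction first. Given $\ep$, a finite set $F$, and positive $x, y$ with $\|x\| = 1$, apply the displayed condition with a small tolerance $\ep' > 0$ (and the same $F, x, y$) to obtain positive contractions $f_g$ that are pairwise \emph{approximately} orthogonal, $\|f_g f_h\| < \ep'$ for $g \neq h$, together with the commutation, equivariance, Cuntz subordination, and norm clauses at level $\ep'$. Because the relations defining a finite family of pairwise orthogonal positive contractions are (weakly) stable, for $\ep'$ small enough there are genuinely orthogonal positive contractions $f_g'$ with $\|f_g' - f_g\|$ arbitrarily small; the four conditions of \Def{W.T.R.P.def.non} then follow for the $f_g'$ from those of the $f_g$ by the triangle inequality, the Cuntz clause being transported by a norm perturbation of $yf'y$ against $yfy$ fed through \Lem{PhiB.Lem.18.4}(\ref{Item_9420_LgSb_1_6}). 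The invariance and normalization clauses of the displayed condition are simply discarded, as \Def{W.T.R.P.def.non} does not demand them.

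For the converse, fix $\ep, F, x, y$ and apply \Def{W.T.R.P.def.non} with a small tolerance $\ep' > 0$ but with the \emph{enlarged} finite set $F' = \bigcup_{g \in G} \af_g(F)$, yielding orthogonal positive contractions $f_g$ with the four defining properties relative to $\ep'$ and $F'$. Set
\[
\bar f_g = \frac{1}{\card(G)} \sum_{h \in G} \af_h\big(f_{h^{-1}g}\big) \quad (g \in G), \qquad \bar f = \sum_{g \in G} \bar f_g.
\]
A reindexing of the sum gives $\af_k(\bar f_g) = \bar f_{kg}$ for every $k$, so the equivariance clause holds \emph{exactly} and, on summing over $g$, $\bar f \in A^\alpha$. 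Each $\bar f_g$ is a positive contraction, and since $\bar f - \bar f_g = \sum_{g' \neq g} \bar f_{g'} \geq 0$ we have $\|\bar f_g\| \leq \|\bar f\|$.

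The reason all the approximate data survives the average is the equivariance of the $f_g$: from $\|\af_h(f_{h^{-1}g}) - f_g\| < \ep'$ for each $h$ one gets $\|\bar f_g - f_g\| \leq \ep'$, hence $\|\bar f - f\| \leq \card(G)\,\ep'$. Thus approximate orthogonality of the $\bar f_g$ follows from $f_g f_{g'} = 0$, and the commutation estimate for $a \in F$ follows term by term from $\|a\,\af_h(f_{h^{-1}g}) - \af_h(f_{h^{-1}g})\,a\| = \|\af_{h^{-1}}(a)\,f_{h^{-1}g} - f_{h^{-1}g}\,\af_{h^{-1}}(a)\|$ together with $\af_{h^{-1}}(a) \in F'$ --- this is precisely why $F$ was enlarged to $F'$. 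The Cuntz and norm clauses for $\bar f$ follow from those for $f$ by the norm-perturbation argument of the first paragraph, again via \Lem{PhiB.Lem.18.4}. Finally, since $\|\bar f - f\|$ is small and $\|f\| = \max_g \|f_g\|$ is close to $1$ by the norm clause, $\|\bar f\|$ is close to $1$; rescaling by putting $\hat f_g = \|\bar f\|^{-1} \bar f_g$ gives $\|\hat f\| = 1$, keeps the $\hat f_g$ positive contractions (as $\|\bar f_g\| \leq \|\bar f\|$), preserves $\hat f \in A^\alpha$ and exact equivariance, and leaves $\|\hat f - f\|$ small, so that the commutation, Cuntz, and norm clauses persist for $\hat f$ by one further application of \Lem{PhiB.Lem.18.4}. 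Taking $\ep'$ small enough at the outset makes all six clauses hold at level $\ep$.

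The main obstacle is one of bookkeeping rather than concept: the single tolerance $\ep'$ (with the fixed number $\|y\|$) must simultaneously dominate the orthogonality defect, the commutator defect, the Cuntz perturbation controlled by \Lem{PhiB.Lem.18.4}(\ref{Item_9420_LgSb_1_6}), and the normalization error introduced by the final rescaling, so that the argument reduces to ordering the quantifiers correctly and checking that each defect tends to $0$ with $\ep'$. The one external input is the stability used in the easy direction, which promotes approximate orthogonality to exact orthogonality.
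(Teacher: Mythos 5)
Your proposal is correct, and its overall skeleton matches the paper's: in both, the direction from the displayed condition back to \Def{W.T.R.P.def.non} is handled by stability of finitely many pairwise orthogonal positive contractions (the paper invokes exactly the statement you appeal to, Lemma~2.5.12 of \cite{LnBook}), after which the four defining clauses are transported through small perturbations via \Lem{PhiB.Lem.18.4}, and the substantive direction is a symmetrization producing an exactly invariant, exactly equivariant family. The genuine divergence is in the symmetrization itself. The paper does not average the whole family: it takes the orbit of the single tower element $d_1$, setting $r = \big\| \sum_{g \in G} \alpha_g(d_1) \big\|$ and $f_g = r^{-1} \alpha_g(d_1)$, so that exact equivariance, $f \in A^{\alpha}$, and $\| f \| = 1$ all hold by construction, and every remaining clause is transported through the single estimate $\| f_g - d_g \| \leq (2 + 4n)\ep'$; no enlargement of the finite set $F$ and no final rescaling are needed, at the cost of verifying $|r - 1| < 2n\ep'$ and $r > \tfrac{1}{2}$. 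Your Reynolds-type average $\bar f_g = \card(G)^{-1} \sum_{h \in G} \alpha_h\big(f_{h^{-1}g}\big)$ instead buys automatic positivity and contractivity (each $\bar f_g$ is a convex combination) and the clean domination $0 \leq \bar f_g \leq \bar f$ that legitimizes the closing rescaling $\hat f_g = \|\bar f\|^{-1} \bar f_g$, at the cost of the enlarged set $F' = \bigcup_{g \in G} \af_g(F)$ and a two-stage normalization whose extra defect $\big| \, \|\bar f\| - 1 \big|$ you correctly fold into the perturbation estimates; your key identities ($\af_k(\bar f_g) = \bar f_{kg}$ exactly, $\|\bar f_g - f_g\| \leq \ep'$ from the approximate equivariance of the $f_g$, $\|f\| = \max_g \|f_g\|$ for the orthogonal family) all check out, and your quantifier ordering in the easy direction (the stability modulus is chosen before the tolerance fed to the displayed condition) is the right one. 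Both routes are sound; the paper's is slightly leaner in bookkeeping, yours makes positivity and the norm comparison structurally automatic.
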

\begin{proof}
Set $n=\card (G) \in \N$.
To prove the forward implication, 
assume that $\alpha$ has the weak tracial Rokhlin property. 
We may assume $\ep < \frac{1}{2}$.
Let $F_0 \subset A$ be a finite set, and 
let $x, y \in A_{+}$ with $\| x \|= 1$.
Set 
\[
F= F_0 \cup \{y \}, 
\quad
M= \max \{\| a \| \colon a \in F\},
\quad
\mbox{and} 
\quad
\ep'=
\frac{\ep}{6 n \left[ 1+ 2n (2 + 4 \,n) \right]  (M + 1)^2 }.
\]
Applying \Def{W.T.R.P.def.non}  with $F$, $x$, and $y$ as given, and with $\ep'$ in place of $\ep$, 
we get orthogonal positive contractions
$d_g \in G$ in $A$, for $g \in G$, such that, 
 with $d = \sum_{g \in G} d_g$, the following conditions hold:
 \begin{enumerate}
\setcounter{enumi}{\value{TmpEnumi}}
\item \label{W.T.R.P.111}
$\| a d_g - d_g a \| < \ep'$ for all $g \in G $ and all $a \in F$.
\item \label{W.T.R.P.222}
$\| \alpha_{g}( d_h ) - d_{gh} \|  < \ep'$ for all $g , h\in G$.
\item \label{W.T.R.P.333}
$(y^2 - y d y - \ep)_+ \precsim_A x$.
\item \label{W.T.R.P.444}
$\|  d x d  \| > 1 - \ep'$.
\setcounter{TmpEnumi}{\value{enumi}}
\end{enumerate}
Now,  we define 

\[
r=\left\| \sum_{g \in G} \alpha_{g} (d_{1}) \right\|,
\qquad
f_{g} = \frac{1}{r} \alpha_{g} (d_1)
\quad
\mbox{for $g \in G$},
\qquad
\mbox{
and
}
\qquad
f= \sum_{g \in G} f_g.
\]
Clearly, $\| f_g\| \leq 1$ for all $g \in G$, $\| f \|=1$ , and $f \in A^{\alpha}$.
Now we claim that 
\begin{enumerate}
\setcounter{enumi}{\value{TmpEnumi}}
\item \label{W.T.R.P.111'}
$| r-1 | < 2 n \ep' $ and $r > \frac{1}{2}$,
\item\label{W.T.R.P.222'}
$\| f_g - d_g \| < \big(2 + 4 n\big) \ep'$ for all $g \in G$.
\setcounter{TmpEnumi}{\value{enumi}}
\end{enumerate}

To prove (\ref{W.T.R.P.111'}), first, by (\ref{W.T.R.P.444}),  we have 
\[
1 - \ep' < \| d x d\| \leq \|d\|.
\]
Also, by (\ref{W.T.R.P.222}), we have 
\begin{equation*}
\Bigg\|\sum_{g \in G} \alpha_{g} (d_1) - d \Bigg\|=
\Bigg \|\sum_{g \in G} \alpha_{g} (d_1) - \sum_{g \in G} d_g \Bigg \|
\leq \sum_{g \in G} \| \alpha_{g} (d_1) -  d_g\| 
< n\ep'.
\end{equation*}
This relation  implies that
\[
r - \| d \| < n\ep'
\andeqn
\| d \| - r < n \ep'.
\]
Then we have 
\begin{equation}\label{1.3.19.11}
r< \| d \| +n \ep' \leq 1+ n \ep' \leq 1+ 2 \,\card (G) \, \ep',
\end{equation}
and 
\begin{align}\label{1.3.19.22}
r
&> \| d \| - n \ep' 
>
 1-\ep' -n \ep' 
\\ \notag
&= 1-\ep' (1 + n) 
\geq 1 - 2 n\ep'
\\\notag
&>
 1 - \ep > \frac{1}{2}.
\end{align}
Using (\ref{1.3.19.11}) and  (\ref{1.3.19.22}), we get
\[ 
| r - 1 | < 2 n \ep',
\]
and so we have proved (\ref{W.T.R.P.111'}). 
Now, we prove (\ref{W.T.R.P.222'}).  We use (\ref{W.T.R.P.222}) at the second step and use
(\ref{W.T.R.P.111'}) at the fourth step to get
\begin{align*}
\|  f_g - d_g \| 
&=
\Big \| \frac{1}{r}\alpha_{g}( d_1) - d_g  \Big\| 
\\ 
&\leq  
\Big \| \frac{1}{r}\alpha_{g}( d_1) - \frac{1}{r} d_g \Big  \| 
+ \Big  \| \frac{1}{r} d_g -d_g \Big \|
\\ 
&\leq
\frac{1}{r} \ep' + \frac{\| d_g \|}{r} | r-1 |
\\
&<
2 \ep' + 4 n\ep'
= \big(2+4n \big) \ep'.
\end{align*}
This completes the proof of (\ref{W.T.R.P.222'}). 

Now, we prove (\ref{W.T.R.P.66}).  We compute, 
using (\ref{W.T.R.P.222'}) at the third step,
\begin{align*}
\| f_g f_h\| 
&\leq
 \| f_g f_h - f_g d_h\| + \| f_g d_h - d_g d_h\| 
\\
&\leq
\| f_g \| \cdot \| f_h -  d_h\| + \| f_g  - d_g \| \cdot \|d_h\| 
\\
&<
2 \big( 2+4 n \big) \ep'
< \ep.
\end{align*}

To prove  (\ref{W.T.R.P.11}), we compute, 
using (\ref{W.T.R.P.111}) at the second step 
and  (\ref{W.T.R.P.222'}) at the third step,
\begin{align*}
\| a f_g - f_g a \| 
&\leq 
\| a f_g -a  d_g  \| + \| a d_g - d_g a \| + \|  d_g a  - f_g a\|
\\
&<
2 \| a \| \cdot \| d_g - f_g\| + \ep'
\\
&<
\left(2 M (2+4 n ) + 1\right)\ep' < \ep.
\end{align*}

To prove (\ref{W.T.R.P.22}), we get, 
using (\ref{W.T.R.P.222})  
and  (\ref{W.T.R.P.222'}) at the second step,
\begin{align*}
\| \alpha_{g}( f_h) - f_{gh} \| 
&\leq
\| \alpha_{g}( f_h ) - \alpha_{g}( d_h ) \|  
+ 
 \|  \alpha_{g}( d_h ) - d_{gh} \| + \|  d_{gh} - f_{gh}  \| 
\\
&<
(2+4 n ) \ep' + \ep' +  (2+4n ) \ep' < \ep.
\end{align*}

To prove  (\ref{W.T.R.P.33}),  we  estimate, 
using  (\ref{W.T.R.P.222'}) at the fourth step,
\begin{align*}
&
\| (y^2 - y f y ) - (y^2 - y d y - \ep')_+ \| 
\\\notag
&\hspace*{4 em} {\mbox{}} \leq
\| (y^2 - y f y ) - (y^2 - y d y ) \| +
\| (y^2 - y d y ) - (y^2 - y d y - \ep')_+ \|
\\\notag
&\hspace*{ 4 em} {\mbox{}}\leq
\| y \| \cdot \| d - f \| \cdot \| y \| + \ep'
\\\notag
&\hspace*{4 em} {\mbox{}}\leq  
\sum_{g \in G } \| y \| \cdot  \|  f_g -  d_g \| \cdot \| y \| + \ep'
\\\notag
&\hspace*{4 em} {\mbox{}} <
\card (G) M^2 (2+4n ) \ep' + \ep' < \ep.
\end{align*}
Therefore, 
using this and Lemma~\ref{PhiB.Lem.18.4}(\ref{PhiB.Lem.18.4.10.a}) at the first step,  
and 
using (\ref{W.T.R.P.333}) at the second step, we get
\[
(y^2 - y f y - \ep)_+ \precsim_{A} (y^2 - y d y - \ep')_+  \precsim_{A} x.
\]

To prove (\ref{W.T.R.P.44}), we  estimate, 
using (\ref{W.T.R.P.222'}) at the last step,
\begin{align*}
\| d x d - f x f \| 
&\leq \| d x d - d  x f \| + \| d  x f - f x f \| 
\\
& \leq
\| d x \| \cdot \| d -  f \| + \| d   - f \| \cdot \| x f \|
\\
&<
2n (2+4 n ) \ep'.
\end{align*}
Therefore, using (\ref{W.T.R.P.444}) at the second step,
\begin{align*}
\| f x f \| 
&>
 \| d x d \| -2 n (2+4 n ) \ep'
\\
&>
1 - \ep' -2 n (2+4n) \ep' 
\\
&>
1- \ep.
\end{align*}
This completes the proof of the forward implication.

To  prove the reverse implication, assume that $\alpha$ satisfies the conditions (\ref{W.T.R.P.66}) to (\ref{W.T.R.P.55}) in the lemma hold.
We must to show that $\alpha$ has the weak tracial Rokhlin property.
 Let $\ep>0$, 
let $F_0 \subset A$ be a finite set, and let $x, y \in A_+$  with $\| x \| = 1$. 
Set 
\[
F = F_0 \cup \{y\},
\quad
M= \max \{ \| a \| \colon a \in F\},
\quad
\mbox{and}
\quad
\ep' = \frac{\ep}{6 (M+1)^2 n}.
\]
Using Lemma~2.5.12 of \cite{LnBook}
  with $\ep'$ in place of $\ep$ 
and 
with  $n$ as given, we can choose $\dt>0$ such that the following conditions hold:
\begin{enumerate}
\setcounter{enumi}{\value{TmpEnumi}}
\item\label{Item.a_9417_dtlessep.n}
$\dt < \frac{\ep}{2}$.
\item\label{Item.b_9417_Prod.n}
If  $a_1, \ldots, a_n \in A_+$ 
with $\| a_j \|\leq 1$ for $j=1, \ldots, n$ such that 
$\| a_j a_k\| < \dt$ when $j \neq k$, then there are 
$b_1, \ldots, b_n \in A_+$ such that 
$b_j b_k = 0$ when $j \neq k$, $\| b_j \| \leq 1$, and 
$\| a_j - b_j \| < \ep$
for $j=1, \ldots, n$.
\setcounter{TmpEnumi}{\value{enumi}}
\end{enumerate}
Using the condition in the lemma with $\dt$ in place of $\ep$
with the finite set $F$, and $x, y \in A_+$ as given, 
we can find  positive contractions
$f_g \in G$ in $A$ for $g \in G$ such that, 
with $f = \sum_{g \in G} f_g$ , the following conditions hold:
 \begin{enumerate}
 \setcounter{enumi}{\value{TmpEnumi}}
 \item \label{W.T.R.P.66.n}
$\|f_{g} f_{h}\| <  \dt$ for all $g, h \in G$ with $g \neq h$.
\item \label{W.T.R.P.11.n}
$\| a f_g - f_g a \| < \dt$ for all $g \in G $ and all $a \in F$.
\item \label{W.T.R.P.22.n}
$\| \alpha_{g}( f_h ) - f_{gh} \|  < \dt$ for all $g , h\in G$.
\item \label{W.T.R.P.33.n}
$(y^2 - y f y - \dt)_+ \precsim_A x$.
\item \label{W.T.R.P.44.n}
 $\|  f x f  \| > 1 - \dt$. 
\item \label{W.T.R.P.55.n}
 $f \in A^{\alpha} $ and $\| f \| = 1$.
\setcounter{TmpEnumi}{\value{enumi}}
\end{enumerate}
Using (\ref{Item.b_9417_Prod.n}) and  (\ref{W.T.R.P.66.n}), 
we can find $c_g \in A_+$ for $g \in G$ such that:
 \begin{enumerate}
 \setcounter{enumi}{\value{TmpEnumi}}
\item\label{Lin.application1}
$\| c_g \| \leq 1$ for all $g \in G$.
\item\label{Lin.application2}
$c_g c_h = 0$ for $g \neq h$.
\item\label{Lin.application3}
$\| c_g - f_g \| < \ep'$ for all $g \in G$.
\setcounter{TmpEnumi}{\value{enumi}}
\end{enumerate}
Set $c= \sum_{g \in G} c_g$. Now we claim that:
\begin{enumerate}
 \setcounter{enumi}{\value{TmpEnumi}}
\item \label{W.T.R.P.11.nn}
$\| a c_g - c_g a \| < \ep$ for all $g \in G $ and all $a \in F$.
\item \label{W.T.R.P.22.nn}
$\| \alpha_{g}( c_h ) - c_{gh} \|  < \ep$ for all $g , h\in G$.
\item \label{W.T.R.P.33.nn}
$(y^2 - y c y - \ep)_+ \precsim_A x$.
\item \label{W.T.R.P.44.nn}
 $\|  c x c  \| > 1 - \ep$. 
\setcounter{TmpEnumi}{\value{enumi}}
\end{enumerate}

To prove (\ref{W.T.R.P.11.nn}), we estimate, using
(\ref{W.T.R.P.11.n})
and
(\ref{Lin.application3})
at the second step, using
(\ref{Item.a_9417_dtlessep.n}) at the third step, and using (\ref{Item.a_9417_dtlessep.n}) at the last step,
\begin{align*}
\| a c_g - c_g a \| 
&\leq \| a \| \cdot \| c_g - f_g \|
+
\| a f_g - f_g a\|
+
\| f_g - c_g\| \cdot \| a \|
\\
&<
2 M \ep' + \dt <  \ep.
\end{align*}

To prove (\ref{W.T.R.P.22.nn}), we estimate, using
(\ref{W.T.R.P.22.n})
and 
(\ref{Lin.application3})
at the second step and using (\ref{Item.a_9417_dtlessep.n}) at the last step,
\begin{align*}
\| \alpha_{g}( c_h ) - c_{gh} \|
&\leq
\| \alpha_{g}( c_h ) - \alpha_{g}( f_h ) \|
+
\| \alpha_{g}( f_h ) - f_{gh} \|
+
\| f_{gh} - c_{gh} \|
\\
&< 2 \ep' + \dt < \ep.
\end{align*}

To prove (\ref{W.T.R.P.33.nn}), first we estimate, using
(\ref{Lin.application3})
at the second step,
\begin{equation}\label{Eq111.05.22.2019}
\| f - c  \| \leq \sum_{g \in G} \| f_g - c_g\| < n  \ep'. 
\end{equation}
Second, we estimate, using (\ref{Eq111.05.22.2019}) and (\ref{Item.a_9417_dtlessep.n}) at the third step,
\begin{align*}
\| (y^2 - y c y) - (y^2 - y f y - \dt)_+\|
&\leq
\| (y^2 - y c y) - (y^2 - y f y) \|
\\
& \quad +
\| (y^2 - y f y) - (y^2 - y f y - \dt)_+  \|
\\
&\leq
\| y \| \cdot \| f - c \| \cdot \| y \| + \dt
< M^2 n \ep' + \tfrac{\ep}{2}< \ep.
\end{align*}
Now, we use this relation and Lemma~\ref{PhiB.Lem.18.4}(\ref{PhiB.Lem.18.4.10.a}) to get
\begin{equation}
(y^2 - y c y - \ep)_+ 
\precsim_A
(y^2 - y f y - \dt)_+.
\end{equation}
Then, by (\ref{W.T.R.P.33.n}), 
\[
(y^2 - y c y - \ep)_+ \precsim_A x.
\]

To prove (\ref{W.T.R.P.44.nn}), first we estimate, using
(\ref{Eq111.05.22.2019})
at the second step,
\begin{equation}\label{Eq13.05.22.2019}
\| f  x f -  c x c \|
\leq
\| f - c  \| \cdot \| x f \| 
+
\| c x  \| \cdot \| f - c \|
<
2 M n \ep'< \tfrac{\ep}{2}.
\end{equation}
Therefore, 
using (\ref{Eq13.05.22.2019}) at the first step,
using (\ref{W.T.R.P.44.n}) at the second step,
and using (\ref{Item.a_9417_dtlessep.n}) at the third step,
we get
\[
\| c x c\| \geq \| f x f  \| - \tfrac{\ep}{2} > 1 - \dt - \tfrac{\ep}{2} > 1 - \ep. 
\]
This completes the proof.
\end{proof}

The following lemma is Theorem~1.3 of \cite{FG20}. 
\begin{lem}
\label{FG.thm.1.3}
 Let $\alpha \colon  G \to  \Aut(A)$ and $\beta \colon G \to\ \Aut(B)$ be actions of a finite
group $G$ on simple \ca{s} $A$ and $B$. If $\alpha$ has the weak tracial Rokhlin property,
then so also does the action $\alpha \otimes \beta \colon G \to  \Aut(A \otimes_{\mathrm{min}} B)$.
\end{lem}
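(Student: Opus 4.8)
The plan is to verify \Def{W.T.R.P.def.non} for $\alpha \otimes \beta$ by hand, taking as Rokhlin elements the ones supplied by $\alpha$ tensored with a $G$-invariant approximate identity of $B$. Fix $\ep > 0$, a finite set $F \subset A \otimes_{\mathrm{min}} B$, and positive $x, y \in A \otimes_{\mathrm{min}} B$ with $\| x \| = 1$. Since all four conditions are approximate (norm or Cuntz) ones, I would first perturb: enlarge $F$ and replace its elements, together with $x^{1/2}$ and $y$, by nearby elements of the algebraic tensor product $A \odot B$, recording the finite set $F_A \subset A$ of all $A$-slots and the finite set of $B$-slots that occur. Recall that, by Takesaki's theorem, $A \otimes_{\mathrm{min}} B$ is again simple, so \Def{W.T.R.P.def.non} applies to it and every nonzero positive element is full.

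Next I would build the second tensor factor. Averaging any approximate identity of $B$ over $G$ produces a $\beta$-invariant approximate identity (each $\beta_g$ being isometric), and I fix one positive contraction $e$ from it, chosen far enough out that $e$ is an approximate unit for every $B$-slot above (so that $\| (1 \otimes e) x (1 \otimes e) - x \|$ and the commutators $\| be - eb \|$ are small) and that $\| \beta_g(e) - e \|$ is small for all $g$. Then I would apply \Lem{invariant.contractions} to $\alpha$, with the finite set $F_A$, a tolerance adjusted for the constants that appear, and with positive test elements $x_A, y_A \in A$ to be specified below, obtaining positive contractions $f_g \in A$ with $f = \sum_g f_g \in A^\alpha$, $\| f \| = 1$, satisfying (\ref{W.T.R.P.66})--(\ref{W.T.R.P.55}). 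Set $\tilde f_g = f_g \otimes e$ and $\tilde f = f \otimes e$. With this choice the first two conditions of \Def{W.T.R.P.def.non} are routine: approximate orthogonality of the $\tilde f_g$ comes from $\tilde f_g \tilde f_h = f_g f_h \otimes e^2$ and (\ref{W.T.R.P.66}); condition (\ref{Def.w.t.r.p.a.non}) splits, on an elementary tensor $a \otimes b$, into the $A$-commutator controlled by (\ref{W.T.R.P.11}) and the $B$-commutator $\| be - eb \|$ controlled by the choice of $e$; and condition (\ref{Def.w.t.r.p.b.non}) follows from $(\alpha \otimes \beta)_g(\tilde f_h) = \alpha_g(f_h) \otimes \beta_g(e)$ together with (\ref{W.T.R.P.22}) and $\| \beta_g(e) - e \|$ small. (Honest orthogonality, if wanted, is recovered from the reverse implication already proved in \Lem{invariant.contractions}.)

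The real content is conditions (\ref{Def.w.t.r.p.c.non}) and (\ref{Def.w.t.r.p.d.non}), and this is where I expect the main obstacle. The difficulty is that a positive element of $A \otimes_{\mathrm{min}} B$ is genuinely \emph{entangled}: its norm is not computed by product states, and the comparison of a slice against $x$ is not for free in the absence of strict comparison. So one cannot split $x$ or $y$ as a tensor and argue in each factor; my plan keeps $x, y$ intact and transports one-sided information from $A$. For (\ref{Def.w.t.r.p.d.non}) I would fix a state $\om$ on $A \otimes_{\mathrm{min}} B$ with $\om(x) > 1 - \dt$, let $\ph_A$ be its $A$-marginal ($\ph_A(a) = \om(a \otimes 1)$ on the multiplier extension), and use the GNS estimate $\| \tilde f x \tilde f \| \ge \om\big( (f \otimes 1)(1 \otimes e) x (1 \otimes e)(f \otimes 1) \big) \ge \om(x) - \dt - 2\big( 1 - \ph_A(f) \big)^{1/2}$, so that it suffices to make $\ph_A(f)$ close to $1$. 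For (\ref{Def.w.t.r.p.c.non}), writing $\tilde f = (f \otimes 1)(1 \otimes e)$ and using that $1 \otimes e$ is an approximate unit, I would reduce $y^2 - y \tilde f y$, up to small Cuntz error via \Lem{PhiB.Lem.18.4}(\ref{PhiB.Lem.18.4.10.a}) and (\ref{PhiB.Lem.18.4.15}), to the compression $y\big( (1 - f) \otimes 1 \big) y$, and then bound this by the tensored-up image of the element $y_A(1 - f)y_A$ controlled by (\ref{W.T.R.P.33}) for $\alpha$.

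The crux — the step I expect to be hardest — is the \emph{coupling}: I must choose the single pair $(x_A, y_A)$ in $A_+$ so that the Rokhlin data of $\alpha$ makes $\ph_A(f)$ close to $1$ for the prescribed marginal state and simultaneously forces $(1 - f)$ to be Cuntz-small enough that its compression lands $\precsim_{A \otimes_{\mathrm{min}} B} x$ rather than merely $\precsim x_A \otimes 1$. I would attack this using simplicity of $A \otimes_{\mathrm{min}} B$ (so $x$ is full and slices of $x$ are comparable in the needed direction), the freedom from \Lem{invariant.contractions} to take $f \in A^\alpha$, and an iteration of the perturbation so that one pair $(x_A, y_A)$ governs both conditions at once; turning this coupling into precise estimates is the technical heart of the argument.
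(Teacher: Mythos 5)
First, a point of reference: the paper contains no proof of this lemma at all --- it is quoted verbatim as Theorem~1.3 of \cite{FG20} --- so your attempt must be measured against the argument given there. Your skeleton agrees with that argument in its outer layer: perturb $F$, $x$, $y$ into the algebraic tensor product $A \odot B$, take the tower $\tilde f_g = f_g \otimes e$ with $f_g$ supplied by the weak tracial Rokhlin property of $\alpha$ (in the invariant form of \Lem{invariant.contractions}) and $e$ a $G$-averaged approximate-unit element of $B$, and observe that conditions (\ref{Def.w.t.r.p.a.non}) and (\ref{Def.w.t.r.p.b.non}) of \Def{W.T.R.P.def.non} are routine. The trouble is with the two steps you yourself single out as the content: one of them, as proposed, would fail, and the other is left open.

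The step that fails is your reduction of condition (\ref{Def.w.t.r.p.d.non}) to ``make $\varphi_A(f)$ close to $1$,'' where $\varphi_A$ is the $A$-marginal of a state $\omega$ chosen to nearly norm $x$. The weak tracial Rokhlin property cannot deliver this: it makes the defect of $f$ small only in Cuntz comparison and makes $\|f x_A f\|$ large only for the single test element $x_A$ you feed in, and both constraints are compatible with the existence of a state (even a pure state) assigning mass close to $1$ to the hereditary subalgebra carrying the defect. Since $\omega$ is dictated by $x$, you cannot steer $\varphi_A$ away from that subalgebra; demanding $\varphi(f) \approx 1$ against prescribed states is essentially the genuine Rokhlin property, not its weak tracial weakening, so this route silently strengthens the hypothesis. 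As for the ``coupling'' you defer, in \cite{FG20} it is resolved not by an iteration but by a slice argument plus simplicity: Kirchberg's slice lemma applied inside $\overline{(x - (1-\eta))_+\,(A \otimes_{\mathrm{min}} B)\,(x - (1-\eta))_+}$ produces $z$ with $z^*z = a_0 \otimes b_0$ a nonzero elementary tensor and $zz^*$ sitting under (a spectral cutoff of) $x$; one then runs the weak tracial Rokhlin property of $\alpha$ with a suitable piece of $a_0$ as test element, uses simplicity of $A$ and $B$ (in the spirit of Lemma~\ref{Lem2.4Ph14}) to dominate the stray $A$- and $B$-legs of the defect by finitely many copies of $a_0$ and $b_0$, so that the defect of $\tilde f$ lands $\precsim_{A \otimes_{\mathrm{min}} B} a_0 \otimes b_0 \precsim_{A \otimes_{\mathrm{min}} B} x$, giving (\ref{Def.w.t.r.p.c.non}); and condition (\ref{Def.w.t.r.p.d.non}) is then verified on the slice itself --- after including the slice data among the elements with which the $f_g$ approximately commute --- where the norm is computed on an elementary tensor, $\|(f \otimes e)(a_0 \otimes b_0)(f \otimes e)\| = \|f a_0 f\| \cdot \|e b_0 e\|$, by multiplicativity of the minimal tensor norm. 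Since your proposal explicitly leaves this ``technical heart'' unresolved and the one concrete mechanism it does offer for (\ref{Def.w.t.r.p.d.non}) is unattainable from the hypothesis, it stands as a plan with a genuine gap rather than a proof.
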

We need the following approximation lemmas. 
\begin{lem}
\label{L:FCF_2019.05.08}
Let $f, g\colon [0, \infty) \to [0, \infty )$ be  \cfn{s}
such that $f (0) =g (0)= 0$, let $M \in (0, \I)$,
and let $\ep > 0$.
Then there is $\dt > 0$ such that whenever
$A$ is a C*-algebra and
$a, b \in A_{+}$ satisfy $\| a \| \leq M$
and $\| a b  - b a \| < \dt$,
then 
\[
\| f (a) g(b) - g(b) f(a) \| < \ep.
\]
\end{lem}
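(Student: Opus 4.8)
The plan is the standard reduction of a continuous-functional-calculus estimate to a polynomial one. Writing $[x,y] = xy - yx$ for the commutator, I first record the spectral bounds: since $a \in A_+$ and $\|a\| \le M$, we have $\spec(a) \subseteq [0,M]$, and I will use the same bound $\|b\| \le M$, so that $\spec(b) \subseteq [0,M]$ as well. (A bound on $b$ is genuinely needed here: already in $M_2$, with $f(t)=t$ and $g(t)=t^2$, one can make $\|[a,b^2]\|$ large while keeping $\|a\|$ and $\|[a,b]\|$ small by letting $\|b\|\to\infty$; so the conclusion fails without controlling $\|b\|$.) By the Stone--Weierstrass theorem, since $f$ and $g$ are continuous on the compact interval $[0,M]$ and vanish at $0$, I can approximate them uniformly on $[0,M]$ by polynomials $p(t) = \sum_{k=1}^{n} c_k t^k$ and $q(t) = \sum_{l=1}^{m} d_l t^l$ with no constant term; by functional calculus the errors $\|f(a) - p(a)\|$ and $\|g(b) - q(b)\|$ are then at most $\sup_{[0,M]} |f-p|$ and $\sup_{[0,M]} |g-q|$, while $\|p(a)\|$ and $\|g(b)\|$ are controlled by the sup-norms of $p$ and $g$ on $[0,M]$.

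The key step is the monomial commutator estimate. Using the telescoping identities
\[
[a^k, b] = \sum_{j=0}^{k-1} a^j [a,b] a^{k-1-j}
\andeqn
[a^k, b^l] = \sum_{i=0}^{l-1} b^i [a^k, b] b^{l-1-i},
\]
together with $\|a\|, \|b\| \le M$, I obtain $\|[a^k, b^l]\| \le kl \, M^{k+l-2} \, \|[a,b]\|$. Summing over the monomials of $p$ and $q$ yields a constant $C = C(p, q, M)$ with $\|[p(a), q(b)]\| \le C \, \|ab - ba\|$.

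It then remains to assemble the splitting
\[
\|[f(a), g(b)]\| \le 2\|f(a) - p(a)\| \cdot \|g(b)\| + 2\|p(a)\| \cdot \|g(b) - q(b)\| + \|[p(a), q(b)]\|.
\]
I choose $p$ first so that the first term is below $\ep/3$ (possible since $\|g(b)\|$ depends only on $g$ and $M$), then $q$ so that the second term is below $\ep/3$ (now $\|p(a)\|$ is already fixed); this determines the constant $C$, and setting $\dt = \ep/(3C)$ forces the third term below $\ep/3$ whenever $\|ab-ba\| < \dt$. The main obstacle, and the only genuinely non-bookkeeping point, is the need to keep $\|g(b)\|$ finite and to run the monomial estimate, which is exactly where the (implicitly required) bound on $\|b\|$ enters; once that is in place the argument is routine, provided one fixes the two polynomials, and hence $C$, before choosing $\dt$.
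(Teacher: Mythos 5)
Your proof is correct, and it settles a genuine issue with the statement along the way. You are right that the lemma as printed is missing the hypothesis $\| b \| \leq M$: your $M_2$ example is valid (take $a = P$ a projection and $b = N Q_\theta$ a scaled rotated projection with $\theta \sim \dt / N$; then $\| [a, b] \|$ stays below any fixed $\dt$ while $\| [a, b^2] \| = N \| [a, b] \|$ blows up, so with $f(t) = t$, $g(t) = t^2$ the conclusion fails for unbounded $b$). The paper's own proof silently assumes exactly this bound --- it begins ``Let $a, b \in A_+$ satisfy $\| a \|, \| b \| \leq M$'' --- and the application in Lemma~\ref{L2:FCF_2019.05.08} supplies norm bounds on both elements, so you have repaired a typo rather than contradicted the intended lemma.

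As for the route: the paper does not argue from scratch. It sets $C = \max \big( \| f |_{[0, M]} \|_{\infty}, \| g |_{[0, M]} \|_{\infty} \big)$ and invokes Lemma~1.6 of \cite{AV21} (a one-function near-commutation lemma) twice --- once with bound $M$ to get $\dt_1 \mapsto \dt_2$ so that $\| a b - b a \| < \dt_2$ forces $\| a\, g(b) - g(b)\, a \| < \dt_1$, and once with bound $\max(C, M)$, applied to the pair $\big(g(b), a\big)$, to pass from $\| [g(b), a] \| < \dt_1$ to $\| [g(b), f(a)] \| < \ep$. Your Stone--Weierstrass argument, with the telescoping estimate $\| [a^k, b^l] \| \leq k l\, M^{k + l - 2} \| a b - b a \|$ and the three-term splitting in which $p$, then $q$, then $C$, then $\dt$ are fixed in that order, is in effect a self-contained proof of the cited one-function lemma, run for both functions at once. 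Each approach has its merit: the paper's is two lines modulo a black box from the literature, while yours is longer but makes explicit precisely where the bound on $\| b \|$ enters (via $\spec(b) \subseteq [0, M]$, hence $\| g(b) \| \leq \| g |_{[0, M]} \|_{\infty}$, and in the monomial estimate), and your quantifier bookkeeping --- the polynomials and hence the constant $C$ chosen before $\dt$ --- is handled correctly.
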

\begin{proof}
Let $\ep>0$.
Set
$C = \max \big( \| f |_{[0, M]} \|_{\I}, \, \| g |_{[0, M]} \|_{\I} \big)$.
Using Lemma~1.6 of \cite{AV21},
 we can choose $\dt_1>0$ such that
\begin{enumerate}
\item\label{Cond.a.05.15.19}
If
$c, d \in A_{\mathrm{+}}$ satisfy $\| c \|, \, \| d \| \leq \max (C, \, M)$
and $\| c d - d c \| < \dt_1$,
then 
\[
\big\| c f(d) - f(d) c \big\| < \ep.
\]
\setcounter{TmpEnumi}{\value{enumi}}
\end{enumerate}
Using Lemma~1.6 of \cite{AV21}, we can choose $\dt_2>0$ such that
\begin{enumerate}
\setcounter{enumi}{\value{TmpEnumi}}
\item\label{Cond.b.05.15.19}
If
$c, d \in A_{\mathrm{+}}$ satisfy $\| c \|, \, \| d \| \leq M$
and $\| c d - d c \| < \dt_2$,
then 
\[
\big\| c g (d) - g(d) c \big\| < \dt_1.
\]
\setcounter{TmpEnumi}{\value{enumi}}
\end{enumerate}
Set $\dt= \dt_2$. 
Let $a, b \in A_+$ satisfy $\| a \| , \| b \| \leq M$ and $\| a  b  - b a \|< \dt$. 
Using (\ref{Cond.b.05.15.19}), we get
\[
\| a g(b) - g(b) a \| < \dt_1.
\]
Then, by (\ref{Cond.a.05.15.19}),
\[
\| f(a) g(b) - g(b) f(a) \| < \ep.
\]
This completes the proof.
\end{proof}

\begin{lem}
\label{L2:FCF_2019.05.08}
Let $f, g \colon [0, \I) \to [0, \I)$
be \cfn{s} such that $f (0) = g (0) = 0$,
let $\ep > 0$,
and let $M \in (0, \I)$.
Then there is $\dt > 0$ such that whenever $A$ is a \ca,
  and
$c, d \in A_{+}$ satisfy $\| c \|, \| d \| \leq M$
and $\| c d   - d c  \| < \dt$,
then 
\[
\left\| 
\Big(f (d )   g (c)  f (d)  \Big)^{1/2}  
- 
f(d)^{1/2} g(c)^{1/2} f(d)^{1/2} 
\right\| 
< \ep.
\]
\end{lem}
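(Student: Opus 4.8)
The plan is to reduce the desired estimate to a comparison of the \emph{squares} of the two elements and then to transfer that estimate back through the square root using uniform continuity of functional calculus. Write $a = f(d)$ and $b = g(c)$; since $f(0)=g(0)=0$ and $\|c\|,\|d\|\le M$, these are positive elements with $\|a\|\le C_f := \sup_{0\le t\le M} f(t)$ and $\|b\|\le C_g := \sup_{0\le t\le M} g(t)$. The motivation is the exactly-commuting case: if $cd=dc$ then $a$ and $b$ commute, and both $(aba)^{1/2}$ and $a^{1/2}b^{1/2}a^{1/2}$ reduce to $a\,b^{1/2}$, so the two expressions coincide. The task is therefore to quantify the defect in terms of $\|cd-dc\|$.

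First I would record the algebraic identity
\[
aba - \big(a^{1/2}b^{1/2}a^{1/2}\big)^2
 = a^{1/2}\big(a^{1/2}b a^{1/2} - b^{1/2}a b^{1/2}\big)a^{1/2},
\]
which gives $\big\|aba - (a^{1/2}b^{1/2}a^{1/2})^2\big\| \le C_f\big\|a^{1/2}b a^{1/2} - b^{1/2}a b^{1/2}\big\|$. Inserting $ab$ and using the two factorizations $a^{1/2}b a^{1/2} - ab = a^{1/2}(b a^{1/2} - a^{1/2} b)$ and $ab - b^{1/2}a b^{1/2} = (a b^{1/2} - b^{1/2}a)b^{1/2}$, I would then bound
\[
\big\|a^{1/2}b a^{1/2} - b^{1/2}a b^{1/2}\big\|
 \le C_f^{1/2}\,\big\|a^{1/2}b - b a^{1/2}\big\|
   + C_g^{1/2}\,\big\|a b^{1/2} - b^{1/2}a\big\|.
\]
By the composition rule for continuous functional calculus, $a^{1/2}=(\sqrt{f})(d)$ and $b^{1/2}=(\sqrt{g})(c)$, where $\sqrt{f},\sqrt{g}$ denote the continuous functions $t\mapsto\sqrt{f(t)}$ and $t\mapsto\sqrt{g(t)}$, both vanishing at $0$. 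Hence the two norms on the right are exactly commutators of functional-calculus elements of $d$ and $c$.

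These commutators are precisely what \Lem{L:FCF_2019.05.08} controls: applying that lemma once with the pair $(\sqrt{f},g)$ and once with $(f,\sqrt{g})$, each with the bound $M$, for any prescribed $\ep'>0$ I obtain a $\dt>0$ so that $\|cd-dc\|<\dt$ forces both commutators to have norm less than $\ep'$. Combining the inequalities above, $\big\|aba - (a^{1/2}b^{1/2}a^{1/2})^2\big\|$ can be made smaller than any preassigned $\eta>0$ by shrinking $\dt$.

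Finally, since $\|aba\|\le C_f^2 C_g$ and $\big\|(a^{1/2}b^{1/2}a^{1/2})^2\big\|\le C_f^2 C_g$, it remains to pass from closeness of these two positive elements to closeness of their square roots. The map $X\mapsto X^{1/2}$ is uniformly continuous on the positive elements of norm at most $N:=C_f^2 C_g$, this being the one genuinely non-algebraic input, obtained by approximating $t\mapsto\sqrt{t}$ uniformly on $[0,N]$ by polynomials (Stone–Weierstrass). Choosing $\eta$ to match the corresponding modulus of continuity for the target $\ep$ yields $\big\|(aba)^{1/2} - a^{1/2}b^{1/2}a^{1/2}\big\|<\ep$, as required. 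The main obstacle is exactly this last transfer: because the square root is not Lipschitz at $0$, one cannot estimate $(aba)^{1/2}$ directly by a polynomial commutator argument; the device of comparing the squares and then invoking operator-norm uniform continuity of $\sqrt{\,\cdot\,}$ on a bounded interval is what makes the argument work.
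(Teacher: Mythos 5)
Your proof is correct and follows essentially the same route as the paper: both compare $f(d)\,g(c)\,f(d)$ with $\big(f(d)^{1/2} g(c)^{1/2} f(d)^{1/2}\big)^2$ using the approximate-commutation result \Lem{L:FCF_2019.05.08}, and then pass to square roots via uniform continuity of $X \mapsto X^{1/2}$ on norm-bounded positive elements. The only cosmetic differences are that the paper applies \Lem{L:FCF_2019.05.08} once to the pair $\big(\sqrt{f}, \sqrt{g}\big)$ and cites Lemma~12.4.5 of \cite{GKPT18} for the square-root step, whereas you apply it twice, to $\big(\sqrt{f}, g\big)$ and $\big(f, \sqrt{g}\big)$, via a symmetric algebraic identity, and rederive the square-root continuity by polynomial approximation.
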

\begin{proof}
Let $\ep>0$ and let $M \in (0, \I)$.
Set
$C = \max \big( \| f |_{[0, M]} \|_{\I},
                 \, \| g |_{[0, M]} \|_{\I} \big)+1$.
 Using Lemma 12.4.5 of \cite{GKPT18}, we can choose $\dt_1>0$ such that:
\begin{enumerate}
\item\label{Cond.a.05.14.19}
If
$a, b \in A_{\mathrm{sa}}$ satisfy $\| a \|, \, \| b \| \leq C^3$
and $\| a - b \| < \dt_1$,
then 
$
\big\| a^{1/2} - b^{1/2} \big\| < \ep.
$
\setcounter{TmpEnumi}{\value{enumi}}
\end{enumerate}
Using \Lem{L:FCF_2019.05.08}, we can choose $\dt_2>0$ such that:
\begin{enumerate}
\setcounter{enumi}{\value{TmpEnumi}}
\item\label{Cond.b.05.14.19}
If
$a, b \in A_{+}$ satisfy $\| a \|, \, \| b \| \leq M$
and $\| a b  - b a \| < \dt_2$,
then 
\[
\left\| f^{1/2}(a) g^{1/2}(b) - g^{1/2}(b) f^{1/2}(a) \right\| < \tfrac{\dt_1}{2 C^2}.
\]
\setcounter{TmpEnumi}{\value{enumi}}
\end{enumerate}
Now set $\dt= \min \{\tfrac{\dt_1}{2C^2},\dt_2\}$. 
Let $c, d \in A_+$ satisfy $\| c \| , \| d \| \leq M$ and $\| c d - d c \|< \dt$. 
Using (\ref{Cond.b.05.14.19}) at the third step, we get
\begin{align}
&
\left\| 
f(d) g(c) f(d) 
- 
\left(f(d)^{1/2} g(c)^{1/2} f(d)^{1/2}\right)^2 
\right\|
\\\notag
& \hspace*{1 em} {\mbox{}}=
\left\|
 f(d) g(c) f(d) 
- 
f(d)^{1/2} g(c)^{1/2} f(d) g(c)^{1/2} f(d)^{1/2} 
\right\|
\\\notag
& \hspace*{1 em} {\mbox{}} \leq
\left\| f(d) g(c)^{1/2} \right\| 
\cdot
\left\| g(c)^{1/2} f (d)^{1/2} -  f (d)^{1/2} g(c)^{1/2} \right\|
\cdot
\left\| f (d)^{1/2} \right\|
\\\notag
& \hspace*{4 em} {\mbox{}} +
\left\| f (d)^{1/2} \right\|
\cdot
\left\| f (d)^{1/2} g(c)^{1/2} - g(c)^{1/2} f (d)^{1/2} \right\|
\\\notag
& \hspace*{18 em} {\mbox{}}
\cdot
\left\| f (d)^{1/2} g(c)^{1/2} f (d)^{1/2}  \right\|
\\\notag
& \hspace*{1 em} {\mbox{}} <
C^2  \tfrac{\dt_1}{2 C^2} 
+ 
C^2 \tfrac{\dt_1}{2 C^2}
=
\dt_1.
\end{align}
Then, by (\ref{Cond.a.05.14.19}),
\[
\left\| 
\Big(f (d )   g (c)  f (d)  \Big)^{1/2}  
- 
f(d)^{1/2} g(c)^{1/2} f(d)^{1/2} 
\right\| 
< \ep.
\]
This completes the proof.
\end{proof}
The importance of the following lemma is that it can apply to any non-unital C*-algebra and any positive element with norm one. 
This lemma plays an important role in the proof of Lemma~\ref{Cu.Surjec}. In the case that $A$ is unital and we take $y= 1_A$, then this gives us
 Lemma~2.6 of \cite{AGP19}, which is restricted to unital C*-algebras and their units. 
\begin{lem}\label{Lem.ANP.Dec.18.non}
Let $A$ be a  \ca and let $a \in A_{+}$ with $\| a \| \leq 1$.
 Let $\ep_1 , \ep_2 > 0$. Then for every $\ep>0$, 
 there exists  $\dt>0$ such that 
 for all $y \in A_+$ with $\| y \|=1$ and 
 for all $c \in A_+$ with $\| c \| \leq 1$, 
  the following condition holds:\\

 If $\| y^2 a y^2 - a \|< \dt$ 
 and 
 $\| y c - cy \| < \dt$,
  then
\[
\Big(   a  - ( \ep +\ep_1 + \ep_2) \Big)_+ 
\precsim_{A} 
\Big( c ( y a y ) c - \tfrac{\ep_1}{2}  \Big)_+
\oplus 
\Big( y^2 - y c y - \tfrac{\ep_2}{2} \Big)_+.
\]
\end{lem}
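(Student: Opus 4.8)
The plan is to set $\dt=\ep$; the argument below will in fact invoke only the norm estimate $\|y^2 a y^2 - a\| < \dt$, and not the commutation estimate. First I would use the approximation hypothesis to pass from $a$ to $y^2 a y^2$: since $\|y^2 a y^2 - a\| < \dt = \ep$, Lemma~\ref{PhiB.Lem.18.4}(\ref{Item_9420_LgSb_1_6}) applied with $\lambda = \ep_1 + \ep_2$ gives
\[
(a - (\ep + \ep_1 + \ep_2))_+ \precsim_A (y^2 a y^2 - (\ep_1 + \ep_2))_+ ,
\]
so it remains to dominate $(y^2 a y^2 - (\ep_1+\ep_2))_+$ by the right-hand side of the asserted subequivalence.

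The key step is an exact algebraic decomposition. Writing $e = ycy$ and $g = y^2 - ycy$, we have $e+g = y^2$, and since $0 \le c \le 1$ forces $ycy \le y^2$ we get $g \ge 0$. Expanding $(e+g)a(e+g)$ and estimating the cross terms by $x^*z + z^*x \le x^*x + z^*z$ (with $x = a^{1/2}e$ and $z = a^{1/2}g$, so that $x^*z + z^*x = eag + gae$ and $x^*x + z^*z = eae + gag$) yields the operator inequality
\[
y^2 a y^2 = (e+g)\,a\,(e+g) \le 2\,eae + 2\,gag .
\]
Combining Lemma~\ref{PhiB.Lem.18.4}(\ref{Item.largesub.lem1.7}) with Lemma~\ref{PhiB.Lem.18.4}(\ref{PhiB.Lem.18.4.15}) and the Cuntz equivalence $2X \sim_A X$ then gives
\[
(y^2 a y^2 - (\ep_1 + \ep_2))_+ \precsim_A \big(eae - \tfrac{\ep_1}{2}\big)_+ \oplus \big(gag - \tfrac{\ep_2}{2}\big)_+ .
\]

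It remains to identify the two summands with the intended targets. For the first, I would use the exact identity
\[
eae = ycy\,a\,ycy = y\,\big(c(yay)c\big)\,y .
\]
Setting $w = c(yay)c$ and applying Lemma~\ref{PhiB.Lem.18.4}(\ref{PhiB.Lem.18.4.6}) gives $(ywy - \tfrac{\ep_1}{2})_+ \sim_A (w^{1/2}y^2 w^{1/2} - \tfrac{\ep_1}{2})_+$; since $y^2 \le 1$ we have $w^{1/2}y^2 w^{1/2} \le w$, so Lemma~\ref{PhiB.Lem.18.4}(\ref{Item.largesub.lem1.7}) yields $\big(eae - \tfrac{\ep_1}{2}\big)_+ \precsim_A \big(c(yay)c - \tfrac{\ep_1}{2}\big)_+$. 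For the second, since $\|a\| \le 1$ and $0 \le g \le 1$ we have $gag \le g^2 \le g = y^2 - ycy$, and Lemma~\ref{PhiB.Lem.18.4}(\ref{Item.largesub.lem1.7}) gives $\big(gag - \tfrac{\ep_2}{2}\big)_+ \precsim_A \big(y^2 - ycy - \tfrac{\ep_2}{2}\big)_+$. Assembling the three displayed subequivalences completes the proof.

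The one genuinely delicate point is finding the correct decomposition. The naive attempts---flipping $y^2 a y^2 = y(yay)y$ by Lemma~\ref{PhiB.Lem.18.4}(\ref{PhiB.Lem.18.4.6}) and then splitting $y^2 = ycy + g$, or inserting a partition directly---all produce a first summand carrying the \emph{wrong} power of $c$ (for instance $c^{1/2}(yay)c^{1/2}$, which \emph{dominates} rather than is dominated by the target $c(yay)c$), so the subequivalence points the wrong way. The decomposition $(e+g)a(e+g)\le 2\,eae+2\,gag$ with $e=ycy$ is the crucial move: it is precisely what makes the identity $eae = y\big(c(yay)c\big)y$ appear, after which conjugation by $y$ together with $y^2 \le 1$ lands exactly on $c(yay)c$ with the cutoff on the correct side. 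It is worth noting that this route never uses $\|yc - cy\| < \dt$.
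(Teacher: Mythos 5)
Your proof is correct; I checked each step. The passage $(a-(\ep+\ep_1+\ep_2))_+\precsim_A(y^2ay^2-(\ep_1+\ep_2))_+$ via Lemma~\ref{PhiB.Lem.18.4}(\ref{Item_9420_LgSb_1_6}) is valid with $\dt=\ep$; the cross-term inequality $(e+g)a(e+g)\le 2eae+2gag$ (from $(x-z)^*(x-z)\ge 0$ with $x=a^{1/2}e$, $z=a^{1/2}g$) holds; $g=y(1-c)y$ satisfies $0\le g\le 1$ so $gag\le g^2\le g$; the identity $eae=y\big(c(yay)c\big)y$ is exact; and the flip by Lemma~\ref{PhiB.Lem.18.4}(\ref{PhiB.Lem.18.4.6}) with $w^{1/2}y$, followed by $w^{1/2}y^2w^{1/2}\le w$ and Lemma~\ref{PhiB.Lem.18.4}(\ref{Item.largesub.lem1.7}), lands on $(c(yay)c-\tfrac{\ep_1}{2})_+$ as claimed, with the factor $2$ absorbed by $(2X-\ep_i)_+=2(X-\tfrac{\ep_i}{2})_+\sim_A(X-\tfrac{\ep_i}{2})_+$ producing exactly the halved cut-offs. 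This is, however, a genuinely different route from the paper's. The paper chooses $\dt$ via the approximation Lemma~\ref{L2:FCF_2019.05.08} so that $\|(y^2c^2y^2)^{1/2}-ycy\|<\min\{\ep,\tfrac{\ep_1}{4}\}$ --- this is precisely where the commutator hypothesis $\|yc-cy\|<\dt$ enters --- and then works with the decomposition $y^4=y^2c^2y^2+h$, $h=y^2(1-c^2)y^2$: it flips to $a^{1/2}y^4a^{1/2}$, splits by Lemma~\ref{PhiB.Lem.18.4}(\ref{PhiB.Lem.18.4.15}), compares $(y^2c^2y^2)^{1/2}a(y^2c^2y^2)^{1/2}$ to $c(yay)c$ using the approximate square root, and needs the factorization $1-c^2=(1-c)(1+c)$ to reduce the $h$-summand to $(y^2-ycy-\tfrac{\ep_2}{2})_+$. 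Your decomposition $y^2=e+g$ with $e=ycy$ replaces all of that functional-calculus approximation by exact operator inequalities, which buys two things: the argument is shorter and more elementary, and it proves a strictly stronger statement, since the hypothesis $\|yc-cy\|<\dt$ is never used and $\dt=\ep$ works independently of $\ep_1,\ep_2$. Nothing is lost in the paper's applications (Proposition~\ref{W.T.R.P.inject} and Lemma~\ref{Cu.Surjec}), where the commutator estimate is available anyway but, on your route, simply unnecessary for this lemma.
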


\begin{proof}
We may assume that $a\neq 0$.  
Let $\ep, \ep_1, \ep_2\in (0, \infty)$. 
Set $\ep'= \min \{\ep, \tfrac{\ep_1}{4}\}$. 
Applying  Lemma~\ref{L2:FCF_2019.05.08},
 choose $\dt > 0$ such that the following conditions hold:
\begin{enumerate}
\setcounter{enumi}{\value{TmpEnumi}}
\item\label{Item.a_9417_dtlessep}
$\dt \leq  \ep'$. 
\item\label{Item.b_9417_Prod}
If
$b, d \in A_{+}$ satisfy $\| b \|, \, \| d \| \leq 1$
and $\| b d   - d b \| < \dt$,
then 
\[
\left\| (d^2  b^2 d^2 )^{1/2}  - d b d \right\| < \ep'.
\]
\setcounter{TmpEnumi}{\value{enumi}}
\end{enumerate}
Let $y \in A_+$ with $\| y \|=1$ and let $c \in A_+$ with $\| c\| \leq 1$. Assume that 
\begin{equation}\label{Eq9.2019.05.07}
\| y^2 a y^2 - a \| < \dt
\qquad
\mbox{ and }
\qquad
\| y c  -  c y  \|< \dt.
\end{equation}
It folllows from (\ref{Item.b_9417_Prod}) and 
the second part of (\ref{Eq9.2019.05.07}) that 
\begin{equation}\label{Eq5.2019.05.9}
\left\| (y^2  c^2 y^2 )^{1/2}  - y c y \right\| < \ep'.
\end{equation}
Set \[
h = y^4 - y^2 c^2 y^2 = y^2 (1 -  c^2 ) y^2.
\] 
Since $\| y \|=1$, $\| c\| \leq 1$, and $c, y \in A_+$, it follows that 
$h \geq 0$ and  $y^4 - h = y^2 c^2 y^2 \geq 0$.
Now we claim that:
\begin{enumerate}
\setcounter{enumi}{\value{TmpEnumi}}
\item\label{Claim1.05.14.2019}
$\left(a - (\ep+\ep_1+\ep_2)\right)_+ 
\precsim_A
\left(y^2 a y^2 - (\ep_1+\ep_2)\right)_+$.
\item\label{Claim2.05.14.2019}
$
\left((y^2 c^2 y^2)^{1/2} a (y^2 c^2 y^2)^{1/2} - \ep_1 \right)
\precsim_A
\left( c y a y c   - \tfrac{\ep_1}{2 } \right)_+.
$
\item\label{Claim3.05.14.2019}
$
(h - \ep_2)_+ 
\precsim_A
\left( y^2  - y c y  -\tfrac{\ep_2}{2} \right)_+.
$
\end{enumerate}

To prove (\ref{Claim1.05.14.2019}), we estimate, using the first part of (\ref{Eq9.2019.05.07}) at the second step and using (\ref{Item.a_9417_dtlessep}) at the last step,
\begin{align*}
\left\| a - \left(y^2 a y^2 - (\ep_1+ \ep_2) \right)_+ \right\| 
&\leq
 \| a - y^2 a y^2\| + \left\| y^2 a y^2 - \left(y^2 a y^2 - (\ep_1+ \ep_2)\right)_+\right\|
\\
&< 
\dt + \ep_1+ \ep_2 < \ep + \ep_1+ \ep_2
\end{align*}
Now, we use this and Lemma~\ref{PhiB.Lem.18.4}(\ref{PhiB.Lem.18.4.10.a}) to get  
\begin{equation*}\label{Eq5.2019.05.07}
\left(a - (\ep+\ep_1+\ep_2)\right)_+ 
\precsim_A
\left(y^2 a y^2 - (\ep_1+\ep_2)\right)_+.
\end{equation*}
This completes the proof of (\ref{Claim1.05.14.2019}).

To prove (\ref{Claim2.05.14.2019}), first we estimate, using  (\ref{Eq5.2019.05.9}) at the third step,
\begin{align}\label{Eq10.2019.05.10}
&
\left\| 
(y c y) a (y c y)  
- 
(y^2 c^2 y^2)^{1/2} a (y^2 c^2 y^2)^{1/2}
 \right \|
\\\notag
&\hspace*{10em} {\mbox{}} \leq
\left\| y c y -  (y^2 c^2 y^2)^{1/2} \right\| 
\cdot 
\left\| a y c y \right\| 
\\\notag
&\hspace*{12em} {\mbox{}} +
\left\| (y^2 c^2 y^2)^{1/2} a  \right\| 
\cdot
\left\| y c y -  (y^2 c^2 y^2)^{1/2} \right\|
\\\notag
&\hspace*{10em} {\mbox{}} <
2 \ep' \leq \tfrac{\ep_1}{2}.
\end{align}
Second, we estimate, using (\ref{Eq10.2019.05.10}) at the second step,
\begin{align}\label{Eq7.2019.05.07}
&
\left\| 
\left((y c y) a (y c y ) - \tfrac{\ep_1}{2} \right)_+
 - 
(y^2 c^2  y^2)^{1/2} a (y^2 c^2 y^2)^{1/2}
\right\|
\\\notag
&\hspace*{2 em} {\mbox{}} \leq
\left\| 
\left((y c y) a (y c y) - \tfrac{\ep_1}{2} \right)_+ 
-  
(y c y) a (y c y) 
\right\|
\\\notag
&\hspace*{3 em} {\mbox{}} +
\left\|
 (y c y) a (y c y)
- 
(y^2 c^2 y^2)^{1/2} a (y^2 c^2 y^2)^{1/2}  
\right\|
\\\notag
&\hspace*{2 em} {\mbox{}} <
\tfrac{\ep_1}{2} + \tfrac{\ep_1}{2}= \ep_1.
\end{align}
Using this relation and Lemma~\ref{PhiB.Lem.18.4}(\ref{PhiB.Lem.18.4.10.a}), we get
\begin{equation}\label{Eq8.2019.05.07}
\left((y^2 c^2 y^2)^{1/2} a (y^2 c^2 y^2)^{1/2} - \ep_1 \right)
\precsim_A
\left((y c y) a (y c y)  - \tfrac{\ep_1}{2} \right)_+.
\end{equation}
Therefore, 
using (\ref{Eq8.2019.05.07}) at the first step, 
using Lemma~\ref{PhiB.Lem.18.4}(\ref{LemdivibyNorm})  at the second step, 
and using $\| y \|=1$ at the last step, we get
\begin{align}\label{Eq12.2019.05.07}
\left((y^2 c^2 y^2)^{1/2} a (y^2 c^2 y^2)^{1/2} - \ep_1 \right)
&\precsim_A
\left((y c y) a (y c y)  - \tfrac{\ep_1}{2} \right)_+
\\\notag
&\precsim_A
y \left( c y a y c   - \tfrac{\ep_1}{2 \| y \|^2} \right)_+ y
\\\notag
&\precsim_A
\left( c y a y c   - \tfrac{\ep_1}{2 } \right)_+.
\end{align}
This completes the proof of (\ref{Claim1.05.14.2019}).

To prove (\ref{Claim3.05.14.2019}), we use 
Lemma~\ref{PhiB.Lem.18.4}(\ref{LemdivibyNorm}) at the second and eighth steps, use the fact that $\|c \|\leq 1$ at the fifth step, use Lemma~\ref{PhiB.Lem.18.4}(\ref{PhiB.Lem.18.4.6}) at the seventh and last steps, and use 
$\|1+ c\|\leq 2$ at the ninth step to get
\begin{align*}
(h - \ep_2)_+ 
&=
(y^4 - y^2 c^2 y^2 - \ep_2)_+
\\
&\precsim_A
y \left(y^2  -  y c^2 y   - \tfrac{\ep_2}{\| y \|^2}\right)_+ y
\\
&\precsim_A
\left(y^2 - y c^2 y - \ep_2 \right)_+
\\
&=
\left(y (1 - c^2) y - \ep_2\right)_+ 
\\
&=
 \left( y (1 - c) (1 + c) y  - \ep_2\right)_+ 
\\
&=
 \left( y (1 - c)^{1/2} (1 + c)^{1/2}  (1 + c)^{1/2} (1 - c)^{1/2} y - \ep_2\right)_+ 
\\
&\sim_A
 \left((1 + c)^{1/2} (1 - c)^{1/2} y^2 (1 - c)^{1/2} (1 + c)^{1/2} - \ep_2\right)_+ 
\\
&\precsim_A
(1 + c)^{1/2} \left( (1 - c)^{1/2} y^2 (1 - c)^{1/2}  -\tfrac{\ep_2}{\| 1+ c\|}\right)_+ (1 + c)^{1/2}
\\
&\precsim_A
\left( (1 - c)^{1/2} y^2 (1 - c)^{1/2}  -\tfrac{\ep_2}{2}\right)_+
\\
&\sim_A
\left( y (1 - c) y  -\tfrac{\ep_2}{2} \right)_+
=
\left( y^2  - y c y  -\tfrac{\ep_2}{2} \right)_+.
\end{align*}
This completes the proof of (\ref{Claim3.05.14.2019}).

Therefore, using (\ref{Claim1.05.14.2019}) at the first step, using Lemma~\ref{PhiB.Lem.18.4}(\ref{PhiB.Lem.18.4.6}) at the second, sixth, and seventh steps, using Lemma~\ref{PhiB.Lem.18.4} (\ref{PhiB.Lem.18.4.15}) at the fourth step, 
 using (\ref{Claim2.05.14.2019}) together with Lemma~\ref{PhiB.Lem.18.4}(\ref{LemdivibyNorm}) and the fact that $0<\| a \|\leq 1$ at the seventh step,
and using (\ref{Claim3.05.14.2019}) at the last  step, we get
\begin{align*}
\left( a - (\ep + \ep_1 + \ep_2) \right)_+
&\precsim_A
 \left(y^2 a y^2 - (\ep_1+\ep_2) \right)_+
\\
&\sim_A
\left( a^{1/2} y^4  a^{1/2} - (\ep_1+\ep_2) \right)_+
\\
&=
\left( a^{1/2} (y^4 - h )  a^{1/2} + a^{1/2} h a^{1/2}  - (\ep_1+\ep_2) \right)_+
\\
&\precsim_A
\left(a^{1/2} (y^4 - h )  a^{1/2} - \ep_1 \right)_+
\oplus
\left(a^{1/2} h a^{1/2} - \ep_2 \right)_+
\\
&=
\left(a^{1/2} (y^2 c^2  y^2) a^{1/2} - \ep_1 \right)_+
\oplus
(a^{1/2} h a^{1/2} - \ep_2 )_+
\\
&\sim_A
\left( (y^2 c^2  y^2)^{1/2} a (y^2 c^2  y^2)^{1/2} - \ep_1\right)_+
\oplus
(h^{1/2} a h^{1/2} - \ep_2 )_+
\\
&\precsim_A
\left( c ( y a y ) c   - \tfrac{\ep_1}{2 } \right)_+
\oplus
( h - \ep_2 )_+
\\
&\precsim_A
\left( c ( y a y) c   - \tfrac{\ep_1}{2 } \right)_+
\oplus
\left(y^2 - y c y - \tfrac{\ep_2}{2} \right)_+.
\end{align*}
This completes the proof.
\end{proof}
The proof of the following lemma is as same as the proof of Lemma 3.6 of \cite{AGP19}, 
as the unit of the C*-algebra is not needed there.  
\begin{lem}\label{approx.orthogonal.action}
Let $A$ be a   simple not necessarily unital  \ca, and let
  $\alpha \colon G \to \Aut (A) $ be an action 
of a finite group $G$ on $A$ which has the weak tracial Rokhlin property.
 Then for every $\ep > 0$ and  every positive element
 $b \in A^{\alpha}$ with $\| b \|=1$, 
 there is a positive element $x \in \overline{b A b}$ 
 with $\| x \|=1$ such that 
 $
\| x\alpha_{g} (x) \| < \ep
$
 for all  $g \in G \setminus \{ 1 \}$.
\end{lem}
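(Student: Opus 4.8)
The plan is to use the weak tracial Rokhlin property to produce an approximately $\alpha$-invariant family of orthogonal ``tower pieces'', to isolate a single piece whose compression by $b$ has norm close to $1$, and finally to cut that piece down into $\overline{bAb}$ so that its $\alpha$-translates become almost orthogonal to it. Write $n = \card(G)$ and fix a tolerance $\ep' > 0$ to be determined at the end in terms of $\ep$ and $n$. First I would apply \Def{W.T.R.P.def.non} with the finite set $F = \{b\}$, with both distinguished positive elements equal to $b$ (so $\|b\| = 1$), and with $\ep'$ in place of $\ep$. This produces orthogonal positive contractions $f_g \in A$, $g \in G$, with $f = \sum_g f_g$, such that $\|b f_g - f_g b\| < \ep'$ for all $g$, such that $\|\alpha_g(f_h) - f_{gh}\| < \ep'$ for all $g,h$ (condition (\ref{Def.w.t.r.p.b.non})), and such that $\|f b f\| > 1 - \ep'$ (condition (\ref{Def.w.t.r.p.d.non})); I would not need (\ref{Def.w.t.r.p.c.non}).

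Next I would isolate a single piece. Since the $f_g$ are honestly orthogonal, the positive elements $f_g b f_g$ are mutually orthogonal, so $\big\| \sum_g f_g b f_g \big\| = \max_g \| f_g b f_g \|$. For $g \neq h$ the relation $f_g f_h = 0$ together with approximate commutation gives $\| f_g b f_h \| = \| f_g (b f_h - f_h b) \| < \ep'$; summing the $n^2 - n$ off-diagonal terms yields $\big\| f b f - \sum_g f_g b f_g \big\| < (n^2 - n)\ep'$. Combined with $\|fbf\| > 1 - \ep'$, this forces some index $g_0$ to satisfy $\| f_{g_0} b f_{g_0} \| > 1 - (n^2 - n + 1)\ep'$.

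I would then set $z = b^{1/2} f_{g_0} b^{1/2}$, a positive element of $\overline{bAb}$. Writing $c = f_{g_0}^{1/2} b^{1/2}$ one has $c^* c = z$ and $c c^* = f_{g_0}^{1/2} b f_{g_0}^{1/2}$, so $\|z\| = \| f_{g_0}^{1/2} b f_{g_0}^{1/2} \|$; since $\| f_{g_0} b f_{g_0} \| \leq \| f_{g_0}^{1/2} b f_{g_0}^{1/2} \| \leq 1$, the previous step gives $1 - (n^2 - n + 1)\ep' < \|z\| \leq 1$. Put $x = z/\|z\|$, a positive element of $\overline{bAb}$ with $\|x\| = 1$. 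For the orthogonality of the translates, fix $g \in G \setminus \{1\}$; since $b \in A^{\alpha}$ we have $\alpha_g(z) = b^{1/2}\alpha_g(f_{g_0}) b^{1/2}$, and (\ref{Def.w.t.r.p.b.non}) gives $\|\alpha_g(f_{g_0}) - f_{g g_0}\| < \ep'$. As $g g_0 \neq g_0$, orthogonality yields $f_{g_0} f_{g g_0} = 0$, so $\| f_{g_0} b f_{g g_0} \| = \| (f_{g_0} b - b f_{g_0}) f_{g g_0} \| < \ep'$. These two estimates bound $\| z\, \alpha_g(z) \| = \| b^{1/2} f_{g_0} b\, \alpha_g(f_{g_0}) b^{1/2} \| < 2\ep'$, whence $\| x\, \alpha_g(x) \| < 2\ep' / (1 - (n^2 - n + 1)\ep')^2$; choosing $\ep'$ small enough that the right-hand side is below $\ep$ finishes the argument.

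The only delicate point is the reduction in the second step, namely passing from $\|fbf\|$ to the norm of a single compressed piece $\|f_{g_0} b f_{g_0}\|$. It is precisely here that both features of the Rokhlin data are used in an essential way: the honest orthogonality of the $f_g$ turns the diagonal sum into a maximum of norms, while the approximate commutation of $b$ with each $f_g$ annihilates the off-diagonal contributions. Everything afterward is a routine chain of norm estimates, and the fixedness $b \in A^{\alpha}$ makes the translates $\alpha_g(z)$ land on the orthogonal pieces automatically.
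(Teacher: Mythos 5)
Your argument is correct and takes essentially the same approach as the paper's: the paper proves this lemma by invoking Lemma~3.6 of \cite{AGP19}, whose proof is exactly your scheme --- apply the weak tracial Rokhlin property with $F=\{b\}$ and with $b$ as both distinguished elements, use honest orthogonality of the $f_g$ together with approximate commutation to locate a single tower piece with $\| f_{g_0} b f_{g_0} \|$ close to $1$, and normalize $b^{1/2} f_{g_0} b^{1/2}$, controlling the translates via $\|\alpha_g(f_{g_0}) - f_{g g_0}\| < \ep'$ and $f_{g_0} f_{g g_0} = 0$. All of your norm estimates (including $\|z\| = \|f_{g_0}^{1/2} b f_{g_0}^{1/2}\| \geq \|f_{g_0} b f_{g_0}\|$ and the final rescaling bound) check out, so nothing further is needed.
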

The following proposition shows that  Cuntz comparison over a (non-unital) C*-algebra $A$ and  Cuntz comparison over the fixed algebra $A^\alpha$  for the purely positive elements of the fixed point algebra are the same, when $\alpha$ has the (non-unital) weak tracial Rokhlin property. 
This is a generalization of Lemma~3.4 of \cite{AGP19} in the setting of non-unital C*-algebras.
The basic idea of the proofs is the same. But,
 in this case, we encounter additional difficulties arising from the fact that $A$ may not be unital, from the definition of the (non-unital) weak tracial Rokhlin property, and from
 the critical role of Lemma~\ref{Lem.ANP.Dec.18.non}.
\begin{prp}\label{W.T.R.P.inject}
Let $A$ be a simple  infinite dimensional \ca{},
 not necessarily unital,
 and let $\alpha\colon G \to A$
be an action of a finite group $G$ on $A$ with the weak tracial Rokhlin property. Let 
$a, b \in (A^{\alpha})_+$ and assume that $0$ is a limit point of $\spec (b)$. Then
 $a \precsim_A b$  if and only if $a \precsim_{A^\alpha} b$.
\end{prp}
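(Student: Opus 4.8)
The plan is to prove the two implications separately; the forward implication is immediate and the reverse one carries all the content. If $a \precsim_{A^\alpha} b$, then a sequence $(w_n)$ in $A^\alpha \otimes \cK$ witnessing this also lies in $A \otimes \cK$, so $a \precsim_A b$. For the converse, assume $a \precsim_A b$. Since positive scalar multiples are Cuntz equivalent and rescaling preserves both membership in $A^\alpha$ and the hypothesis that $0$ is a limit point of $\spec(b)$, I would normalize so that $\|a\| \le 1$ and $\|b\| = 1$. By Rørdam's lemma (\Lem{PhiB.Lem.18.4}(\ref{PhiB.Lem.18.4.11})) applied in $A^\alpha$, it then suffices to prove $(a - \ep)_+ \precsim_{A^\alpha} b$ for every $\ep > 0$.

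Fix $\ep$ and write $\ep = \ep_0 + \ep_1 + \ep_2$. Because $A$ need not be unital, the unit that drives the unital argument of \cite{AGP19} must be replaced by an \emph{invariant approximate unit}: I would take $y = h(a) \in A^\alpha$ via continuous functional calculus, with $h\colon [0,1] \to [0,1]$ equal to $1$ on $[\ep_0/2, 1]$ and $h(0) = 0$, so that $\|y\| = 1$, $y \in A^\alpha$ (as $a \in A^\alpha$), and $\|y^2 a y^2 - a\|$ is as small as needed after passing to $(a - \ep_0)_+$. Next I would invoke the invariant form of the weak tracial Rokhlin property (\Lem{invariant.contractions}) with a finite set $F \ni a, y$ and the prescribed tolerances, taking the auxiliary positive element $x$ to be a \emph{small} nonzero element sitting in a corner of $b$ that is orthogonal to a full copy of $b$. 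This is exactly where the hypothesis that $0$ is a limit point of $\spec(b)$ enters: together with \Lem{approx.orthogonal.action} it lets me split $b$ by functional calculus as $b \sim_{A^\alpha} (b-\delta)_+ \oplus e$ with $e \ne 0$ and $(b-\delta)_+ \oplus e \precsim_{A^\alpha} b$. The lemma then produces an invariant positive contraction $f = \sum_g f_g \in A^\alpha$ with $\|f\| = 1$, with $f_g = \alpha_g(f_1)$ approximately orthogonal, approximately commuting with $y$ and $a$, and satisfying the defect estimate $(y^2 - yfy - \ep_2)_+ \precsim_A x \precsim_{A^\alpha} e$ coming from condition~\ref{W.T.R.P.33} of \Lem{invariant.contractions}.

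Now I would apply the decomposition \Lem{Lem.ANP.Dec.18.non} with $c = f$, and crucially apply it to the C*-algebra $A^\alpha$ itself (legitimate, since the lemma holds for all C*-algebras and $a, y, f \in A^\alpha$), obtaining a subequivalence \emph{over $A^\alpha$}:
\[
(a - \ep)_+ \precsim_{A^\alpha} \big(f(yay)f - \tfrac{\ep_1}{2}\big)_+ \oplus \big(y^2 - yfy - \tfrac{\ep_2}{2}\big)_+ .
\]
The second summand is the Rokhlin defect and is handled by the defect estimate above, giving $\big(y^2 - yfy - \tfrac{\ep_2}{2}\big)_+ \precsim_{A^\alpha} e$. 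For the first summand, the approximate commutation and approximate orthogonality of the $f_g$ yield $f(yay)f \approx f^2(yay) \approx f^2 a \le a$, and the goal is to push the main term into $(b-\delta)_+$. Combining the two summands through $b \sim_{A^\alpha} (b-\delta)_+ \oplus e$ then delivers $(a - \ep)_+ \precsim_{A^\alpha} b$.

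The main obstacle is the transfer of the comparison of the first summand from $A$ to $A^\alpha$. The relation $a \precsim_A b$ is witnessed by elements of $A \otimes \cK$ that need not be $\alpha$-invariant, and the naive equivariantization $\sum_g f_g^{1/2}\alpha_g(s)$ of such a witness produces cross terms $f_g^{1/2}\big(\alpha_g(s)\,b\,\alpha_h(s)^*\big)f_h^{1/2}$ that are not controlled merely by orthogonality of $f_g$ and $f_h$. The whole purpose of \Lem{Lem.ANP.Dec.18.non} is to \emph{localize} the comparison at the single invariant element $f$, so that the genuinely uncontrolled part is pushed into the Rokhlin defect $\big(y^2 - yfy\big)_+$, which is then absorbed into the small orthogonal corner $e$ of the purely positive element $b$. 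Everything else is non-unital bookkeeping flagged before the statement: there is no unit to sandwich with, so the invariant approximate unit $y = h(a)$ and the choice of room $e$ under $b$ must be tracked carefully, and the assumption that $0$ is a limit point of $\spec(b)$ is precisely what guarantees that this room exists.
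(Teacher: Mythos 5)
The decisive step is missing from your proposal: after invoking \Lem{Lem.ANP.Dec.18.non} over $A^\alpha$, you must prove that the main summand satisfies $\bigl(f(yay)f - \tfrac{\ep_1}{2}\bigr)_+ \precsim_{A^\alpha} (b-\delta)_+$, and you never do --- worse, you explicitly disavow the only mechanism available. You claim the equivariantized witness produces cross terms ``not controlled merely by orthogonality of $f_g$ and $f_h$'' and that the purpose of \Lem{Lem.ANP.Dec.18.non} is to push the uncontrolled part into the Rokhlin defect. That is a misdiagnosis: the decomposition lemma only splits $(a-\ep)_+$ against the main term and the defect; it compares the main term with nothing below $b$. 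The paper's proof performs exactly the averaging you reject: with $w \in A$ a witness for $a' = (a-\ep)_+ \precsim_A (b-\dt_1)_+ = b'$, it sets $v = \sum_{g \in G} \alpha_g(f_1 w) \in A^\alpha$ and proves $\|v b' v^* - f a' f\| < \tfrac{\ep}{16}$, whence $(fa'f - \tfrac{\ep}{16})_+ \precsim_{A^\alpha} v b' v^* \precsim_{A^\alpha} b'$. The cross terms are controlled because $w, w^*, b', a'$ are put into the finite set $F$ of \Lem{invariant.contractions}: approximate equivariance plus $\|f_1 w - w f_1\|$ small give $\alpha_g(f_1 w) \approx \alpha_g(w) f_g$, approximate commutation moves $f_g$ past $b'$, and then a factor $f_g f_h$ of norm $< \dt_3$ appears for $g \neq h$. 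So orthogonality combined with the commutation and equivariance estimates already built into the lemma does suffice, and the diagonal terms converge to $f a' f$ using $\alpha_g(w) b' \alpha_g(w)^* \approx a'$ (invariance of $a', b'$). Without this step or a substitute, your argument has no route from the first summand into $(b-\delta)_+$ over $A^\alpha$; note also that your remark $f(yay)f \approx f^2 a \le a$ points the wrong way (comparison into $a$ is useless here, and $f^2 a \le a$ is not even a valid operator inequality).

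The defect term is also flawed as written: the chain $(y^2 - yfy - \ep_2)_+ \precsim_A x \precsim_{A^\alpha} e$ is a non sequitur. The element $x$ produced by \Lem{approx.orthogonal.action} lies in $\overline{h(b) A h(b)}$ and is not $\alpha$-invariant, so ``$x \precsim_{A^\alpha} e$'' does not typecheck, and in any case composing $\precsim_A$ with $\precsim_{A^\alpha}$ yields only $\precsim_A$; asserting that the invariant defect compares over $A^\alpha$ is circular, since that is precisely the statement being proved. The actual conversion uses the approximate orbit orthogonality $\|x \alpha_g(x)\|$ small together with Lemma~3.5 of \cite{AGP19}, which turns $d \precsim_A x$ for invariant $d$ into $(d - \dt^{1/2})_+ \precsim_{A^\alpha} h(b)$; your proposal cites the orthogonalization lemma but omits this conversion. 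Apart from these two transfer steps --- which are the entire content of the proposition --- your skeleton does match the paper's: the Rørdam reduction, the functional-calculus splitting $b' \perp h(b)$ with $b' + h(b) \precsim_{A^\alpha} b$ (where the hypothesis that $0$ is a limit point of $\spec(b)$ enters), and the application of \Lem{Lem.ANP.Dec.18.non} inside $A^\alpha$; your choice $y = h(a)$ in place of the paper's approximate unit $u_{\lambda_0}$ for $A^\alpha$ is a harmless variant.
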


\begin{proof}
We prove only the forward implication, as the backward implication is trivial. 
Assume  
  $\| a \| \leq 1$ and $\| b\|=1$.
Let $\ep >0$. By Lemma~\ref{PhiB.Lem.18.4}(\ref{PhiB.Lem.18.4.11.b}), it suffices to show that $(a -  \ep)_+ \precsim_{A^{\alpha}} b$. 
We may assume $\ep < [2 \, \card (G)]^{-2}$
and $G$ is not a trivial group. 
Since $a \precsim _A b$, it follows from Lemma~\ref{PhiB.Lem.18.4}(\ref{PhiB.Lem.18.4.11.c}) that there is $\dt_1 >0$ such that 
$(a- \ep)_+ \precsim_A (b-\dt_1)_+$. 
Set $a'= (a-\ep)_+$ and $b'= (b - \dt_1)_+$. 
So there is $ w \in  A$
such that
\[
\big\| w b' w^* - a'  \big\|
   < \frac{\ep}{60 \, \card (G)}.
\]
Since $ b', a' \in A^{\alpha} $, it follows that
\begin{equation}\label{Eq.12.28.18.9}
\big\| \alpha_{g} (w)  b'  \alpha_{g}(w^*) - a'  \big\|
   <  \frac{\ep}{60 \, \card (G)}
\end{equation}
 for all $g \in G$.
 Choose a continuous function $h \colon [0, \infty) \to [0, \infty)$ such that:
\begin{enumerate}
\item\label{h.func.a}
$h(t) = 0$ for all $t \in  \{0\} \cup [\dt_1, \infty)$.
\item\label{h.func.b}
$h(t) \neq 0$ for all $t \in (0, \dt_1)$ and $\| h \|= 1$.
\setcounter{TmpEnumi}{\value{enumi}}
\end{enumerate}
Since $0$ is a limit point of $\spec (b)$, it follows that
$\spec (b) \cap (0, \dt_1) \neq \varnothing$.
Then the following properties hold:
\begin{enumerate}
\setcounter{enumi}{\value{TmpEnumi}}
\item\label{h_dt.p.a}
$h (b)\neq 0$ and $\| h (b) \|=1$.
\item\label{h_dt.p.b}
$h (b) \perp b'$.
\item\label{h_dt.p.c}
$h (b) + b' \precsim_{A^\alpha} b$.
\setcounter{TmpEnumi}{\value{enumi}}
\end{enumerate}
Applying \Lem{approx.orthogonal.action} 
with $h (b)$ in place of $b$, we find
a positive element $x \in \overline{h (b) A h (b)}$ 
with $\| x \|=1$ such that 
\[
\big \| x  \alpha_{g} (x) \big \| < \frac{\ep^{2}}{16^2}  
\]
for all  $g \in G \setminus \{ 1 \}$.
Now set 
\[
\ep'= \frac{\ep}{240 \big(\| w \| +1 \big)^2
 \big(\| b '\| +1 \big) \big( \card(G)^2+ 1 \big)}.
 \]
Using \Lem{Lem.ANP.Dec.18.non}
with $\tfrac{\ep}{4}$ in place of $\ep_1$, $\ep_2$, and $\ep$, with $a'$ in place of $a$, and 
with $A^{\alpha}$ in place of $A$,
we can choose $\dt_2>0$  such that:
\begin{enumerate}
\setcounter{enumi}{\value{TmpEnumi}}
\item\label{Eq5.05.14.2019}
$\card (G) \dt_2 \leq \ep'$.
\item\label{Eq5.05.13.2019}
 For all $y \in (A^{\alpha})_+ $ with $\| y \|=1$ and 
 for all $c \in (A^{\alpha})_+$ with $\| c \| \leq 1$
 satisfying $\| y^2 a' y^2 - a' \|< \dt_2$ 
 and 
 $\| c y  -  y c \|< \dt_2$,
  then
\[
\left(   a'  - \tfrac{3 \ep}{4} \right)_+ 
\precsim_{A} 
\Big( c ( y a' y ) c - \tfrac{\ep}{8}  \Big)_+
\oplus 
\Big( y^2 - y c y - \tfrac{\ep}{8} \Big)_+.
\]
\setcounter{TmpEnumi}{\value{enumi}}
\end{enumerate}
Let $( u_\lambda )_{\lambda \in \Lambda}$ be an approximate unit for $A^{\alpha}$. 
Choose $\lambda_0$ such that 
\begin{equation}\label{Eq1.05.13.2019}
\| u_{\lambda_0}^{2} a' u_{\lambda_0}^{2} - a' \| <\dt_2.
\end{equation}
We may assume $\| u_{\lambda_0} \|=1$. 
 Set 
\[
\dt_3= \tfrac{\dt_2}{\card (G)}, 
\qquad
F= \{a', b', w, w^*, u_{\lambda_0} \}, 
\qquad
M = \sup_{z \in F} \| z \|,
\quad
\mbox{ and } 
\]
\[
s= \frac{1}{\|(x^2 - \frac{1}{2})_+ \|} (x^2 - \frac{1}{2})_+. 
\]
Applying  \Lem{invariant.contractions} with $F$ as given,
 with $\dt_3$ in place of $\ep$, with $s$ in place of $x$, and with $u_{\lambda_0}$ in place of $y$,
  we get positive contractions  
$f_g \in A$, for $g \in G$, such that,
 with $f= \sum_{g \in G} f_g $,  the following properties hold:
\begin{enumerate}
\setcounter{enumi}{\value{TmpEnumi}}
\item\label{WTRP4}
$\| f_g f_h \| < \dt_3$ for all $g , h\in G$.
\item\label{WTRP1}
$\| z f_g - f_g z \| <  \dt_3$ for all $g \in G $ and all $z \in F$.
\item\label{WTRP2}
$\| \alpha_{g}( f_g ) - f_{gh} \|  <  \dt_3$ for all $g , h\in G$.
\item\label{WTRP3}
$f \in A^{\alpha}$ and $\| f \| = 1$.
\item\label{WTRP5}
$(u_{\lambda_0}^2 - u_{\lambda_0} f u_{\lambda_0} - \dt_3 )_+ \precsim_{A} s$.
\setcounter{TmpEnumi}{\value{enumi}}
\end{enumerate}
Now, we use (\ref{WTRP5}) and the fact that $\dt_3 <\ep' < \frac{\ep}{16}$ to get
\begin{align*}
(u_{\lambda_0}^2 - u_{\lambda_0} f u_{\lambda_0} - \tfrac{\ep}{16}  )_+ 
&\precsim_{A}
 (u_{\lambda_0}^2 - u_{\lambda_0} f u_{\lambda_0} - \ep' )_+ 
 \\
&\precsim_{A} 
(u_{\lambda_0}^2 - u_{\lambda_0} f u_{\lambda_0} - \dt_3 )_+
\\
&
 \precsim_{A} 
 s 
\sim_A 
 (x^2 - \tfrac{1}{2})_+.
\end{align*}
Considering this relation and applying 
Lemma 3.5 of \cite{AGP19}
with $h(b)$ in place of $b$,
 with $(u_{\lambda_0}^2 - u_{\lambda_0} f u_{\lambda_0} - \tfrac{\ep}{16})_+$
in place of $a$, with $x$ as given,
 and with $\frac{\ep^{2}}{16^2}$ in place of $\dt$, we get 
\[
\Bigg( \big(u_{\lambda_0}^2 - u_{\lambda_0} f u_{\lambda_0} -
\tfrac{\ep}{16}\big)_+ 
- \left(\frac{\ep^{2}}{16^2}\right)^{1/2}  
\Bigg)_{+}
\precsim_{A^{\alpha}} 
h(b). 
\]
Therefore, using Lemma~\ref{PhiB.Lem.18.4}(\ref{PhiB.Lem.18.4.8}),
 \begin{equation}\label{Eq.8855}
 \left(u_{\lambda_0}^2 - u_{\lambda_0} f u_{\lambda_0} - \tfrac{\ep}{8}\right)_+
  \precsim_{A^{\alpha}} 
 h (b).
 \end{equation}
Now define $v= \sum_{g} \alpha_{g} (f_1 w)$. Clearly $v \in A^{\alpha}$. We claim that:
\begin{enumerate}
\setcounter{enumi}{\value{TmpEnumi}}
\item\label{Claim.a.05.13.2019}
$
\left\| v b' v^* - f a' f     \right\| < \frac{\ep}{16}$.
\item\label{Claim.b.05.13.2019}
$
\|  f u_{\lambda_0}  - u_{\lambda_0} f   \|< \dt_2.
$
\end{enumerate}

We prove (\ref{Claim.a.05.13.2019}).  
 First, for all $g, h \in G$ with $g \neq h$, we estimate,
using (\ref{WTRP4}) 
and 
(\ref{WTRP1}) at the third step, and using (\ref{Eq5.05.14.2019}) at the last step,
\begin{align}\label{Eq13.05.14.2019}
\left\| f_g a' f_h\right\| 
&\leq
\left\|  f_g a' f_h -  f_g f_h a' \right\|
+
\left\| f_g f_h a' \right\|
\\\notag
&\leq
\left\| f_g \right\| 
\cdot 
\left\| a' f_h -   f_h a'  \right\|
+
\left\| f_g f_h \right\| \cdot \| a' \|
\\\notag
&<
\dt_3 + \ep' < 2 \ep'.
\end{align}
Second, we estimate,
 using (\ref{WTRP4})  at the fifth step,
\begin{align}\label{Eq.12.28.18.5}
\Bigg\| 
f a' f  
-
\sum_{g \in G} f_{g} a' f_{g}
\Bigg\|
&=
\Bigg\| 
\Bigg(\sum_{g \in G} f_{g} \Bigg) a' \Bigg(\sum_{h \in G} f_{h} \Bigg)
-
\sum_{g \in G} f_{g} a' f_{g}
\Bigg\| 
\\\notag
&=
\Bigg\| 
\sum_{g, h \in G} f_g a'  f_h
-
\sum_{g \in G} f_{g} a' f_{g}
\Bigg\|
\\\notag
&\leq
\Bigg\|
\sum_{g \neq h} f_g a'  f_h
\Bigg\|
\\\notag
&<
 2 \,\card (G)^2 \dt_3
 <
 2 \,\card (G)^2 \ep'
 \leq \frac{\ep}{120}.
\end{align}
Third, for all $g\in G$ we estimate, 
using (\ref{WTRP1}) and (\ref{WTRP2}) at the last step,
\begin{align}
\label{Eq.12.27.18.1}
&
\left\| 
\alpha_{g} (f_1 w) 
- 
\alpha_{g} ( w ) f_g  
\right\| 
\\\notag
& \hspace*{3 em} {\mbox{}} \leq 
\left\| 
\alpha_{g} (f_1 w) 
- 
\alpha_{g} (w f_1) 
\right\|
+ 
\left\| 
\alpha_{g} (w f_1) 
- 
\alpha_{g} (w) f_g
\right\| 
\\\notag
& \hspace*{3 em} {\mbox{}} \leq
\left\| 
f_1 w 
- 
w f_1 
\right \| 
+ 
\| w\| 
\cdot
\left\| 
\alpha_{g}(f_1)  
- 
 f_g  
\right\|
\\\notag
& \hspace*{3 em} {\mbox{}} <
(\| w \| + 1) \dt_3 <(\| w \| + 1) \ep'.
\end{align}
Therefore, for all $g \in G$,
\begin{equation}\label{Eq.12.27.18.2}
\left\|
 \alpha_{g} (w^* f_1 ) 
  -  
 f_{g} \alpha_{g} ( w^* ) 
 \right\|  
 <
 (\| w \| + 1) \dt_3
 <
 (\| w \| + 1) \ep'.
\end{equation}
Fourth, for all $g \in G$ we estimate, using 
 (\ref{Eq.12.27.18.1}), (\ref{Eq.12.27.18.2}), and
 (\ref{WTRP2}) at the second step, and using (\ref{Eq5.05.14.2019}) at the last step,
\begin{align}\label{Eq.12.27.18.3}
&
\left\|
 \alpha_{g} ( w ) f_g  b' f_{g} \alpha_{g} (w^* )
 - 
f_g \alpha_{g} ( w)  b' \alpha_{g} (w^*) f_g
\right\|
\\\notag
&\hspace*{9em} {\mbox{}}
\leq
\left\|
 \alpha_{g} ( w ) f_g
 - 
 \alpha_{g} ( f_1 w ) 
 \right\|
 \cdot 
\left\| 
b'  f_g \alpha_{g} (w^* )
\right\|
\\ \notag
& \hspace*{10.2em} {\mbox{}} +
\left\| 
\alpha_{g} ( f_1 w ) b'
\right\| 
\cdot 
\left\|
 f_g \alpha_{g} (w^* ) 
 - 
 \alpha_{g} (w^* f_1 ) 
 \right\|
\\ \notag
&\hspace*{10.2em} {\mbox{}}+
\left\|
 \alpha_{g} (f_1) 
- 
f_g
\right\| 
\cdot 
\left\|
 \alpha_{g} (w ) b'  \alpha_{g} ( w^* f_1  )
 \right\|
\\ \notag
&\hspace*{10.2em} {\mbox{}} +
\left\| 
f_g \alpha_{g} ( w )  b'  \alpha_{g} ( w^* ) 
\right\| 
\cdot 
\left\|
\alpha_{g} ( f_1  ) 
- 
f_g
\right\|
\\ \notag
&\hspace*{9em} {\mbox{}}
<
2 \| b' \| \cdot \| w\| ( \| w \| +1) \ep' + 2 \| b' \| \cdot \| w \|^{2} \dt_3
\\ \notag
&\hspace*{9em} {\mbox{}}
<
\frac{\ep}{60 \, \card (G)}.
\end{align}
Fifth, for all $g, h \in G$ we estimate, using 
 (\ref{Eq.12.27.18.1}), (\ref{Eq.12.27.18.2}), and (\ref{WTRP1}) at the second step, and 
 using (\ref{Eq5.05.14.2019}) at the last step,
 \begin{align}\label{Eq1.2024.12.24}
 &
 \left\| 
 \alpha_{g} ( f_1 w )  b'  \alpha_{h} (w^*f_1) 
 - 
 \alpha_{g} (w) f_g f_h b'  \alpha_{h} (w^*)
 \right\|
\\\notag
&\hspace*{9em} {\mbox{}} \leq
\left\| 
\alpha_{g}(f_1 w) 
- 
\alpha_g (w) f_g
\right\| 
\cdot
\left\| 
b' \alpha_{h} (w^* f_1)
\right\|
\\\notag
&\hspace*{10.2em} {\mbox{}}+ 
\left\| \alpha_{g} (w) f_g b'
\right\| 
\cdot
\left\|
\alpha_{h}(w^* f_1) 
- 
f_h \alpha_{h}(w^*) 
\right\|
\\\notag
&\hspace*{10.2em} {\mbox{}}+
\left\|
 \alpha_{g} (w) f_g
 \right\| 
\cdot 
\left\| 
b' f_h - f_h b' 
\right\|
\cdot 
\|\alpha_{h}(w^*) \|
\\\notag
&\hspace*{9em} {\mbox{}}<
2 \|b'\| \cdot \|w\|(\|w\|+1)  \ep' + \|w\|^{2} \dt_3
\\\notag
&\hspace*{9em} {\mbox{}}
<
\frac{\ep}{80 \, \card (G)^2}.
\end{align}
Sixth, for all $g, h \in G$,  we estimate, using (\ref{Eq.12.27.18.3}) 
at the second step and  using
 (\ref{WTRP1})  at the last step,
 \begin{align}\label{Eq.1.20.2019}
 &
 \left\|
   \alpha_{g}( w )  f_g^2  b' \alpha_h (w^* )  
 - 
 f_g \alpha_{g}(w)  b'   \alpha_{g}(w^*) f_g
 \right\|
 \\\notag
 &\hspace*{4 em} {\mbox{}}\leq
 \left\| 
 \alpha_{g}( w ) f_g
 \right\| 
 \cdot 
 \left\| 
 f_g  b'- b' f_g 
 \right\| 
 \cdot 
 \| \alpha_h (w^* ) \|
\\\notag
&\hspace*{5 em} {\mbox{}}+
\left\|
\alpha_{g}( w )  f_g  b' f_g \alpha_h (w^* ) 
 -  
 f_g \alpha_{g}(w)  b'   \alpha_{g}(w^*) f_g
 \right\|
\\\notag
&\hspace*{4 em} {\mbox{}}<
\| w \|^{2} \cdot \| b' f_g - f_g b' \| 
+
 \frac{\ep}{60 \, \card (G)}
\\\notag
&\hspace*{4 em} {\mbox{}}<
\frac{5 \ep}{240 \, \card (G)}.
\end{align}
Seventh, we estimate, using (\ref{Eq.12.28.18.5}) at the second step, and 
using (\ref{Eq.12.28.18.9}) and (\ref{Eq.1.20.2019}) at the last step,
\begin{align}\label{Eq15.05.14.2019}
&
\Bigg \|
\sum_{g \in G} \alpha_{g}( w ) \, f_g^2  b' \, \alpha_h (w^* ) 
-
f a' f
\Bigg \| 
\\\notag
&
\hspace*{4 em} {\mbox{}}\leq
\Bigg\| 
\sum_{g \in G} \alpha_{g}( w ) \, f_g^2  b' \, \alpha_h (w^* ) 
 - 
\sum_{g \in G} f_g \alpha_{g}(w)  b'   \alpha_{g}(w^*) f_g
\Bigg\|
\\\notag
&\hspace*{5 em} {\mbox{}}+
\Bigg\| \sum_{g \in G} f_g \alpha_{g}(w) b'  \alpha_{g}(w^*)  f_g - \sum_{g} f_g a' f_g   \Bigg\|
\\\notag
& \hspace*{5 em} {\mbox{}} +
\Bigg \| \sum_{g \in G} f_g a' f_g -  f a' f  \Bigg\|
\\\notag
&\hspace*{4 em} {\mbox{}} \leq
\card (G)
 \left\| 
 \alpha_{g}( w )  f_g^2  b'  \alpha_h (w^* )
  - 
 f_g \alpha_{g}( w) b' \alpha_{g}(w^*) f_g
 \right\|
\\\notag
&\hspace*{5 em} {\mbox{}} +
\card (G) \| \alpha_{g} (w) b' \alpha_{g}(w^*) - a'  \| 
+
\frac{\ep}{120}
\\\notag
&\hspace*{4 em} {\mbox{}} <
\frac{5 \ep}{240 }
+
\frac{ \ep}{60}
+
\frac{ \ep}{120}
=
\frac{ 11 \ep}{240}
\end{align}
Using (\ref{Eq15.05.14.2019}) at the second step, and using
 (\ref{WTRP4}), (\ref{Eq5.05.14.2019}), and (\ref{Eq1.2024.12.24}) at the third  step, we get 
\begin{align*}
&
\left\|
 v b' v^* 
-
 f a' f 
\right\| 
\\
&\hspace*{3 em} {\mbox{}} \leq
\Bigg \| \sum_{g, h \in G} \alpha_{g}(f_1 w )  b' \alpha_h (w^* f_1) - 
  \sum_{g, h \in G} \alpha_{g}( w ) f_g f_h b' \alpha_h (w^* )\Bigg\|
\\
&\hspace*{4 em} {\mbox{}} +
\Bigg \| \sum_{g\neq h}  \alpha_{g}( w ) f_g f_h b' \alpha_h (w^* )\Bigg\|
\\
& \hspace*{4 em} {\mbox{}} +
\Bigg \| 
\sum_{g \in G} \alpha_{g}( w ) \, f_g^{2}  b' \, \alpha_h (w^* )  
- 
 f a' f
\Bigg\|
\\
& \hspace*{3 em} {\mbox{}} \leq
\card (G)^2 \left\| \alpha_{g} ( f_1 w )  b'  \alpha_{h} (w^* f_1 )
 - 
 \alpha_{g} (w) f_g f_h b'  \alpha_{h} (w^*)
 \right\|
\\
&\hspace*{4 em} {\mbox{}} +
 \card (G)^2 \| b' \| \cdot  \| w \|^{2} \| f_g f_h \|
+
\frac{11 \ep}{240} 
\\
&\hspace*{3 em} {\mbox{}}<
\frac{\ep}{80} +\frac{\ep}{240}+  \frac{11 \ep}{240} 
=\frac{\ep}{16}.
\end{align*}
This completes the proof of (\ref{Claim.a.05.13.2019}). 

To prove (\ref{Claim.b.05.13.2019}), we estimate,
using (\ref{WTRP1}) at the third step,
\begin{align*}
\|  f u_{\lambda_0}  - u_{\lambda_0}  f\|
&=
 \left\|
 \sum_{g \in G} f_g u_{\lambda_0}  - \sum_{g \in G}  u_{\lambda_0} f_g  
 \right\|
\\\notag
&\leq
\sum_{g \in G} \|    f_g u_{\lambda_0} - u_{\lambda_0} f_g \|  
\\\notag
&<
\card (G) \dt_3  
=
\dt_2.
\end{align*}
This completes the proof of (\ref{Claim.b.05.13.2019}).

By (\ref{Claim.a.05.13.2019}) and Lemma~\ref{PhiB.Lem.18.4}(\ref{PhiB.Lem.18.4.10.a}), we have 
\begin{equation}\label{Eq.7744}
( f a' f - \tfrac{\ep}{16})_+ \precsim_{A^{\alpha}} v b' v^* \precsim_{A^{\alpha}} b'.
\end{equation}
Using  (\ref{Claim.b.05.13.2019}) at the third step and using (\ref{Eq5.05.14.2019}) at the last step, we get
\begin{align*}
\left\| 
f u_{\lambda_0} a' u_{\lambda_0} f 
- 
( u_{\lambda_0} f a' f u_{\lambda_0} - \tfrac{\ep}{16})_+ 
\right\|
&\leq
\| f u_{\lambda_0}  -  u_{\lambda_0} f \|
\cdot
\| a' u_{\lambda_0} f \|
\\
&\hspace*{1 em} {\mbox{}} +
\| u_{\lambda_0} f  a' \|
\cdot
 \| u_{\lambda_0} f -  f u_{\lambda_0} \| 
 \\
 &\hspace*{1 em} {\mbox{}}+
\left\|
u_{\lambda_0}  f a' f u_{\lambda_0}
- 
\left( u_{\lambda_0} f a' f u_{\lambda_0} - \tfrac{\ep}{16} \right)_+ 
 \right\|
\\
& <
2 \dt_2 + \tfrac{\ep}{16} < \tfrac{\ep}{8}.
\end{align*}
We use this relation and Lemma~\ref{PhiB.Lem.18.4}(\ref{PhiB.Lem.18.4.10.a}) to get 
\begin{equation}\label{Eq7.05.14.219}
\left(f u_{\lambda_0} a' u_{\lambda_0} f - \tfrac{\ep}{8} \right)_+
\precsim_{A^\alpha}
\left( u_{\lambda_0} f a' u_{\lambda_0} f - \tfrac{\ep}{16}\right)_+.
\end{equation}
Therefore, 
using (\ref{Eq7.05.14.219}) at the first step,
using Lemma~\ref{PhiB.Lem.18.4}(\ref{LemdivibyNorm})  at the second step,
and using (\ref{Eq.7744}) at the fourth step, we get
\begin{align}\label{Eq3.2019.05.07}
\left(f u_{\lambda_0} a' u_{\lambda_0} f - \tfrac{\ep}{8} \right)_+
&\precsim_{A^\alpha}
\left( u_{\lambda_0} f a'  f u_{\lambda_0} - \tfrac{\ep}{16}\right)_+
\\\notag
&\precsim_{A^\alpha}
u_{\lambda_0} \left(  f a'  f - \tfrac{\ep}{16\| u_{\lambda_0} \|^2}\right)_+ u_{\lambda_0}
\\\notag
&\precsim_{A^\alpha}
\left(  f a'  f - \tfrac{\ep}{16}\right)_+
\precsim_{A^\alpha}
b'.
\end{align}
Using (\ref{Eq5.05.13.2019}) with $f$ in place of $c$ and with $u_{\lambda_0}$ in place of $y$,
 and considering (\ref{Eq1.05.13.2019}) and
(\ref{Claim.b.05.13.2019}), we get
\begin{equation}\label{Eq2.2019.05.07}
\Big(  a'  - \tfrac{3 \ep}{4} \Big)_+ 
\precsim_{A^\alpha} 
\Big( f u_{\lambda_0} a' u_{\lambda_0} f - \tfrac{\ep }{8} \Big)_+
\oplus 
\Big( u_{\lambda_0}^2 - u_{\lambda_0} f u_{\lambda_0} - \tfrac{\ep }{8} \Big)_+.
\end{equation}
Therefore, using (\ref{Eq2.2019.05.07}) at the second step,  using
  (\ref{Eq3.2019.05.07}) and  (\ref{Eq.8855}) at the third step, 
  and using  (\ref{h_dt.p.c}) at the last step, we get
\begin{align*}
(a -  \ep)_+
&= 
(a' - \tfrac{3\ep}{4})_+
\\
&\precsim_{A^\alpha} 
\Big( f u_{\lambda_0} a' u_{\lambda_0} f - \tfrac{\ep}{8} \Big)_+
\oplus 
\Big( u_{\lambda_0}^2 - u_{\lambda_0} f u_{\lambda_0} - \tfrac{\ep}{8} \Big)_+
\\
&\precsim_{A^{\alpha}}  
b' \oplus h (b) \precsim_{A^{\alpha}} b.
\end{align*}
This relation implies that
$(a -  \ep)_+ \precsim_{A^{\alpha}} b$.
\end{proof}

\begin{dfn}\label{D_9421_Pure}
Let $A$ be a simple \ca.
Following the discussion before Corollary 2.24 of~\cite{APT11}
and Definition~3.1 of~\cite{Ph14},
with slight changes in notation,
we define the following:
\begin{itemize}
\item
$A_{++} = \big\{ a \in A_{+} \colon
  \mbox{there is no projection $p \in M_{\infty} (A)$
         such that $[ a ]_A = [ p ]_A$} \big\}$.
\item
$\Cu_{+} (A)
 = \big\{ [ a ]_A \colon a \in (A \otimes \cK)_{++} \}$.
\end{itemize}
The elements of $A_{++}$ are called {\emph{purely positive}}.
Let $G$ be a discrete group 
and let
$\alpha \colon G \to \Aut (A)$ be an action
of $G$ on~$A$.
We set
\[
\Cu_{+} (A)^{\alpha}
 = \big\{ [ a ]_A \colon a \in (A \otimes \cK)_{++}
\mbox{ and } 
 [ \alpha_{g} (a) ]_A =   [ a ]_A
  \mbox{ for all } g \in G \}.
  \]
\end{dfn}
In the above definition, pure positivity  may not make sense if $A$ is not simple. For example, 
let $A$ be a simple C*-algebra. Take $a, b \in A$ such that $a \in A_{++}$ and $b \notin A_{++}$. By the above definition, 
$(a, b) \in A \oplus A$  is purely positive, which is not what we desire. For the non-simple setting, we refer to Definition~3.1 of \cite{ThiVil24}.

Recall that an element $x$ in a $\Cu$-semigroup $S$ is called \emph{compact} if it is compactly contained it itself, i.e., $x \ll x$. (See Corollary 5 of \cite{CowEllIva08CuInv})

The following lemma provides a connection between pure pure positivity and softness of elements in a  stably finite simple (non-unital) C*-algebra. 
\begin{lem}\label{Pure-Soft-Equivalent}
Let $A$ be a stably finite simple \ca. Let $a \in (A \otimes \cK)_+ \setminus \{0\}$.
Then the following properties are equivalent:
\begin{enumerate}
\item \label{Pure-Soft-Equivalent.1}
$a$ is purely positive 
\item \label{Pure-Soft-Equivalent.2}
$a$ is soft as in Definition~3.1 of \cite{ThiVil24}. 
\item \label{Pure-Soft-Equivalent.3}
 $0$ is not an isolated point in $\spec (a)$. 
 \item\label{Pure-Soft-Equivalent.4}
 $[a]_A$ is not compact in $\Cu (A)$. 
\end{enumerate}
\end{lem}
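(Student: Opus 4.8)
The plan is to make condition~(\ref{Pure-Soft-Equivalent.3}) the hub and route the other three through it. Write $B = A\otimes\cK$, so that the classes $[b]_A$ for $b \in B_+$ are exactly the elements of $\Cu(A)$, and work inside $C^*(a)\subseteq B$ by functional calculus. Two standard consequences of Rørdam's lemma (\Lem{PhiB.Lem.18.4}(\ref{PhiB.Lem.18.4.11}); cf.\ \cite{Ror92, CowEllIva08CuInv}) are used throughout: for every $b\in B_+$ and $\epsilon>0$ one has $[(b-\epsilon)_+]_A \ll [b]_A$, and $[b]_A = \sup_n[(b-1/n)_+]_A$ along an increasing sequence. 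These give the reformulation I need: $[b]_A$ is compact if and only if $[b]_A = [(b-\epsilon)_+]_A$ for some $\epsilon>0$. The forward direction is immediate from the first fact; conversely, $[b]_A \ll [b]_A = \sup_n[(b-1/n)_+]_A$ forces $[b]_A \leq [(b-1/n_0)_+]_A$ for some $n_0$, hence equality.

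First I would prove $(\ref{Pure-Soft-Equivalent.3}) \Leftrightarrow (\ref{Pure-Soft-Equivalent.4})$. If $0$ is isolated in $\spec(a)$, choose continuous $g\colon[0,\infty)\to[0,1]$ with $g(0)=0$ and $g\equiv 1$ on $\spec(a)\setminus\{0\}$; then $p:=g(a)$ is a projection in $C^*(a)$ with $a \sim_A p$, and since $[p]_A = [(p-\tfrac12)_+]_A \ll [p]_A$ the class $[a]_A = [p]_A$ is compact, so (\ref{Pure-Soft-Equivalent.4}) fails. Conversely, assume $[a]_A$ is compact, so $[a]_A = [(a-\epsilon)_+]_A$ for some $\epsilon>0$, and suppose for contradiction that $0$ is not isolated. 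Then $\spec(a)\cap(0,\epsilon)\neq\varnothing$; picking $t_0$ there and a continuous $f\geq 0$ with $f(t_0)=1$ supported in a neighbourhood of $t_0$ contained in $(0,\epsilon)$, the element $c:=f(a)$ is nonzero and $c\,(a-\epsilon)_+ = 0$. Since $t\mapsto f(t)+(t-\epsilon)_+$ is a continuous function of $a$ vanishing at $0$ whose cozero set lies in that of $a$, commutative Cuntz comparison gives $(a-\epsilon)_+ + c \precsim_A a$, whence
\[
[(a-\epsilon)_+]_A + [c]_A \;\leq\; [a]_A \;=\; [(a-\epsilon)_+]_A.
\]
Now I invoke stable finiteness and simplicity: by Remark~3.7 of \cite{BK04} there is a faithful densely defined $\tau \in \EQT_2(A)\setminus\{\tau_\infty\}$. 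As $(a-\epsilon)_+ \in \Ped(B)$ (a standard property of the Pedersen ideal), \Lem{BonPed} shows $\tau$ is bounded on $\overline{(a-\epsilon)_+ B (a-\epsilon)_+}$, so $d_\tau((a-\epsilon)_+) < \infty$, while faithfulness gives $d_\tau(c) \geq \tau(c) > 0$. Applying $d_\tau$ to the displayed inequality yields $d_\tau((a-\epsilon)_+) + d_\tau(c) \leq d_\tau((a-\epsilon)_+)$ with finite left-hand summand and strictly positive second term, a contradiction. Hence $0$ is isolated.

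The equivalence $(\ref{Pure-Soft-Equivalent.1})\Leftrightarrow(\ref{Pure-Soft-Equivalent.3})$ is then formal. If $0$ is isolated, the projection $p$ above gives $[a]_A=[p]_A$, so $a$ is not purely positive; this is the contrapositive of $(\ref{Pure-Soft-Equivalent.1})\Rightarrow(\ref{Pure-Soft-Equivalent.3})$. Conversely, if $a$ is not purely positive, say $[a]_A = [p]_A$ with $p$ a projection in $M_{\infty}(A)$, then $[a]_A = [p]_A$ is compact, so by the implication $(\ref{Pure-Soft-Equivalent.4})\Rightarrow(\ref{Pure-Soft-Equivalent.3})$ just proved, $0$ is isolated; this is the contrapositive of $(\ref{Pure-Soft-Equivalent.3})\Rightarrow(\ref{Pure-Soft-Equivalent.1})$. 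Finally, for $(\ref{Pure-Soft-Equivalent.1})\Leftrightarrow(\ref{Pure-Soft-Equivalent.2})$ I would appeal to the characterization of soft elements in the simple stably finite setting from \cite{ThiVil24}: there a nonzero positive element is soft precisely when its Cuntz class is non-compact, equivalently not the class of a projection, which is condition (\ref{Pure-Soft-Equivalent.4}) (equivalently (\ref{Pure-Soft-Equivalent.1})) established above.

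The main obstacle is the implication $(\ref{Pure-Soft-Equivalent.4})\Rightarrow(\ref{Pure-Soft-Equivalent.3})$; everything else is functional calculus or formal bookkeeping. Its two delicate points are (i) producing a nonzero $c$ orthogonal to $(a-\epsilon)_+$ with $(a-\epsilon)_+ + c \precsim_A a$, which is exactly what the failure of isolation of $0$ provides by letting $\spec(a)$ accumulate below $\epsilon$, and (ii) guaranteeing $d_\tau((a-\epsilon)_+)<\infty$, for which the key inputs are membership of $(a-\epsilon)_+$ in the Pedersen ideal together with \Lem{BonPed}. One must also check that the quasitrace is genuinely faithful and distinct from $\tau_\infty$, so that $d_\tau(c)>0$ and the finiteness of $d_\tau((a-\epsilon)_+)$ hold simultaneously; this is where simplicity (via \cite{BK04}) enters.
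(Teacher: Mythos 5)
Your proof is correct, but it takes a genuinely different route from the paper. The paper's proof is almost entirely citation-based: it gets (2)$\Leftrightarrow$(3) from Proposition~3.6 of the soft-operators paper of Thiel--Vilalta (simplicity alone suffices there), gets (1)$\Leftrightarrow$(3) by observing that stable finiteness makes all projections finite so that the proof of Lemma~3.2 of Phillips' large subalgebras paper goes through without a unit, and gets (4)$\Leftrightarrow$(1) from Proposition~5.3.16 of Antoine--Perera--Thiel. You instead make (3) the hub and prove (3)$\Leftrightarrow$(4) from scratch: the compactness reformulation $[b]$ compact $\iff$ $[b]=[(b-\ep)_+]$ via R{\o}rdam's lemma, the functional-calculus production of a nonzero $c\perp(a-\ep)_+$ with $(a-\ep)_+ + c \precsim_A a$ when $0$ is not isolated, and the contradiction via a faithful densely defined quasitrace -- which is exactly where stable finiteness enters, through Remark~3.7 of Blanchard--Kirchberg, with finiteness of $d_\tau((a-\ep)_+)$ secured by $(a-\ep)_+\in\Ped(A\otimes\cK)$ and the paper's boundedness lemma. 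What your approach buys is a self-contained argument that makes visible \emph{why} stable finiteness is needed (in $\cO_2$ every nonzero class is compact while spectra need not have $0$ isolated, so (4)$\Rightarrow$(3) genuinely fails without it), at the cost of redoing work the cited references already contain; the paper's approach is shorter but opaque about where each hypothesis is used.

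Two small points to tighten. First, in passing from ``$0$ isolated'' to ``$a$ not purely positive,'' your projection $p=g(a)$ lives in $A\otimes\cK$, whereas Definition~3.10 of the paper asks for a projection in $M_\infty(A)$; you should note that any projection in $A\otimes\cK$ is close to, hence Murray--von Neumann equivalent to, a projection in some $M_n(A)$. Second, your citation for (1)$\Leftrightarrow$(2) attributes to Thiel--Vilalta a ``soft $\iff$ non-compact class'' characterization; what their Proposition~3.6 actually provides (and what the paper uses) is, for simple $A$, that softness is equivalent to $0$ not being isolated in the spectrum -- i.e., your condition (3). Since (3) is your hub and you have already proved (3)$\Leftrightarrow$(1)$\Leftrightarrow$(4), citing soft $\Leftrightarrow$ (3) closes the chain immediately and avoids any appearance of circularity.
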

\begin{proof}
It follows from simplicity of $A$ and 
Proposition~3.6 of \cite{ThiVil24}
that 
(\ref{Pure-Soft-Equivalent.2})
 is equivalent to 
 (\ref{Pure-Soft-Equivalent.3}). 
Stable finiteness of 
$A$ is not required for this part.

Since $A$ is  stably finite simple, it follows by
Remark~\ref{FiniteProj}  that all projections in $A$ are finite. So, the proof of Lemma 3.2 of \cite{Ph14} still works if $A$ is not unital. Therefore, (\ref{Pure-Soft-Equivalent.1}) is equivalent to (\ref{Pure-Soft-Equivalent.3}). 

It follows from the definition of pure positivity and
Proposition~5.3.16 of \cite{APT18}
that 
(\ref{Pure-Soft-Equivalent.4})
 is equivalent to 
 (\ref{Pure-Soft-Equivalent.1}). 
\end{proof}
\begin{ntn}
Let $A$ be a C*-algebra and let $a, b \in A_+$. 
We write $a \approx b$ if there exists $x\in A$ such that
$a=x^*x$ and $b=xx^*$.
\end{ntn}
The following definition is equivalent to
the actual definition of the the Global Glimm Property, Definition~4.12 of \cite{KR02}. We use this version for convenience. 
\begin{dfn}\label{GlobalGlimmProp}
Let $A$ be a C*-algebra. We say that 
$A$ has \emph{the Global Glimm Property} if for every $a \in A_+$, for every $\ep > 0$, and for every $n \in \N$, there are
mutually orthogonal positive elements 
$b_1, b_2,\ldots, b_n$ in $\overline{aAa}$ such that:
\begin{enumerate}
\item
$b_1 \approx b_2 \approx \cdots \approx b_n$.
\item
$(a-\ep)_+ \in \overline{\mathrm{Span} (AbA)}$ 
where $b= b_1+ b_2+ \ldots + b_n$. 
\end{enumerate}
\end{dfn}
The following lemma   is quite similar to
Lemma~2.4 of \cite{Ph14}, except that we here have $b_1 \approx b_2 \approx \cdots \approx b_n$
instead of  $b_1 \sim_A b_2 \sim_A \cdots \sim_A b_n$. 
The proof is exactly the same as the proof of Lemma~2.4 of \cite{Ph14} as, in fact, it is easy to check that 
$b_1 \approx b_2 \approx \cdots \approx b_n$ in the setting of \cite{Ph14}. 
\begin{lem}\label{Lem2.4Ph14}
Let $A$ be a simple C*-algebra which is not of type I, let 
$a \in A_+ \setminus \{0\}$, and let 
$n \in \N$. Then there exist 
mutually orthogonal positive elements
$b_1, b_2,\ldots, b_n \in A_+ \setminus \{0\}$
 such that
 $b_1 \approx b_2 \approx \cdots \approx b_n$
 and 
 $b_1+ b_2+ \ldots + b_n \in \overline{aAa}$.
\end{lem}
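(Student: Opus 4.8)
The plan is to follow the proof of Lemma~2.4 of \cite{Ph14} verbatim and simply observe that the equivalences it produces are of the stronger form $\approx$. First I would pass to the hereditary subalgebra $B = \overline{aAa}$. Since $A$ is simple, $B$ is a nonzero full hereditary subalgebra, hence simple; and since a simple C*-algebra is of type~I precisely when it is elementary (Corollary~IV.1.2.6 of \cite{BlaBo}), and for simple $A$ the subalgebra $B$ is elementary if and only if $A$ is, the algebra $B$ is again simple and not of type~I. Because $\approx$ is symmetric (if $a = x^{*}x$ and $b = xx^{*}$ then $b = (x^{*})^{*}x^{*}$ and $a = x^{*}(x^{*})^{*}$, so $b \approx a$ via the witness $x^{*}$), it suffices to produce mutually orthogonal nonzero positive elements $b_1, \ldots, b_n \in B$ together with $x_1, \ldots, x_{n-1} \in B$ satisfying $x_i^{*}x_i = b_{i+1}$ and $x_i x_i^{*} = b_i$; the sum will then lie in $B = \overline{aAa}$ automatically.

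The key structural input, and the main obstacle, is the existence of a nonzero $*$-homomorphism $\varphi \colon C_0((0,1], M_n) \to B$ from the cone over $M_n$. This is exactly where non-type-I-ness enters. By Glimm's theorem, $B$ has a C*-subalgebra $C$ with an ideal $J \subseteq C$ such that $C/J$ contains a nonzero copy of $M_n$, so there is a nonzero homomorphism $M_n \to C/J$. Composing with the evaluation $C_0((0,1], M_n) \to M_n$ at the endpoint $t = 1$ and lifting along the projectivity of the cone $C_0((0,1], M_n)$ yields a homomorphism $\varphi$ into $C \subseteq B$ whose image modulo $J$ is nonzero, hence $\varphi \neq 0$. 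This is precisely the content one extracts from the construction in Lemma~2.4 of \cite{Ph14}.

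Given $\varphi$, I would fix a nonzero $g \in C_0((0,1])_{+}$ with $\varphi(g \otimes 1_{M_n}) \neq 0$ (possible since $\varphi \neq 0$), let $\{ e_{ij} \}$ be the standard matrix units of $M_n$, and set
\[
b_i = \varphi(g \otimes e_{ii}) \ \ (1 \le i \le n), \qquad x_i = \varphi(g^{1/2} \otimes e_{i,i+1}) \ \ (1 \le i \le n-1).
\]
Each $b_i$ is positive, and the $b_i$ are mutually $\approx$-equivalent through the off-diagonal matrix units, so they are simultaneously nonzero; the choice of $g$ forces them to be nonzero. Moreover $b_i b_j = \varphi(g^2 \otimes e_{ii}e_{jj}) = 0$ for $i \neq j$, giving mutual orthogonality, and a direct computation yields $x_i^{*} x_i = \varphi(g \otimes e_{i+1,i+1}) = b_{i+1}$ and $x_i x_i^{*} = \varphi(g \otimes e_{ii}) = b_i$, whence $b_i \approx b_{i+1}$. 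This last line is exactly the step at which the conclusion of \cite{Ph14} is upgraded from Cuntz equivalence to $\approx$, at no extra cost, since the witnessing elements are of the form $x^{*}x$ and $xx^{*}$. Finally $\sum_{i=1}^{n} b_i = \varphi(g \otimes 1_{M_n}) \in B = \overline{aAa}$, completing the argument; the only genuinely nontrivial point is the cone homomorphism of the second paragraph.
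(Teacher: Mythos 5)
Your proposal is correct and is essentially the paper's own argument: the paper proves this lemma simply by remarking that the proof of Lemma~2.4 of \cite{Ph14} goes through verbatim and already yields the stronger relation $\approx$, and your reconstruction --- a nonzero homomorphism $\varphi \colon C_0((0,1],M_n) \to \overline{aAa}$ obtained from Glimm's theorem plus projectivity of the cone, with $b_i = \varphi(g \otimes e_{ii})$ and off-diagonal witnesses $\varphi(g^{1/2} \otimes e_{ij})$ --- is exactly that construction. Your closing observation, that the Cuntz equivalences in \cite{Ph14} are implemented by elements of the form $x^*x$ and $xx^*$ and hence upgrade to $\approx$ at no cost, is precisely the point the paper's one-line proof makes.
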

By Definition~\ref{GlobalGlimmProp},
Lemma~\ref{Lem2.4Ph14}, and simplicity of the C*-algebra, we have the following lemma. 
\begin{lem}\label{SimplNonTypeGlimm}
Every simple C*-algebra which is not of type I has the Global Glimm Property. 
\end{lem}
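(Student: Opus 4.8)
The plan is to deduce the statement directly from Lemma~\ref{Lem2.4Ph14} and simplicity, checking the two conditions of Definition~\ref{GlobalGlimmProp} in turn. So let $A$ be a simple C*-algebra which is not of type~I, and fix $a \in A_+$, $\ep > 0$, and $n \in \N$. First I would dispose of the degenerate case $a = 0$ by taking $b_1 = \cdots = b_n = 0$ (which are mutually orthogonal, positive, and satisfy $b_1 \approx \cdots \approx b_n$ with $x = 0$), for which both conditions hold vacuously since $(a - \ep)_+ = 0$. Assume then that $a \neq 0$.

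Next I would invoke Lemma~\ref{Lem2.4Ph14} to obtain mutually orthogonal positive elements $b_1, \ldots, b_n \in A_+ \setminus \{0\}$ with $b_1 \approx b_2 \approx \cdots \approx b_n$ and $b := b_1 + \cdots + b_n \in \overline{aAa}$. Condition~(1) of Definition~\ref{GlobalGlimmProp} is then immediate. To place each individual $b_i$ inside $\overline{aAa}$ (which is what the definition asks), I would use that $\overline{aAa}$ is a hereditary subalgebra of $A$ together with the inequality $0 \leq b_i \leq b$, which forces $b_i \in \overline{aAa}$.

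Finally, for condition~(2), I would appeal to simplicity: since the $b_i$ are nonzero and mutually orthogonal, $b \neq 0$, and every nonzero element of a simple C*-algebra is full, so $\overline{\mathrm{Span}(AbA)} = A$. In particular $(a - \ep)_+ \in A = \overline{\mathrm{Span}(AbA)}$, which is exactly condition~(2). The only point requiring any care is checking that the output of Lemma~\ref{Lem2.4Ph14} lies in $\overline{aAa}$ for each $b_i$ separately rather than merely for their sum, and this is settled by heredity as above; the genuine content lives entirely in Lemma~\ref{Lem2.4Ph14} (and hence in the non-type-I hypothesis), while condition~(2) collapses to a triviality once simplicity is used. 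Thus $A$ has the Global Glimm Property.
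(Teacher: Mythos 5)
Your proof is correct and follows exactly the route the paper takes: its proof of this lemma is the one-line observation that it follows from Definition~\ref{GlobalGlimmProp}, Lemma~\ref{Lem2.4Ph14}, and simplicity. You have merely filled in the details the paper leaves implicit (the case $a=0$, heredity of $\overline{aAa}$ to place each $b_i$ individually, and fullness of the nonzero element $b$ in the simple algebra for condition~(2)), all of which are handled correctly.
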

We refer the reader to \cite{KR02, ThiVil23} for more details on the Global Glimm Property, and, also,  refer to Section~3 of \cite{ATV24}  for the notion of strongly soft Cuntz classes. 
Since every stably finite simple non-type-I C*-algebra  has the Global Glimm Property (see Lemma~\ref{SimplNonTypeGlimm}), it follows from Lemma~\ref{Pure-Soft-Equivalent} and Corollary~3.4 of \cite{ATV24}  that the strongly soft part of $\Cu(A)$ coincides  with
$\Cu_+ (A) \cup \{0\}$. Now, using this and  Proposition~3.6 of \cite{ATV24}, we get the following corollary. 
\begin{cor}\label{SubCuSemiofPurelyPosi}
Let $A$ be a stably finite simple non-type-I \ca. Then $\Cu_+(A) \cup \{0\}$
is a sub-Cu-semigroup of $\Cu (A)$. 
\end{cor}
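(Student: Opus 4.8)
The plan is to identify $\Cu_+(A) \cup \{0\}$ with the strongly soft part of $\Cu(A)$ in the sense of \cite{ATV24}, and then to quote the general fact that this strongly soft part is a sub-$\Cu$-semigroup. First I would record that $A$, being simple and not of type~I, has the Global Glimm Property by Lemma~\ref{SimplNonTypeGlimm}; stable finiteness is not needed for this, but it is what lets me invoke Lemma~\ref{Pure-Soft-Equivalent}, which says that for a positive element of $A \otimes \cK$ the notions of being purely positive, being soft (in the sense of Definition~3.1 of \cite{ThiVil24}), having $0$ a non-isolated point of its spectrum, and having non-compact Cuntz class all coincide. Thus $\Cu_+(A) \cup \{0\}$ is exactly the set of Cuntz classes of soft positive elements together with $0$.

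Next I would promote ``soft'' to ``strongly soft''. Under the Global Glimm Property, Corollary~3.4 of \cite{ATV24} guarantees that the soft elements are exactly the strongly soft ones, so that the strongly soft part of $\Cu(A)$ coincides with $\Cu_+(A) \cup \{0\}$. Having made this identification, the conclusion follows directly from Proposition~3.6 of \cite{ATV24}, which asserts that the strongly soft part of a $\Cu$-semigroup (with the Global Glimm Property in force) is a sub-$\Cu$-semigroup: it is a submonoid, it is itself a $\Cu$-semigroup for the inherited order, and the inclusion into $\Cu(A)$ is a $\Cu$-morphism.

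The substantive content is not in any direct verification but in the two cited results, so the main obstacle is really checking that their hypotheses are met. The delicate axioms to keep in mind are (O1) and the requirement that the inclusion preserve compact containment and suprema of increasing sequences: a priori an increasing sequence of soft elements could have a supremum that is compact, and a compact containment inside the subset need not be one in $\Cu(A)$. It is precisely the strong-softness condition, combined with the Global Glimm Property supplied by Lemma~\ref{SimplNonTypeGlimm}, that rules this out, which is why the passage from ``soft'' to ``strongly soft'' via Corollary~3.4 of \cite{ATV24} is the crucial step rather than a cosmetic one. I would therefore present the argument in exactly this order, letting Proposition~3.6 of \cite{ATV24} absorb the routine monoid- and order-theoretic bookkeeping.
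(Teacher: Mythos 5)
Your proposal is correct and follows essentially the same route as the paper: the discussion preceding the corollary invokes Lemma~\ref{SimplNonTypeGlimm} for the Global Glimm Property, Lemma~\ref{Pure-Soft-Equivalent} together with Corollary~3.4 of \cite{ATV24} to identify $\Cu_+(A) \cup \{0\}$ with the strongly soft part of $\Cu(A)$, and Proposition~3.6 of \cite{ATV24} to conclude. Your additional remarks on where stable finiteness enters and why the passage from soft to strongly soft matters are accurate glosses on that same argument.
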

In the following theorem, we show that the $\Cu$-morphism 
$\Cu (\iota) \colon \Cu (A^{\alpha}) \to \Cu (A)$
 is injective on the purely positive part of 
 $\Cu(A^\alpha)$. 
This generalizes Lemma 3.11 of \cite{AGP19} to the non-unital case. 
\begin{thm}\label{WC_+.injectivity}
Let $A$ be a  simple non-type-I not necessarily unital \ca{} and let $\alpha\colon G \to A$
be an action of a finite group $G$ on $A$ with the weak tracial Rokhlin property. 
Let $\iota \colon A^{\alpha} \to A$ denote the inclusion map. Then
the map 
\[
\Cu (\iota) \colon \Cu (A^{\alpha}) \to \Cu (A)
\]
 is  a $\Cu$-morphism which is injective on $\Cu_+ (A^\alpha)$.
\end{thm}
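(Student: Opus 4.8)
The first assertion is immediate. Since $\iota$ is a homomorphism, \Rmk{Cucpc}(\ref{Cucpc.2}) shows that $\Cu(\iota)$ is a $\Cu$-morphism. It therefore remains to prove injectivity on $\Cu_+(A^\alpha)$. So fix $a, b \in (A^\alpha \otimes \cK)_{++}$ and suppose $\Cu(\iota)([a]_{A^\alpha}) = \Cu(\iota)([b]_{A^\alpha})$; that is, $[a]_A = [b]_A$, or equivalently $a \precsim_A b$ and $b \precsim_A a$. The goal is to deduce $a \sim_{A^\alpha} b$, i.e.\ $[a]_{A^\alpha} = [b]_{A^\alpha}$.

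The plan is to apply \Prp{W.T.R.P.inject}, not to $A$ directly but to its stabilization. Let $\beta \colon G \to \Aut(\cK)$ be the trivial action; since $\cK$ is simple, \Lem{FG.thm.1.3} shows that $\alpha \otimes \id = \alpha \otimes \beta \colon G \to \Aut(A \otimes \cK)$ again has the weak tracial Rokhlin property. The algebra $A \otimes \cK$ is simple and infinite dimensional, its fixed point algebra is $(A \otimes \cK)^{\alpha \otimes \id} = A^\alpha \otimes \cK$, and under the canonical identification $(A \otimes \cK) \otimes \cK \cong A \otimes \cK$ Cuntz subequivalence in $A \otimes \cK$ coincides with $\precsim_A$, while Cuntz subequivalence in $A^\alpha \otimes \cK$ coincides with $\precsim_{A^\alpha}$. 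Thus \Prp{W.T.R.P.inject}, applied to the system $(A \otimes \cK, \alpha \otimes \id)$, will convert statements about $\precsim_A$ into statements about $\precsim_{A^\alpha}$ for elements of $A^\alpha \otimes \cK$, once its spectral hypothesis is checked.

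That hypothesis follows from pure positivity. If $0$ were isolated in $\spec(a)$, then, choosing a continuous $f \colon [0, \infty) \to [0, \infty)$ with $f(0) = 0$ and $f \equiv 1$ on $\spec(a) \setminus \{0\}$, the element $f(a) \in A^\alpha \otimes \cK$ would be a projection with $[a]_{A^\alpha} = [f(a)]_{A^\alpha}$, contradicting $a \in (A^\alpha \otimes \cK)_{++}$; this is the implication ``(\ref{Pure-Soft-Equivalent.1}) $\Rightarrow$ (\ref{Pure-Soft-Equivalent.3})'' of \Lem{Pure-Soft-Equivalent}, and it uses neither stable finiteness nor unitality. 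Since $A^\alpha \otimes \cK$ is non-unital we have $0 \in \spec(a) \cap \spec(b)$, so $0$ is a limit point of each of $\spec(a)$ and $\spec(b)$. We may now apply \Prp{W.T.R.P.inject} twice: from $a \precsim_A b$ together with the fact that $0$ is a limit point of $\spec(b)$ we get $a \precsim_{A^\alpha} b$, and from $b \precsim_A a$ together with the fact that $0$ is a limit point of $\spec(a)$ we get $b \precsim_{A^\alpha} a$. Hence $a \sim_{A^\alpha} b$, proving injectivity.

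The substance of the argument is entirely carried by \Prp{W.T.R.P.inject}, which is what forces comparison over $A$ and over $A^\alpha$ to agree on purely positive elements; the present theorem only has to move that result onto the stabilization. Accordingly, the step most in need of care is not an estimate but the verification that the weak tracial Rokhlin property is inherited by $\alpha \otimes \id$ on $A \otimes \cK$ (supplied by \Lem{FG.thm.1.3}) and the routine but essential bookkeeping that $(A \otimes \cK)^{\alpha \otimes \id} = A^\alpha \otimes \cK$ and that the two stabilized Cuntz relations agree.
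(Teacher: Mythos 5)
Your proposal is correct and follows essentially the same route as the paper: pass the weak tracial Rokhlin property to the stabilized action and then invoke \Prp{W.T.R.P.inject}, the only cosmetic difference being that you obtain the property on $A \otimes \cK$ by applying \Lem{FG.thm.1.3} with the trivial action on the simple algebra $\cK$ (exactly the device the paper itself uses in the proofs of Lemma~\ref{Cu.Surjec}, Lemma~\ref{Cu.W.T.R.p.Surjec}, and Theorem~\ref{TrcaeofProj}), whereas the paper's own proof of this theorem amplifies to matrix algebras via $\id_n \otimes \alpha$ and cites Corollary~4.4 of \cite{FG20} to pass to the direct limit. Your write-up is in one respect more complete than the paper's: you explicitly verify the spectral hypothesis of \Prp{W.T.R.P.inject} --- that $0$ is a limit point of $\spec(a)$ and of $\spec(b)$ --- deriving it from pure positivity together with $0 \in \spec(a) \cap \spec(b)$ (automatic in the non-unital algebra $A^\alpha \otimes \cK$), and you correctly isolate the one implication of \Lem{Pure-Soft-Equivalent} being used as the one that requires neither stable finiteness (which this theorem does not assume) nor unitality; the paper's proof leaves this check implicit, so the only substantive addition on your side closes a small gap rather than opening one. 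The remaining bookkeeping you flag --- that $(A \otimes \cK)^{\alpha \otimes \id} = A^\alpha \otimes \cK$ and that the stabilized Cuntz relations agree with $\precsim_A$ and $\precsim_{A^\alpha}$ --- is indeed routine, e.g.\ via the conditional expectation $\frac{1}{\card(G)}\sum_{g} \alpha_g \otimes \id$ and Lemma~\ref{PhiB.Lem.18.4}(\ref{KR00.lem.22}).
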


\begin{proof}
Let $\id_n \colon G \to \Aut (M_n (\mathbb{C}))$ denote the trivial action,
for $n\in \N$. 
It follows from Remark~\ref{Cucpc} that 
$\Cu (\iota)$ is a $\Cu$-morphism. 
To prove the injectivity of $\Cu (\iota)$ 
on $\Cu_+ (A)$, 
let $a, b \in (A \otimes \cK)_{++}$. 
Suppose 
$\Cu (\iota) \left([ a ]_{A^{\alpha}}\right)
=
\Cu (\iota) \left([ b ]_{A^{\alpha}}\right)$.
Then $[ a ]_{A} = [ b ]_{A}$
and  $a \sim_A b$. 
By Lemma \ref{FG.thm.1.3}, 
 $\id_{n}  \otimes \alpha $ has the weak tracial  Rokhlin property for all $n\in \N$.
 Therefore, by Corollary 4.4 of \cite{FG20}, 
$\dirlim \id_{n}  \otimes \alpha$ has the weak tracial  Rokhlin property. 
Using  Proposition \ref{W.T.R.P.inject}, we get $a \sim_{A^{\alpha}} b$ and 
so $[ a ]_{A^{\alpha}} = [ b ]_{A^{\alpha}}$.
 \end{proof}
 For the definition of  the \emph{Rokhlin property} for a finite group action,  
we refer to Definition~2 of \cite{S15}
for  non-unital C*-algebras, 
Definition~3.1 of \cite{Naw16} for $\sigma$-unital C*-algebras, and Defintion~ 3.1 of \cite{Iz04} for unital C*-algebras (see Definition~1.1 of \cite{PhT1} for the tracial version of this).
 In the above theorem, $\Cu (\iota)$ cannot be injective on the entire $\Cu (A^\alpha)$ (see Example 4.7 of \cite{AGP19} for a counterexample). However, it holds if 
 $\alpha$ has the Rokhlin property, not just  the weak tracial Rokhlin property. 
Namely, if $\alpha\colon G \to A$ has the Rokhlin property, then 
$\Cu (\iota)$ is injective on the whole of $\Cu (A^\alpha)$.
This is shown in Theorem~4.10 of \cite{GS17}.

The following lemma is Lemma~5.2 of \cite{AGP19}. 
\begin{lem}\label{Small_fixed_element}
Let $A$ be a simple \ca{} which is not of type I. 
Let $\alpha \colon G \to \Aut (A)$ be an action of a finite group $G$ on $A$
and let $x \in A_+ \setminus \{0\}$. 
Then there exists $z \in (A^{\alpha})_+ \setminus \{0\}$ such that
$z \precsim_A x$.
\end{lem}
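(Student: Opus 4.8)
The plan is to produce $z$ by averaging a sufficiently small positive element over $G$, using that a simple non-type-I C*-algebra has enough room below $x$ to accommodate $\card (G)$ mutually disjoint copies. Write $n = \card (G)$. First I would apply \Lem{Lem2.4Ph14} to $x$ and $n$ to obtain mutually orthogonal positive elements $b_g \in A_+ \setminus \{0\}$, one for each $g \in G$, with $\sum_{g \in G} b_g \in \overline{x A x}$. Since $\overline{x A x}$ is a hereditary subalgebra, $\sum_{g} b_g \precsim_A x$, and by mutual orthogonality $\sum_{g} b_g \sim_A \bigoplus_{g} b_g$, so that $\bigoplus_{g} b_g \precsim_A x$.

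The key step is to find a single $c \in A_+ \setminus \{0\}$ whose orbit fits underneath the $b_g$, that is, with $\alpha_g (c) \precsim_A b_g$ for every $g$; equivalently (applying the automorphism $\alpha_{g^{-1}}$, which preserves $\precsim_A$) with $c \precsim_A \alpha_{g^{-1}} (b_g)$ for every $g$. Such a $c$ is a common nonzero Cuntz-lower bound for the finite family $\{\alpha_{g^{-1}} (b_g) : g \in G\}$ of nonzero positive elements, and its existence is the only substantive point. It follows from the elementary fact that in a simple C*-algebra $A$, for $a, b \in A_+ \setminus \{0\}$ there is $d \in A_+ \setminus \{0\}$ with $d \precsim_A a$ and $d \precsim_A b$. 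Indeed, $a^{1/2} A b^{1/2} \neq 0$ (otherwise $a^{1/2} A b^{1/2} = 0$ gives $a^{1/2} A b^{1/2} A = 0$, and since $\overline{A b^{1/2} A} = A$ by simplicity this forces $a^{1/2} A = 0$, hence $a = 0$), so choose $w \in A$ with $a^{1/2} w b^{1/2} \neq 0$ and put $d = b^{1/2} w^* a w b^{1/2} = (a^{1/2} w b^{1/2})^* (a^{1/2} w b^{1/2}) \neq 0$; then $d \in \overline{b A b}$ gives $d \precsim_A b$, while $d \sim_A a^{1/2} w b w^* a^{1/2} \leq \|b\| \, \|w\|^2 \, a$ (by \Lem{PhiB.Lem.18.4}(\ref{PhiB.Lem.18.4.6})) gives $d \precsim_A a$. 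Applying this repeatedly, in a finite induction over $G$, produces the desired common lower bound $c$, and hence $\alpha_g (c) \precsim_A b_g$ for all $g \in G$.

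Finally I would set $z = \sum_{g \in G} \alpha_g (c)$. By construction $z \in (A^{\alpha})_+$, and $z \geq \alpha_1 (c) = c \neq 0$, so $z \in (A^{\alpha})_+ \setminus \{0\}$. To estimate $z$, note that a sum of positive elements is dominated by their orthogonal direct sum, so by \Lem{PhiB.Lem.18.4}(\ref{PhiB.Lem.18.4.15}) (iterated) $z = \sum_g \alpha_g (c) \precsim_A \bigoplus_g \alpha_g (c)$. Since $\alpha_g (c) \precsim_A b_g$ for each $g$ and the order on $\Cu (A)$ is compatible with addition, $\bigoplus_g \alpha_g (c) \precsim_A \bigoplus_g b_g$. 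Combining with $\bigoplus_g b_g \precsim_A x$ from the first paragraph yields $z \precsim_A x$, as required. The main obstacle is the common-lower-bound step; everything else is bookkeeping with orthogonal sums. Note that non-type-I is used precisely to guarantee, via \Lem{Lem2.4Ph14}, the $n$ mutually orthogonal nonzero pieces below $x$ (for a type-I algebra such as $\mathbb{C}$ these need not exist), while simplicity is what drives the common-lower-bound fact.
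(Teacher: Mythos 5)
Your proof is correct. There is no in-paper argument to compare against: the paper states this lemma as a quotation of Lemma~5.2 of \cite{AGP19} and gives no proof, so your write-up is in effect a self-contained reconstruction, and it follows the same scheme as the proof in \cite{AGP19}: mutually orthogonal nonzero pieces below $x$ supplied by non-type-I-ness via \Lem{Lem2.4Ph14}, a common nonzero Cuntz lower bound for the finite family $\{\alpha_{g^{-1}}(b_g)\}_{g \in G}$ supplied by simplicity, and averaging the orbit of that lower bound over $G$. Your verification of the common-lower-bound fact is sound: $a^{1/2} A b^{1/2} \neq 0$ by simplicity, and with $d = b^{1/2} w^* a w b^{1/2} = (a^{1/2} w b^{1/2})^*(a^{1/2} w b^{1/2}) \neq 0$ one gets $d \precsim_A b$ from $d \in \overline{b A b}$ and $d \precsim_A a$ from $d \sim_A a^{1/2} w b w^* a^{1/2} \leq \| b \| \, \| w \|^2 \, a$ using Lemma~\ref{PhiB.Lem.18.4}(\ref{PhiB.Lem.18.4.6}). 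The bookkeeping $z = \sum_{g} \alpha_g (c) \precsim_A \bigoplus_{g} \alpha_g (c) \precsim_A \bigoplus_{g} b_g \sim_A \sum_{g} b_g \precsim_A x$ is likewise correct (the first step is Lemma~\ref{PhiB.Lem.18.4}(\ref{PhiB.Lem.18.4.15}) with $\af = \bt = 0$, iterated, and the invariance $z \in (A^{\alpha})_+ \setminus \{0\}$ follows since $z \geq c > 0$ and the sum is permuted by each $\alpha_h$). One small remark: you only use that the $b_g$ are mutually orthogonal and nonzero; the relation $b_1 \approx \cdots \approx b_n$ provided by \Lem{Lem2.4Ph14} plays no role in your argument, which is fine.
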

We need the following lemma for the  proofs of Lemma~\ref{Cu.Surjec} 
and  Theorem~\ref{TrcaeofProj}. 
\begin{lem}\label{M.A_lemma_05.17.2019}
Let $A$ be a \ca{} and let $\ep>0$. Let $a \in A_+$ with $\| a \| \leq 1$. 
Then, for every $y \in A$,
\[
(y y^* - y a y^* - \ep)_+
\precsim_A
(y y^* - y a^2 y^* - \ep)_+
\precsim_A
\left(y y^* - y a y^* - \tfrac{\ep}{2}\right)_+.
\]
\end{lem}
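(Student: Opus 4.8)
The plan is to reduce both subequivalences to two elementary operator inequalities for $a$, each combined with the monotonicity statement \Lem{PhiB.Lem.18.4}(\ref{Item.largesub.lem1.7}). I would work in the unitization $A^+$, where $\|a\|\le 1$ and $a\ge 0$ give $0\le a\le 1$; note that although $1-a$ and $1-a^2$ lie in $A^+$, the elements $y(1-a)y^*=yy^*-yay^*$ and $y(1-a^2)y^*=yy^*-ya^2y^*$ lie in $A_+$, so every comparison below really takes place in $A$.

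For the first subequivalence, I would observe that $a-a^2=a(1-a)\ge 0$, i.e.\ $a^2\le a$, whence
\[
0\le yy^*-yay^*=y(1-a)y^*\le y(1-a^2)y^*=yy^*-ya^2y^*.
\]
Applying \Lem{PhiB.Lem.18.4}(\ref{Item.largesub.lem1.7}) to these two positive elements gives $(yy^*-yay^*-\ep)_+\precsim_A (yy^*-ya^2y^*-\ep)_+$ at once.

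For the second subequivalence the key is the reverse-looking inequality $1-a^2\le 2(1-a)$, which holds because $2(1-a)-(1-a^2)=(1-a)^2\ge 0$. Conjugating by $y$ yields
\[
0\le yy^*-ya^2y^*=y(1-a^2)y^*\le 2\,y(1-a)y^*=2(yy^*-yay^*),
\]
so \Lem{PhiB.Lem.18.4}(\ref{Item.largesub.lem1.7}) gives $(yy^*-ya^2y^*-\ep)_+\precsim_A \big(2(yy^*-yay^*)-\ep\big)_+$. Setting $u=yy^*-yay^*\in A_+$ and using the functional-calculus identity $(2u-\ep)_+=2\big(u-\tfrac{\ep}{2}\big)_+$ together with the standard fact that a positive scalar multiple of a positive element is Cuntz equivalent to it, namely $2\big(u-\tfrac{\ep}{2}\big)_+\sim_A \big(u-\tfrac{\ep}{2}\big)_+$, completes the chain and yields $(yy^*-ya^2y^*-\ep)_+\precsim_A \big(yy^*-yay^*-\tfrac{\ep}{2}\big)_+$.

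There is no deep obstacle here; the only point requiring care is that the second comparison runs \emph{against} the order $yy^*-yay^*\le yy^*-ya^2y^*$, so a direct appeal to monotonicity is impossible. The resolution is the quadratic slack $1-a^2\le 2(1-a)$ coming from $(1-a)^2\ge 0$, which trades the order reversal for a harmless factor of $2$; this factor is then absorbed by the rescaling identity $(2u-\ep)_+=2\big(u-\tfrac{\ep}{2}\big)_+$ and scalar Cuntz invariance, which is precisely why the $\ep$ on the right-hand side is halved.
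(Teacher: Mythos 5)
Your proof is correct and follows essentially the same route as the paper's: both derive $1-a\le 1-a^2$ and $1-a^2\le 2(1-a)$ in the unitization, conjugate by $y$, and apply Lemma~\ref{PhiB.Lem.18.4}(\ref{Item.largesub.lem1.7}), absorbing the factor $2$ via $\bigl(2u-\ep\bigr)_+\sim_A\bigl(u-\tfrac{\ep}{2}\bigr)_+$. The only (harmless) difference is that you use the exact functional-calculus identity $(2u-\ep)_+=2\bigl(u-\tfrac{\ep}{2}\bigr)_+$ where the paper instead invokes Lemma~\ref{PhiB.Lem.18.4}(\ref{LemdivibyNorm}).
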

\begin{proof}
 Applying the functional calculus to $a$ in $A^+$, the unitization of $A$, 
 and considering $\| a \| \leq 1$, 
we get
\begin{equation*}
1 - a \leq 1 - a^2
\qquad
\mbox{and}
\qquad
1 - a^2 \leq 2 (1 - a).
\end{equation*}
Then, for all $y \in A$,
\begin{equation}\label{Eq1.05.17.2019}
0 \leq y y^* - y ay^*\leq y y^* - y a^2 y^*
\qquad
\mbox{and}
\qquad
0 \leq y y^* - y a^2 y^* \leq 2 (y y^* -y a y^*).
\end{equation}
Therefore, 
using the first part of (\ref{Eq1.05.17.2019}) and Lemma~\ref{PhiB.Lem.18.4}(\ref{Item.largesub.lem1.7})  at the first step,
using the second  part of (\ref{Eq1.05.17.2019}) and \Lem{Eq1.05.17.2019} at the second step,
and using Lemma~\ref{PhiB.Lem.18.4}(\ref{LemdivibyNorm})  at the third step, we get
\begin{align*}
(y y^* - y a y^* - \ep)_+
&\precsim_A
(y y^* - y a^2 y^* - \ep)_+
\\
&\precsim_A
\big(2 (y y^* -y a y^*) - \ep \big)_+
\\
&\precsim_A
2 \Big( (y y^* -y a y^*) - \tfrac{\ep}{2} \Big)_+
\\
&\sim_A
\Big( (y y^* -y a y^*) - \tfrac{\ep}{2} \Big)_+. 
\end{align*}
This relation implies that
\[
(y y^* - y a y^* - \ep)_+
\precsim_A
(y y^* - y a^2 y^* - \ep)_+
\precsim_A
\left(y y^* - y a y^* - \tfrac{\ep}{2}\right)_+.
\]
This completes the proof.
\end{proof}
The following lemma is well known. 
\begin{lem}\label{A.C.05.21.2019}
Let $A$ be a \ca{} and 
let $\alpha \colon G \to \Aut(A)$ be an action of a finite group $G$ on $A$.
Then there exists an approximate unit for $A$ which is in $A^{\alpha}$.
\end{lem}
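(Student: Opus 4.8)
The plan is to obtain the desired approximate unit by averaging an arbitrary one over the group; since $G$ is finite, this averaging is harmless. First I would fix an increasing approximate unit $(e_\lambda)_{\lambda \in \Lambda}$ for $A$ consisting of positive contractions, which exists for every \ca. I would then set
\[
f_\lambda = \frac{1}{\card (G)} \sum_{g \in G} \alpha_g (e_\lambda)
\]
for each $\lambda \in \Lambda$. Each $f_\lambda$ is positive, and $\| f_\lambda \| \leq 1$ since it is a convex combination of contractions. Because each $\alpha_g$ is an automorphism, $\alpha_h (f_\lambda) = \frac{1}{\card (G)} \sum_{g \in G} \alpha_{hg} (e_\lambda) = f_\lambda$ for all $h \in G$, so $f_\lambda \in A^{\alpha}$. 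Moreover, since each $\alpha_g$ preserves the order and $(e_\lambda)$ is increasing, the net $(f_\lambda)$ is increasing as well.

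The one point requiring an argument is that $(f_\lambda)$ is still an approximate unit for $A$, and here I would use that each $\alpha_g$ is an isometric automorphism together with the finiteness of $G$. For $a \in A$ and $g \in G$, writing $a_g = \alpha_{g^{-1}} (a)$, one has
\[
\alpha_g (e_\lambda) \, a = \alpha_g \big( e_\lambda \, a_g \big),
\]
and since $\| e_\lambda a_g - a_g \| \to 0$ and $\alpha_g$ is isometric, it follows that $\| \alpha_g (e_\lambda) a - a \| = \| \alpha_g ( e_\lambda a_g - a_g ) \| \to 0$. Averaging over the finitely many $g \in G$ then gives $\| f_\lambda a - a \| \to 0$, and taking adjoints together with $f_\lambda = f_\lambda^*$ yields $\| a f_\lambda - a \| \to 0$. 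Hence $(f_\lambda)$ is an approximate unit for $A$ lying entirely in $A^{\alpha}$, as required.

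The only subtle issue — and it is genuinely mild — is that the rate of convergence of $\| \alpha_g (e_\lambda) a - a \|$ depends on $g$ through $a_g$; this is exactly where finiteness of $G$ enters, since a finite average of convergent nets still converges. I do not expect any further obstacle.
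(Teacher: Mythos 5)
Your proposal is correct and follows essentially the same route as the paper: both average an approximate unit $(e_\lambda)$ over $G$ to get $f_\lambda = \frac{1}{\card(G)}\sum_{g\in G}\alpha_g(e_\lambda) \in A^{\alpha}$, and both use that each $(\alpha_g(e_\lambda))_\lambda$ is again an approximate unit (via $\|\alpha_g(e_\lambda)a - a\| = \|e_\lambda \alpha_{g^{-1}}(a) - \alpha_{g^{-1}}(a)\|$) together with finiteness of $G$ to pick a single index working for all $g$ at once. Your explicit attention to the $g$-dependence of the convergence and the adjoint trick for two-sidedness is exactly the point the paper's choice of $\lambda_0$ handles implicitly.
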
 

\begin{proof}
Let $(u_{\lambda})_{\lambda \in \Lambda}$ be an approximate unit for $A$.
Clearly, 
$\left(\alpha_g (u_{\lambda}) \right)_{\lambda \in \Lambda}$ 
is also an approximate unit  for $A$ for all $g \in G$. 
For all $\lambda \in \Lambda$, define 
\[
v_{\lambda}= \frac{1}{\card (G)} \cdot \sum_{g \in G} \alpha_g (u_{\lambda}).
\]
Clearly, $v_{\lambda} \in A^{\alpha}$ for all $\lambda \in \Lambda$.
Now we claim that $(v_{\lambda})_{\lambda \in \Lambda}$ 
is an approximate unit for $A$.
To prove the claim, 
let $\ep>0$, and let $a \in A$.
Choose $\lambda_0$ such that, for all $g \in G$,
\begin{equation}\label{Eq10.05.23.2019}
\| \alpha_g (u_{\lambda_0}) a - a   \| < \ep.
\end{equation} 
Then, by (\ref{Eq10.05.23.2019}) at the last step,
\begin{align*}
\| v_{\lambda_0} a - a   \|
&= 
\frac{1}{\card (G)} \cdot
\Bigg\|  \sum_{g \in G}  \alpha_g (u_{\lambda_0}) a -  \card (G) \, a \Bigg\|
\\
&\leq 
\frac{1}{\card (G)} \cdot \sum_{g \in G} \| \alpha_g (u_{\lambda_0}) a -   a \| < \ep.
\end{align*}
This completes the proof.
\end{proof} 
 To prepare for the proof of  surjectivity of $\Cu (\iota) \colon \Cu_+ (A^{\alpha}) \cup \{ 0 \}\to \Cu_+ (A)^{\alpha} \cup \{ 0 \}$, we need the following proposition. 
 In the setting of unital C*-algebras, this lemma is 
 Lemma~5.3 of \cite{AGP19}.
 \begin{lem}\label{Cu.Surjec}
Let $A$ be a  simple non-type-I  not necessarily unital
 \ca{} and let 
$\alpha \colon G \to \Aut (A)$ be an action 
of a finite group $G$ on $A$ which has the weak tracial  Rokhlin property.
Let $a \in (A \otimes \cK)_+$ satisfy $a \sim_A \alpha_{g} (a)$ for all $g \in G$
and assume that  $0$ is a limit point of $\spec (a)$.
Then for $\ep>0$  there exist  $\delta>0$ 
and  $b \in \left( (A \otimes \cK \right)^{\alpha})_+$ such that
\[
(a- \ep)_+ \precsim_A  b \precsim_A (a - \delta)_+
\qquad
\mbox{ and }
\qquad
[0, 1] \subseteq \spec (b).
\]
\end{lem}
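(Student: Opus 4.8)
The plan is to construct $b$ explicitly from a Rokhlin tower adapted both to $a$ and to the elements implementing the invariance $a\sim_A\alpha_g(a)$, and then to read off the three required properties, following closely the mechanism of Proposition~\ref{W.T.R.P.inject}. First I would normalize $\|a\|=1$, assume $G$ nontrivial, and work with the action $\id\otimes\alpha$ (equivalently inside $A\otimes\cK$), which still has the weak tracial Rokhlin property by Lemma~\ref{FG.thm.1.3}. Since $a\sim_A\alpha_g(a)$ for each $g$, I fix elements $w_g\in A\otimes\cK$ with $w_g\alpha_g(a)w_g^*$ close to $a$ and put $a$ together with all the $w_g$ into the finite set $F$. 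Applying Lemma~\ref{invariant.contractions} (with $x$ a small positive element and $y$ taken from an $\alpha$-invariant approximate unit for $\overline{a(A\otimes\cK)a}$, furnished by Lemma~\ref{A.C.05.21.2019}) yields positive contractions $f_g$, $g\in G$, with $f=\sum_g f_g\in(A\otimes\cK)^\alpha$, $\|f\|=1$, that are approximately orthogonal, satisfy $\alpha_g(f_h)\approx f_{gh}$, approximately commute with $a$ and with each $w_g$, and obey the domination estimate $(y^2-yfy-\delta)_+\precsim_A x$.

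I would then set
\[
b_0=\sum_{g\in G}\alpha_g(f_1\,a\,f_1)\in(A\otimes\cK)^\alpha,
\]
which is manifestly $\alpha$-invariant. The heart of the argument is to prove $[b_0]_A=[a]_A$ up to the allowed cutdowns. Approximate orthogonality of the $f_g$ and their approximate commutation with $a$ make the summands $\alpha_g(f_1af_1)\approx f_g\alpha_g(a)f_g$ pairwise approximately orthogonal, so $b_0\sim_A\bigoplus_g\alpha_g(f_1af_1)$ and hence $[b_0]_A=\sum_g[\alpha_g(f_1af_1)]_A=\card(G)\cdot[f_1af_1]_A$, the last equality because each $\alpha_g$ preserves Cuntz classes. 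On the other hand, exactly as in Proposition~\ref{W.T.R.P.inject}, Lemma~\ref{Lem.ANP.Dec.18.non} applied with $c=f$ decomposes $a$, and the leftover term is absorbed by the estimate $(y^2-yfy-\delta)_+\precsim_A x$ with $x$ small; together with the approximately orthogonal decomposition $\sum_g f_gaf_g$ this gives $a\sim_A\bigoplus_g f_gaf_g$, so $[a]_A=\sum_g[f_gaf_g]_A$.

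The step that genuinely exploits the invariance (rather than actual fixedness) of $a$ is the transport identity $[f_gaf_g]_A=[f_1af_1]_A$ for all $g$. Applying $\alpha_{g^{-1}}$ and using $\alpha_{g^{-1}}(f_g)\approx f_1$ gives $[f_gaf_g]_A=[f_1\,\alpha_{g^{-1}}(a)\,f_1]_A$; since $w_{g^{-1}}\alpha_{g^{-1}}(a)w_{g^{-1}}^*\approx a$ and $f_1$ was arranged to commute approximately with $w_{g^{-1}}$, I slide $f_1$ past $w_{g^{-1}}$ to obtain $f_1\alpha_{g^{-1}}(a)f_1\precsim_A f_1af_1$ and, symmetrically, the reverse subequivalence, whence $[f_gaf_g]_A=[f_1af_1]_A$. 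Summing yields $[a]_A=\card(G)\cdot[f_1af_1]_A=[b_0]_A$, and Rørdam's lemma, Lemma~\ref{PhiB.Lem.18.4}(\ref{PhiB.Lem.18.4.11}), converts this into $(a-\ep)_+\precsim_A b_0\precsim_A(a-\delta)_+$ for a suitable $\delta>0$. To secure $[0,1]\subseteq\spec(b)$, I would finally pass from $b_0$ to $b=b_0\oplus e$ inside $(A\otimes\cK)^\alpha$ (two orthogonal copies via $\cK$), where $e\in((A\otimes\cK)^\alpha)_+$ is a positive element with $\spec(e)=[0,1]$ and arbitrarily small Cuntz class; such an $e$ exists because $A^\alpha$ is again simple, infinite-dimensional and non-elementary, and by Lemma~\ref{Small_fixed_element}. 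Since $0$ is a limit point of $\spec(a)$, the class $[a]_A$ is purely positive, i.e.\ soft, by Lemma~\ref{Pure-Soft-Equivalent}, so there is enough room below $(a-\delta)_+$ to absorb $[e]$ while preserving $(a-\ep)_+\precsim_A b\precsim_A(a-\delta)_+$.

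The main obstacle I anticipate is twofold. Conceptually, the crux is the transport identity $[f_gaf_g]_A=[f_1af_1]_A$: it is precisely here that invariance of $a$ only at the level of Cuntz classes (not as an element) must be promoted to a usable estimate, which is why the implementers $w_g$ are forced into the finite set $F$. Technically, the difficulty is the $\ep$--$\delta$ bookkeeping through the approximately orthogonal sums, so that the $\card(G)$ tower pieces fold to \emph{exactly one} copy of $[a]_A$ rather than $\card(G)$ copies, together with carrying out the final spectral normalization $[0,1]\subseteq\spec(b)$ without disturbing the sandwiching.
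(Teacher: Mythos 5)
Your construction is structurally the paper's: your $b_0=\sum_{g\in G}\alpha_g(f_1af_1)$ is exactly the element $d_0$ in the paper's proof, obtained from the same ingredients (Lemma~\ref{invariant.contractions} with the implementers of $a\sim_A\alpha_g(a)$ placed in the finite set, an invariant approximate unit from Lemma~\ref{A.C.05.21.2019}, the decomposition Lemma~\ref{Lem.ANP.Dec.18.non}, and, for the spectral condition, Lemma~\ref{Small_fixed_element} together with an element of spectrum $[0,1]$ in a small invariant hereditary subalgebra). But your verification scheme has a genuine gap, and it is not just bookkeeping. First, the exact Cuntz-class identities you rely on --- $b_0\sim_A\bigoplus_g\alpha_g(f_1af_1)$, $[f_gaf_g]_A=[f_1af_1]_A$, and finally $[a]_A=[b_0]_A$ --- are not available: the weak tracial Rokhlin property produces only \emph{approximately} orthogonal, \emph{approximately} equivariant towers, and sliding $f_1$ past the implementers $w_{g^{-1}}$ incurs norm errors, so each of these statements survives only in cutdown form, e.g.\ $\big(f_1\alpha_{g^{-1}}(a)f_1-\ep\big)_+\precsim_A f_1af_1$, never as an equality of classes. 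This is precisely why the paper works throughout with $(d_0-\dt_1^2)_+$ rather than $d_0$, and why the conclusion of the lemma is a two-sided sandwich of cutdowns rather than a Cuntz equivalence.

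Second, and fatally, even if $[b_0]_A=[a]_A$ held, your final step fails: R{\o}rdam's lemma (Lemma~\ref{PhiB.Lem.18.4}(\ref{PhiB.Lem.18.4.11})) converts $b_0\precsim_A a$ into ``for every $\ep$ there is $\dt$ with $(b_0-\ep)_+\precsim_A(a-\dt)_+$,'' which is strictly weaker than the required $b_0\precsim_A(a-\dt)_+$ for a \emph{fixed} $\dt>0$ (consider $b_0=a$). Worse, your claimed equality is incompatible with the conclusion you need: since $[(a-\dt)_+]_A\ll[a]_A$ always, the relations $[a]_A\leq[b]_A$ and $b\precsim_A(a-\dt)_+$ together would force $[a]_A\ll[a]_A$, i.e.\ $[a]_A$ compact, contradicting the hypothesis that $0$ is a limit point of $\spec(a)$ (via Lemma~\ref{Pure-Soft-Equivalent} in the stably finite setting where the lemma is applied). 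So the element $b$ must be genuinely strictly below $a$, and the upper bound cannot be extracted from any equivalence $b_0\sim_A a$. The paper instead proves two separate norm estimates using implementers in \emph{both} directions --- $v_g$ for $a\precsim_A\alpha_g(a)$ giving $\|x(d_0-\dt_1^2)_+x^*-f^2af^2\|<\tfrac{\ep}{8}$, and $y=\sum_g f_gw_g$ for $\alpha_g(a)\precsim_A a$ giving $\|y(a-\dt_2)_+y^*-d_0\|<\dt_1^2$ --- so that $(d_0-\dt_1^2)_+\precsim_A(a-\dt_2)_+$ holds directly, while the lower bound comes from Lemma~\ref{Lem.ANP.Dec.18.non} with the leftover absorbed into the spectral chunk $h(a)$ carved out of $(\dt_3,\dt_2)$; the same spectral-gap device (a second chunk $g(a)$ in $(\dt_5,\dt_4)$, orthogonal to $(a-\dt_4)_+$) is what makes your ``absorb $[e]$ while preserving the sandwich'' step rigorous, rather than an appeal to softness alone.
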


\begin{proof}
With $\id \colon G \to \Aut(\cK)$ the trivial action, 
since $\alpha$ has the weak tracial Rokhlin property, it follows from Lemma~\ref{FG.thm.1.3} that
$\alpha \otimes \id \colon G \to \Aut (A \otimes \mathcal{K})$ also has the weak tracial Rokhlin property.
By abuse of notation, let us write $\alpha$ instead of  $\alpha \otimes \id$. 
We may assume  
$\ep <1$  and $\| a \|=1$.
For all $g \in G$ we have $a \precsim_A \alpha_{g} (a)$, 
and so 
there exists $\delta_g^{(1)} >0$ such that 
\[
\left(a- \tfrac{\ep}{32}\right)_+ 
\precsim_A 
\left(\alpha_g (a) - \delta_g^{(1)} \right)_+.
\]
We may assume $ \card (G)^2 \delta_g^{(1)} < \frac{\ep}{32}$.
Set $\delta_1 = \min \big\{ \delta_g^{(1)} \colon g \in G \big\}$.
 Then
\begin{equation} \label{Eq111.2019.04.16}
\big(a- \tfrac{\ep}{32}\big)_+ 
\precsim_A 
\big(\alpha_g (a) - \delta_1 \big)_+.
\end{equation}
For all $g \in A$, we have  $\alpha_g (a)  \precsim_A a$,
and 
so there exists $\delta_g^{(2)} >0$ such that 
\[
\left(\alpha_g (a) - \tfrac{\delta_1^3}{\card(G)} \right)_+ 
 \precsim_A 
\left(a - \delta_g^{(2)} \right)_+.
\]
We may assume $ \delta_g^{(2)} + \delta_1^3 < \delta_1^2$.
Set $\delta_2 = \min \big\{ \delta_g^{(2)} \colon g \in G \big\}$. Then
\begin{equation} \label{Eq2.2019.04.16}
\left(\alpha_g (a) -\tfrac{\delta_1^3}{\card(G)} \right)_+  
\precsim_A 
\left(a - \delta_2 \right)_+.
\end{equation}
Using (\ref{Eq111.2019.04.16}) and applying
Lemma~2.7 of \cite{AGP19}
with $ \big(a - \tfrac{\ep}{32}\big)_+$ in place of $a$ and 
with $\alpha_g (a)$ in place of $b$,
 for all $g \in G$,
we can find   $v_g \in A \otimes \cK$ such that 
\begin{equation}\label{Eq.2019.04.16}
\left\| 
v_g \alpha_g (a) v_g^* 
- 
\big(a - \tfrac{\ep}{32}\big)_+ 
\right\| 
< 
\tfrac{\delta_2}{20 \, \card(G)} 
\qquad
\mbox{ and } 
\qquad
\| v_g \| \leq \delta_1^{-1/2}.
\end{equation}
By (\ref{Eq2.2019.04.16}), for all $g \in G$,
we can find  $w_g \in A \otimes \cK$ such that 
\begin{equation}\label{Eq1.2019.04.16}
\left\| 
w_g (a - \delta_2)_+  w_g^* - 
\left(\alpha_g (a) - \tfrac{\delta_1^3}{\card(G)} \right)_+  
 \right\| 
< 
 \tfrac{\delta_2}{20 \, \card(G)}.
\end{equation}
Since $0$ is a limit point of $\spec (a)$, 
we can choose $0<\dt_3< \dt_2$ such that 
$\spec (a) \cap (\dt_3, \dt_2) \neq \varnothing$.
 Choose a continuous function $h \colon [0, \infty) \to [0, \infty)$ such that:
\begin{enumerate}
\item\label{h.func.2.a}
$h(t) = 0$ for all $t \in [0, \dt_3] \cup [\dt_2, \infty)$, 
\item\label{h.func.2.b}
$h(t) \neq 0$ for all $t \in (\dt_3, \dt_2)$ and $\| h \|= 1$.
\setcounter{TmpEnumi}{\value{enumi}}
\end{enumerate}
Then we have the following properties:
\begin{enumerate}
\setcounter{enumi}{\value{TmpEnumi}}
\item\label{h_dt.p.a1}
$h (a)\neq 0$ and $\| h (a) \|=1$.
\item\label{h_dt.p.b2}
$h_(a) \perp (a- \dt_2)_+$.
\item\label{h_dt.p.c3}
$h (a) + (a- \dt_2)_+ 
\precsim_{A} 
(a - \dt_3)_{+}$.
\setcounter{TmpEnumi}{\value{enumi}}
\end{enumerate}
Using \Lem{Lem.ANP.Dec.18.non}
with $A \otimes \cK$ in place of $A$,
with $a$ as given,
with $\tfrac{\ep}{4}$ in place of $\ep$, $\ep_2$, 
and
with $\tfrac{\ep}{2}$ in place of  $\ep_1$, 
we can choose $\dt_4>0$  such that:
\begin{enumerate}
\setcounter{enumi}{\value{TmpEnumi}}
\item\label{Eq555.05.16.2019}
$ \dt_4 <  \dt_3$.
\item\label{Eq5.05.16.2019}
 For all $c, y \in \left(A \otimes \cK\right)_+ $ with $\| c \| \leq 1$  and 
  with $\| y \|=1$, the 
  following condition holds:
 
If  $\| y^2 a y^2 - a \|< \dt_4$ 
 and 
 $\| c y  -  y c \|< \dt_4$,
  then
\[
\left(   a  - \ep  \right)_+ 
\precsim_{A} 
\Big( c ( y a y ) c - \tfrac{\ep}{4}  \Big)_+
\oplus 
\Big( y^2 - y c y - \tfrac{\ep}{8} \Big)_+.
\]
\setcounter{TmpEnumi}{\value{enumi}}
\end{enumerate}
We have 
\begin{equation}\label{Eq4.2019.04.16}
0<\dt_4 <\dt_3 < \delta_2 <\delta_2 + \delta_1^3  < \delta_1^2 <  \delta_1 < \card (G)^2 \delta_1 
< \tfrac{\ep}{32} < 1.
\end{equation}
By \Lem{A.C.05.21.2019}, we can find an approximate unit 
$( u_\lambda )_{\lambda \in \Lambda}$ 
for $A \otimes \cK$ which is in  $(A \otimes \cK)^{\alpha}$. 
Choose $\lambda_0$ such that 
\begin{equation}\label{Eq1.05.16.2019}
\| u_{\lambda_0}^{2} a u_{\lambda_0}^{2} - a \| <\dt_4.
\end{equation}
We may assume $\| u_{\lambda_0} \|=1$. 
Set 
\[
\ep'= \tfrac{\delta_4}{60 M^2 \card (G)^3 }, 
\qquad
M= \big(\max \{ \| z \| \colon z \in F\}\big) +1,
\quad
\mbox{and}
\]
\[
F = \big\{ v_{g}, v_{g}^*, w_{g}, w_{g}^* \colon g \in G\big\} 
\cup 
\big\{ a, (a - \tfrac{\ep}{32})_+, (a - \delta_2 )_+, u_{\lambda_0}   \big\}. 
\]
Applying  \Lem{invariant.contractions} with $F$ as given, with $\ep'$ in place of $\ep$,
$u_{\lambda_0}$ in place of $y$, 
and with $h(a)$ in place of $x$, 
  we get positive contractions
$f_g \in A \otimes \cK$, for $g \in G$, such that,
 with $f= \sum_{g \in G} f_g $,  the following properties hold:
\begin{enumerate}
\setcounter{enumi}{\value{TmpEnumi}}
\item\label{WTRP4.a.16}
$\left\| f_g f_h \right\| < \ep'$ for all $g , h\in G$.
\item\label{WTRP1.b.16}
$\left\| z f_g - f_g  z \right\| < \ep'$ for all $g \in G $ and all $z \in F$.
\item\label{WTRP2.c.16}
$\left\| \alpha_{g} ( f_g ) - f_{gh} \right\|  < \ep'$ for all $g , h\in G$.
\item\label{WTRP3.d.16}
$f \in (A\otimes \cK)^{\alpha}$ and $\| f \| = 1$.
\item\label{WTRP5.e.16}
$\left(u_{\lambda_0}^2 - u_{\lambda_0} f u_{\lambda_0} - \ep' \right)_+ \precsim_{A} h(a)$.
\setcounter{TmpEnumi}{\value{enumi}}
\end{enumerate}
Using \Lem{M.A_lemma_05.17.2019} with $\tfrac{\ep}{8}$ 
in place of $\ep$, 
with $f$ in place of $a$, 
and 
with $u_{\lambda_0}$ in place of $y$, 
we get  
 \begin{equation}\label{Eq41.16.04.2019}
 \left(u_{\lambda_0}^2 - u_{\lambda_0} f^2 u_{\lambda_0} - \tfrac{\ep}{8}\right)_+ 
\precsim_{A} 
\left(u_{\lambda_0}^2 - u_{\lambda_0} f u_{\lambda_0} - \tfrac{\ep}{16} \right)_+.
 \end{equation}
 Therefore, 
 using (\ref{Eq41.16.04.2019}) at the first step,
 using (\ref{Eq4.2019.04.16}) at the second,  
and  using (\ref{WTRP5.e.16}) at the third  step, we get 
\begin{align}\label{Eq3.16.04.2019}
\left(u_{\lambda_0}^2 - u_{\lambda_0} f^2 u_{\lambda_0} - \tfrac{\ep}{8}\right)_+ 
&\precsim_{A} 
\left(u_{\lambda_0}^2 - u_{\lambda_0} f u_{\lambda_0} - \tfrac{\ep}{16}\right)_+
\\\notag
&\precsim_{A}
\left(u_{\lambda_0}^2 - u_{\lambda_0} f u_{\lambda_0} - \ep'  \right)_+
\precsim_A
h(a).
\end{align}
Set 
$x= \sum_{g \in G} f_g v_{g} $ 
and 
$y= \sum_{g \in G} f_g w_{g} $. Then, by the second part of  (\ref{Eq.2019.04.16}) at the last step,
\begin{equation}\label{Eq99.2019.04.16}
\| x \|= \Bigg\|  \sum_{g\in G} f_g v_{g}  \Bigg\| 
\leq
\sum_{g \in G} \| f_g \| \cdot \| v_{g} \| 
\leq  \sum_{g\in G} \| v_{g} \| 
\leq \card(G) \delta_1^{-1/2}.
\end{equation}
Now define 
$d_0= \sum_{g \in G} \alpha_g (f_1 a f_1 )$.  
Clearly
 $d_0 \in \left((A \otimes \cK)^\alpha \right)_+ $. 
We claim that 
\begin{enumerate}
\setcounter{enumi}{\value{TmpEnumi}}
\item\label{claim0.16}
$\left\| 
f^2 u_{\lambda_0} a u_{\lambda_0} f^2 
- 
( u_{\lambda_0} f^2 a f^2 u_{\lambda_0} - \tfrac{\ep}{8})_+ 
\right\| < \tfrac{\ep}{4}$,
\item\label{claim1.16}
$\left\| x (d_0 - \delta_1^2)_+  x^* - f^2 a f^2 \right\| <  \tfrac{\ep}{8}$, 
\item\label{claim2.16}
$\left\| y (a - \delta_2)_+ y^* - d_0 \right\| < \delta_1^2$.
\setcounter{TmpEnumi}{\value{enumi}}
\end{enumerate}

To prove (\ref{claim0.16}),   we estimate, 
using the second part of (\ref{WTRP3.d.16}) 
and 
using (\ref{WTRP1.b.16}) at the third  step,
\begin{align}\label{Eq1.05.20.2019}
\left\| f^2 u_{\lambda_0} -  u_{\lambda_0} f^2  \right\|
&\leq
 \| f \| \cdot \| f u_{\lambda_0} - u_{\lambda_0} f\|
+
\| f u_{\lambda_0} - u_{\lambda_0} f\| \cdot \| f\|
\\\notag
&\leq
2 \sum_{g \in G} \| f_g u_{\lambda_0} - u_{\lambda_0} f_g \|
<2\, \card(G) \ep'
< \dt_4.
\end{align}
Therefore, using  (\ref{Eq1.05.20.2019}) at the second  step
and 
using (\ref{Eq4.2019.04.16}) at the third step, we get
\begin{align*}
&
\left\| 
f^2 u_{\lambda_0} a u_{\lambda_0} f^2 
- 
( u_{\lambda_0} f^2 a f^2 u_{\lambda_0} - \tfrac{\ep}{8})_+ 
\right\|
\\
 &\hspace*{2 em} {\mbox{}} \leq
\| f^2 u_{\lambda_0}  -  u_{\lambda_0} f^2 \|
\cdot
\| a u_{\lambda_0} f^2 \|
+
\| u_{\lambda_0} f^2  a\|
\cdot
 \| u_{\lambda_0} f^2   -  f^2 u_{\lambda_0} \| 
\\\notag
&\hspace*{3 em} {\mbox{}}+
\left\|
u_{\lambda_0}  f^2 a f^2 u_{\lambda_0}
- 
\left( u_{\lambda_0} f^2 a f^2 u_{\lambda_0} -  \tfrac{\ep}{8} \right)_+ 
 \right\|
\\
&\hspace*{2 em} {\mbox{}} <
4\, \card(G) \ep' +\tfrac{\ep}{8}
\leq 
\tfrac{4 \ep }{32} + \tfrac{\ep}{8}=
 \tfrac{\ep}{4}. 
\end{align*}
This completes the proof of (\ref{claim0.16}).

To peove (\ref{claim1.16}), first we estimate, 
using (\ref{WTRP4.a.16}) at the fourth step,
\begin{align}\label{Eq1.01.04.201}
\Bigg\| f^2 - \sum_{g \in G} f_g^2 \Bigg\| 
&=
\Bigg\| \Bigg(\sum_{g \in G} f_g \Bigg)\Bigg( \sum_{h\in G} f_h \Bigg) - \sum_{g \in G} f_g^2 \Bigg\|
\\\notag
&=
\Bigg\| \sum_{g, h \in G}  f_g  f_h - \sum_g f_g^2 \Bigg\|
\\\notag
&\leq
 \sum_{g \neq h} \|  f_g  f_h \|
<
\card(G)^2 \ep'.
\end{align}
Second, we estimate, using (\ref{Eq1.01.04.201}) at the second step,
\begin{align}\label{Eq.1.2016.04.16}
&\Bigg\|
f^2 \left(a - \tfrac{\ep}{32}\right)_+ f^2   
 - 
\sum_{g, h \in G} f_{g}^2  \left(a - \tfrac{\ep}{32}\right)_+  f_{h}^2
\Bigg\|
\\\notag
& \hspace*{8 em} {\mbox{}} \leq
\Bigg\|
f^2 - \sum_{g \in G} f_{g}^2
\Bigg\| 
\cdot 
\Bigg\| \left(a - \tfrac{\ep}{32}\right)_+ f^2  \Bigg\|
 \\\notag
 & \hspace*{9 em} {\mbox{}} +
\Bigg \| \sum_{g \in G} f_{g}^2   \left(a - \tfrac{\ep}{32}\right)_+ \Bigg \| 
\cdot 
\Bigg\| f^2  - \sum_{h \in G} f_{h}^2    \Bigg\|
\\\notag
& \hspace*{8 em} {\mbox{}} < 
\card(G)^2 \ep' + \card(G)^3 \ep'
\leq
2 \,  \card(G)^3 \ep'.
\end{align}
Third, we estimate, using (\ref{WTRP4.a.16}) and (\ref{WTRP1.b.16}) at the second step,
\begin{align}\label{Eq.2.2019.04.16}
&\Bigg\|
\sum_{g, h \in G} f_{g}^2  \left(a - \tfrac{\ep}{32}\right)_+  f_{h}^2
-
\sum_{h \in G} f_{h}^2  \left(a - \tfrac{\ep}{32}\right)_+  f_{h}^2
\Bigg\|
\\\notag
& \hspace*{3 em} {\mbox{}} \leq
\sum_{g, h\in G  } 
\Big( 
\left\| f_{g}^2 \right\| 
\cdot 
\left\| \left(a - \tfrac{\ep}{32}\right)_+  f_{h} - f_{h} \left(a - \tfrac{\ep}{32}\right)_+  \right\|
 \cdot
  \big\| f_h \big\| \Big)
\\\notag  
& \hspace*{4 em} {\mbox{}} +
\sum_{g\neq h}
\left(
\left\| f_g \right\| 
\cdot 
\left\| f_{g} f_{h} \right\| 
\cdot
 \left\| \left(a - \tfrac{\ep}{32}\right)_+  f_{h} \right\| 
\right)
\\\notag
&\hspace*{4 em} {\mbox{}} +
\sum_{h \in G}
\left(
\| f_h^2 \| 
\cdot  
\left\| 
f_h \left(a - \tfrac{\ep}{32}\right)_+ 
- 
  \left(a - \tfrac{\ep}{32}\right)_+  f_{h} 
\right\|
\cdot 
\| f_h \|
\right) 
\\\notag
&\hspace*{3 em} {\mbox{}} <
\card(G)^2 \ep'
+
\card(G)^2 \ep'
+
\card(G) \ep'
\leq 3 \, \card(G)^2 \ep'.
\end{align} 
Fourth, we estimate, 
using (\ref{Eq.1.2016.04.16})  and (\ref{Eq.2.2019.04.16})  at the second  step,
\begin{align} \label{2019.04.16.1}
&
\Bigg\|
f^2 \big(a - \tfrac{\ep}{32}\big)_+ f^2   
 - 
\sum_{h \in G} f_{h}^2 \big(a - \tfrac{\ep}{32}\big)_+ f_{h}^2 
\Bigg\|
\\\notag
&\hspace*{7em} {\mbox{}} \leq
\Bigg\|
f^2 \big(a - \tfrac{\ep}{32}\big)_+ f^2   
 - 
\sum_{g, h \in G} f_{g}^2  \big(a - \tfrac{\ep}{32}\big)_+  f_{h}^2
\Bigg\|
\\\notag
&\hspace*{8 em} {\mbox{}} +
\Bigg\|
\sum_{g, h \in G} f_{g}^2  \big(a - \tfrac{\ep}{32}\big)_+  f_{h}^2
-
\sum_{h \in G} f_{h}^2  \big(a - \tfrac{\ep}{32}\big)_+  f_{h}^2
\Bigg\|
\\\notag
&\hspace*{7em} {\mbox{}}<
2 \,  \card(G)^3 \ep'
+
3 \, \card(G)^2 \ep'
\leq
5 \card(G)^3 \ep'.
\end{align}
Fifth, for all $h \in G$ we estimate,
using (\ref{WTRP2.c.16}) at the third step, 
\begin{align}\label{2019.04.16}
\left\|
 \alpha_h (f_1 a f_1) - f_h \alpha_h (a) f_h
\right\|
&
\leq
\|\alpha_h (f_1) - f_h  \| \cdot \| \alpha_h (a f_1) \|
\\\notag
& \qquad +
\| f_h \alpha_h (a ) \| \cdot \|\alpha_h (f_1) - f_h  \|
\\\notag
&
< 2  \ep'.
\end{align}
Sixth, for all $g, h, t \in G$ with $g \neq h$ or $h \neq t$, we estimate,
using  (\ref{WTRP4.a.16}), (\ref{WTRP1.b.16}), and (\ref{2019.04.16}) at the third step, 
\begin{align}\label{2019.04.16.2}
&\| f_g v_{g} \alpha_h (f_1 a f_1) v_{t}^* f_t \| 
\\\notag
&\hspace*{7em} {\mbox{}} \leq 
\| f_g v_{g} \|
 \cdot
  \| \alpha_h (f_1 a f_1) - f_h \alpha_h (a) f_h  \| 
 \cdot 
 \| v_{t}^* f_t \|
\\\notag
& \hspace*{8 em} {\mbox{}}+
\| f_g \| 
\cdot 
\| v_{g} f_h - f_h  v_{g}  \| 
\cdot 
\| \alpha_h (a) f_h v_{t}^* f_t \|
\\\notag
&\hspace*{8 em} {\mbox{}} +
\| f_g f_h v_{g} \alpha_h (a)  \| 
\cdot 
\| f_h v_{t}^* -  v_{t}^* f_h \| 
\cdot
 \| f_t \|
\\\notag
&\hspace*{8 em} {\mbox{}} +
\| f_g f_h \|  
\cdot
 \| v_{g}  \alpha_h (a) v_{t}^* \| 
\cdot 
\| f_h f_t \|
\\\notag
&\hspace*{7em} {\mbox{}} <
2 M^2 \ep' + M \ep' + M \ep' + M^2 \ep'
< 5 M^2 \ep'.
\end{align}
Seventh, for all $h \in G$, we estimate, 
 using  (\ref{WTRP4.a.16}), (\ref{WTRP1.b.16}), and (\ref{2019.04.16}) at the second step,
\begin{align}\label{Eq20.2019.04.16.15}
&\|
 v_{h} \alpha_h (f_1 a f_1) v_{h}^*
- 
f_h v_{h} \alpha_h (a) v_{h}^* f_h
\|
\\\notag
&\hspace*{7em} {\mbox{}}
\leq 
\|  v_{h} \| 
\cdot  
\| \alpha_h (f_1 a f_1)  -    f_h \alpha_h ( a )  f_h \|
\cdot 
\| v_{h}^*\| 
\\\notag
&\hspace*{8 em} {\mbox{}}
+
\|  v_{h} f_h - f_h  v_{h} \|
\cdot 
\|  \alpha_h ( a ) f_h v_{h}^*  \|
\\\notag
&\hspace*{8 em} {\mbox{}}
+
\| f_h  v_{h}  \alpha_h ( a ) \| 
\cdot
 \| f_h v_{h}^*  - v_{h}^* f_h \|
\\\notag
&\hspace*{7em} {\mbox{}}
<
2 M^2 \ep' + 2 M \ep'
< 4 M^2 \ep'.
\end{align}
Eighth, we estimate, using
the first part of (\ref{Eq.2019.04.16}), 
(\ref{2019.04.16.1}), 
and (\ref{Eq20.2019.04.16.15}) 
  at the second step and using (\ref{Eq4.2019.04.16}) at the third step,
\begin{align}\label{Eq808.2019.04.16}
&\left\| 
\sum_{h \in G} f_h v_{h}    \alpha_h (f_1 a f_1  )   v_{h}^* f_h 
-
f^2 a f^2 
\right\|
\\\notag
& \hspace*{2 em} \leq
\left\| 
\sum_{h \in G} f_h v_{h}    \alpha_h (f_1 a f_1  )   v_{h}^* f_h 
-
\sum_{h \in G} f_h^2 v_{h}   \alpha_h ( a )   v_{h}^*  f_h^2
\right\|
\\\notag
&\hspace*{3 em} +
\left\|  \sum_{h \in G} f_h^2 v_{h}   \alpha_h ( a )   v_{h}^*  f_h^2 
- 
\sum_{h \in G} f_h^2  (a - \tfrac{\ep}{32})_+ f_h^2
\right\|
\\\notag
&\hspace*{3 em} +
\left\| \sum_{h \in G} f_h^2  (a - \tfrac{\ep}{32})_+ f_h^2  
 - 
  f^2  (a - \tfrac{\ep}{32})_+  f^2 \right\|
\\\notag
& \hspace*{3 em} +
\left\|    f^2  \left(a - \tfrac{\ep}{32}\right)_+  f^2  -  f^2 a f^2 \right\|
\\\notag
&\hspace*{2 em} < 
 4 \, \card (G) M^2 \ep'  
+ \tfrac{\card (G) \delta_2}{20 \, \card(G)} 
 +
5 \card(G)^3 \ep' + \tfrac{\ep}{32}
<
\tfrac{\delta_2}{5} +\tfrac{\ep}{32}. 
\end{align}
Set $S= \big\{ (g, h, t) \in G^3 \colon g \neq h \mbox{ or } h \neq t \big\}$.
Ninth, we estimate, using
 (\ref{2019.04.16.2})  and  (\ref{Eq808.2019.04.16})
  at the third step,  and
 using (\ref{Eq4.2019.04.16}) at the fifth step,
\begin{align}\label{Eq43.2019.04.16}
&
\left\| x d_0  x^* - f^2 a f^2 \right\|
\\\notag
&\hspace*{2 em} =
\left\| 
\Bigg(\sum_{g \in G} f_g v_{g}  \Bigg) \Bigg(\sum_{h \in G} \alpha_h (f_1 a f_1 ) \Bigg) 
\Bigg(\sum_{t \in G}  v_{t}^* f_t \Bigg) -  f^2 a f^2 
\right\| 
\\\notag
&\hspace*{2 em} \leq
\left\| 
\sum_{(g, h, t) \in S} f_g v_{g}    \alpha_h (f_1 a f_1  )   v_{t}^* f_t 
\right\|
\\\notag
&\hspace*{3 em} +
\left\| 
\sum_{h \in G} f_h v_{h}    \alpha_h (f_1 a f_1  )   v_{h}^* f_h 
-
 f^2 a f^2
\right\|
\\\notag
&\hspace*{2 em} < 
5\, \card (G)^3 M^2 \ep' + \tfrac{\delta_2}{5} +\tfrac{\ep}{32} 
\\\notag
& \hspace*{2 em} <
\delta_2 +\tfrac{\ep}{32} 
< \tfrac{\ep}{32} + \tfrac{\ep}{32} = \tfrac{\ep}{16}.
\end{align}
Tenth, we estimate, using the second part of (\ref{Eq.2019.04.16}) at the second step
and 
using (\ref{Eq4.2019.04.16}) at the last step,
\begin{align}\label{Eq1.16.04.2019}
\| x (d_0 - \delta_1^2)_+  x^* - x d_0 x^*\| 
&\leq 
\| x \| \cdot \| (d_0 - \delta_1^2)_+ - d_0  \| \cdot \| x^* \|
\\\notag
&< 
(\card (G) \dt_1^{-1/2} ) \dt_1 ^2 ( \card (G) \dt_1^{-1/2})
\\\notag
&=
 \card(G)^2 \dt_1  < \tfrac{\ep}{32}.
\end{align}
Therefore, using (\ref{Eq43.2019.04.16}) and  (\ref{Eq1.16.04.2019}) at the second step,
\begin{align*}
\| x (d_0 - \delta_1^2)_+  x^* - f^2 a f^2 \|
&\leq
\| x (d_0 - \delta_1^2)_+  x^* - x d_0 x^*\|
+
\| x d_0 x^* - f^2 a f^2 \| 
\\\notag
&< \tfrac{\ep}{32} + \tfrac{\ep}{16} < \tfrac{\ep}{8}.
\end{align*}
This completes the proof  of (\ref{claim1.16}).

To prove (\ref{claim2.16}), first  for all $g, h \in G$ with $g \neq h$ we estimate,
using  (\ref{WTRP4.a.16}) and (\ref{WTRP1.b.16}) at the second step,
\begin{align}\label{Eq102.2019.04.16}
&\|  
f_g w_{g}  \left(a - \delta_2\right)_+ w_{h}^* f_h 
\|
\\\notag
& \hspace*{6 em} {\mbox{}} \leq 
\| f_g w_{g}  \left(a - \delta_2\right)_+ \| 
\cdot 
\| w_{h}^* f_h - f_h w_{h}^* \|
\\\notag
&\hspace*{7 em} {\mbox{}} +
\| f_g w_{g} \| 
\cdot 
\| \left(a - \delta_2\right)_+ f_h - f_h  \left(a - \delta_2\right)_+ \| 
\cdot
 \| w_{h}^* \|
\\\notag
&\hspace*{7 em} {\mbox{}} +
\|  f_g \|
 \cdot 
\| w_{g} f_h - f_h w_{g}  \| 
\cdot 
\|  \left(a - \delta_2\right)_+  w_{h}^* \|
\\\notag
& \hspace*{7 em} {\mbox{}} +
\| f_g f_h\| 
\cdot
\left\| w_{g} \left(a - \delta_2\right)_+  w_{h}^* \right\|
\\\notag
& \hspace*{6 em} {\mbox{}} < 
M \ep' + M^2 \ep' + M \ep' + M^2 \ep'  < 4 M^2 \ep'.
\end{align}
Second, we estimate, using (\ref{Eq1.2019.04.16})  and (\ref{2019.04.16}) at the second step,
\begin{align}\label{Eq26.2019.04.15}
&\left\|
f_g  w_{g} \left(a - \delta_2\right)_+ w_{g}^* f_g 
-
 \alpha_g (f_1 a f_1)
\right\|
\\\notag
&  \hspace*{5 em} {\mbox{}} \leq
\| f_g \| 
\cdot 
\left\|
 w_{g}  \left(a - \delta_2\right)_+ w_{g}^*   
 -  
 \left(\alpha_g (a) - \tfrac{\delta_1^3}{\card(G)} \right)_+ 
 \right\| 
\cdot
 \| f_g \|
\\\notag
& \hspace*{6 em} {\mbox{}} +
\| f_g \| 
\cdot
\left\|
 \left(\alpha_g (a) - \tfrac{\delta_1^3}{\card(G)} \right)_+ 
  - 
 \alpha_g (a) 
\right\|
\cdot
\| f_g \|
\\\notag
& \hspace*{6 em} {\mbox{}} +
\| 
 f_g \alpha_g (a) f_g  
- 
 \alpha_g (f_1 a f_1)
\|
\\\notag
&  \hspace*{5 em} {\mbox{}} <
   \tfrac{\delta_2}{20 \, \card(G)} +
 \tfrac{  \delta_1^3}{\card(G)} + 2  \ep'.
\end{align}
Third, we compute, 
using (\ref{Eq102.2019.04.16}) and (\ref{Eq26.2019.04.15}) at the third step,
and using (\ref{Eq4.2019.04.16}) at the last step,
\begin{align}
&
\Bigg\| y \left(a - \delta_2\right)_+ y^* - d_0 \Bigg\|
\\\notag
& \hspace*{3 em} {\mbox{}} =
\Bigg\| 
\Bigg(\sum_{g \in G} f_g w_{g}  \Bigg) \left(a_m - \delta_2\right)_+ \Bigg(\sum_{h \in G}  w_{h}^* f_h \Bigg)
- 
\sum_{g \in G} \alpha_g (f_1 a f_1) 
\Bigg\| 
\\\notag
&\hspace*{3 em} {\mbox{}} \leq
\Bigg\| 
\sum_{g \neq h} f_g  w_{g}  \left(a - \delta_2\right)_+ w_{h}^* f_h
\Bigg\|
\\\notag
&\hspace*{4 em} {\mbox{}} +
\Bigg\|
\sum_{{g \in G} } f_g  w_{g}  \left(a - \delta_2\right)_+ w_{g}^* f_g
-
\sum_{g \in G} \alpha_g (f_1 a f_1)
 \Bigg\|
\\\notag
&\hspace*{3 em} {\mbox{}}<
4 \card(G)^2 M^2 \ep' + 
\card (G) \left(
   \tfrac{ \delta_2}{20 \, \card(G)} +
 \tfrac{ \delta_1^3}{\card(G)} + 2  \ep' 
\right)
\\\notag
& \hspace*{3 em} {\mbox{}} <
\delta_2 +\delta_1^3 < \delta_1^2.
\end{align}
This completes the proof of (\ref{claim2.16}). 

Now, we use  (\ref{claim0.16}) and Lemma~\ref{PhiB.Lem.18.4}(\ref{PhiB.Lem.18.4.10.a}) at the first step,  and 
Lemma~\ref{PhiB.Lem.18.4}(\ref{LemdivibyNorm}) and 
the fact that $\| u_{\lambda_0} \|=1$ at the second step, to get
\begin{equation}\label{Eq777.05.14.219}
\left(f^2 u_{\lambda_0} a u_{\lambda_0} f^2 -  \tfrac{\ep}{4}\right)_+
\precsim_{A^\alpha}
\left( u_{\lambda_0} f^2 a  f^2 u_{\lambda_0} - \tfrac{\ep}{8} \right)_+
\precsim_{A^\alpha}
\left(  f^2 a  f^2  -\tfrac{\ep}{8} \right)_+.
\end{equation}
By (\ref{claim1.16}) and Lemma~\ref{PhiB.Lem.18.4}(\ref{PhiB.Lem.18.4.10.a}), we have
\begin{equation}\label{Eq6.2019.04.16}
(f^2 a f^2 -  \tfrac{\ep}{8})_+ 
\precsim_{A} 
 x (d_0 - \delta_1^2)_+  x^* 
\precsim_{A} 
(d_0 - \delta_1^2)_+.
\end{equation}
By (\ref{claim2.16}) and Lemma~\ref{PhiB.Lem.18.4}(\ref{PhiB.Lem.18.4.10.a}), we have 
\begin{equation}\label{Eq7.2019.04.16}
(d_0 - \delta_1^2)_+ 
\precsim_{A} 
y (a - \delta_2)_+  y^* 
\precsim_{A} 
(a - \delta_2)_+.
\end{equation}
Using 
(\ref{Eq1.05.16.2019})
and  
(\ref{Eq1.05.20.2019}), and 
applying  (\ref{Eq5.05.16.2019})
  with $f^2$ in place of $c$
and with $u_{\lambda_0}$ in place of $y$,
we get
\begin{equation}\label{Eq6.16.04.2019}
\left(a -  \ep \right)_+  
\precsim_A 
 (f^2 u_{\lambda_0} a u_{\lambda_0} f^2 -\tfrac{\ep}{4})_+ 
\oplus 
( u_{\lambda_0}^2 - u_{\lambda_0} f^2 u_{\lambda_0} - \tfrac{\ep}{8})_+.
\end{equation}
Therefore, 
using (\ref{Eq6.16.04.2019}) at the first step, 
using (\ref{Eq41.16.04.2019}) and (\ref{Eq777.05.14.219}) at the second step,
using (\ref{Eq6.2019.04.16}) at the third  step, and 
(\ref{Eq7.2019.04.16}) and (\ref{Eq3.16.04.2019})   at the fourth step,
\begin{align}\label{Eq100.2019.04.16}
(a -  \ep)_+
&\precsim_A  
\left(f^2 u_{\lambda_0} a u_{\lambda_0}  f^2 - \tfrac{\ep}{4}\right)_+ 
\oplus
 \left(u_{\lambda_0}^2 - u_{\lambda_0} f^2 u_{\lambda_0} - \tfrac{\ep}{8}\right)_+ 
\\\notag
&\precsim_A  
\left(f^2  a  f^2 - \tfrac{\ep}{8}\right)_+ 
\oplus
 \left(u_{\lambda_0}^2 - u_{\lambda_0} f u_{\lambda_0} - \tfrac{\ep}{16}\right)_+
\\\notag
&\precsim_A (d_0 - \delta_1^2)_+ 
\oplus
 \left(u_{\lambda_0}^2 - u_{\lambda_0} f u_{\lambda_0} - \tfrac{\ep}{16}\right)_+
\\\notag
&\precsim_A
(a - \dt_2)_+ \oplus h(a)
\\\notag
&\precsim_A
 (a - \dt_3)_+.
\end{align}
Now set 
\[
b_0 
=
(d_0 - \delta_1^2)_+ 
\oplus 
(u_{\lambda_0}^2 - u_{\lambda_0}  f u_{\lambda_0} - \tfrac{\ep}{16})_+.\]  
Since $d_0, f, u_{\lambda_0} \in \big( (A \otimes \cK)^{\alpha} \big)_+$, 
it follows that $b_0 \in \big( (A \otimes \cK)^{\alpha} \big)_+$. 
Therefore, using (\ref{Eq100.2019.04.16}),
\begin{equation}\label{Eq1.2019.04.20}
(a -  \ep)_+ \precsim_{A}  b_0 \precsim_{A} (a - \dt_3)_+.
\end{equation}
Since $0$ is a limit point of $\spec(a)$, we can choose 
$\dt_5 \in (0, \dt_4)$ such that $\spec (a) \cap (\dt_5, \dt_4) \neq \varnothing$.
Choose a continuous function $g \colon [0, \infty) \to [0, \infty)$ such that:
\begin{enumerate}
\setcounter{enumi}{\value{TmpEnumi}}
\item\label{Cont.func.g.a.19}
$g(t) = 0$ for all $t \in [0, \dt_5] \cup [\dt_4, \infty)$.
\item\label{Cont.func.g.b.19}
$g(t) \neq 0$ for all $t \in (\dt_5, \dt_4)$.
\end{enumerate}
Since $\spec (a) \cap (\dt_5, \dt_4) \neq \varnothing$, it follows that $g(a) \neq 0$. 
Then, by (\ref{Cont.func.g.a.19}) and  (\ref{Cont.func.g.b.19}),
\begin{equation}\label{Eq205.2019.04.20}
g(a) \oplus (a - \dt_4)_+ \precsim_A (a - \dt_5)_+.
\end{equation}
Since
$A$ is a simple non-type-I C*-algebra
and
 $\alpha$ has the weak tracial Rokhlin property,  also
$A \otimes \cK$
and
$(A \otimes \cK)^{\alpha}$   are  simple and non-type-I (see Theorem~4.1 of \cite{Rffl} and Corollary~3.3 of \cite{FG20}). 
Using \Lem{Small_fixed_element} with $g(a)$ in place of $x$,
 we can find $z \in \big( (A \otimes \cK)^\alpha\big)_+ \setminus \{0\}$ such that 
\begin{equation}\label{Eq3.05.17.2019}
z \precsim_A g(a).
\end{equation}
It follows from  Remark~\ref{Rmk2024.08.27} that $A$ is not of type I and by Theorem~4.1 of \cite{Rffl}, neither is
$(A \otimes \cK) ^{\alpha}$.
Now, using Lemma~2.1 of \cite{Ph14} with 
$\overline{z  (A \otimes \cK) ^{\alpha} z}$ in place of $A$, 
we can find a positive element 
 $d_1 \in \overline{z  (A \otimes \cK) ^{\alpha} z}$ such that $\spec (d_1) = [0, 1]$.
Using $d_1 \in \overline{z  (A \otimes \cK) ^{\alpha} z}$ at the first step and 
using (\ref{Eq3.05.17.2019}) at the second step, we have 
\begin{equation}\label{Eq201.2019.04.19}
 d_1 \precsim_A z \precsim_A g(a)
 \qquad
 \mbox{and}
 \qquad
\spec(d_1)= [0, 1].
\end{equation}
Therefore, using (\ref{Eq1.2019.04.20}) at the first and third step, 
using the first part of (\ref{Eq201.2019.04.19}) at the third step,
using (\ref{Eq555.05.16.2019}) at the fourth step,
and using (\ref{Eq205.2019.04.20}) at the fifth step, we get
\begin{align}\label{Eq4.2019.04.20}
(a - \ep)_+ 
\precsim_A 
b_0 
\precsim_A 
d_1 \oplus b_0 
&\precsim_A
 g(a) \oplus (a - \dt_3)_+ 
\\\notag
&\precsim_A
 g(a) \oplus (a - \dt_4)_+ 
\precsim_A (a - \dt_5)_+.
\end{align}
Now set 
\[
b = d_1 \oplus b_0=d_1 
\oplus 
(d_0 - \delta_1^2)_+ 
\oplus 
(u_{\lambda_0}^2 - u_{\lambda_0}  f u_{\lambda_0} - \tfrac{\ep}{8})_+
\quad
\mbox{and}
\quad
  \dt= \dt_5.
 \] 
Since $b_0, d_1 \in \big( (A \otimes \cK)^\alpha\big)_+$, 
we have $b \in \big( (A \otimes \cK)^\alpha\big)_+$.
Since $\spec (d_1) = [0, 1]$, we have $[0, 1] \subseteq \spec (b)$.
Therefore, using (\ref{Eq4.2019.04.20}),
\[
(a- \ep)_+ \precsim_A  b \precsim_A (a - \delta)_+
\qquad
\mbox{ and }
\qquad
[0, 1] \subseteq \spec (b).
\]
This completes the proof.
\end{proof}

We need the following lemma for the proof of Theorem~\ref{Thm.05.22.2019}. 
\begin{lem}\label{Cu.W.T.R.p.Surjec}
Let $A$ be a  stably finite simple non-type-I  not necessarily unital  \ca{} and let 
$\alpha \colon G \to \Aut (A)$ be an action 
of a finite group $G$ on $A$ which has the weak tracial Rokhlin property.
Let $a \in (A \otimes \cK)_{++}$ satisfy $a \sim_A \alpha_{g} (a)$ for all $g \in G$. 
Then there exists    $b \in (K \otimes A^{\alpha})_{++}$ such that 
$[ a ]_A = [ b ]_A$.
\end{lem}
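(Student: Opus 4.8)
The plan is to realize $[a]_A$ as the supremum of an increasing sequence of classes of fixed-point elements obtained from \Lem{Cu.Surjec}, and then to let the completeness of $\Cu(A^\alpha)$ perform the gluing instead of building an explicit orthogonal sum by hand. Since $a\in(A\otimes\cK)_{++}$, \Lem{Pure-Soft-Equivalent} shows that $0$ is a limit point of $\spec(a)$, so $[a]_A=\sup_n[(a-\ep_n)_+]_A$ for any null sequence $\ep_n\searrow 0$. Replacing $\alpha$ by $\alpha\otimes\id$ on $A\otimes\cK$, which again has the weak tracial Rokhlin property by \Lem{FG.thm.1.3}, I would first construct recursively a strictly decreasing sequence $\ep_1>\ep_2>\cdots\to 0$ together with elements $b_n\in\big((A\otimes\cK)^{\alpha}\big)_+=(A^\alpha\otimes\cK)_+$ such that
\[
(a-\ep_n)_+\precsim_A b_n\precsim_A (a-\ep_{n+1})_+
\qquad\text{and}\qquad
[0,1]\subseteq\spec(b_n).
\]
At stage $n$ this is just \Lem{Cu.Surjec} applied to $a$ with tolerance $\ep_n$, producing some $\delta_n>0$ and $b_n$ with $(a-\ep_n)_+\precsim_A b_n\precsim_A(a-\delta_n)_+$; one then sets $\ep_{n+1}=\min\{\delta_n,\ep_n/2\}$ so that $b_n\precsim_A(a-\ep_{n+1})_+$.

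Next I would check that each $b_n$ is purely positive in $A^\alpha\otimes\cK$. Here I use that $A^\alpha\otimes\cK=(A\otimes\cK)^{\alpha}$ is simple and non-type-I (the results cited in the proof of \Lem{Cu.Surjec}), and stably finite: it is a simple C*-subalgebra of the stably finite algebra $A\otimes\cK$, hence contains no infinite projections, hence is stably finite by \Rmk{FiniteProj}; consequently $A^\alpha$ itself is stably finite, simple and non-type-I. The spectral condition $[0,1]\subseteq\spec(b_n)$ then gives, via \Lem{Pure-Soft-Equivalent} applied to $A^\alpha$, that $[b_n]_{A^\alpha}\in\Cu_+(A^\alpha)$. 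Moreover the chain above yields $b_n\precsim_A b_{n+1}$, and since $0$ is a limit point of $\spec(b_{n+1})$, \Prp{W.T.R.P.inject} (applied to $\alpha\otimes\id$, whose fixed-point algebra is $A^\alpha\otimes\cK$) upgrades this to $b_n\precsim_{A^\alpha}b_{n+1}$. Thus $\big([b_n]_{A^\alpha}\big)_n$ is an increasing sequence in $\Cu_+(A^\alpha)$, and its supremum $s$ in $\Cu(A^\alpha)$ exists.

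Finally I would identify $s$ as a purely positive class and push it forward. By \Cor{SubCuSemiofPurelyPosi} the set $\Cu_+(A^\alpha)\cup\{0\}$ is a sub-$\Cu$-semigroup of $\Cu(A^\alpha)$, hence closed under suprema of increasing sequences; since $s\geq[b_1]_{A^\alpha}\neq 0$, we conclude $s=[b]_{A^\alpha}$ for some $b\in(A^\alpha\otimes\cK)_{++}$. As $\Cu(\iota)$ is a $\Cu$-morphism (\Thm{WC_+.injectivity}), it preserves suprema of increasing sequences, so
\[
[b]_A=\Cu(\iota)(s)=\sup_n\Cu(\iota)\big([b_n]_{A^\alpha}\big)=\sup_n[b_n]_A=\sup_n[(a-\ep_n)_+]_A=[a]_A,
\]
where the last two equalities use that $\big([(a-\ep_n)_+]_A\big)_n$ and $\big([b_n]_A\big)_n$ are mutually cofinal by the chain. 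Identifying $A^\alpha\otimes\cK$ with $\cK\otimes A^\alpha$ gives the required $b$.

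The main obstacle is conceptual rather than computational: one must avoid the temptation to build an explicit infinite orthogonal sum and instead exploit completeness of $\Cu(A^\alpha)$ under suprema, which is legitimate only once the interpolation of \Lem{Cu.Surjec} has made the sequence genuinely increasing with the correct limit and \Prp{W.T.R.P.inject} is available to transport $\precsim_A$ down to $\precsim_{A^\alpha}$. The points needing care are the verification that $A^\alpha$ inherits stable finiteness, simplicity and non-type-I from $A$ (so that both \Lem{Pure-Soft-Equivalent} and \Cor{SubCuSemiofPurelyPosi} apply to $A^\alpha$), and the observation that the single spectral condition $[0,1]\subseteq\spec(b_n)$ simultaneously forces pure positivity of $b_n$ and supplies the limit-point hypothesis of \Prp{W.T.R.P.inject}.
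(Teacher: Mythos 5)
Your proposal is correct and follows essentially the same route as the paper's proof: iterate Lemma~\ref{Cu.Surjec} with interleaved tolerances to produce an increasing chain $(a-\ep_n)_+\precsim_A b_n\precsim_A(a-\ep_{n+1})_+$ of invariant elements with $[0,1]\subseteq\spec(b_n)$, transfer the comparisons to $A^\alpha$ via Proposition~\ref{W.T.R.P.inject}, take the supremum inside $\Cu_+(A^\alpha)\cup\{0\}$ using Corollary~\ref{SubCuSemiofPurelyPosi} and O1, and push forward along the suprema-preserving map $\Cu(\iota)$ to squeeze $[b]_A=[a]_A$. The only cosmetic differences are that you invoke Proposition~\ref{W.T.R.P.inject} directly for $\alpha\otimes\id$ on $A\otimes\cK$ where the paper routes through the direct limit action $\varinjlim\,\id_n\otimes\alpha$, and that you spell out (harmlessly, and slightly redundantly in the case of $[a]_A=\sup_n[(a-\ep_n)_+]_A$, which holds for any positive element by Lemma~1.25(1) of \cite{Ph14}) the inherited simplicity, stable finiteness, and non-type-I properties of the fixed point algebra that the paper leaves implicit.
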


\begin{proof}
With $\id \colon G \to \Aut(\cK)$ the trivial action,  
since $\alpha$ has the weak tracial Rokhlin property, it follows from Lemma~\ref{FG.thm.1.3} that
$\alpha \otimes \id \colon G \to \Aut (A \otimes \mathcal{K})$ also has the weak tracial Rokhlin property.
By abuse of notation, let us write $\alpha$ instead of  $\alpha \otimes \id$. 
For all $n \in \N$, set $\ep_n = \frac{1}{n}$. 
Using Lemma~\ref{Cu.Surjec} with $\ep_1$ in place of $\ep$, choose
$\dt_1>0$  and  $b _1 \in\left( (A \otimes \cK)^{\alpha} \right)_+$ such that 
\begin{equation}\label{Eq15.w.2019.04.18}
(a - \ep_1)_+ \precsim_A  b_1 \precsim_A (a - \dt_1)_+
\qquad
\mbox{ and }
\qquad
[0, 1] \subseteq \spec (b_1).
\end{equation}
Set $\eta_1= \ep_1$ and $\eta_2= \min \{\ep_2, \dt_1\}$.
Using Lemma~\ref{Cu.Surjec} with $\eta_2$ in place of $\ep$, choose
$\dt_2>0$ and  $b _2 \in \left( ( A \otimes \cK)^{\alpha} \right)_+$ such that 
\begin{equation}\label{Eq16.w.2019.04.18}
(a - \eta_2)_+ \precsim_A  b_2 \precsim_A (a - \dt_2)_+
\qquad
\mbox{ and }
\qquad
[0, 1] \subseteq \spec (b_2).
\end{equation}
Set  $\eta_3= \min \{\ep_3, \dt_2\}$.
Using Lemma~\ref{Cu.Surjec} with $\eta_3$ in place of $\ep$, choose
$\dt_3>0$ and  $b _3 \in \left( ( A \otimes \cK)^{\alpha} \right)_+$ such that 
\begin{equation}\label{Eq17.w.2019.04.18}
(a - \eta_3)_+ \precsim_A  b_3 \precsim_A (a - \dt_3)_+
\qquad
\mbox{ and }
\qquad
[0, 1] \subseteq \spec (b_3).
\end{equation}
By induction for all $k=1, 2,3, \ldots$,  we have
$\dt_{k}>0$ and  $b _k \in \left( ( A \otimes \cK)^{\alpha} \right)_+$ such that 
\begin{equation}\label{Eq18.w.2019.04.18}
(a - \eta_k)_+ \precsim_A  b_k \precsim_A (a - \dt_k)_+
\qquad
\mbox{ and }
\qquad
[0, 1] \subseteq \spec (b_k),
\end{equation}
where $\eta'_k= \min \{\ep_k, \dt_{k-1}\}$. 
By (\ref{Eq15.w.2019.04.18}), (\ref{Eq16.w.2019.04.18}), (\ref{Eq17.w.2019.04.18}), and 
(\ref{Eq17.w.2019.04.18}), it is clear that
\begin{align*}
(a - \eta_1)_+ 
 \precsim_{A} 
 b_1 &\precsim_{A} (a - \dt_1)_+ 
\\
&\precsim_{A} (a - \eta_2)_+  \precsim_{A} b_2 \precsim_{A} (a - \delta_2)_+
\\
& \precsim_{A} (a - \eta_3)_+ \precsim_{A}  \ldots
\precsim_{A} (a - \delta_{n-1})_+ \precsim_{A}  b_n \precsim_{A} (a - \delta_{n})_+ 
\\
&\precsim_{A}  (a - \eta_{n+1})_+ \precsim_A b_{n+1}  \precsim_{A} (a - \delta_{n+1})_+ \precsim_{A} a.
\end{align*}
Thus, we have constructed a sequence 
$( b_n )_{n \in \N}$ in $\left( ( A \otimes \cK)^{\alpha} \right)_+ $ such that, for all $n \in \N$, 
\begin{equation}\label{Eq26.2019.04.24}
b_n \precsim_A b_{n+1}, 
\quad
[0, 1] \subseteq \spec (b_n),
\quad
\mbox{and}
\quad
(a - \tfrac{1}{n})_+ \precsim_A  ( a - \eta_{n})_+ \precsim_A b_n \precsim_A a.
\end{equation}
Let $\id_n \colon G \to \Aut (M_n (\mathbb{C}))$ denote the trivial action
for  $n\in \Nz$. 
By Lemma~\ref{FG.thm.1.3}, 
 $\id_{n}  \otimes \alpha $ has the weak tracial  Rokhlin property for $n\in \N$.
 Then, by Proposition~4.3 of \cite{FG20},
$\dirlim \id_{n}  \otimes \alpha$ also has  the weak tracial  Rokhlin property.
Using Proposition \ref{W.T.R.P.inject}, we get
\begin{equation*}
b_1 \precsim_{A^\alpha} b_2 \precsim_{A^\alpha} b_3 
\precsim_{A^\alpha}   \ldots,
\end{equation*}
i.e.,
\begin{equation}\label{Eq22.w.2019.04.24}
 [ b_1 ]_{A^{\alpha}} \leq [ b_2 ]_{A^{\alpha}} 
 \leq [ b_3 ]_{A^{\alpha}} \leq [ b_4 ]_{A^{\alpha}} 
 \leq \ldots.
\end{equation}
By Corollary~\ref{SubCuSemiofPurelyPosi} and O1, 
there exists $b_{\infty} \in (A^{\alpha} \otimes \cK)_{++}$ 
such that 
\begin{equation*}
[ b_{\infty} ]_{A^{\alpha}} = \sup_{n} [ b_n ]_{A^{\alpha}}.
\end{equation*}
Now, we use this relation at the second step and 
 Remark~\ref{Cucpc} at the third step to get
\begin{align}\label{Eq13.2019.04.24}
[ b_{\infty} ]_{A} 
= 
\Cu(\iota) \big([ b_{\infty} ]_{A^{\alpha}}\big) 
&=
\Cu(\iota) \Big( \sup_{n} [ b_n ]_{A^{\alpha}} \Big)
\\\notag
&=
 \sup_n  \Big(\Cu(\iota) ([ b_n ]_{A^{\alpha}})  \Big) 
=
 \sup_n  [ b_n ]_{A}.
\end{align}
By the second part of (\ref{Eq26.2019.04.24}), we have, for all $n \in \N$,
\begin{equation}\label{Eq29.2019.04.24}
[ (a - \tfrac{1}{n})_+ ]_A \leq [  b_n ]_{A} \leq [ b_{\infty} ]_{A}
\qquad
\mbox{and}
\qquad
[ b_n ]_A \leq [ a ]_{A}.
\end{equation}
Therefore, 
using Lemma 1.25(1) of \cite{Ph14}
 at the first step,
using the first part of (\ref{Eq29.2019.04.24}) at the second step,
using (\ref{Eq13.2019.04.24}) at the third step, 
and using the second part of (\ref{Eq29.2019.04.24}) at the fourth step, we get
\begin{equation}\label{Eq31.2019.04.24}
[ a ]_{A}
=
\sup_n [ (a - \tfrac{1}{n})_+ ]_A \leq [ b_{\infty} ]_{A}
=
\sup_n [ b_n ]_A 
\leq 
[ a ]_{A}.
\end{equation}
With $b = b_{\infty}$, by (\ref{Eq31.2019.04.24}) we have
$[ b ]_{A} = [ a ]_{A}$, as desired.
\end{proof}

Recall the definition of $\Cu_+(A)$ (Definition \ref{D_9421_Pure}). 
As promised, we show in the following theorem that the map
$\Cu (\iota) \colon \Cu_+ (A^{\alpha}) \cup \{0\} \to \Cu_+ (A)^\alpha \cup \{0\}$  is an isomorphism of ordered semigroups. While this theorem is known for unital C*-algebras (see Theorem~5.5 of \cite{AGP19}),
our overarching objective is to prove this result for 
  non-unital C*-algebras.  
\begin{thm}\label{Thm.05.22.2019}
 Let $A$ be a  stably finite  simple non-type-I
 (not necessarily unital) \ca{} and let 
$\alpha \colon G \to \Aut (A)$ be an action 
of a finite group $G$ on $A$ which has the weak tracial Rokhlin property. Let $\iota \colon A^{\alpha} \to A$
denote the inclusion map. Then $\iota$ induces an isomorphism of ordered semigroups 
\[
\Cu (\iota) \colon \Cu_+ (A^{\alpha}) \cup \{0\} 
\to\Cu_+ (A)^{\alpha}  \cup  \{0\}.
\]
\end{thm}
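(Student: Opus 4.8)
The plan is to assemble the statement from the three substantial results already established: the injectivity statement \Thm{WC_+.injectivity}, the surjectivity lemma \Lem{Cu.W.T.R.p.Surjec}, and the comparison equivalence \Prp{W.T.R.P.inject}. Throughout I would pass to the stabilized action $\alpha \otimes \id \colon G \to \Aut(A \otimes \cK)$, which again has the weak tracial Rokhlin property by \Lem{FG.thm.1.3}, and I would use that $A \otimes \cK$ is simple, stably finite, and non-type-I, with fixed point algebra $(A \otimes \cK)^{\alpha} = A^{\alpha} \otimes \cK$; by abuse of notation I write $\alpha$ for $\alpha \otimes \id$. The genuine content is carried by these cited results, so the task here is to check well-definedness of the codomain, bijectivity, and order reflection.

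First I would verify that $\Cu(\iota)$ really maps $\Cu_+(A^\alpha) \cup \{0\}$ into $\Cu_+(A)^\alpha \cup \{0\}$. The class $0$ goes to $0$, and for $a \in (A^\alpha \otimes \cK)_{++}$ the class $\Cu(\iota)([a]_{A^\alpha}) = [a]_A$ is $\alpha$-invariant since $\alpha_g(a) = a$. Pure positivity is preserved because, by \Lem{Pure-Soft-Equivalent}, it is equivalent to $0$ not being an isolated point of $\spec(a)$, a condition intrinsic to $C^*(a)$ and hence independent of whether $a$ is regarded as an element of $A^\alpha \otimes \cK$ or of $A \otimes \cK$ (both algebras being stably finite simple non-type-I). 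Thus $[a]_A \in \Cu_+(A)^\alpha$. For additivity I would note that, by \Cor{SubCuSemiofPurelyPosi}, the set $\Cu_+(A^\alpha) \cup \{0\}$ is a sub-$\Cu$-semigroup, while $\Cu_+(A)^\alpha \cup \{0\}$ is closed under addition ($\alpha$-invariance is additive, and pure positivity of sums follows again from \Cor{SubCuSemiofPurelyPosi} together with simplicity). Since $\Cu(\iota)$ is a $\Cu$-morphism by \Thm{WC_+.injectivity}, its restriction is an additive, order-preserving map between these two ordered semigroups.

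For injectivity I would invoke \Thm{WC_+.injectivity}, which gives injectivity on $\Cu_+(A^\alpha)$; and if $\Cu(\iota)(x) = 0$ with $x = [a]_{A^\alpha}$ and $a \neq 0$, then $[a]_A \neq 0$ by simplicity of $A$, so the only preimage of $0$ is $0$. For surjectivity, given a nonzero $y = [a]_A \in \Cu_+(A)^\alpha$ one has $a \sim_A \alpha_g(a)$ for all $g$, so \Lem{Cu.W.T.R.p.Surjec} produces $b \in (A^\alpha \otimes \cK)_{++}$ with $[b]_A = [a]_A$, whence $\Cu(\iota)([b]_{A^\alpha}) = y$.

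Finally I would establish that the inverse is order-preserving, so that the map is a genuine order isomorphism and not merely an order-preserving bijection: taking $a, b \in (A^\alpha \otimes \cK)_{++}$ with $[a]_A \leq [b]_A$, i.e.\ $a \precsim_A b$, and using that $b$ is purely positive so that $0$ is a limit point of $\spec(b)$, \Prp{W.T.R.P.inject} applied to the stabilized action yields $a \precsim_{A^\alpha} b$, that is $[a]_{A^\alpha} \leq [b]_{A^\alpha}$; the cases involving $0$ are immediate. Combining these steps, $\Cu(\iota)$ restricts to an additive bijection that preserves and reflects the order, hence an isomorphism of ordered semigroups. The main obstacle within this assembly is not computational but bookkeeping: one must transfer \Prp{W.T.R.P.inject} — stated for elements of $(A^\alpha)_+$ — correctly to $(A^\alpha \otimes \cK)_+$ via the stabilized action, and one must justify that pure positivity is an ambient-independent spectral condition, which is exactly what identifies the codomain as $\Cu_+(A)^\alpha \cup \{0\}$.
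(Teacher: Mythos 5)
Your proposal is correct and follows essentially the same route as the paper's proof: injectivity from Theorem~\ref{WC_+.injectivity}, surjectivity from Lemma~\ref{Cu.W.T.R.p.Surjec}, and order reflection by applying Proposition~\ref{W.T.R.P.inject} (via the stabilized action) to preimages supplied by surjectivity. The only difference is that you spell out the well-definedness of the codomain and the stabilization bookkeeping, which the paper leaves implicit with a ``clearly.''
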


\begin{proof}
 It follows from 
 Theorem~\ref{WC_+.injectivity} that the map 
 $\Cu (\iota) \colon \Cu (A^{\alpha}) \to \Cu (A)$ is injective on $\Cu_+ (A^\alpha)$.
Let us show that
 \[
 \Cu (\iota) \left(\Cu_+ (A^{\alpha})\right) = \Cu_+ (A)^{\alpha}.
 \]
Clearly,  $\Cu (\iota) \left(\Cu_+ (A^{\alpha})\right) \subseteq \Cu_+ (A)^{\alpha}$. 
So it suffices  to prove that 
$\Cu_+ (A)^{\alpha} \subseteq \Cu (\iota) \left(\Cu_+ (A^{\alpha})\right)$.
Let $\mu \in \Cu_+ (A)^{\alpha}$. Then there is 
$a \in (A \otimes \cK)_{++}$ with 
  $[ a ]_A = [ \alpha_{g} (a) ]_A$ for all $g \in G$ and
  $\mu = [ a ]_A$. 
  Since $\alpha$ has the weak tracial Rokhlin property and 
  $a \sim_A  \alpha_{g} (a)$ for all $g \in G$,
  it follows from \Lem{Cu.W.T.R.p.Surjec} that
   there exists 
  $b \in (K \otimes A^{\alpha})_{++}$ such that 
$[ a ]_A = [ b ]_A$.
Now set $\omega = [ b ]_{A^{\alpha}}$. Then
\[
\Cu (\iota) (\omega) = \Cu (\iota) ( [ b ]_{A^{\alpha}} ) 
= [ b ]_{A} = [ a ]_A = \mu.
\]
This completes the proof of the surjectivity of $\Cu (\iota) \colon \Cu_+ (A^{\alpha}) \cup \{0\} 
\to\Cu_+ (A)^{\alpha}  \cup  \{0\}$.
Now, let $x_1, x_2 \in \Cu_+ (A)^{\alpha}$ with
$x_1\leq x_2$. By surjectivity, there exist $a, b \in (A^\alpha \otimes \cK)_{++}$ such that 
$\Cu(\iota) ([ a ]_{A^{\alpha}}) = x_1$ and $\Cu(\iota) ([ b ]_{A^{\alpha}}) = x_1$. Since 
$x_1\leq x_2$, it follows that  
$[ a ]_{A} \leq [ b ]_{A}$. So, $a \precsim_A b$. Using Proposition~\ref{W.T.R.P.inject}, we get $a \precsim_{A^\alpha} b$. Therefore, 
$[ a ]_{A^\alpha} \leq [ b ]_{A^\alpha}$. 
This shows that the inverse of $\Cu (\iota)$ preserves the order as well.
This completes the proof.
\end{proof}
\begin{rmk}
In the above theorem,
$\Cu_+ (A^{\alpha})  \cup  \{0\}$
is a sub-$\Cu$-semigroup  of $\Cu (A)$
by
Corollary~\ref{SubCuSemiofPurelyPosi}. 
Since 
$\Cu (\iota) \colon \Cu_+ (A^{\alpha}) \cup \{0\} 
\to\Cu_+ (A)^{\alpha}  \cup  \{0\}$ is an
isomorphism of ordered semigroups, it follows that 
$\Cu_+ (A)^{\alpha}  \cup  \{0\}$ is also a $\Cu$-semigroup. Furthermore, of course, 
$\Cu (\iota) \colon \Cu_+ (A^{\alpha}) \cup \{0\} 
\to\Cu_+ (A)^{\alpha}  \cup  \{0\}$ is a $\Cu$-morphism.
\end{rmk}  
%
Our next goal is to find precise relationships between 
the Cuntz semigroups of the fixed point algebra and the crossed product, and their purely positive parts. To do this, we need Lemma~\ref{CornerLem1} and 
Lemma~\ref{CornerLem2} to prove Proposition~\ref{prcFullCu}.
\begin{lem}\label{CornerLem1}
Let $A$ be a C*-algebra, let $p \in \cM(A)$ be a full projection, and let 
$a \in (A \otimes \cK)_+$. Then
for $\ep> 0$, there exist $m \in \N$ and 
$b \in M_{m} (pAp)_+$ such that 
$(a - \ep)_+ \precsim_A b$. 
\end{lem}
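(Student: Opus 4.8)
The plan is to reduce to a finite matrix algebra over $A$, approximate a square root of the relevant element using the fullness of $p$, and then realize that approximant as a compression of an explicit positive element living in a matrix corner over $pAp$. Only the three standard facts Lemma~\ref{PhiB.Lem.18.4}(\ref{PhiB.Lem.18.4.10.a}), Lemma~\ref{PhiB.Lem.18.4}(\ref{Item_9420_LgSb_1_6}), and the definition of a full projection are needed.

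First I would truncate. Writing $P_n = 1_{\cM(A)} \otimes e_n \in \cM(A \otimes \cK)$ for the projection onto the first $n$ coordinates of $\cK$, the net $(P_n)_n$ is an approximate unit for $A \otimes \cK$, so $P_n a P_n \to a$ in norm. I choose $n$ with $\|a - a'\| < \ep/2$, where $a' := P_n a P_n \in (A \otimes M_n)_+ = M_n(A)_+$. By Lemma~\ref{PhiB.Lem.18.4}(\ref{Item_9420_LgSb_1_6}) (with $\lambda = \ep/2$) this gives $(a - \ep)_+ \precsim_A (a' - \ep/2)_+$, so it suffices to dominate $(a' - \ep/2)_+$.

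Next I would pass to $p^{(n)} := 1_n \otimes p = \mathrm{diag}(p, \dots, p) \in M_n(\cM(A)) = \cM(M_n(A))$. A routine matrix-unit computation shows $\overline{M_n(A)\, p^{(n)} M_n(A)} = M_n(\overline{ApA}) = M_n(A)$, so $p^{(n)}$ is full, and $p^{(n)} M_n(A) p^{(n)} = M_n(pAp)$. In particular $(a')^{1/2} \in M_n(A) = \overline{M_n(A)\,p^{(n)} M_n(A)}$, so there are $u_1, \dots, u_m, v_1, \dots, v_m \in M_n(A)$ with $d := \sum_{i=1}^m u_i p^{(n)} v_i$ close to $(a')^{1/2}$; since $(a')^{1/2}$ is self-adjoint, $d \to (a')^{1/2}$ forces $dd^* \to a'$, so I arrange $\|dd^* - a'\| < \ep/2$ and conclude $(a' - \ep/2)_+ \precsim_A dd^*$ by Lemma~\ref{PhiB.Lem.18.4}(\ref{PhiB.Lem.18.4.10.a}). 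The key algebraic step is then to factor $d = w V$ with $w = (u_1 p^{(n)}, \dots, u_m p^{(n)}) \in M_{1,m}(M_n(A))$ and $V = (p^{(n)}v_1, \dots, p^{(n)} v_m)^{\mathrm{t}} \in M_{m,1}(M_n(A))$, using $p^{(n)} p^{(n)} = p^{(n)}$, so that $dd^* = w (VV^*) w^*$. The element $b := VV^*$ has $(i,j)$ entry $p^{(n)} v_i v_j^* p^{(n)} \in p^{(n)} M_n(A) p^{(n)} = M_n(pAp)$, hence $b \in M_m(M_n(pAp))_+ = M_{mn}(pAp)_+$. Since $w$ and $V$ sit inside $A \otimes \cK$, the identity $dd^* = w b w^*$ witnesses $dd^* \precsim_A b$ via the constant sequence $w$. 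Chaining the three subequivalences yields $(a - \ep)_+ \precsim_A b$ with $b \in M_{mn}(pAp)_+$, and renaming $mn$ as $m$ completes the argument.

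I expect the genuinely delicate points to be organizational rather than hard: verifying that $p^{(n)}$ is full in $M_n(A)$ and that its corner equals $M_n(pAp)$ (both short matrix-unit computations), and keeping the $\ep$-bookkeeping consistent so that the square-root approximation $d \approx (a')^{1/2}$ translates into the norm estimate $\|dd^* - a'\| < \ep/2$. The conceptual heart is the factorization $dd^* = w(VV^*)w^*$, which transports the positive element into a matrix corner over $pAp$ at the mere cost of a Cuntz domination.
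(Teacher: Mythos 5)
Your proof is correct, and it reaches the conclusion by a mechanism genuinely different from the paper's. The shared skeleton is: truncate into a matrix algebra over $A$, then exploit fullness of $p$; but the two arguments diverge at both steps. For the truncation, the paper invokes Lemma~1.9 of \cite{Ph14} to replace $(a-\ep)_+$ by a Cuntz-equivalent positive element $c \in M_t(A)_+$, whereas you compress directly by $1 \otimes e_n$ and pay only an $\ep/2$ via Lemma~\ref{PhiB.Lem.18.4}(\ref{Item_9420_LgSb_1_6}), which is more elementary and self-contained. For the passage into the corner, the paper approximates $c$ itself by $\sum_j d_j(e_{1,1}\otimes p)f_j$, which is not self-adjoint, and must therefore symmetrize with adjoints and invoke the operator inequality $x^*y + y^*x \le x^*x + y^*y$ (Proposition~II.3.1.9 of \cite{BlaBo}) to dominate by the one-sided terms $d_j(e_{1,1}\otimes p)d_j^* + f_j^*(e_{1,1}\otimes p)f_j$, finishing with the flip $x_j(e_{1,1}\otimes p)x_j^* \sim_A (e_{1,1}\otimes p)x_j^*x_j(e_{1,1}\otimes p)$ from Lemma~\ref{PhiB.Lem.18.4}(\ref{PhiB.Lem.18.4.6}). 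You instead approximate the square root $(a')^{1/2}$, so that $dd^*\approx a'$ comes for free from self-adjointness of the target, and the exact row--column factorization $d = wV$, $dd^* = w(VV^*)w^*$ with $VV^* \in M_m\big(M_n(pAp)\big)_+$ transports the element into the corner in one shot, with an exact (rather than approximate) witness for the final Cuntz domination. What your route buys is the elimination of both the symmetrization trick and the Blackadar inequality, as well as of the citation to \cite{Ph14}; what the paper's route buys is that it approximates $c$ directly with no square-root estimate and works with a plain direct sum rather than rectangular amplifications. Your two auxiliary verifications --- that $p^{(n)} = 1_n \otimes p$ is full in $M_n(A)$ because $\overline{\mathrm{Span}\big(M_n(A)\,p^{(n)}M_n(A)\big)} \supseteq M_n\big(\overline{\mathrm{Span}(ApA)}\big)$, and that $p^{(n)}M_n(A)p^{(n)} = M_n(pAp)$ --- are indeed the routine matrix-unit computations you anticipate, and the $\ep$-bookkeeping closes correctly.
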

\begin{proof}
Let $\ep>0$. We use Lemma~1.9 of \cite{Ph14} to choose $t \in \N$ and $c \in M_t (A)_+$ such that $(a - \ep)_+ \sim_A c$. Now, find $\delta >0$ such that 
$(a - \tfrac{\ep}{2})_+ \precsim_A (c -\dt)_+$. Since $p$ is full, it follows that 
$\overline{\mathrm{Span} \left( M_t (A) (e_{1, 1} \otimes p) M_t (A) \right)} = M_t (A)$. 
So,  there exist $s \in \N$ and
 $d_1, \ldots , d_s, f_1, \ldots , f_s \in M_t (A)$ such that 
 \[
\left\| c - \sum_{j=1}^{s} d_{j}  (e_{1, 1} \otimes p) f_{j}\right\| < \delta.
 \]
 It is clear that 
 \[
\left\| c - \sum_{j=1}^{s} f^*_{j}  (e_{1, 1} \otimes p) d^*_{j}\right\| < \delta.
 \]
 Putting these together, we get
 \[
 \left\|  c - \tfrac{1}{2} \cdot \sum_{j=1}^{s} 
 \left[
 d_{j}  (e_{1, 1} \otimes p) f_{j} - f^*_{j}  (e_{1, 1} \otimes p) d^*_{j}
 \right]
 \right\| < \delta.
 \]
Applying Proposition~II.3.1.9 of \cite{BlaBo}
with $x= (e_{1, 1} \otimes p) d_{j}^{*}$ and $y=(e_{1, 1} \otimes p)
f_j$, we get
\begin{equation}\label{Eq.1.16.2019.2}
d_j (e_{1, 1} \otimes p) f_j + f_{j}^{*} (e_{1, 1} \otimes p) d_{j}^{*} 
\leq d_j (e_{1, 1} \otimes p) d_{j}^{*} + f_{j}^{*} (e_{1, 1} \otimes p) f_{j}
\end{equation}
for all $j\in \{1, \ldots , m\}$.
We use 
 (\ref{Eq.1.16.2019.2}) and Lemma~\ref{PhiB.Lem.18.4}(\ref{PhiB.Lem.18.4.10.a})  to get 
\begin{align*}
(c - \delta)_+ 
& \precsim_A 
\frac{1}{2} \sum_{j=1}^{s} ( d_{j}  (e_{1, 1} \otimes p) f_{j} + f^*_{j}  (e_{1, 1} \otimes p) d^*_{j} )
\leq
\frac{1}{2} \cdot \sum_{j=1}^{s} (  d_j (e_{1, 1} \otimes p) d_{j}^{*} + f_{j}^{*} p f_{j}).
\end{align*}
Now set 
\[
x_j=
\begin{cases}
\frac{1}{\sqrt{2}} f_j       &  1 \leq j\leq s, \\
\frac{1}{\sqrt{2}} d_{j-m}^*       &  s < j\leq 2s.
\end{cases} 
\]
Then
\[
(c - \delta)_+  \precsim_A \sum_{j=1}^{2m} x_j (e_{1, 1} \otimes p)  x_{j}^* \precsim \bigoplus_{j=1}^{2s} x_j (e_{1, 1} \otimes p) x_{j}^* 
\sim_A
\bigoplus_{j=1}^{2s} (e_{1, 1} \otimes p) x^*_j x_{j} (e_{1, 1} \otimes p).
\]
Now, set $n = 2s$ and $b= \bigoplus_{j=1}^{2s} (e_{1, 1} \otimes p) x^*_j x_{j} (e_{1, 1} \otimes p)$. 
 Then
\[
(a - \ep)_+ \precsim_A (c - \delta)_+ \precsim_A b.  
\]
This completes the proof of the lemma. 
\end{proof}
\begin{lem}\label{CornerLem2}
Let $A$ be a C*-algebra, let $p \in \cM(A)$ be a full projection, and let 
$a \in (A \otimes \cK)_+$. 
Suppose that 
for any $\ep> 0$, there exists 
$c \in (\mathcal{K} \otimes pAp)_+$ such that 
$(a - \ep)_+ \precsim_A c$. Then
$[ a ] \in \Cu (pAp)$. 
\end{lem}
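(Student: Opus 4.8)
The plan is to reinterpret the conclusion and then reduce it to a supremum computation inside a hereditary subalgebra. Write $D = \cK \otimes pAp$, which is (after the canonical flip $\cK \otimes A \cong A \otimes \cK$) a hereditary C*-subalgebra — in fact a corner — of $A \otimes \cK$, and let $\iota_D \colon D \hookrightarrow A \otimes \cK$ be the inclusion. Since $\Cu(pAp) = \Cu(pAp \otimes \cK) = \Cu(D)$ by stability of the Cuntz semigroup, the assertion $[a] \in \Cu(pAp)$ means precisely that $[a]_A$ lies in the image of the induced map $\Cu(\iota_D) \colon \Cu(D) \to \Cu(A)$. (Fullness of $p$ is not actually needed for this direction; it was used only in \Lem{CornerLem1}.) So the plan is to construct an increasing sequence in $\Cu(D)$ whose image under $\Cu(\iota_D)$ has supremum $[a]_A$, and then to invoke the fact that $\Cu(\iota_D)$ preserves suprema of increasing sequences.

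The heart of the argument is to produce, for each $n \in \N$, a positive element $b_n \in D_+$ with $b_n \sim_A (a - \tfrac1n)_+$. First I would apply the hypothesis with $\ep = \tfrac{1}{2n}$ to get $c_n \in D_+$ with $(a - \tfrac{1}{2n})_+ \precsim_A c_n$; then Lemma~\ref{PhiB.Lem.18.4}(\ref{PhiB.Lem.18.4.11}) yields $\delta_n > 0$ with $(a - \tfrac{3}{4n})_+ \precsim_A (c_n - \delta_n)_+$, where $(c_n - \delta_n)_+ \in D_+$ by functional calculus. Then I would apply the exact factorization form of Rørdam's lemma (Proposition~2.4 of \cite{Ror92}, the source of Lemma~\ref{PhiB.Lem.18.4}(\ref{PhiB.Lem.18.4.11})): from $(a - \tfrac{3}{4n})_+ \precsim_A (c_n - \delta_n)_+$ there is $w_n \in A \otimes \cK$ with $(a - \tfrac1n)_+ = w_n (c_n - \delta_n)_+ w_n^*$, the iterated cut-down being computed via Lemma~\ref{PhiB.Lem.18.4}(\ref{PhiB.Lem.18.4.8}). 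Finally, setting $b_n = (c_n - \delta_n)_+^{1/2} w_n^* w_n (c_n - \delta_n)_+^{1/2}$ and using Lemma~\ref{PhiB.Lem.18.4}(\ref{PhiB.Lem.18.4.6}) (with $\lambda = 0$) would give $w_n (c_n - \delta_n)_+ w_n^* \sim_A b_n$; moreover $b_n$ lies in $\overline{(c_n - \delta_n)_+ (A \otimes \cK)(c_n - \delta_n)_+} \subseteq D$ because $D$ is hereditary, so $b_n \in D_+$, as required.

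To finish, I would note that for $m \geq n$ one has $b_n \sim_A (a - \tfrac1n)_+ \precsim_A (a - \tfrac1m)_+ \sim_A b_m$, and since $b_n, b_m \in D_+$, Lemma~\ref{PhiB.Lem.18.4}(\ref{KR00.lem.22}) gives $b_n \precsim_D b_m$; hence $([b_n]_D)_n$ is increasing in $\Cu(D)$. By axiom (O1) it has a supremum $s \in \Cu(D)$. As $\Cu(\iota_D)$ is a $\Cu$-morphism (Remark~\ref{Cucpc}), it preserves suprema of increasing sequences, so
\[
\Cu(\iota_D)(s) = \sup_n [b_n]_A = \sup_n \big[(a - \tfrac1n)_+\big]_A = [a]_A,
\]
the last equality being the standard fact (Lemma~1.25(1) of \cite{Ph14}) that $[a]_A = \sup_n [(a - \tfrac1n)_+]_A$. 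Thus $[a]_A$ lies in the image of $\Cu(\iota_D)$, i.e.\ $[a] \in \Cu(pAp)$.

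The main obstacle is the construction in the second paragraph: passing from the purely one-sided hypothesis — each $(a-\ep)_+$ is only Cuntz-\emph{below} a corner element, with no control from above — to an exact Cuntz equivalence with an element genuinely inside $D$. The two ingredients that make this work are the exact factorization version of Rørdam's lemma and the identity $c^*c \sim_A cc^*$ combined with heredity of $D$; once an honest corner representative of each $(a - \tfrac1n)_+$ is in hand, the remainder is the routine $\Cu$-semigroup formalism of suprema and $\Cu$-morphisms.
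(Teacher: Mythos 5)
Your proposal is correct and follows essentially the same route as the paper: both produce, via a R{\o}rdam-type cut-down combined with $c^*c \sim_A cc^*$ and heredity of the corner, elements of $(\cK \otimes pAp)_+$ representing the classes $[(a - \tfrac{1}{n})_+]_A$, pass to an increasing sequence in $\Cu(pAp)$ using Lemma~\ref{PhiB.Lem.18.4}(\ref{KR00.lem.22}), take its supremum by (O1), and identify that supremum with $[a]_A$ via Lemma~1.25(1) of \cite{Ph14}. The only differences are minor streamlinings: you extract exact Cuntz equivalences $b_n \sim_A (a - \tfrac{1}{n})_+$ from the exact-factorization form of R{\o}rdam's lemma where the paper settles for the two-sided sandwich $(a - \ep_n)_+ \precsim_A d_{n+1} \precsim_A (a - \ep_{n+1})_+$, and you use the lemma's stated hypothesis directly (correctly observing that fullness of $p$ is not needed here), whereas the paper's write-up re-derives that hypothesis by invoking Lemma~\ref{CornerLem1}, where fullness enters.
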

\begin{proof}
By induction on~$n$,
we construct sequences $(\ep_n)_{n \in \Nz}$ in $(0, \infty)$,
$(d_n)_{n \in \Nz}$ in $(\cK \otimes pAp)_{+}$
such that $\lim_{n \to \I} \ep_n = 0$,
$d_0 \precsim_A (a - \ep_{0})_{+}$,
and, for all $n \in \Nz$,  we have
\begin{enumerate}
\item
$\ep_{n + 1} < \ep_{n}$,
\item
$d_n \in  (\cK \otimes p A p)_{+}$,
\item
$
(a - \ep_{n})_{+}
  \precsim_A d_{n + 1} \precsim_A (a - \ep_{n + 1})_{+}$.
\end{enumerate}

To begin,
set $\ep_0 = 1$.
Given $\ep_n$ with $n \in \Nz$,
apply \Lem{CornerLem1}
with $\ep_n$ in place of $\ep$,
getting
 $m (n) \in \N$,
and $b_{n} \in ( \cK \otimes pAp)_{+}$
such that
\begin{equation}\label{Eq15_2019_05_14}
(a - \tfrac{\ep_{n}}{2})_{+}
  \precsim_A b_{n}.
\end{equation}
Choose $c_n \in \cK \otimes A$
such that 
\[
\|(a - \tfrac{\ep_{n}}{2})_{+} - c_n b_n c_n^* \| < \tfrac{\ep_n}{4}.
\]
So,  Lemma~\ref{PhiB.Lem.18.4}(\ref{Item_9420_LgSb_1_6}),  we have 
\begin{equation}\label{EQ2024.12.15}
(a - \ep_n)_{+} \precsim 
(c_n b_n c_n^* - \tfrac{\ep_n}{4})_+
\precsim 
(a - \tfrac{\ep_{n}}{2})_{+}.
\end{equation}
By Lemma~\ref{PhiB.Lem.18.4}(\ref{PhiB.Lem.18.4.6}), we have 
$
(c_n b_n c_n^* - \tfrac{\ep_n}{4})_+
\sim 
( b_n^{1/2} c_n^* c_n  b_n^{1/2} - \tfrac{\ep_n}{4})_+$. Now, set 
$d_{n+1} = \left( b_n^{1/2} c_n^* c_n  b_n^{1/2} - \tfrac{\ep_n}{4}\right)_+$.
It is clear that 
$d_{n+1} \in \cK \otimes pAp$ and, then, by (\ref{EQ2024.12.15}), 
\[
(a - \ep_n)_{+} \precsim 
d_{n+1}
\precsim 
(a - \tfrac{\ep_{n}}{2})_{+}.
\]
Then set $\ep_{n + 1} = \frac{\ep_n}{2}$.
The induction is complete.
We now have
$d_0
  \precsim_A d_{1}
  \precsim_A d_2
  \precsim_A \cdots$.
  Since
$pAp$ is a hereditary C*-subalgebra of $A$,
it follows from Lemma~\ref{PhiB.Lem.18.4}(\ref{KR00.lem.22}) that
\begin{equation}\label{Eq21_2019_05_15}
d_0
 \precsim_{pAp} d_1
 \precsim_{p A p} d_2
 \precsim_{pAp} \cdots.
\end{equation}
By O1,
there exists $d \in (\cK \otimes pAp)_{+}$ such that
$[ d ]_{pAp}
 = \sup_{n} \, [ d_n ]_{p A p}$.
Moreover,
for all $n \in \Nz$,
we have
$( a - \ep_{n})_{+} \precsim_A d_{n + 1} \precsim_A a$.
Since $\lim_{n \to \I} \ep_n = 0$,
it follows from Lemma 1.25(1) of~\cite{Ph14} that
$\sup_{n} \, [ d_n ]_A = [ a ]_A$.
So $[ d ]_{A} = [ a ]_{A}$. This completes the proof. 
\end{proof}
Putting
Lemma \ref{CornerLem1} and Lemma \ref{CornerLem2} together, we have the following proposition. 
\begin{prp}\label{prcFullCu}
Let $A$ be a C*-algebra and let $p \in \cM(A)$ be a full projection. Then  $\xi \colon pAp \to A$ be the inclusion map. 
Then the map $\Cu(\xi) \colon \Cu (pAp) \to \Cu(A)$ induced by the inclusion map is a $\Cu$-isomorphism.
\end{prp}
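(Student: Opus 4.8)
The plan is to show that $\Cu(\xi)$ is a bijective $\Cu$-morphism which is moreover an order-embedding, and then to deduce that its set-theoretic inverse is again a $\Cu$-morphism. Since $\xi \colon pAp \to A$ is a homomorphism, Remark~\ref{Cucpc}(\ref{Cucpc.2}) immediately gives that $\Cu(\xi)$ is a $\Cu$-morphism; in particular it preserves the order, addition, the zero element, compact containment, and suprema of increasing sequences. The structural fact underlying everything is that $(pAp) \otimes \cK$ is the corner $q(A \otimes \cK)q$ associated with the projection $q = p \otimes 1 \in \cM(A \otimes \cK)$, and is therefore a hereditary C*-subalgebra of $A \otimes \cK$.

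For injectivity and the order-embedding property, I would invoke Lemma~\ref{PhiB.Lem.18.4}(\ref{KR00.lem.22}): for $a, b \in ((pAp) \otimes \cK)_+$, which lie in the hereditary subalgebra $(pAp) \otimes \cK$, one has $a \precsim_{pAp} b$ if and only if $a \precsim_A b$. Thus $\Cu(\xi)([a]_{pAp}) \leq \Cu(\xi)([b]_{pAp})$ already forces $[a]_{pAp} \leq [b]_{pAp}$; applying this in both directions shows that $\Cu(\xi)$ is injective and that $[a]_{pAp} \leq [b]_{pAp}$ iff $[a]_A \leq [b]_A$, i.e.\ $\Cu(\xi)$ is an order-embedding.

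Surjectivity is where the two preceding lemmas do the real work. Given $a \in (A \otimes \cK)_+$, Lemma~\ref{CornerLem1} provides, for each $\ep > 0$, an $m \in \N$ and $b \in M_m(pAp)_+ \subseteq (\cK \otimes pAp)_+$ with $(a - \ep)_+ \precsim_A b$. This is precisely the hypothesis of Lemma~\ref{CornerLem2}, whose proof yields $d \in (\cK \otimes pAp)_+$ with $[d]_A = [a]_A$; equivalently $\Cu(\xi)([d]_{pAp}) = [a]_A$. Hence $\Cu(\xi)$ is onto.

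Finally I would assemble these facts into the claim that $\Cu(\xi)$ is a $\Cu$-isomorphism. Being a bijective order-embedding whose inverse also preserves the order, $\Cu(\xi)$ automatically transports suprema of increasing sequences and the relation $\ll$ in both directions: if $y = \sup_n y_n$ in $\Cu(pAp)$, then an order-isomorphism sends this to $\sup_n \Cu(\xi)(y_n)$, and since $\ll$ is defined purely in terms of the order and suprema (the definition preceding the $\Cu$-axioms), it too is preserved by the inverse. Therefore $\Cu(\xi)^{-1}$ is a $\Cu$-morphism and $\Cu(\xi)$ is a $\Cu$-isomorphism. The only genuinely nontrivial input is the surjectivity step, which rests entirely on Lemmas~\ref{CornerLem1} and~\ref{CornerLem2}; the remaining care is the bookkeeping identifying $(pAp) \otimes \cK$ as the corner $q(A \otimes \cK)q$, and the routine verification that an order-isomorphism of $\Cu$-semigroups is automatically a $\Cu$-isomorphism.
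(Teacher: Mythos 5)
Your proof is correct and follows essentially the same route as the paper, whose entire proof is the one-line remark that Lemmas~\ref{CornerLem1} and~\ref{CornerLem2} together give surjectivity. The details you supply explicitly --- injectivity and the order-embedding property via Lemma~\ref{PhiB.Lem.18.4}(\ref{KR00.lem.22}) applied to the hereditary subalgebra $pAp \otimes \cK = (p \otimes 1)(A \otimes \cK)(p \otimes 1)$, and the observation that a bijective order-embedding $\Cu$-morphism automatically transports suprema and $\ll$ in both directions --- are exactly what the paper leaves implicit.
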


  We conclude this section by proving the following proposition. Recall the definition of $\Cu_+(A)$ (Definition \ref{D_9421_Pure}). 
We refer to Definition 2.8 of \cite{GS17}
for the definition of 
$\Cu(A)^{\Cu(\alpha)}$. It follows from 
Lemma 2.9 of \cite{GS17} that $\Cu(A)^{\Cu(\alpha)}$ is always a sub-$\Cu$-semigroup of $\Cu(A)$.
\begin{prp}\label{CuCrosFix}
Let $A$ be a stably finite simple non-type-I C*-algebra and let $G$ be a finite group.
Let $\alpha \colon G \to \Aut (A)$ be an action of $G$ on $A$ with the weak tracial Rokhlin property. Then, with respect to the natural maps: 
\begin{enumerate}
\item\label{CuCrosFix.1}
$\Cu(A\rtimes_{\alpha} G) 
\cong
 \Cu(A^{\alpha})$. 
\item\label{CuCrosFix.2}
$\Cu(A)^{\alpha}=\Cu(A)^{\Cu(\alpha)}$, and, therefore, $\Cu(A)^{\alpha}$ is a  sub-$\Cu$-semigroup of $\Cu (A)$. 
\item\label{CuCrosFix.3}
$
\Cu_+ (A\rtimes_{\alpha} G) \cup \{0\} 
\cong 
\Cu_+ (A^{\alpha}) \cup \{0\}  
\cong
\Cu_+ (A)^{\alpha} \cup \{0\}  
$.
\end{enumerate}
\end{prp}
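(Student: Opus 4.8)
The plan is to derive all three statements from the already-established Theorem~\ref{Thm.05.22.2019} and Proposition~\ref{prcFullCu}, combined with the classical description of a finite-group crossed product through its averaging projection. I would begin with (\ref{CuCrosFix.1}). Write $B = A \otimes \cK$ and let $p = \frac{1}{\card(G)}\sum_{g \in G} u_g \in \cM(B \rtimes_{\alpha \otimes \id} G)$ be the averaging projection. Since $u_g p = p u_g = p$, a direct computation gives $pbp = \bigl(\frac{1}{\card(G)}\sum_{g} \alpha_g(b)\bigr) p$ for $b \in B$, and the assignment $c \mapsto cp$ is a $*$-isomorphism $B^{\alpha \otimes \id} \to p(B \rtimes_{\alpha \otimes \id} G)p$; moreover $B^{\alpha \otimes \id} = A^\alpha \otimes \cK$ because the action on $\cK$ is trivial. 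By Remark~\ref{Rmk2024.08.27} the action is pointwise outer, so $A \rtimes_\alpha G$, and hence its stabilization $B \rtimes_{\alpha \otimes \id} G \cong (A \rtimes_\alpha G) \otimes \cK$, is simple; consequently the nonzero projection $p$ is full. Proposition~\ref{prcFullCu} then yields a $\Cu$-isomorphism $\Cu(A^\alpha \otimes \cK) = \Cu(p(B \rtimes_{\alpha \otimes \id} G)p) \cong \Cu(B \rtimes_{\alpha \otimes \id} G)$, and stability of the Cuntz semigroup upgrades this to $\Cu(A^\alpha) \cong \Cu(A \rtimes_\alpha G)$.

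For (\ref{CuCrosFix.3}) I would observe that the isomorphism just built is a $\Cu$-isomorphism, so it and its inverse preserve compact containment, and therefore map compact elements to compact elements in both directions. By Lemma~\ref{Pure-Soft-Equivalent}, in a stably finite simple C*-algebra the purely positive classes are precisely the non-compact elements. Since $A^\alpha$ and $A \rtimes_\alpha G$ are stably finite and simple (simplicity as above; stable finiteness by averaging a faithful densely defined quasitrace over $G$ and composing it with the canonical conditional expectation $A \rtimes_\alpha G \to A$, via Remark~3.7 of \cite{BK04}), the isomorphism restricts to $\Cu_+(A \rtimes_\alpha G) \cup \{0\} \cong \Cu_+(A^\alpha) \cup \{0\}$. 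Composing with the ordered-semigroup isomorphism $\Cu(\iota)\colon \Cu_+(A^\alpha) \cup \{0\} \to \Cu_+(A)^\alpha \cup \{0\}$ of Theorem~\ref{Thm.05.22.2019} produces the full chain of (\ref{CuCrosFix.3}).

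It then remains to prove (\ref{CuCrosFix.2}). The inclusion $\Cu(A)^{\Cu(\alpha)} \subseteq \Cu(A)^\alpha$ holds by the definition of $\Cu(A)^{\Cu(\alpha)}$ (an element approximated from below by fixed elements is fixed, as each $\Cu(\alpha_g)$ preserves suprema), so I only need the reverse inclusion. Let $x \in \Cu(A)^\alpha$. If $x$ is compact, then $x \ll x$ and the constant sequence $(x)$ has supremum $x$, exhibiting $x$ as a limit of fixed elements, so $x \in \Cu(A)^{\Cu(\alpha)}$. If $x$ is not compact, then by Lemma~\ref{Pure-Soft-Equivalent} we have $x \in \Cu_+(A)^\alpha$; by the remark following Theorem~\ref{Thm.05.22.2019}, $\Cu_+(A)^\alpha \cup \{0\}$ is a sub-$\Cu$-semigroup of $\Cu(A)$, so axiom (O2) supplies a sequence $(x_n)$ in $\Cu_+(A)^\alpha \subseteq \Cu(A)^\alpha$ with $x_n \ll x_{n+1}$ and $\sup_n x_n = x$, and the inclusion being a $\Cu$-morphism makes this compact containment hold in $\Cu(A)$. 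Hence $x \in \Cu(A)^{\Cu(\alpha)}$, giving $\Cu(A)^\alpha = \Cu(A)^{\Cu(\alpha)}$; since $\Cu(A)^{\Cu(\alpha)}$ is always a sub-$\Cu$-semigroup (Lemma~2.9 of \cite{GS17}), so is $\Cu(A)^\alpha$.

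I expect the main obstacle to be (\ref{CuCrosFix.2}): reconciling the naive fixed-point set $\Cu(A)^\alpha$ with the structured fixed-point sub-$\Cu$-semigroup $\Cu(A)^{\Cu(\alpha)}$, which requires that every fixed Cuntz class be approximable from below by fixed classes. The compact case is immediate, but the purely positive case is exactly where the weak tracial Rokhlin property enters, namely through the already hard-won conclusion of Theorem~\ref{Thm.05.22.2019} that $\Cu_+(A)^\alpha \cup \{0\}$ is a sub-$\Cu$-semigroup. A secondary point, to be handled with care in the non-unital and stabilized setting, is the fullness of the averaging projection $p$ together with the stable finiteness of $A^\alpha$ and $A \rtimes_\alpha G$; both follow from simplicity of the crossed product and an averaged invariant quasitrace, but should be stated explicitly so that Lemma~\ref{Pure-Soft-Equivalent} applies to the crossed product and the fixed-point algebra.
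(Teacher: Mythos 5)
Your proposal is correct and, in outline, it is the paper's proof, which is a short assembly of the same ingredients: part (\ref{CuCrosFix.1}) from Lemma~\ref{Fixedpoint_corner}(\ref{Fixedpoint_corner_d}) together with Proposition~\ref{prcFullCu}, part (\ref{CuCrosFix.3}) from Theorem~\ref{Thm.05.22.2019} combined with part (\ref{CuCrosFix.1}), and part (\ref{CuCrosFix.2}) by repeating the proof of Proposition~7.8 of \cite{ATV24} with Lemma~\ref{Cu.W.T.R.p.Surjec} in place of Lemma~4.5 of \cite{AGP19}. Your treatment of (\ref{CuCrosFix.2}) is packaged slightly differently: where the paper (following \cite{ATV24}) takes a fixed non-compact class, uses Lemma~\ref{Cu.W.T.R.p.Surjec} to realize it as $[b]_A$ with $b \in (A^{\alpha} \otimes \cK)_{++}$, and writes down the explicit rapidly increasing sequence $\big(\big[(b - \tfrac{1}{n})_+\big]_A\big)_n$ of fixed classes, you instead invoke the $\Cu$-structure on $\Cu_+(A)^{\alpha} \cup \{0\}$ and apply (O2) there. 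That is legitimate, since both arguments ultimately rest on Lemma~\ref{Cu.W.T.R.p.Surjec}, but note that the remark after Theorem~\ref{Thm.05.22.2019} only asserts that $\Cu(\iota)$ is a $\Cu$-morphism onto $\Cu_+(A)^{\alpha} \cup \{0\}$; to pass from relative compact containment to genuine compact containment in $\Cu(A)$, as your argument requires, you should factor the inclusion of $\Cu_+(A)^{\alpha} \cup \{0\}$ into $\Cu(A)$ as the composition of $\Cu(\iota)^{-1}$ (an order isomorphism, hence preserving the order-theoretic notions $\ll$ and suprema), the inclusion $\Cu_+(A^{\alpha}) \cup \{0\} \to \Cu(A^{\alpha})$ (a $\Cu$-morphism by Corollary~\ref{SubCuSemiofPurelyPosi}), and $\Cu(\iota)$. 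Your explicit verifications of fullness of $p$ (via simplicity of the crossed product, which follows from pointwise outerness as in Remark~\ref{Rmk2024.08.27}) and of the compactness dichotomy via Lemma~\ref{Pure-Soft-Equivalent} address points the paper leaves implicit, which is a strength.

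One side justification is genuinely flawed: stable finiteness of $A \rtimes_{\alpha} G$ does not follow by composing a faithful densely defined 2-quasitrace $\tau$ on $A$ with the conditional expectation $E \colon A \rtimes_{\alpha} G \to A$. Although $E$ is linear, it does not preserve commutativity, so $\tau \circ E$ need not be additive on commuting positive elements, nor need it satisfy $\tau(E(xx^*)) = \tau(E(x^*x))$; the composition argument is only valid for traces, and since exactness is not assumed here you cannot replace quasitraces by traces via \cite{Hag14}. The fact itself is true and easily repaired: for a finite group the regular representation embeds $A \rtimes_{\alpha} G$ into $M_{\card(G)}(A)$, and a C*-subalgebra of a stably finite C*-algebra is stably finite; stable finiteness of $A^{\alpha}$ then follows even more directly, either as a subalgebra of $A$ or as the corner $p \left( A \rtimes_{\alpha} G \right) p$ of Lemma~\ref{Fixedpoint_corner}.
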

\begin{proof}
Part (\ref{CuCrosFix.1}) follows from 
Proposition~\ref{prcFullCu} 
and Lemma~\ref{Fixedpoint_corner}(\ref{Fixedpoint_corner_d}). 
The proof of part (\ref{CuCrosFix.2}) is the same as the proof of Proposition~7.8 of \cite{ATV24} except that we should now use Lemma~\ref{Cu.W.T.R.p.Surjec} instead of Lemma~4.5 of \cite{AGP19}. 
Part (\ref{CuCrosFix.3}) follows from
Theorem~\ref{Thm.05.22.2019} and (\ref{CuCrosFix.1}).
\end{proof}
The above proposition generalizes Proposition~7.8 of of \cite{ATV24} to non-unital C*-algebras. 
\section{The relative radius of comparison of the crossed product}
\label{ReRc}
In this section, our main goal is to obtain some relationships between the relative radius of comparison of the crossed product of a non-unital C*-algebra by a finite group action, the relative radius of comparison of the fixed point algebra, and that of the original C*-algebra,  when the action has 
 the (non-unital) weak tracial Rokhlin property.
\begin{ntn}
Let $A$ be a C*-algebra, let $G$ be a discrete group, and let $\alpha \colon G \to \Aut (A)$ be an action of $G$ on $A$. We define
\[
\iota \colon A^\alpha \to A
\quad
\mbox{ and  }
\quad
\kappa \colon A \to A\rtimes_{\alpha} G
\]
to be the inclusion maps. 
 By abuse of notation, we often denote the amplification of these maps to matrices or to the stabilization also by the same symbol. 
\end{ntn}

The following proposition is Theorem 5.11 of \cite{ATV24}.
\begin{prp}\label{softGlobalGlimm}
Let $A$ be a stable C*-algebra with the Global Glimm Property. Then, for any $a\in A_+$, there exists a soft element 
$b\in A_+$ with $b \precsim_A a$ and such that
$d_\tau (a) = d_\tau (b)$
for every $\tau \in \EQT_2(A)$.
\end{prp}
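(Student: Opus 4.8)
The plan is to realize $b$ as the supremum of an increasing sequence of soft elements that lie Cuntz-below $a$ and exhaust its rank. I work in $\Cu(A)$ and use the identification of $\EQT_2(A)$ with $\cF(\Cu(A))$ from Proposition~\ref{Pro.4.2.ESR11}, under which $d_\tau$ is precisely the functional attached to $\tau$; since every functional preserves suprema of increasing sequences and $[a]_A = \sup_n [(a-\tfrac1n)_+]_A$, it suffices to produce a soft $b \in A_+$ with $[(a-\tfrac1n)_+]_A \le [b]_A \le [a]_A$ for all $n$. Indeed, such sandwiching forces $b \precsim_A a$ and $d_\tau(b) = \sup_n d_\tau((a-\tfrac1n)_+) = d_\tau(a)$ for every $\tau \in \EQT_2(A)$, which is exactly the assertion.

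The core is a cofinality statement: for every $\ep > 0$ there is a soft $c \in A_+$ with $(a-\ep)_+ \precsim_A c \precsim_A a$. This is where the Global Glimm Property enters. Given $\ep$ and a large $n \in \N$, Definition~\ref{GlobalGlimmProp} yields mutually orthogonal, pairwise $\approx$-equivalent positive elements $b_1,\dots,b_n \in \overline{a A a}$ with $(a-\ep)_+$ in the closed ideal generated by $b = \sum_i b_i$; since $b \in \overline{aAa}$ we get $b \precsim_A a$, and a standard argument upgrades ideal membership of the cut-down to honest comparison, $(a-\ep)_+ \precsim_A b^{\oplus m}$ for some $m$. The decisive feature is that $b$ consists of arbitrarily many pairwise orthogonal, pairwise equivalent pieces, i.e. it is $n$-divisible; as $n \to \infty$ this "spreading out" destroys any compact (atomic) part, and the limiting object is soft in the sense of Definition~3.1 of \cite{ThiVil24}. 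I would carry out this step by invoking the strongly-soft machinery of \cite{ATV24}, whose Section~3 guarantees that, under the Global Glimm Property, every element of $\Cu(A)$ is dominated cofinally by strongly soft elements of no larger rank.

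Granting the cofinality statement, the conclusion follows cleanly. Using stability of $A$ (so $A \cong A \otimes \cK$ and every Cuntz class has a representative in $A_+$) together with O1, choose soft $c_n \in A_+$ with $(a-\tfrac1n)_+ \precsim_A c_n \precsim_A a$, and, using that the strongly soft elements below $a$ form a subsemigroup closed under suprema of increasing sequences (Proposition~3.6 of \cite{ATV24}), replace the $c_n$ by an increasing sequence of soft elements with the same domination property. Let $b \in A_+$ represent $\sup_n [c_n]_A$. Since a supremum of an increasing sequence of strongly soft elements is again strongly soft, $b$ is soft; and from $[(a-\tfrac1n)_+]_A \le [b]_A \le [a]_A$ we obtain $b \precsim_A a$ and $d_\tau(b) = d_\tau(a)$ for all $\tau \in \EQT_2(A)$.

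I expect the genuine obstacle to be the cofinality statement, namely producing a single soft element that simultaneously dominates $(a-\ep)_+$, stays Cuntz-below $a$, and preserves rank. Two delicate points lurk there: upgrading the ideal-membership clause of the Global Glimm Property to finite Cuntz comparison, and then verifying that the divisible "spread-out" element is genuinely soft rather than merely $n$-divisible. Everything after that is the routine supremum-and-sandwich argument above.
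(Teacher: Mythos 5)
You cannot be faulted for not matching the paper's proof, because the paper has none: Proposition~\ref{softGlobalGlimm} is imported verbatim as Theorem~5.11 of \cite{ATV24}. Your sketch therefore has to stand on its own, and it fails at the very first reduction, which proves too much. You claim it suffices to find a soft $b$ with $[(a-\tfrac1n)_+]_A \le [b]_A \le [a]_A$ for all $n$. Since $[a]_A = \sup_n [(a-\tfrac1n)_+]_A$ and a supremum is a \emph{least} upper bound, any such $b$ satisfies $[b]_A = [a]_A$; you are secretly demanding that the Cuntz class of $a$ itself be soft. This is impossible whenever $[a]_A$ is compact, and the hypotheses allow exactly that: take $A = \cZ \otimes \cK$ (stable, simple, non-type-I, hence with the Global Glimm Property by Lemma~\ref{SimplNonTypeGlimm}) and $a$ a nonzero projection. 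Your ``cofinality statement'' fails for the same reason: for $\ep < 1$ one has $(a-\ep)_+ = (1-\ep)a \sim_A a$, so a soft $c$ with $(a-\ep)_+ \precsim_A c \precsim_A a$ would have $[c]_A = [a]_A$ compact, contradicting Lemma~\ref{Pure-Soft-Equivalent}. The proposition itself is of course consistent here: in $\Cu(\cZ) \cong \N \sqcup (0,\infty]$ the soft element $1'$ satisfies $1' \le 1$ and $d_\tau(1') = d_\tau(1)$, but $1 \not\le 1'$ --- so the soft $b$ of the theorem does \emph{not} dominate the cut-downs of $a$. The root error is conflating equality of ranks ($d_\tau(b) = d_\tau(a)$ for all $\tau \in \EQT_2(A)$, which is all that is asserted) with Cuntz domination of every $(a-\ep)_+$; without comparison hypotheses the latter is strictly stronger, and it is false in the key case (compact classes) for which the proposition is actually needed in Theorem~\ref{RcofFixedPoint}.

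Two further points, of the same circular or leaky character. First, your appeal to ``Section~3 of \cite{ATV24}'' for cofinal domination by strongly soft elements of no larger rank is a mis-citation that amounts to invoking the result being proved: Section~3 of \cite{ATV24} contains the generalities (strongly soft elements form a sub-$\Cu$-semigroup, closed under sums and suprema), while the rank-preservation statement is the content of Section~5, essentially Theorem~5.11 itself. Second, even granting a soft $c_n$ with $(a-\tfrac1n)_+ \precsim_A c_n \precsim_A a$, ``replacing the $c_n$ by an increasing sequence'' requires upward directedness of the soft elements below $[a]_A$; closure under sums does not give this, since $[c_1] + \cdots + [c_n]$ leaves the interval below $[a]_A$ (likewise your $b^{\oplus m}$ from the Glimm decomposition satisfies $b^{\oplus m} \precsim_A a^{\oplus m}$, not $b^{\oplus m} \precsim_A a$). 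The genuine proof in \cite{ATV24} works at the level of functionals rather than Cuntz domination: Glimm divisibility is used to produce strongly soft elements below $[a]_A$ whose values under every functional approximate $\lambda([a]_A)$ from below, and one then takes suprema within the strongly soft sub-$\Cu$-semigroup; at no stage is $(a-\ep)_+ \precsim_A b$ required. Your outer supremum argument is fine, but the sandwich it rests on must be replaced by these rank inequalities.
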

It follows from Lemma~\ref{SimplNonTypeGlimm} that the above proposition can apply to all $A \otimes \K$ when $A$ is a simple non-type-I C*-algebra.  

The following theorem establishes a relationship between the relative radius of comparison of the fixed point algebra and that of the original C*-algebra.
\begin{thm}\label{RcofFixedPoint}
Let $A$ be  a stably finite simple  non-type-I  C*-algebra and
let $\alpha \colon G \to \Aut (A)$ be an action of a  finite group $G$ on $A$
which has the weak tracial Rokhlin property.
Let $a \in (A^{\alpha} \otimes \K)_{+} \setminus \{0\}$. Then 
\[
\rc \big(\Cu (A^\alpha), \ [ a ] \big)   \leq \rc \big(\Cu (A), \ [ \iota ( a ) ] \big).
\] 
\end{thm}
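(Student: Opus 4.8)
The plan is to reduce the inequality $\rc(\Cu(A^\alpha),[a]) \leq \rc(\Cu(A),[\iota(a)])$ to a comparison statement about purely positive elements, where Proposition~\ref{W.T.R.P.inject} can be applied to transfer Cuntz subequivalence from $A$ back to $A^\alpha$. Fix $r > \rc(\Cu(A),[\iota(a)])$; I want to show $\Cu(A^\alpha)$ has $r$-comparison relative to $[a]$, which by taking infima yields the desired bound. So suppose $x = [c]_{A^\alpha}$ and $y = [d]_{A^\alpha}$ with $c, d \in (A^\alpha \otimes \K)_+$ satisfy $d_\tau(c) + r \cdot d_\tau(a) \leq d_\tau(d)$ for all $\tau \in \EQT_2(A^\alpha)$; the goal is $c \precsim_{A^\alpha} d$.

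\textbf{Reducing to purely positive elements.} First I would handle the Cuntz-class side. By Proposition~\ref{softGlobalGlimm} applied to $A^\alpha$ (which is simple non-type-I by Remark~\ref{Rmk2024.08.27} and Corollary~3.3 of \cite{FG20}, hence $A^\alpha \otimes \K$ has the Global Glimm Property by Lemma~\ref{SimplNonTypeGlimm}), I may replace $c$ by a soft (equivalently purely positive, by Lemma~\ref{Pure-Soft-Equivalent}) element $c'$ with $c' \precsim_{A^\alpha} c$ and $d_\tau(c') = d_\tau(c)$ for all $\tau$; similarly replace $d$ by a soft $d'$. Since $c' \precsim_{A^\alpha} c$ it suffices to prove $c' \precsim_{A^\alpha} d$, and since $d_{\tau}(d') = d_\tau(d)$ the dimension-function hypothesis is unchanged; moreover $d' \precsim_{A^\alpha} d$ lets me finish once I show $c' \precsim_{A^\alpha} d'$. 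Thus I may assume $c, d$ are purely positive, so that $0$ is a limit point of $\spec(d)$—exactly the hypothesis Proposition~\ref{W.T.R.P.inject} needs.

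\textbf{The key transfer step.} The main point is matching the dimension functions on $A$ with those on $A^\alpha$. By Proposition~\ref{Pro.4.2.ESR11}, extended quasitraces correspond bijectively to functionals on the Cuntz semigroup. The crucial fact I would establish (or invoke, via the isomorphism $\Cu(\iota)\colon \Cu_+(A^\alpha)\cup\{0\} \to \Cu_+(A)^\alpha\cup\{0\}$ of Theorem~\ref{Thm.05.22.2019} together with $\Cu(A\rtimes_\alpha G)\cong\Cu(A^\alpha)$ from Proposition~\ref{CuCrosFix}) is that every $\tau \in \EQT_2(A^\alpha)$ arises, up to the natural scaling, as the restriction of an $\alpha$-invariant $\tau' \in \EQT_2(A)$: averaging $\tau' = \frac{1}{\card(G)}\sum_g \tau\circ\alpha_g$ or restricting an invariant trace gives the correspondence $d_{\tau'}(\iota(b)) = d_\tau(b)$ for $b \in (A^\alpha\otimes\K)_+$. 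Using this, the hypothesis $d_\tau(c) + r\cdot d_\tau(a) \leq d_\tau(d)$ for all $\tau\in\EQT_2(A^\alpha)$ transfers to $d_{\tau'}(\iota(c)) + r\cdot d_{\tau'}(\iota(a)) \leq d_{\tau'}(\iota(d))$ for all such invariant $\tau'$, and then—because a general $\tau''\in\EQT_2(A)$ can be averaged to an invariant one without changing its value on the invariant elements $\iota(c),\iota(a),\iota(d)$—for \emph{all} $\tau''\in\EQT_2(A)$. Since $r > \rc(\Cu(A),[\iota(a)])$, $r$-comparison in $A$ gives $\iota(c) \precsim_A \iota(d)$, i.e. $c \precsim_A d$. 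Finally, as $c,d\in(A^\alpha\otimes\K)_+$ with $0$ a limit point of $\spec(d)$, Proposition~\ref{W.T.R.P.inject} upgrades this to $c \precsim_{A^\alpha} d$, completing the argument.

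\textbf{The main obstacle.} I expect the genuine difficulty to lie in the trace-correspondence step: verifying that the map $\tau \mapsto \tau'$ between $\EQT_2(A^\alpha)$ and the $\alpha$-invariant part of $\EQT_2(A)$ is a well-defined bijection compatible with the dimension functions $d_\tau$ on the relevant elements, and that averaging an arbitrary $\tau''\in\EQT_2(A)$ leaves its values on $\iota(c),\iota(a),\iota(d)$ invariant (which uses $\alpha$-invariance of the Cuntz classes $[\iota(c)],[\iota(a)],[\iota(d)]$ established during the purely-positive reduction). The scaling factor and the handling of the trivial quasitraces $\tau_0,\tau_\infty$ will need care, but the structural input is already packaged in Proposition~\ref{W.T.R.P.inject} and Theorem~\ref{Thm.05.22.2019}, so the remaining work is bookkeeping on functionals rather than new C*-algebraic content.
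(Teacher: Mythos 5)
Your overall route is the paper's: soften the dominating element using the Global Glimm Property (Lemma~\ref{SimplNonTypeGlimm} plus Proposition~\ref{softGlobalGlimm}), transfer the dimension-function inequality from $\EQT_2(A^\alpha)$ to $\EQT_2(A)$, invoke $r$-comparison in $\Cu(A)$, and pull the resulting subequivalence back with Proposition~\ref{W.T.R.P.inject}. But one of your steps is actually wrong as written: the reduction for $x$ (your $c$) runs backwards. Proposition~\ref{softGlobalGlimm} produces a soft $c'$ with $c' \precsim_{A^\alpha} c$, and from $c' \precsim_{A^\alpha} d$ you cannot conclude $c \precsim_{A^\alpha} d$; that implication would need $c \precsim_{A^\alpha} c'$, which you do not have. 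Fortunately the step is superfluous: Proposition~\ref{W.T.R.P.inject} only requires $0$ to be a limit point of the spectrum of the \emph{dominating} element, so only $y$ (your $d$) needs the soft replacement --- which is exactly the paper's move: take $y'$ soft with $y' \precsim_{A^\alpha} y$ and $d_\tau(y') = d_\tau(y)$ for all $\tau$, prove $x \precsim_{A^\alpha} y'$, and compose. Deleting the $c$-reduction repairs this. You should also dispose of the case $y = 0$ separately (taking $\tau = \tau_\infty$ in the hypothesis forces $x = 0$), since no nonzero soft replacement exists there and the limit-point argument does not apply.

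Your ``key transfer step'' misreads the quantifier direction and is much heavier than necessary. The hypothesis holds for \emph{all} $\tau \in \EQT_2(A^\alpha)$, and what you need is the inequality for all $\sigma \in \EQT_2(A)$; but every such $\sigma$ restricts along $\iota$ to an element of $\EQT_2(A^\alpha)$, with $d_\sigma(\iota(b)) = d_{\sigma \circ \iota}(b)$ for $b \in (A^\alpha \otimes \K)_+$, so the hypothesis applies verbatim to these restrictions. This one-line observation is precisely the paper's step; no extension of quasitraces from $A^\alpha$ to $A$, no bijection with $\alpha$-invariant quasitraces, and no appeal to Theorem~\ref{Thm.05.22.2019} or Proposition~\ref{CuCrosFix} is needed. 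Your averaging patch does happen to be valid (finite sums of lower semicontinuous extended 2-quasitraces are again such under the paper's definition, and the invariance of $\iota(x)$, $\iota(a)$, $\iota(y')$ under the extended action makes the averaged quasitrace agree with $\sigma$ on them), but the surjectivity-of-restriction claim you single out as ``the genuine difficulty'' is never actually used in your own chain of implications and need not be proved. With the $c$-reduction removed, the $y=0$ case added, and the transfer step replaced by plain restriction, your argument coincides with the paper's proof.
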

\begin{proof}
We set 
\[
\Gamma =
\Big\{ r \in (0, \infty) \colon \Cu (A) \mbox{ has $r$-comparison relative to } [ \iota ( a ) ]_A \Big\}
\]
and 
\[
\Lambda = \Big\{ r \in (0, \infty) \colon \Cu (A^\alpha)  \mbox{ has $r$-comparison relative to } [  a ]_{A^\alpha} \Big\}. 
\]
It is enough to show that $\Gamma \subseteq \Lambda$. 
So, let $r> 0$ in $\Gamma$. 
Let 
$x, y \in (A^{\alpha} \otimes \K)_{+}$ satisfy
\begin{equation}
\label{2024.01.16.Q1}
d_{\tau} ( x )
+ r \cdot
d_{\tau} ( a )
\leq
d_{\tau} (y)
\end{equation}
for all $\tau \in \EQT_{2} (A^\alpha)$. If 
$y=0$, then, by (\ref{2024.01.16.Q1}), we have $x=0$. Therefore, $x \precsim_{A^\alpha} y$. Now assume that 
$y \neq 0$. We may assume that $\| y \|\leq 1$.
Since $A^\alpha$ is simple non-type-I C*-algebra, it follows from Lemma~\ref{SimplNonTypeGlimm} that it has the  Global Glimm Property. Now we use 
Theorem~\ref{softGlobalGlimm} with 
$A^{\alpha} \otimes \K$ in place of $A$ and $y$ in place of  $a$ to get a soft and non-zero
 $y' \in (A^{\alpha} \otimes \K)_+$
 such that:
 \begin{enumerate}
 \item \label{E2024.03.05.a}
 $d_{\tau} (y) = d_{\tau} (y')$
 for all $\tau \in \EQT_2 (A^\alpha)$.
 \item \label{E2024.03.05.b}
 $y' \precsim_{A^\alpha} y$.
 \end{enumerate}
We know that for all $\tau \in \EQT_2 (A)$, we have 
$\tau \circ \iota
 \in \EQT_2 (A^\alpha)$. Using (\ref{2024.01.16.Q1}) and 
(\ref{E2024.03.05.a}), we get
\begin{equation}
\label{E2.2024.03.05}
d_{\tau} ( \iota (x) )
+ r \cdot
d_{\tau} ( \iota (a) )
\leq
d_{\tau} ( \iota (y'))
\end{equation}
for all $\tau \in \EQT_{2} (A)$.
Since $r \in \Gamma$, it follows that
$ \iota ( x ) \precsim_{A} \iota (y') $. Since  $y'$ is soft and non-zero, it follows from 
Lemma~\ref{Pure-Soft-Equivalent} 
that $0$ is a limit point of $y'$. Thus, by Proposition~\ref{W.T.R.P.inject},  $x \precsim_{A^\alpha} y'$. Now, using this and (\ref{E2024.03.05.b}), we get
$x \precsim_{A^\alpha} y$. This implies that 
$r \in \Lambda$. 
Taking the infimum of $\Gamma$ and $\Lambda$, we get the result. 
\end{proof}
We require the following lemmas to prove Theorem~\ref{TrcaeofProj}. 
\begin{lem}\label{Lem2.4Gen}
Let $A$ be a simple  non-type-I  C*-algebra. Let 
$\tau \in \EQT_2 (A) \setminus
 \{\tau_{\infty}\}$. Let $a$ be a positive element in 
  $\Ped(A\otimes \mathcal{K}) \setminus \{0\}$. Then, for every $\ep>0$, there exists a positive non-zero element $b$ in $\overline{a( A \otimes \mathcal{K}) a}$ such that $d_{\tau} (b) < \ep$. 
\end{lem}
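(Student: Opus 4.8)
The plan is to exploit the boundedness of $\tau$ on the hereditary subalgebra $\overline{a (A \otimes \cK) a}$ supplied by Lemma~\ref{BonPed}, together with the abundance of mutually orthogonal, mutually $\approx$-equivalent positive elements inside that subalgebra guaranteed by Lemma~\ref{Lem2.4Ph14}. The key observation is that cutting one positive element of bounded $d_\tau$-value into $n$ orthogonal Cuntz-equivalent pieces divides its $d_\tau$-value by $n$, so a large enough $n$ produces a piece of $d_\tau$-value below any prescribed $\ep$.

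First I would note that since $A$ is simple and not of type I, so is $A \otimes \cK$, so that Lemma~\ref{Lem2.4Ph14} applies with $A \otimes \cK$ in place of $A$. By Lemma~\ref{BonPed}, the restriction of $\tau$ to $\big(\overline{a (A \otimes \cK) a}\big)_+$ is bounded; write $N < \infty$ for the constant with $\tau(c) \leq N \|c\|$ for all positive $c$ in this subalgebra. Fix $n \in \N$ with $n > N / \ep$, and apply Lemma~\ref{Lem2.4Ph14} to obtain mutually orthogonal positive elements $b_1, \ldots, b_n \in (A \otimes \cK)_+ \setminus \{0\}$ with $b_1 \approx \cdots \approx b_n$ and $b_0 := b_1 + \cdots + b_n \in \overline{a (A \otimes \cK) a}$.

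Next I would assemble the three elementary facts that drive the estimate. The relation $b_i \approx b_j$ gives $b_i \sim_A b_j$ by Lemma~\ref{PhiB.Lem.18.4}(\ref{PhiB.Lem.18.4.6}) with $\lambda = 0$, whence $d_{\tau}(b_i) = d_{\tau}(b_j)$ for all $i, j$. Orthogonality of the $b_i$ makes $d_{\tau}$ additive on the sum: since $b_0^{1/m} = \sum_i b_i^{1/m}$ with mutually commuting summands, $\tau(b_0^{1/m}) = \sum_i \tau(b_i^{1/m})$, and letting $m \to \I$ yields $d_{\tau}(b_0) = \sum_{i=1}^n d_{\tau}(b_i) = n \, d_{\tau}(b_1)$. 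Finally, boundedness controls $d_{\tau}(b_0)$: for each $m$ the element $b_0^{1/m}$ lies in $\big(\overline{a (A \otimes \cK) a}\big)_+$, so $\tau(b_0^{1/m}) \leq N \|b_0\|^{1/m}$, and passing to the limit $m \to \I$, using $\|b_0\|^{1/m} \to 1$, gives $d_{\tau}(b_0) \leq N$.

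Combining these, $d_{\tau}(b_1) = d_{\tau}(b_0) / n \leq N / n < \ep$, and since $0 \leq b_1 \leq b_0 \in \overline{a (A \otimes \cK) a}$ and the latter is hereditary, $b_1$ itself lies in $\overline{a (A \otimes \cK) a}$; it is nonzero by construction. Taking $b = b_1$ completes the argument. The only genuinely delicate point is the passage from the boundedness of $\tau$ (a statement about $\tau$) to the bound $d_{\tau}(b_0) \leq N$ (a statement about the rank function $d_{\tau}$); this rests on the monotonicity of $m \mapsto \tau(b_0^{1/m})$ together with $\|b_0\|^{1/m} \to 1$, and so is worth writing out carefully even though it is short.
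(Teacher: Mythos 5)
Your argument is correct and takes essentially the same route as the paper: both decompose via Lemma~\ref{Lem2.4Ph14} (applied to the simple non-type-I algebra $A\otimes\cK$) into $n$ mutually orthogonal, mutually equivalent nonzero pieces whose sum lies in $\overline{a(A\otimes\cK)a}$, and then use additivity of $d_\tau$ over orthogonal elements together with equality of the values $d_\tau(b_j)$ to extract one piece with $d_\tau$-value below $\ep$. The only minor variation is bookkeeping: the paper bounds $\sum_j d_\tau(b_j)\leq d_\tau(a)=\eta<\infty$ using monotonicity of $d_\tau$ under Cuntz subequivalence, with finiteness coming from $\Ped(A\otimes\cK)\subseteq\Dom_{1/2}(\tau)$ (Lemma~\ref{DomLem}(\ref{DomLem.4})), whereas you bound $d_\tau(b_1+\cdots+b_n)\leq N$ directly from the boundedness constant of Lemma~\ref{BonPed} via $\tau(b_0^{1/m})\leq N\|b_0\|^{1/m}$ --- the same underlying Pedersen-ideal input, with your version making the finiteness step (and the hereditary argument placing $b_1$ in $\overline{a(A\otimes\cK)a}$) slightly more explicit.
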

\begin{proof}
Let $\tau \in \EQT_2 (A)$. It is clear that 
$\Dom_{\tfrac{1}{2}} (\tau)=\{0\}$ if and only if 
$\tau=\tau_{\infty}$. (This is true if $A$ is not simple.) 
Since $A$ is simple, it follows from Lemma~\ref{DomLem}(\ref{DomLem.4}) that $\Ped (A \otimes \mathcal{K}) \subseteq \Dom_{\tfrac{1}{2}} (\tau)$ for all $\tau \in \EQT_2 (A) \setminus \{\tau_\infty\}$. 
This implies that $\tau(a)$ and $d_{\tau} (a)$ both are finite. So, there exists $\eta \in [0, \infty)$ such that
  $d_{\tau} (a) =\eta$. Let $\ep>0$. 
  We choose $n \in \N$ such that 
  $\tfrac{\eta}{n} < \ep$. Since $A$ is a non-type-I C*-algebra, so also is $A \otimes \mathcal{K}$. Now, 
  we use Lemma \ref{Lem2.4Ph14} to find
   $b_1, b_2, \ldots, b_n \in (A \otimes \mathcal{K})_+ \setminus \{0\}$ such that:
   \begin{enumerate}
   \item\label{Lem2.4.a}
   $b_j b_k =0$ for all $j, k$ with $j\neq k$. 
   \item\label{Lem2.4.b}
   $b_1 \sim_A b_2 \sim_A \ldots \sim_A b_n$. 
   \item\label{Lem2.4.c}
   $b_1 + b_2 + \ldots + b_n \in \overline{a(A \otimes \mathcal{K})a}$.
\end{enumerate} 
(\ref{Lem2.4.c}) implies that
$b_1 + b_2 + \ldots + b_n \precsim_A a$. Take $d_\tau$ from both side of it and use (\ref{Lem2.4.a}) and (\ref{Lem2.4.b}), we have
\[
\sum_{j=1}^{n} d_{\tau} (b_j) = 
d_{\tau} \left(\sum_{j=1}^{n} b_j \right) \leq d_{\tau} (a)= \eta. 
\]
Since $d_{\tau} (b_1) = d_{\tau} (b_j)$ for all $j$, it follows that 
$n \cdot d_{\tau} (b_1) \leq \eta$. This implies that 
$d_{\tau} (b_1) < \ep$. Now, it is enough to take $b=b_1$. 
\end{proof}
The following lemma is known. 
\begin{lem}\label{Fixedpoint_corner}
let $A$ be a \ca,
let $G$ be a finite group,
 and
let $\alpha \colon G \to \Aut (A)$  be an action of $G$ on~$A$.
Let
$p = \frac{1}{\card (G)} \sum_{g \in G} u_g \in \cM(A\rtimes_{\alpha} G)$.
Then:
\begin{enumerate}
\item\label{Fixedpoint_corner_a}
$p$ is a projection in $\cM(A\rtimes_{\alpha} G)$.
\item\label{Fixedpoint_corner_c}
If $a \in A^{\alpha}$, then $p a p = ap$.
\item\label{Fixedpoint_corner_d}
The map $\varphi \colon A^{\alpha} \to p (A\rtimes_{\alpha} G) p$ given by
$a \mapsto \big( (\kappa \circ \iota ) (a )\big)  p$ is an isomorphism.
\end{enumerate}
\end{lem}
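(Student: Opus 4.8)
My plan is to treat the three parts in order, reducing everything to the crossed-product relations $u_g u_h = u_{gh}$, $u_g^* = u_{g^{-1}}$, and $u_g a u_g^* = \alpha_g(a)$ for $a \in A$, plus one genuine structural input (a faithful conditional expectation) reserved for the injectivity in~(\ref{Fixedpoint_corner_d}). For~(\ref{Fixedpoint_corner_a}) I would first check self-adjointness by reindexing, $p^* = \frac{1}{\card(G)}\sum_g u_{g^{-1}} = p$, and then idempotence by counting: $p^2 = \frac{1}{\card(G)^2}\sum_{g,h} u_{gh}$, and since each $k \in G$ is hit by exactly $\card(G)$ pairs $(g,h)$ with $gh = k$, this collapses to $\frac{1}{\card(G)}\sum_k u_k = p$. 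Along the way I would record the absorption identity $u_g p = \frac{1}{\card(G)}\sum_h u_{gh} = p$, and symmetrically $p u_g = p$, which will be the workhorse of part~(\ref{Fixedpoint_corner_d}).

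For~(\ref{Fixedpoint_corner_c}) the point is that an element $a \in A^\alpha$ satisfies $u_g a u_g^* = \alpha_g(a) = a$, so $a$ commutes with every $u_g$ and hence with $p$; combined with $p^2 = p$ this gives $pap = a p^2 = ap$. This computation simultaneously shows that $\varphi(a) = (\kappa\circ\iota)(a)\,p = ap = pap$ lands in the corner $p(A\rtimes_\alpha G)p$, so that $\varphi$ is well defined as a map into that corner.

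The substantive part is~(\ref{Fixedpoint_corner_d}). Linearity and the $*$-identity $\varphi(a^*) = \varphi(a)^*$ are immediate from~(\ref{Fixedpoint_corner_c}) and $p^* = p$, and multiplicativity follows since $a,b$ commute with $p$: $\varphi(a)\varphi(b) = apbp = ab\,p^2 = \varphi(ab)$. For surjectivity I would use that, $G$ being finite, every element of $A\rtimes_\alpha G$ is a finite sum $x = \sum_h a_h u_h$ with $a_h \in A$; absorbing with $u_h p = p$ reduces $pxp$ to $\sum_h p a_h p$, and the short computation $pb = \frac{1}{\card(G)}\sum_g \alpha_g(b) u_g$ followed by $u_g p = p$ gives $pbp = \overline{b}\,p$ with $\overline{b} = \frac{1}{\card(G)}\sum_g \alpha_g(b) \in A^\alpha$. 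Thus $pxp = \varphi(c)$ for $c = \sum_h \overline{a_h} \in A^\alpha$, so $\varphi$ maps onto the corner. For injectivity I would invoke the canonical faithful conditional expectation $E \colon A\rtimes_\alpha G \to A$ extracting the coefficient of $u_e$ (full and reduced crossed products agree here because $G$ is finite): since $E(\varphi(a)) = E\big(\frac{1}{\card(G)}\sum_g a u_g\big) = \frac{1}{\card(G)}\,a$, any $a$ with $\varphi(a) = 0$ must vanish. A bijective $*$-homomorphism between C*-algebras is automatically a $*$-isomorphism, which finishes~(\ref{Fixedpoint_corner_d}).

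I expect no serious obstacle, as this is a standard fact; the only non-formal ingredient is the faithfulness invoked for injectivity, and the one place demanding slight care is surjectivity, where I must verify both that the averaging map $b \mapsto \overline{b}$ carries $A$ onto $A^\alpha$ and that the absorption identity $u_h p = p$ genuinely collapses each summand, so that the corner is \emph{exactly} $\{cp : c \in A^\alpha\}$ and not merely contains it.
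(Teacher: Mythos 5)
Your proof is correct and amounts to the same standard argument the paper delegates to its references: the paper's proof of this lemma is simply ``See \cite{Ros79} and Lemma 4.3 of \cite{AGP19}'', and those sources rest on exactly your averaging computations ($u_g p = p u_g = p$, $pbp = \overline{b}\,p$ with $\overline{b} = \frac{1}{\card(G)}\sum_g \alpha_g(b)$, and the coefficient map for injectivity). The points requiring the care you already took are the ones that matter in the paper's non-unital setting: the $u_g$ live in $\cM(A \rtimes_\alpha G)$ with $a u_g \in A \rtimes_\alpha G$, and since $G$ is finite the crossed product coincides with the algebraic crossed product $\bigoplus_{g \in G} A u_g$, which legitimizes both your exact finite-sum decomposition in the surjectivity step and the well-definedness of the coefficient functional you use for injectivity.
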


\begin{proof}
 See~\cite{Ros79} and Lemma 4.3 of \cite{AGP19}.
\end{proof}
The following corollary follows immediately from Proposition \ref{prcFullCu}. 
\begin{cor}\label{RcofFullCorner}
Let $A$ be a C*-algebra, let $p \in \cM(A)$ be a full projection. Let $\xi \colon pAp \to A$ denote the inclusion map and let 
$c$ be a full positive element in $pAp \otimes \mathcal{K}$.  
Then 
\[
\rc \big(\Cu (pAp), \ [ c ] \big)
=
\rc \big(\Cu (A), \ [ \xi ( c ) ] \big).
\]
\end{cor}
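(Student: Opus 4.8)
The plan is to deduce the corollary from the general principle that the relative radius of comparison depends only on the pair consisting of a $\Cu$-semigroup together with a distinguished full element, up to $\Cu$-isomorphism. By \Prp{Pro.4.2.ESR11}, the formulation of the relative radius of comparison through extended $2$-quasitraces in \Def{DefRcCu} agrees with the purely order-theoretic formulation through functionals on the Cuntz semigroup in \Def{DefRcCuAlgebraic}; so I would work entirely in the latter setting, where the argument becomes transparent. Thus it suffices to prove that $\rc (S, e) = \rc (T, \varphi (e))$ whenever $\varphi \colon S \to T$ is an isomorphism of $\Cu$-semigroups and $e \in S$ is full.

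First I would record how functionals and fullness behave under $\varphi$. Since a $\Cu$-isomorphism preserves the zero element, addition, the order, and suprema of increasing sequences, precomposition $\mu \mapsto \mu \circ \varphi$ defines a bijection $\cF (T) \to \cF (S)$ with inverse $\lambda \mapsto \lambda \circ \varphi^{-1}$. Moreover $\varphi (e)$ is again full, because $\varphi (\infty \cdot e) = \sup_n \varphi (n e) = \infty \cdot \varphi (e)$ and $\varphi$ carries the largest element of $S$ to the largest element of $T$.

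The heart of the matter is the equivalence of $r$-comparison on the two sides. Fix $r > 0$ and suppose that $T$ has $r$-comparison relative to $\varphi (e)$. Given $x, y \in S$ with $\lambda (x) + r \cdot \lambda (e) \leq \lambda (y)$ for all $\lambda \in \cF (S)$, I would apply this identity to $\lambda = \mu \circ \varphi$ for an arbitrary $\mu \in \cF (T)$, obtaining $\mu (\varphi (x)) + r \cdot \mu (\varphi (e)) \leq \mu (\varphi (y))$ for all $\mu \in \cF (T)$; $r$-comparison in $T$ relative to $\varphi (e)$ then gives $\varphi (x) \leq \varphi (y)$, and since $\varphi$ is an order isomorphism, $x \leq y$. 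The reverse implication is identical with $\varphi^{-1}$ in place of $\varphi$. Hence $S$ has $r$-comparison relative to $e$ if and only if $T$ has $r$-comparison relative to $\varphi (e)$, and taking the infimum over such $r$ yields $\rc (S, e) = \rc (T, \varphi (e))$.

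Finally I would specialize to $S = \Cu (pAp)$, $T = \Cu (A)$, and $\varphi = \Cu (\xi)$, which is a $\Cu$-isomorphism by \Prp{prcFullCu}, taking $e = [ c ]$; here $\Cu (\xi) ([ c ]) = [ \xi (c) ]$ by the definition of the induced map, and the fullness of $c$ in $pAp \otimes \cK$ ensures that $[ c ]$ is full, so both radii are defined. The equality $\rc (\Cu (pAp), [ c ]) = \rc (\Cu (A), [ \xi (c) ])$ then follows. I do not expect any real obstacle: the only points needing care are the transport of fullness along $\varphi$ and the verification that precomposition genuinely induces a bijection of functionals, both of which are immediate consequences of the $\Cu$-isomorphism axioms.
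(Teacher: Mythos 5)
Your proposal is correct and follows the same route as the paper, which simply observes that the corollary is an immediate consequence of Proposition~\ref{prcFullCu}; you have merely spelled out the (genuinely routine) verification that the relative radius of comparison is invariant under a $\Cu$-isomorphism carrying the distinguished full element to the distinguished full element. The details you supply --- the bijection $\mu \mapsto \mu \circ \varphi$ on functionals, the preservation of fullness, and the transfer of $r$-comparison --- are exactly the implicit content of the paper's ``follows immediately.''
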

The following lemma is  crucial for us, as one of  the key points to compute the relative radius of comparison of the crossed product is that we want to know the value of extended traces when $a$ is replaced by $p\cdot a$. 
\begin{thm}\label{TrcaeofProj}
Let $A$ be a simple infinite dimensional C*-algebra. Let 
$G$ be a finite group and let 
$\alpha \colon G \to \Aut(A)$ be an action of $G$ on $A$ with the weak tracial Rokhlin property. Set 
 $p= \frac{1}{\card (G)} \sum_{g} u_g$ in 
 $\cM \big(  (A \otimes \mathcal{K}) \rtimes_{\alpha \otimes \id} G \big)$. 
 Let $a$ be a positive element of
  $\Ped(A^\alpha \otimes \mathcal{K}) \setminus \{0\}$ and let 
  $\tau \in \ET( A\rtimes_{\alpha} G))$. Then:
\begin{enumerate}
\item\label{TrcaeofProj.1}
 $\tau\left( \big((\kappa \circ \iota) (a)\big) p \right) = \frac{1}{\card (G)} \cdot \tau ((\kappa \circ \iota) (a))$
\item\label{TrcaeofProj.2}
$d_{\tau} \left( \big((\kappa \circ \iota) (a) \big) p \right)  =
 \frac{1}{\card (G)} \cdot d_{\tau} \left((\kappa \circ \iota) (a) \right)$.
\end{enumerate}
\end{thm}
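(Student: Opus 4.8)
The plan is to deduce (\ref{TrcaeofProj.2}) from (\ref{TrcaeofProj.1}), and to reduce (\ref{TrcaeofProj.1}) to the vanishing of $\tau$ on the ``off-diagonal'' terms. Write $B = A \otimes \cK$, $\beta = \alpha \otimes \id$, and abbreviate $\tilde{a} = (\kappa \circ \iota)(a) \in (B \rtimes_{\beta} G)_+$; since $a \in (A^{\alpha} \otimes \cK)_+$ we have $\beta_g(\tilde a) = \tilde a$, so $\tilde a$ commutes with every $u_g$ and hence with $p$. Expanding,
\[
\tilde a\, p = \frac{1}{\card(G)} \sum_{g \in G} \tilde a\, u_g,
\]
so (\ref{TrcaeofProj.1}) is equivalent to the assertion that $\tau(\tilde a\, u_g) = 0$ for every $g \ne 1$, the term $g = 1$ contributing $\frac{1}{\card(G)}\tau(\tilde a)$. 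Since $a \in \Ped(A^\alpha \otimes \cK)$, Remark~\ref{PedIdeal} gives $C^*(a) \subseteq \Ped$, and Lemma~\ref{BonPed} shows that $\tau$ is finite and bounded on the positive part of the hereditary subalgebra generated by $a$; this lets me treat $\tau$ as an honest bounded trace functional in all the estimates below and guarantees $\tau(\tilde a) < \infty$.

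For the vanishing, fix $g \ne 1$ and $\ep > 0$. First I would use Lemma~\ref{Lem2.4Gen} to manufacture a positive element $x$ in $\overline{a (A \otimes \cK) a}$ with $d_\tau(x) < \ep$; this $x$ will absorb the Rokhlin ``complement''. Then, applying the weak tracial Rokhlin property in the invariant form of Lemma~\ref{invariant.contractions} to $B$ with the action $\beta$ (legitimate by Lemma~\ref{FG.thm.1.3}), I obtain positive contractions $f_h$ ($h \in G$) with $f = \sum_h f_h \in (A \otimes \cK)^{\alpha}$, $\|f\| = 1$, satisfying approximate orthogonality $\|f_h f_k\|$ small for $h \ne k$, approximate equivariance $\|\alpha_s(f_h) - f_{sh}\|$ small, approximate commutation with $a$, and a complement condition forcing the part of an approximate unit not covered by $f$ to be Cuntz-dominated by $x$. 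The idea is that $\{f_h\}$ assembles (after passing to approximate square roots and partial isometries) into an approximate system of $G$-matrix units, and that $\tilde a\, u_g$ decomposes entirely into \emph{off-diagonal} such matrix units, because conjugation by $u_g$ carries the index $h$ to $gh \ne h$. The trace of an off-diagonal matrix unit vanishes identically by the trace identity $\tau(v_{h,k}) = \tau(v_{h,k} v_{k,k}) = \tau(v_{k,k} v_{h,k}) = 0$ for $h \ne k$, so $|\tau(\tilde a\, u_g)|$ is bounded by the trace of the complement together with the commutator and orthogonality errors, all of which are $O(\ep)$. Letting $\ep \to 0$ yields $\tau(\tilde a\, u_g) = 0$, and hence (\ref{TrcaeofProj.1}).

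Finally, (\ref{TrcaeofProj.2}) follows cleanly from (\ref{TrcaeofProj.1}) by functional calculus. Because $\tilde a$ commutes with the projection $p$, one has $(\tilde a\, p)^{1/n} = \tilde a^{1/n} p = (\kappa \circ \iota)(a^{1/n})\, p$ for every $n \in \N$, and $a^{1/n} \in C^*(a) \subseteq \Ped(A^\alpha \otimes \cK)$, so (\ref{TrcaeofProj.1}) applies to each $a^{1/n}$. Using the definition $d_\tau(c) = \lim_n \tau(c^{1/n})$ and monotone convergence of the increasing sequence $\tau(\tilde a^{1/n})$, I compute
\[
d_\tau(\tilde a\, p) = \lim_n \tau\big((\kappa \circ \iota)(a^{1/n})\, p\big) = \lim_n \tfrac{1}{\card(G)}\,\tau\big((\kappa \circ \iota)(a^{1/n})\big) = \tfrac{1}{\card(G)}\, d_\tau(\tilde a),
\]
which is (\ref{TrcaeofProj.2}). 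The main obstacle is the off-diagonal vanishing in (\ref{TrcaeofProj.1}): the positive contractions supplied by the weak tracial Rokhlin property are not projections, so converting them into an approximate matrix-unit system — or otherwise extracting \emph{exact} trace cancellation rather than a norm bound, which the cyclic rearrangement needed to land inside the hereditary subalgebra would spoil — is the delicate technical core, and is precisely where outerness, encoded in the equivariance $f_{h} \mapsto f_{gh}$ with $gh \ne h$, must be exploited.
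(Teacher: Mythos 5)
Your skeleton coincides with the paper's at every structural joint: the reduction of part~(\ref{TrcaeofProj.2}) to part~(\ref{TrcaeofProj.1}) via $(\tilde{a}\,p)^{1/n}=\tilde{a}^{1/n}p$ and $a^{1/n}\in C^*(a)\subseteq\Ped(A^\alpha\otimes\K)$ is exactly the paper's one-line argument; the reduction of~(\ref{TrcaeofProj.1}) to smallness of the off-diagonal terms $\tau(\tilde{a}\,u_g)$, $g\neq 1$, is the right move (strictly, (\ref{TrcaeofProj.1}) is equivalent only to $\sum_{g\neq 1}\tau(\tilde{a}\,u_g)=0$, and the paper settles for the per-$\ep$ estimate $|\tau(pa)-\tfrac{1}{\card(G)}\tau(a)|<\ep$, which suffices); and the supporting apparatus you invoke --- \Lem{Lem2.4Gen} for a Cuntz-small element, \Lem{invariant.contractions} for invariant Rokhlin contractions, \Lem{BonPed} for boundedness of $\tau$ on the hereditary subalgebra generated by $a$ --- is precisely the apparatus the paper uses.

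However, the step you yourself flag as ``the delicate technical core'' is the entire content of the theorem, and the mechanism you sketch for it does not work, while the alternative you dismiss is the one the paper carries out. The $f_g$ are merely positive contractions; assembling them into an approximate system of $G$-matrix units with exact off-diagonal trace cancellation $\tau(v_{h,k})=\tau(v_{h,k}v_{k,k})=0$ requires (approximate) partial isometries and diagonal projections $v_{k,k}$, and in the stably projectionless algebras this theorem is designed to cover (e.g.\ $\cW$ and the non-unital hereditary subalgebras in Section~\ref{Example}) no such elements exist, so this route fails in the stated generality. Moreover, your reason for rejecting the norm-bound route is backwards: the paper's proof \emph{is} a norm-bound argument, and the cyclic rearrangement does not spoil it --- this is exactly why \Lem{invariant.contractions} is applied with $a^{1/2}$ in place of $y$ and with the finite set $F=\{a^{1/4},a^{3/4}\}$. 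Since $f\in(A\otimes\K)^{\alpha}$ exactly, one has $f u_g f=\sum_{t,h}f_t\,\beta_g(f_h)\,u_g\approx\sum_{t}f_t^2 u_g$ (approximate equivariance plus approximate orthogonality), and the trace property together with the small commutators $[a^{1/4},f_t]$ and $[a^{3/4},f_t]$ lets one rewrite each summand, up to controlled errors, as $\tau\big(a^{1/8}f_t u_g f_t a^{7/8}\big)$; the fractional powers of $a$ kept on both sides ensure this element lies in $\overline{a\big((A\rtimes_{\alpha}G)\otimes\K\big)a}$, where $|\tau|\leq\eta\,\|\cdot\|$ by \Lem{BonPed}, and its norm is $<2\ep'$ because $f_t u_g f_t=f_t\,\beta_g(f_t)\,u_g\approx f_t f_{gt}u_g$ with $gt\neq t$ --- which is where $g\neq 1$ enters. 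The complement $a-a^{1/2}f^2a^{1/2}$ is then absorbed using the Rokhlin condition $(a-a^{1/2}f a^{1/2}-\ep')_+\precsim b$, \Lem{M.A_lemma_05.17.2019} to pass from $f$ to $f^2$, and $d_\tau(b)<\ep/6$. So approximate orthogonality plus boundedness of $\tau$ on the hereditary subalgebra yield the off-diagonal smallness directly, with no matrix units and no exact cancellation; as written, your proposal establishes the reductions and part~(\ref{TrcaeofProj.2}) but leaves the theorem's essential estimate unproven, with a plan that fails precisely in the non-unital, projectionless setting the result targets.
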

\begin{proof}
We only prove the first part as the second part follows from the definition of $d_{\tau}$ and the fact that $C^*(a) \subseteq \Ped(A^\alpha \otimes \mathcal{K})$. 
So, to prove (\ref{TrcaeofProj.1}), 
we may assume that $\| a \|\leq 1$.
By abuse of notation, we will also write 
$a$ instead of $(\kappa \circ \iota) (a)$. Let $\id \colon G \to \Aut(\cK)$ denote the trivial action. 
Since $\alpha$ has the weak tracial Rokhlin property, it follows from Lemma~\ref{FG.thm.1.3} that
$\alpha \otimes \id \colon G \to \Aut (A \otimes \mathcal{K})$ also has the weak tracial Rokhlin property. We set $\beta = \alpha \otimes \id$. 
Let $\ep \in (0, \infty)$ be arbitrary. It suffices to show that 
\[
\left| \tau (pa) - \frac{1}{\card (G)} \cdot \tau (a) \right | < \ep. 
\]
Since $\Ped (A^\alpha \otimes \mathcal{K} ) \subseteq 
\Ped ( (A\rtimes_{\alpha} G) \otimes \mathcal{K})$ (see Proposition~4.5 of \cite{IvKu19}), we may use Lemma \ref{Lem2.4Gen}
with $\frac{\ep}{6}$ and $(A\rtimes_{\alpha} G) \otimes \mathcal{K}$ to get a non-zero positive element $b$ in $\overline{a ((A\rtimes_{\alpha} G) \otimes \mathcal{K}) a}$ such that 
\begin{equation}\label{EQ1.2024.06.17}
d_{\tau} (b) < \frac{\ep}{6}.
\end{equation}
We may assume $\| b \|=1$. Since  $a \in \Ped (A^\alpha \otimes \mathcal{K}) \subseteq 
\Ped ( (A\rtimes_{\alpha} G) \otimes \mathcal{K})$, it follows from Lemma~\ref{BonPed} that $\tau$ is bounded  on 
$\overline{a \big((A\rtimes_{\alpha} G) \otimes \mathcal{K} \big) a}$. 
 We set
  $\eta \in [0, \infty)$ to be the norm of the restriction of $\tau$ to 
 $\overline{a \big((A\rtimes_{\alpha} G) \otimes \mathcal{K}\big) a}$. If $\eta=0$, then the theorem is trivial. So, we assume that $\eta \neq 0$.
Set
\[
\ep'= \frac{\ep}{24 \eta \card (G)^3}
\qquad
\mbox{and}
\qquad
F= \left\{ a^{\tfrac{1}{4}}, a^{\tfrac{3}{4}} \right\}.
\] 
  Now, we use Lemma \ref{invariant.contractions} with $a^{\tfrac{1}{2}}$ in place of $y$, $b$ in place of $x$, $\ep'$ in place of $\ep$, and $F$ as given to get positive contractions 
  $f_g \in G$ in $A$ for $g \in G$ such that, 
with $f = \sum_{g \in G} f_g$ , the following properties hold:
 \begin{enumerate}
 \item \label{2-W.T.R.P.66}
$\|f_{g} f_{h}\| <  \ep'$ for all $g, h \in G$ with $g \neq h$.
\item \label{2-W.T.R.P.11}
$\| c f_g - f_g c \| < \ep'$ for all $g \in G $ and all $c \in F$.
\item \label{2-W.T.R.P.22}
$\| \beta_{g}( f_h ) - f_{gh} \|  < \ep'$ for all $g , h\in G$.
\item \label{2-W.T.R.P.33}
$(a - a^{\tfrac{1}{2}} f a^{\tfrac{1}{2}} - \ep')_+ \precsim_A b$.
\item \label{2-W.T.R.P.55}
 $f \in A^{\alpha} \otimes \mathcal{K}$ and $\| f \| = 1$.
\setcounter{TmpEnumi}{\value{enumi}}
\end{enumerate}
 We use Lemma \ref{M.A_lemma_05.17.2019} with $2\ep'$ in place of $\ep$ and $f$ in place of $a$
 to get
 \begin{equation*}
 (a - a^{\tfrac{1}{2}} f a^{\tfrac{1}{2}} - 2 \ep')_+ \precsim_A
 (a - a^{\tfrac{1}{2}} f^2 a^{\tfrac{1}{2}} - 2 \ep')_+ \precsim_A
 (a - a^{\tfrac{1}{2}} f a^{\tfrac{1}{2}} -  \ep')_+. 
\end{equation*}
Now, using this relation at the first step, using 
(\ref{2-W.T.R.P.33}) at the second step, and (\ref{EQ1.2024.06.17}) at the last step, we get
\begin{align}\label{EQ3.2024.06.17}
d_{\tau} \left((a - a^{\tfrac{1}{2}} f^2 a^{\tfrac{1}{2}} - 2 \ep')_+\right)
\leq 
d_{\tau} \left((a - a^{\tfrac{1}{2}} f a^{\tfrac{1}{2}} -  \ep')_+\right)
\leq
d_{\tau} (b) < \frac{\ep}{6}. 
\end{align}
It is clear that 
\begin{equation*}
\left\|
\left(a-a^{\frac{1}{2}} f^{2} a^{\frac{1}{2}}\right)
-
\left(a-a^{\frac{1}{2}} f^2 a^{\frac{1}{2}} - 2\ep'\right)_+\right\|
< 2\ep'.
\end{equation*}
Using this relation and (\ref{EQ3.2024.06.17})  at the second step, we get
\begin{align}\label{EQ8.2024.06.17}
\left|
\tau \left( a-a^{\frac{1}{2}} f^{2} a^{\frac{1}{2}} \right) 
\right|
&\leq
  \left|
  \tau \left( (a-a^{\frac{1}{2}} f^{2} a^{\frac{1}{2}} \right)
  -
  \tau \left( (a-a^{\frac{1}{2}} f^{2} a^{\frac{1}{2}} - 2\ep')_+ \right)
  \right|
   \\& \quad + \notag
 \left|
 \tau \left( (a-a^{\frac{1}{2}} f^{2} a^{\frac{1}{2}} - 2\ep')_+ \right)
 \right|
\\ \notag &<
2\eta \ep' + \frac{\ep}{6}
<
 \frac{\ep}{3}.
\end{align}
Now we claim that
\begin{equation}\label{EQ9.2024.06.17}
\left|
\tau(p a p) - \tau \left(p a^{\frac{1}{2}} f^{2} a^{\frac{1}{2}} p\right)
\right|<\frac{\ep}{3}. 
\end{equation}

To prove (\ref{EQ9.2024.06.17}), we  use
the fact that 
$a-a^{\frac{1}{2}} f^{2} a^{\frac{1}{2}} \in A^{\alpha}$ and  (\ref{EQ8.2024.06.17})
to
get
\begin{align*}
\label{EQ10.2024.06.17}
\tau \left(p (a-a^{\frac{1}{2}} f^{2} a^{\frac{1}{2}}) p\right)
&=
\tau \left( (a-a^{\frac{1}{2}} f^{2} a^{\frac{1}{2}})^{\tfrac{1}{2}} p (a-a^{\frac{1}{2}} f^{2} a^{\frac{1}{2}})^{\tfrac{1}{2}} \right)
\\&\leq \notag 
\| p \| \cdot \tau \left( (a-a^{\frac{1}{2}} f^{2} a^{\frac{1}{2}}) \right)
<
\frac{\ep}{3}. 
\end{align*}
Now, we claim that
\begin{equation}
\label{EQ11.2024.06.17}
\left|
\tau \left( p a^{\tfrac{1}{2}} f^2 a^{\tfrac{1}{2}}  p \right)
-
\frac{1}{\card (G)} \cdot \tau \left( a^{\tfrac{1}{2}} f^2 a^{\tfrac{1}{2}} \right) 
\right|
<
\frac{\ep}{3}. 
\end{equation}

To prove (\ref{EQ11.2024.06.17}), we first use 
(\ref{2-W.T.R.P.66}) and (\ref{2-W.T.R.P.22}) to
estimate, for $g\in G \setminus \{1\}$, 
\begin{align}\label{EQ12.2024.06.17}
\left\|
a^{\frac{1}{8}} f_h u_{g} f_{h} a^{\frac{7}{8}}
\right\|
=
\left\| 
a^{\frac{1}{8}} f_h \beta_{g} (f_{h}) u_g a^{\tfrac{7}{8}}
\right\| 
& \leq
\|a^{\frac{1}{8}} f_h  \| \cdot \| \beta_{g} (f_{h}) - f_{gh}\| \cdot \|u_g a^{\tfrac{7}{8}} \|
 \\ \notag
& \quad +
\|a^{\frac{1}{8}} \|  \cdot \| f_{h} f_{gh}\| \cdot \|u_g a^{\tfrac{7}{8}} \| 
\\ \notag
& < 2\ep'.
\end{align}
Second, by (\ref{2-W.T.R.P.11}), we  have 
\begin{equation}\label{EQ1.2024.06.20}
\left\|
a^{\tfrac{1}{2}} {f_t}^{2} u_g a^{\tfrac{1}{2}} 
- 
a^{\tfrac{1}{4}} f_{t} a^{\tfrac{1}{4}} f_t u_g a^{\frac{1}{2}}
\right\| 
=
\|
a^{\tfrac{1}{4}} 
\| \cdot 
\| a^{\tfrac{1}{4}}  {f_t} 
-
{f_t} a^{\tfrac{1}{4}}  \|
\cdot 
\|
{f_t}  u_g a^{\tfrac{1}{2}} 
\|
<\ep',
\end{equation}
and 
\begin{equation}
\label{EQ2.2024.06.20}
\left\|
a^{\frac{1}{8}} f_t u_g a^{\frac{3}{4}} f_t a^{\frac{1}{8}}
-
a^{\frac{1}{8}} f_{t} u_g f_{t}  a^{\frac{7}{8}}
\right\|
=
\|a^{\frac{1}{8}} f_t u_g \| \cdot
 \|
  a^{\frac{3}{4}} f_t 
 - 
 f_t a^{\frac{3}{4}} \|
 \cdot \| a^{\frac{1}{8}}\|
< \ep'.
\end{equation}
Third, for every $t\in G$ and $g\in G \setminus \{1\}$, we use the fact that $\tau$ is tracial at the first step and use
(\ref{EQ12.2024.06.17}),
(\ref{EQ1.2024.06.20}), and (\ref{EQ2.2024.06.20}) at the second step to get
\begin{align}
\label{EQ3.2024.06.20}
\left|
\tau \left(a^{\tfrac{1}{2}} f_{t}^{2} u_g a^{\tfrac{1}{2}}\right)
\right| 
&\leq
\left|
\tau \left( a^{\tfrac{1}{2}} f_{t}^{2} u_g a^{\frac{1}{2}} \right)
-
\tau \left( a^{\tfrac{1}{4}} f_{t} a^{\frac{1}{4}} f_t u_g a^{\frac{1}{2}} \right)\right| 
\\ &+ \notag
\left|
\tau \left( a^{\tfrac{1}{8}} f_{t} u_g a^{\tfrac{1}{2}} a^{\tfrac{1}{4}} f_{t} a^{\tfrac{1}{8}} \right)
-
\tau \left( a^{\tfrac{1}{8}} f_{t} u_g f_{t} a^{\frac{7}{8}} \right)
\right| 
\\&+ \notag
\left|
\tau \left(a^{\tfrac{1}{8}} f_{t} u_g f_t a^{\tfrac{7}{8}}\right)
\right| 
\\ &< \notag
\eta \ep' + \eta \ep' + 2 \ep' \leq \frac{\ep}{6}. 
\end{align}
Fourth, using 
(\ref{2-W.T.R.P.66}) and (\ref{2-W.T.R.P.22}) at the fourth step, we get
\begin{align}\label{EQ5.2024.06.20}
\left\|
f u_{g} f - \sum_{t \in G} f_{t}^{2} u_{g}
\right\|
&=
\left\|
\sum_{t, h} f_{t} u_{g} f_{h} - \sum_{t} f_{t}^{2} u_{g}
\right\| 
\\& = \notag
\left\| 
\sum_{t, h} f_{t} \beta_g (f_h) u_{g} - \sum_{t} f_{t}^{2} u_{g} 
\right\|
 \\& \leq \notag
\left\|
\sum_{t, h} f_{t} \beta_g (f_h) u_{g} 
-
\sum_{t, h} f_{t} f_{g h} u_{g}
\right\| 
\\&  \quad+ \notag
\left\|
\sum_{t \neq gh} f_{t} f_{g h} u_{g}
\right\| 
\\&< \notag
2 \ \card (G)^2 \ep'.
\end{align}
Fifth , we use the first part of
(\ref{2-W.T.R.P.55}) at the first step and use (\ref{EQ3.2024.06.20}) and (\ref{EQ5.2024.06.20}) at the second step to get, for every $g \in G$, 
\begin{align}\label{EQ11.2024.06.20}
 \left|
 \tau\left( a^{\tfrac{1}{2}} f^2 u_{g}  a^{\tfrac{1}{2}} \right)
 \right| 
 &=
 \left|
 \tau\left( a^{\tfrac{1}{2}} f u_{g} f a^{\tfrac{1}{2}} \right)
 \right| 
\\ &\leq \notag
 \left|
  \tau\left( a^{\tfrac{1}{2}} f u_{g} f a^{\tfrac{1}{2}} \right)
   -
  \sum_{t\in G} \tau \left( a^{\tfrac{1}{2}} f_t^{2} u_g a^{\tfrac{1}{2}} \right)
  \right| 
   \\ & \quad+ \notag
   \left|
   \sum_{t\in G} \tau \left( a^{\tfrac{1}{2}} f_t^{2} u_g a^{\tfrac{1}{2}} \right)
   \right| 
   \\& < \notag
   2 \eta \, \card (G)^2 \ep' +  \frac{\ep}{6} < \frac{\ep}{3}. 
\end{align}
A simple computation shows that
\begin{equation}\label{EQ7.2024.06.20}
p a^{\tfrac{1}{2}} f^2 a^{\tfrac{1}{2}} p = 
\frac{1}{\card (G)} \cdot a^{\tfrac{1}{2}} f^2 a^{\tfrac{1}{2}} u_1 + 
\frac{1}{\card (G)} \cdot \sum_{g \in G \setminus \{1\}}  a^{\tfrac{1}{2}} f^2 a^{\tfrac{1}{2}} u_g.
\end{equation}
Finally, using (\ref{EQ7.2024.06.20}) at the first step and using (\ref{EQ11.2024.06.20}) at the second step, we get
\begin{align*}
\left|
\tau \left(p a^{\tfrac{1}{2}} f^{2} a^{\tfrac{1}{2}} p\right)
-
\frac{1}{\card (G)} \cdot \tau \left( a^{\frac{1}{2}} f^{2} a^{\frac{1}{2}} \right)
\right| 
& = 
\left|
\frac{1}{\card (G)} \cdot 
\sum_{g \in G \setminus \{1\}} 
\tau \left(
 a^{\tfrac{1}{2}} f^2 a^{\tfrac{1}{2}} u_g
 \right)
\right|
 \\&< \notag
 2 \eta \, \card (G)^2 \ep' + 4 \, \card (G) \ep' < \frac{\ep}{3}. 
\end{align*}
This completes the proof of (\ref{EQ11.2024.06.17}). 
Now, we use Lemma~\ref{Fixedpoint_corner}(\ref{Fixedpoint_corner_c}) at the fist step and use
(\ref{EQ8.2024.06.17}),  (\ref{EQ9.2024.06.17}),
and (\ref{EQ11.2024.06.17}) at the second step to get
\begin{align}
\left|
\tau (pa) - \frac{1}{\card (G)} \cdot \tau (a)
\right| 
&\leq 
\left|
\tau (pap) - \tau \left( p a^{\tfrac{1}{2}} f^2 a^{\tfrac{1}{2}} p \right)
\right| 
\\&+ \notag
\left|
\tau \left( p a^{\tfrac{1}{2}} f^2 a^{\tfrac{1}{2}} p \right)
- 
\frac{1}{\card (G)} \cdot \tau \left(  a^{\tfrac{1}{2}} f^2 a^{\tfrac{1}{2}}  \right)
\right| 
\\&+ \notag
 \frac{1}{\card (G)} \cdot
  \left|
  \tau ( a^{\frac{1}{2}} f^2 a^{\frac{1}{2}}) - \tau (a)
  \right| 
  \\&< \notag
  \frac{\ep}{3} + \frac{\ep}{3} + \frac{\ep}{3}= \ep.
\end{align}
This completes the proof. 
\end{proof}
\begin{qst}
Does Theorem~\ref{TrcaeofProj} hold without  the exactness of the C*-algebra $A$?
\end{qst}
The following proposition provides with us a relationship between the radius of comparison of the crossed product relative to $a$ and  to $p\cdot a$. 
\begin{prp}\label{RCandCardinal}
Let $A$ be a simple exact non-type-I C*-algebra. Let 
$G$ be a finite group and let 
$\alpha \colon G \to \Aut(A)$ be an action of $G$ on $A$ with the weak tracial Rokhlin property.  Let
 $\id \colon G \to \Aut (\cK)$ denote the trivial action and 
let 
 $p= \frac{1}{\card (G)} \sum_{g} u_g$ in 
 $\cM \big((A \otimes \mathcal{K}) \rtimes_{\alpha \otimes \id} G\big)$. 
 Let $a$ be a positive element in 
 $\Ped(A^\alpha \otimes \mathcal{K}) \setminus \{0\}$.  Then
  \[
  \rc \Big(\Cu \left(A \rtimes_{\alpha} G) \right), \ [
   (\kappa \circ \iota) (a) ] \Big)  
  =
  \frac{1}{\card (G)}  \cdot 
  \rc \Big(
  \Cu (A \rtimes_{\alpha} G),\  \left[ p \cdot (\kappa \circ \iota) (a) \right]
  \Big).
  \]
\end{prp}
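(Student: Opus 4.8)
The plan is to reduce the asserted equality entirely to the trace identity of Theorem~\ref{TrcaeofProj}(\ref{TrcaeofProj.2}), combined with a rescaling of the sets of admissible radii appearing in Definition~\ref{DefRcCu}. For brevity I would write $e = (\kappa\circ\iota)(a) \in \big((A\rtimes_{\alpha} G)\otimes\cK\big)_+$; since $a \in A^{\alpha}$ commutes with $p$, the element $p\cdot e = pe = pep$ is positive, so $[p\cdot e]$ is a well-defined Cuntz class. The whole argument then splits into recording two standing facts and performing a bookkeeping step.

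First I would record the two facts that make Definition~\ref{DefRcCu} applicable and that supply the trace identity on all of $\EQT_2$. Because $\alpha$ has the weak tracial Rokhlin property it is pointwise outer (Remark~\ref{Rmk2024.08.27}), so $A\rtimes_{\alpha} G$ is simple and every nonzero positive element is full. Both $e$ and $p\cdot e$ are nonzero — $e$ because $a\neq 0$ and $\kappa\circ\iota$ is injective, and $p\cdot e = (\kappa\circ\iota)(a)p$ because the map in Lemma~\ref{Fixedpoint_corner}(\ref{Fixedpoint_corner_d}) is an isomorphism — hence $[e]$ and $[p\cdot e]$ are full. Next, since $A$ is exact and $G$ is finite, $A\rtimes_{\alpha} G$ is exact, so $\EQT_2(A\rtimes_{\alpha} G)=\ET(A\rtimes_{\alpha} G)$ by Remark~4.5 of \cite{ERS11}. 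This is the one place where exactness is used: it guarantees that Theorem~\ref{TrcaeofProj}(\ref{TrcaeofProj.2}), stated for $\tau\in\ET(A\rtimes_{\alpha} G)$, holds for every $\tau\in\EQT_2(A\rtimes_{\alpha} G)$, yielding
\[
d_\tau(p\cdot e) = \tfrac{1}{\card(G)}\, d_\tau(e) \qquad \text{for all } \tau\in\EQT_2(A\rtimes_{\alpha} G).
\]

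With this identity the proof becomes a formal manipulation. I would set
\[
\Gamma = \big\{ r\in(0,\infty) : \Cu(A\rtimes_{\alpha} G) \text{ has } r\text{-comparison relative to } [e] \big\}
\]
and
\[
\Lambda = \big\{ r\in(0,\infty) : \Cu(A\rtimes_{\alpha} G) \text{ has } r\text{-comparison relative to } [p\cdot e] \big\}.
\]
For any $x,y\in\big((A\rtimes_{\alpha} G)\otimes\cK\big)_+$ and any $r>0$, the identity above shows that $d_\tau(x)+r\,d_\tau(p\cdot e)\le d_\tau(y)$ holds for all $\tau$ if and only if $d_\tau(x)+\tfrac{r}{\card(G)}\,d_\tau(e)\le d_\tau(y)$ holds for all $\tau$. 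Thus the defining condition for $r$-comparison relative to $[p\cdot e]$ is word-for-word that for $\tfrac{r}{\card(G)}$-comparison relative to $[e]$, so $r\in\Lambda$ if and only if $\tfrac{r}{\card(G)}\in\Gamma$. Hence $\Lambda=\card(G)\cdot\Gamma$, and taking infima (with the convention $\inf\varnothing=+\infty$, which covers the case where no finite radius exists) gives $\rc\big(\Cu(A\rtimes_{\alpha} G),[p\cdot e]\big) = \card(G)\cdot \rc\big(\Cu(A\rtimes_{\alpha} G),[e]\big)$, which is the claim after dividing by $\card(G)$.

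The only genuine content lies in the trace identity of Theorem~\ref{TrcaeofProj}; once that is available there is no real obstacle, and the remaining work is the rescaling above. The two points that demand a little care are verifying exactness of the crossed product (so that $\ET$ and $\EQT_2$ agree and the trace identity is valid for all functionals in Definition~\ref{DefRcCu}) and checking fullness of $[e]$ and $[p\cdot e]$, both of which follow at once from simplicity of $A\rtimes_{\alpha} G$.
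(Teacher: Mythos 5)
Your proposal is correct and follows essentially the same route as the paper: both arguments reduce the statement to the trace identity $d_\tau\big(p\cdot(\kappa\circ\iota)(a)\big)=\tfrac{1}{\card(G)}\,d_\tau\big((\kappa\circ\iota)(a)\big)$ from Theorem~\ref{TrcaeofProj}, made available for all of $\EQT_2(A\rtimes_\alpha G)$ via exactness and Remark~4.5 of \cite{ERS11}, and then rescale the admissible radii in Definition~\ref{DefRcCu} by $\card(G)$. Your write-up is in fact slightly more complete than the paper's, since you verify fullness of $[(\kappa\circ\iota)(a)]$ and $[p\cdot(\kappa\circ\iota)(a)]$ and obtain both implications symmetrically from the identity, where the paper proves one direction and notes the other is similar.
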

\begin{proof}
Let $r\in (0, \infty)$. It is enough to show that 
$\Cu \left(A \rtimes_{\alpha} G \right)$ has $r$-comparison relative to 
   $[(\kappa \circ \iota) (a) ]$
    if and only if 
$\Cu (A \rtimes_{\alpha} G)$ has $(\card (G)\cdot r)$-comparison relative to $\left[ p \cdot (\kappa \circ \iota) (a) \right]$.    
  Let us assume that 
$\Cu (A \rtimes_{\alpha} G))$  has $r$-comparison relative to $[
   (\kappa \circ \iota) (a)]$.
   Let 
$x, y \in ((A \rtimes_{\alpha} G) \otimes \K)_{+}$ satisfy
\begin{equation}
\label{2024.07.16.Q1}
d_{\tau} ( x )
+ \left(\card (G) \cdot r \right) \cdot
d_{\tau} \left(  p \cdot (\kappa \circ \iota) (a) \right)
\leq
d_{\tau} (y)
\end{equation}
for all $\tau \in \QT_{2} (A \rtimes_{\alpha} G)$.
Since $A$ is exact, it follows that $A \rtimes_{\alpha} G$ is also exact, and therefore, Remark  4.5 of \cite{ERS11}, 
$\EQT_{2} (A \rtimes_{\alpha} G) = \ET (A \rtimes_{\alpha} G)$. Using (\ref{2024.07.16.Q1}) and 
using Theorem (\ref{TrcaeofProj}), we get 
\begin{equation}
\label{2024.07.16.Q2}
d_{\tau} ( x )
+  r \cdot
d_{\tau} ( (\kappa \circ \iota) (a) )
\leq
d_{\tau} (y)
\end{equation}
for all $\tau \in \ET (A \rtimes_{\alpha} G)$. Since $\Cu (A \rtimes_{\alpha} G))$  has $r$-comparison relative to $[
   (\kappa \circ \iota) (a)] \big)$, it follows that 
   $x \precsim_{A \rtimes_{\alpha} G} y$. This shows that $\Cu (A \rtimes_{\alpha} G)$ has $(\card (G)\cdot r)$-comparison relative to $\left[ p \cdot (\kappa \circ \iota) (a) \right]$.    
  The backward implication can be proved similarly. 
\end{proof}
\begin{rmk}
In the  proposition above, if the C*-algebra $A$ is not necessarily exact and $a \in (A^\alpha \otimes \mathcal{K}) \setminus \{0\}$ not necessarily in $\Ped(A^\alpha \otimes \mathcal{K}) \setminus \{0\}$, then we
 have 
 \[
 \rc \Big(\Cu \left(A \rtimes_{\alpha} G) \right),\
  [(\kappa \circ \iota) (a) ]\Big)  
  \leq 
  \rc \Big(
  \Cu (A \rtimes_{\alpha} G), \ \left[ p \cdot (\kappa \circ \iota) (a)  \right]
  \Big).
  \]
  To see this, let $r\in (0, \infty)$. It is enough to show that if
$\Cu (A \rtimes_{\alpha} G)$ has $r$-comparison relative to $\left[ p \cdot  (\kappa \circ \iota) (a) \right]$, then   
$\Cu (A \rtimes_{\alpha} G)$ has $r$-comparison relative to $[ (\kappa \circ \iota) (a) ])$.  
  Assume that 
$\Cu (A \rtimes_{\alpha} G)$ has $r$-comparison relative to $\left[ p \cdot (\kappa \circ \iota) (a) \right]$. 
   Let 
$x, y \in ((A \rtimes_{\alpha} G) \otimes \K)_{+}$ satisfy
\begin{equation}
\label{2024.07.16.Q111}
d_{\tau} ( x )
+
d_{\tau} ( (\kappa \circ \iota) (a))
\leq
d_{\tau} (y)
\end{equation}
for all $\tau \in \QT_{2} (A \rtimes_{\alpha} G)$.
We use Lemma~\ref{Fixedpoint_corner}(\ref{Fixedpoint_corner_c}) at the first step to get 
\[
p \cdot (\kappa \circ \iota) (a) = p \cdot (\kappa \circ \iota) (a)  \cdot p \precsim_{A \rtimes_{\alpha} G} (\kappa \circ \iota) (a).
\]
This implies that 
$d_\tau \left( p\cdot \big((\kappa \circ \iota) (a)\big) \right) \leq d_{\tau} ((\kappa \circ \iota) (a))$
for all $\tau \in \EQT_{2} (A \rtimes_{\alpha} G)$. 
Using this and (\ref{2024.07.16.Q111}), we get
\begin{equation}
\label{2024.07.16.Q222}
d_{\tau} ( x )
+ r \cdot
d_{\tau} \left( p \cdot  (\kappa \circ \iota) (a) \right)
\leq
d_{\tau} (y)
\end{equation}
for all $\tau \in \EQT_2 (A \rtimes_{\alpha} G)$. 
Using this and the fact that $\Cu (A \rtimes_{\alpha} G)$ has $r$-comparison relative to $\left[ p \cdot  (\kappa \circ \iota) (a) \right]$, it follows that 
   $x \precsim_{A \rtimes_{\alpha} G} y$. This shows that $\Cu (A \rtimes_{\alpha} G)$ has $r$-comparison relative to $[ (\kappa \circ \iota) (a) ]$.      
  \end{rmk}
In the following theorem, as promised, we obtained the relationships between the relative radius of comparison of the crossed product by finite group actions with (non-unital) weak tracial Rokhlin property,
the relative radius of comparison of the fixed point algebra, and the relative radius of comparison of the underlying C*-algebra. 
\begin{thm}\label{MainThmRC}
Let $A$ be a stably finite simple  non-type-I  exact  (not necessarily unital) C*-algebra. Let 
$G$ be finite group and let 
$\alpha \colon G \to \Aut(A)$ be an action of $G$ on $A$ with the weak tracial Rokhlin property. 
 Let $a$ be a positive element in 
  $\Ped(A^\alpha \otimes \mathcal{K}) \setminus \{0\}$. Then: 
  \begin{enumerate}
  \item\label{MainThmRC.1}
  $\rc \Big(\Cu \left(A \rtimes_{\alpha} G\right),\ [(\kappa \circ \iota) (a) ]\Big)  
  = 
  \frac{1}{\card (G)} \cdot
  \rc \big(\Cu (A^\alpha), \ [ a ] \big).
  $
\item\label{MainThmRC.2}
$\rc \Big(\Cu \left(A \rtimes_{\alpha} G \right), \
 [(\kappa \circ \iota) (a) ] \Big)  
  \leq 
  \frac{1}{\card (G)}  \cdot 
  \rc \big(
  \Cu (A), \ [ \iota ( a ) ]
  \big)$.
   \end{enumerate}
\end{thm}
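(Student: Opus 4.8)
The plan is to assemble results already established in the excerpt; no new estimates are needed. Throughout I would write $q = (\kappa \circ \iota)(a) \in \big( (A \rtimes_{\alpha} G) \otimes \cK \big)_+$ and use the standard identification $(A \otimes \cK) \rtimes_{\alpha \otimes \id} G \cong (A \rtimes_{\alpha} G) \otimes \cK$, so that $p = \tfrac{1}{\card(G)} \sum_g u_g$ is the averaging projection in $\cM\big( (A \rtimes_{\alpha} G) \otimes \cK \big)$.

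For part (\ref{MainThmRC.1}), first I would note that since $\alpha$ has the weak tracial Rokhlin property it is pointwise outer (\Rmk{Rmk2024.08.27}); as $A$ is simple and $G$ is finite, the crossed product $A \rtimes_{\alpha} G$ is simple, so the nonzero projection $p$ is full. By \Lem{Fixedpoint_corner} the map $\varphi \colon A^{\alpha} \to p (A \rtimes_{\alpha} G) p$, $a \mapsto q p$, is a $*$-isomorphism, and because $a \in A^{\alpha}$ we have $q p = p q = p q p$. Since the relative radius of comparison is an invariant of the pair consisting of the Cuntz semigroup together with its extended $2$-quasitraces, amplifying $\varphi$ to the stabilization gives
\[
\rc \big( \Cu (A^{\alpha}), \ [ a ] \big) = \rc \big( \Cu (p (A \rtimes_{\alpha} G) p), \ [ p q ] \big).
\]
Next I would pass from this full corner to the whole algebra using \Cor{RcofFullCorner}, applied with the full projection $p$ and the full positive element $p q$ (full because $A \rtimes_{\alpha} G$ is simple and $p q \neq 0$), obtaining
\[
\rc \big( \Cu (p (A \rtimes_{\alpha} G) p), \ [ p q ] \big) = \rc \big( \Cu (A \rtimes_{\alpha} G), \ [ p q ] \big).
\]
Finally, \Prp{RCandCardinal} — whose hypotheses ($A$ simple exact non-type-I with the weak tracial Rokhlin property, and $a \in \Ped (A^{\alpha} \otimes \cK) \setminus \{0\}$) are all part of the present theorem — yields
\[
\rc \big( \Cu (A \rtimes_{\alpha} G), \ [ q ] \big) = \tfrac{1}{\card(G)} \cdot \rc \big( \Cu (A \rtimes_{\alpha} G), \ [ p q ] \big).
\]
Chaining these three identities gives exactly part (\ref{MainThmRC.1}).

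For part (\ref{MainThmRC.2}) I would simply combine part (\ref{MainThmRC.1}) with \Thm{RcofFixedPoint}, which applies since $a \in (A^{\alpha} \otimes \cK)_+ \setminus \{0\}$ and gives $\rc \big( \Cu (A^{\alpha}), \ [ a ] \big) \leq \rc \big( \Cu (A), \ [ \iota(a) ] \big)$. Dividing by $\card(G)$ and substituting the equality from part (\ref{MainThmRC.1}) produces the asserted inequality.

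The mathematical content is entirely carried by the cited results, so I expect the only delicate points to be bookkeeping rather than analysis: verifying that, under the identification $(A \otimes \cK) \rtimes_{\alpha \otimes \id} G \cong (A \rtimes_{\alpha} G) \otimes \cK$, the element $p q$ is genuinely the image $\varphi(a)$ of $a$ under the corner isomorphism of \Lem{Fixedpoint_corner} (so that the two radii of comparison really coincide), and confirming the fullness of both $p$ and $p q$ from simplicity of the crossed product so that \Cor{RcofFullCorner} is applicable.
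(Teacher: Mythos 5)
Your proposal is correct and takes essentially the same route as the paper's proof: the paper chains exactly the same three ingredients---\Prp{RCandCardinal}, \Cor{RcofFullCorner}, and \Lem{Fixedpoint_corner} (applied directly with $A\otimes\cK$ and $\alpha\otimes\id$ in place of $A$ and $\alpha$, which is the same bookkeeping as your amplification of $\varphi$ to the stabilization)---and then invokes \Thm{RcofFixedPoint} for part~(\ref{MainThmRC.2}), merely traversing the chain of equalities in the opposite direction. Your explicit checks that $p$ and $p\cdot(\kappa\circ\iota)(a)$ are full (via simplicity of $A\rtimes_{\alpha}G$ and of the corner) are details the paper leaves implicit, not a difference in method.
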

\begin{proof}
We prove (\ref{MainThmRC.1}) and (\ref{MainThmRC.2}) together. Let $\id \colon G \to \Aut (\cK)$ denote the trivial action and 
let  
 $p= \frac{1}{\card (G)} \sum_{g} u_g$ in $\cM((A \otimes \mathcal{K}) \rtimes_{\alpha \otimes \id} G)$. 
We use 
 Proposition \ref{RCandCardinal} at the first step, use Corollary \ref{RcofFullCorner} at the third step, 
use Lemma \ref{Fixedpoint_corner} with $A\otimes \mathcal{K}$ in place of $A$ and $\alpha \otimes \id$ in place of $\alpha$ at the fifth step, and use Theorem \ref{RcofFixedPoint} at the last step to get
\begin{align*}
& \card (G) \cdot 
     \rc \Big(
     \Cu \left(  
     A \rtimes_{\alpha} G 
     \right),  \
     [(\kappa \circ \iota) (a) ]
  \Big) 
  \\ 
  & \hspace*{12 em} {\mbox{}}=
 \rc \Big(
 \Cu \left(
  A \rtimes_{\alpha} G 
  \right), \
 [p \cdot (\kappa \circ \iota) (a)  ]
   \Big) 
  \\& \hspace*{12 em} {\mbox{}}=  
   \rc \Big( 
   \Cu \left(
   \left( A \rtimes_{\alpha} G \right) \otimes \mathcal{K}  \right), \
   [   p \cdot (\kappa \circ \iota) (a) ]
   \Big)
  \\&\hspace*{12 em} {\mbox{}}=
   \rc \Big( 
   \Cu \left(p 
   \left( 
   (A \rtimes_{\alpha} G)
    \otimes \mathcal{K} \right) 
   p \right), \
   [ p \cdot  (\kappa \circ \iota) (a) ]
   \Big) 
  \\&\hspace*{12 em} {\mbox{}}=
  \rc \Big(
  \Cu \left(
  p \left( 
  (A \rtimes_{\alpha} G)
   \otimes \mathcal{K} 
  \right) p 
  \right), \
   [ \varphi (a) ]
  \Big) 
  \\&\hspace*{12 em} {\mbox{}}=
  \rc \big(\Cu (A^\alpha \otimes \mathcal{K}),\ [ a ] \big) 
  \\&\hspace*{12 em} {\mbox{}}=
  \rc \big(\Cu (A^\alpha),\ [ a ]\big) 
  \\& \hspace*{12 em} {\mbox{}} \leq
\rc \big(\Cu (A),\ [\iota ( a ) ]\big).
\end{align*}
This completes the proof of the theorem. 
\end{proof}
In the above theorem, if $A$ is unital and if we take $a$ to be $1_A$, then the result immediately follows by Proposition~3.2.3 of \cite{BRTW12}) and Theorem~4.6 of \cite{AGP19}. 
\begin{rmk}
In above theorem, if the C*-algebra $A$ is not necessarily exact and $a \in (A^\alpha \otimes \mathcal{K}) \setminus \{0\}$ not necessarily in $\Ped(A^\alpha \otimes \mathcal{K}) \setminus \{0\}$, then we have 
\[
\rc \big(\Cu \left(A \rtimes_{\alpha} G \right), \
[(\kappa \circ \iota) (a) ] \big) 
  \leq 
  \rc (\Cu (A^\alpha), \ [ a ]) 
  \leq
\rc (\Cu (A), \ [\iota ( a ) ]).
\]
\end{rmk}

\section{Examples}\label{Example}
In this section, we provide some examples that satisfy our main results to demonstrate that our theorems are meaningful.
 
Let us start with one of the most important non-unital C*-algebras. We denote by $\cW$ the Razak-Jacelon algebra. The algebra
$\cW$ is a simple stably finite separable amenable stably projectionless $\cZ$-stable C*-algebra. It is known that
$\Cu (\cW) \cong [0, \infty]$. Also, every element in $\cW$ is a limit of products of two nilpotents  (see Theorem~1.1 of \cite{Rob16}).
For more details about  $\cW$, we refer to \cite{Bi13, Raz02} and, for Rokhlin actions of finite groups on $\cW$, we refer to \cite{Naw21}.
\begin{eme}\label{ExampW}
Let $A$ be a stably finite simple  non-type-I  (not necessarily unital) C*-algebra and 
let $\alpha \colon G \to \Aut(A)$ be an action of a finite group $G$ on $A$.
Let $\beta \colon G \to \Aut(\cW)$ be an action of  $G$ on $\cW$ 
with the weak tracial Rokhlin property. 
Assume $A \otimes \cW \cong \cW$. 
It follows from Lemma~\ref{FG.thm.1.3} that $\alpha \otimes \beta$ has the (non-unital) weak tracial Rokhlin property. 
\begin{enumerate}
\item 
We use Theorem~\ref{Thm.05.22.2019} at the first step to get
\[
\Cu_+  \Big( (A \otimes \cW)^{\alpha \otimes \beta}\Big)
\cup \{0\}
\cong
\Cu_+ (\cW)^{\alpha \otimes \beta} \cup \{0\}
\cong
[0, \infty].
\]
\item
We use Proposition~\ref{CuCrosFix}(\ref{CuCrosFix.3}) at the second step and Theorem~\ref{Thm.05.22.2019} at the third step to get
\begin{align*}
\Cu_+ \Big( (A \otimes \cW) \rtimes_{\alpha \otimes \beta}  G\Big)
\cup \{0\}
&\cong
\Cu_+ \Big( \cW \rtimes_{\beta} G  \Big) 
\\&\cong 
\Cu_+ (\cW^{\beta })  \cup \{0\}
\cong
\Cu_+ (\cW)^{\beta }  \cup \{0\}
\cong [0, \infty].
\end{align*}
\end{enumerate}
\end{eme}
The following two examples are more concrete and are  special cases of
 Example~\ref{ExampW}. 
 \begin{eme}\label{E_9526_JacelonFlip}
 Let $n \in \{ 2, 3, \ldots \}$
 and let $A = \cW^{\otimes n}$,
 the tensor product of $n$ copies of~$\cW$.
 Let  $S_n$ be the symmetric group on the set $\{1,...,n\}$
 and let
  $\alpha \colon S_n \to \Aut (A)$ denote the permutation action given by
\[
\alpha_{\theta}(a_1 \otimes a_2\otimes \ldots  \otimes a_n) = a_{\theta^{-1}(1)} \otimes a_{\theta^{-1}(2)} \otimes \ldots \otimes a_{\theta^{-1}(n)}.
\]
It follows from  Theorem~3.10 of 
 \cite{AGJP17} that
 $\alpha$
 has the (non-unital) weak tracial Rokhlin property. Using Proposition~\ref{CuCrosFix}(\ref{CuCrosFix.3}) at the first step and using Theorem~\ref{Thm.05.22.2019} at the second step, we get
\begin{align*}
 \Cu_+ \big( A \rtimes_{\alpha} S_n  \big) \cup \{0\}
&\cong
\Cu_+ ( A^{\alpha}  ) \cup \{0\}
\\&\cong 
\Cu_+ (A)^{\alpha} \cup \{0\}
= \Cu_+ (\cW)^{\alpha} \cup \{0\}
\cong [0, \infty].
\end{align*}
 \end{eme}
\begin{eme}
Let $A$ be the  $2^\infty$ UHF algebra and 
let $\alpha \colon \mathbb{Z} / 2 \mathbb{Z} \to \Aut(A)$ be an action 
with the  Rokhlin property.
Let $\beta \colon \mathbb{Z} / 2 \mathbb{Z} \to \Aut(\cW)$  
have the weak tracial Rokhlin property. Clearly, $A \otimes \cW \cong \cW$. Then
\[
\Cu_+  \Big( (A \otimes \cW)^{\alpha \otimes \beta}\Big) \cup \{0\}
\cong
[0, \infty]
\quad
\mbox{ and } 
\quad
\Cu_+ \Big( (A \otimes \cW) \rtimes_{\alpha \otimes \beta} \mathbb{Z} / 2 \mathbb{Z}) \Big) \cup \{0\}
\cong [0, \infty].
\]
\end{eme} 
  \begin{eme}\label{E_9526_Flip}
 Let $A$ be a stable finite simple unital \ca{}
 which is covered by the Elliott classification program,
 and let $B \subset A$ be a non-unital \hsa.
 Then, by Theorem~3.10 of 
 \cite{AGJP17}, the tensor flip action $\alpha$ on $B \otimes B$
 has the (non-unital) weak tracial Rokhlin property.
 Then, by Theorem~\ref{Thm.05.22.2019} and Proposition~\ref{CuCrosFix}(\ref{CuCrosFix.3}),
 \[
 \Cu_{+} \big((B \otimes B)^\alpha \big) \cup \{0\} \cong \Cu_+ (B \otimes B)^{\alpha}  \cup \{0\} 
 \cong 
 \Cu_+ \Big( (B \otimes B) \rtimes_{\alpha} \mathbb{Z} / 2 \mathbb{Z}) \Big) \cup \{0\}. 
 \]
 \end{eme}
\begin{eme}\label{E_9526_UHFTensorB}
 Let $A = \bigotimes_{k = 1} ^{\infty} M_{3}$
 denote the UHF~algebra of type $3^{\infty}$ and
 let $B$ be a non-unital simple C*-algebra.
 Then $A \otimes B$ is $\mathcal{Z}$-absorbing, and 
 so is tracially $\mathcal{Z}$-absorbing.
 Consider the action
 $\alpha \colon {\mathbb{Z}}_{2} \to \mathrm{Aut} (A)$ generated by
 the automorphism
 \[
 \bigotimes_{k = 1}^{\infty}
   \mathrm{Ad}
    \begin{pmatrix}1&0&0 \cr 0&1&0 \cr 0&0& - 1 \end{pmatrix}
  \in \mathrm{Aut} (A),
 \]
 and let $\beta \colon {\mathbb{Z}}_{2} \to \mathrm{Aut} (B)$
 be an arbitrary action.
 The action $\alpha$ has the tracial Rokhlin property
 (for unital C*-algebras and using projections),
 so
 $\alpha \otimes \beta
  \colon {\mathbb{Z}}_{2} \to \mathrm{Aut} (A \otimes B)$
 has the (non-unital) weak tracial Rokhlin property by Lemma~\ref{FG.thm.1.3}.
 Therefore,
 \[
 \Cu_{+} \big((A \otimes B)^{\alpha \otimes \beta} \big) \cup \{0\} \cong \Cu_+ (A \otimes B)^{\alpha \otimes \beta}  \cup \{0\} 
 \cong 
 \Cu_+ \Big( (A \otimes B) \rtimes_{{\alpha \otimes \beta}} \mathbb{Z} / 2 \mathbb{Z}) \Big) \cup \{0\}. 
 \]
 \end{eme}
 \begin{eme}\label{E_9526_HSA}
 Let $A$ be a stably finite simple unital \ca,
 let $G$ be a finite group,
 and let $\af \colon G \to \Aut (A)$
 be an action of $G$ on~$A$
 which has the weak tracial Rokhlin property.
 Let $B \subset A$ be a non-unital $\af$-invariant \hsa ~and 
 let $\beta \colon G \to \Aut (B)$ denote the restriction of $\alpha$ to $B$. 
 Then it follows from Proposition~4.2 of \cite{FG20} that $\beta$
 has the (non-unital) weak tracial Rokhlin property. 
 Now, let $b \in (B^{\beta} \otimes \K)_{+} \setminus \{0\}$.
 So, by Theorem~\ref{RcofFixedPoint},  we have
\[
\rc \big(\Cu (B^\alpha), \ [ b ]_B \big)   \leq \rc \big(\Cu (B), \ [\iota ( b ) ]_B \big).
\]  
If further, $A$ is exact, then
\[
\rc \Big(\Cu \left(B \rtimes_{\beta} G\right), \
[ (\kappa \circ \iota) (b) ] \Big)  
  = 
  \frac{1}{\card (G)} \cdot
  \rc \big(\Cu (B^\beta), \ [ b ]\big),
  \]
  and 
\[
\rc \Big(\Cu \left(B \rtimes_{\beta} G \right), \
[(\kappa \circ \iota) (b) ] \Big)  
  \leq 
  \frac{1}{\card (G)}  \cdot 
  \rc \big(
  \Cu (A), \ [ \iota ( b ) ]
  \big),
   \]
   where
$\iota \colon B^\beta \to B$
 and  
$\kappa \colon B \to B\rtimes_{\beta} G$
are the inclusion maps. 
 \end{eme}
The following two examples are  specific, more concrete, cases of the above example. 
 \begin{eme}\label{E_9526_MainHer}
 Let $A$ and $\alpha \colon {\mathbb{Z}}_{2} \to \mathrm{Aut} (A)$
 be as in the main example in Section~6~of~\cite{AGP19}.
 Let $B \subset A$ be a non-unital $\af$-invariant \hsa.
 Then the restriction of $\af$ to~$B$
 has the (non-unital) weak tracial Rokhlin property.
 \end{eme}
 \begin{eme}\label{E_9526_MainHer2}
 Let $A$ and $\alpha \colon {\mathbb{Z}}_{2} \to \mathrm{Aut} (A)$
 be as in the main example in Section~6~of~\cite{AGP19}.
 Let $a \in (A^{\alpha})_{+} \setminus \{0\}$.
 Using
 Theorem~6.21 of \cite{AGP19} at the first step, using the fact that $A$ is a stably finite simple unital C*-algebra together with Proposition~3.2.3 of \cite{BRTW12} at the second step, using the fact that $[ \iota (a) ]_A \leq [ 1_A ]_A$ at the third step,  using Theorem~\ref{MainThmRC}(\ref{MainThmRC.1}) at the fourth step, and using Theorem~\ref{MainThmRC}(\ref{MainThmRC.2}) at the last step, we get
 \begin{align*}
0 <
 \operatorname{rc} (A \rtimes_{\alpha} {\mathbb{Z}}_{2}) 
 &=
 \rc \Big(\Cu \left(A \rtimes_{\alpha} {\mathbb{Z}}_{2} \right), \ 
  [(\kappa \circ \iota) (1_A) ] \Big)  
 \\& \leq 
  \rc \Big(
  \Cu \left(A \rtimes_{\alpha} {\mathbb{Z}}_{2} \right), \
   [(\kappa \circ \iota) (a) ] \Big)  
\\&=  \frac{1}{2} \cdot
  \rc \big(\Cu (A^\alpha), \ [ a ]\big)
  \leq 
  \frac{1}{2} \cdot
  \rc \big(\Cu (A), [ \iota (a) ]\big).
 \end{align*}
 \end{eme}
 \begin{qst}
 In Example \ref{E_9526_MainHer2}, can we prove that
 \[
 \rc (\Cu (A^\alpha), \ [ a ])
 =
  \rc (\Cu (A), \ [ \iota (a) ])?
 \]
 \end{qst}

%

\subsection*{Acknowledgements}
 Both authors would like to especially thank N. Christopher Phillips for the many discussions they had over the course of writing this paper.  
 They would like to thank Bruce Blackadar, Ilan Hirshberg,  Zhuang Niu,  Hannes Thiel, and Andrew S. Toms for their comments on the relative radius of comparison. Also, 
 the first author is grateful to 
 Feodor Kogan for his helpful questions on this work during the Fields Operator Algebra Seminar. 
 
Part of this work was completed while the first author was at the Fields Institute for Research in Mathematical Sciences, as a visiting member, during the period July 2023 to December 2024. He is very grateful for the support and hospitality provided by the Fields Institute, with special thanks to  Miriam Schoeman. 


\begin{thebibliography}{99}
\bibitem{AGJP17}
M.~Amini, N.~Golestani, S.~Jamali, and N. C.~Phillips,
\emph{Group actions on simple tracially Z-absorbing C*-algebras},
preprint, arXiv:2204.03615 [math.OA].
%
\bibitem{APT18}
R.~Antoine, F.~Perera, and
 H.~Thiel, Tensor products and regularity properties
  of {C}untz semigroups,  \emph{Mem. Amer. Math. Soc.} \textbf{251} (2018),
  viii+191.
%
\bibitem{ArchThes}
D. E. Archey, 
\emph{Crossed product C*-algebras by finite group actions with a generalized tracial Rokhlin property}, Ph.D. Dissertation, University of Oregon, 2008, 107 pp.
%
\bibitem{AA20}
M. B. Asadi and M.~A.\  Asadi-Vasfi,\ 
{\emph{The radius of comparison of the 
tensor product of a C*-algebra with $C (X)$}},
J. Operator Theory {\textbf{1}} (2021), 31--50. 
%
\bibitem{ASV23}
M.~A.~Asadi-Vasfi,
{\emph{ The radius of comparison of the crossed product by a weakly tracially strictly approximately inner action}},
Studia Math.\  {\textbf{271}} (2023),  241-285.
%
\bibitem{AV21}
M.~A.\  Asadi-Vasfi,\ 
{\emph{Weakly tracially approximately representable actions}},
 J. Operator Theory {\textbf{91}} (2024), 3--25.
%
\bibitem{AGP19}
M. A. Asadi-Vasfi, N. Golestani,  and  N. C. Phillips, \emph{The Cuntz semigroup and the radius of comparison of the crossed product by a finite group},  Ergodic Theory Dynam. Systems {\textbf{41}} (2021), 3541–3592.
%
\bibitem{ATV24}
M. A. Asadi-Vasfi, H. Thiel,  and  E. Vilalta,
 \emph{ Ranks of soft operators in nowhere scattered C*-algebras}, J. Inst. Math. Jussieu, pp. 1-40.  DOI: \url{https://doi.org/10.1017/S1474748024000318}
 
\bibitem{APT11}
P.~Ara, F.~Perera, and A.~S.\  Toms,
{\emph{K-theory for operator algebras. Classification of C*-algebras}},
pages 1--71 in:
{\emph{Aspects of Operator Algebras and Applications}},
P.~Ara, F~Lled\'{o}, and F.~Perera (eds.),
Contemporary Mathematics vol.~534,
Amer.\  Math.\  Soc., Providence RI, 2011.
\bibitem{BK04}
E. Blanchard and E.~Kirchberg,
{\emph{ Non-simple purely infinite C*-algebras: the Hausdorff case}},
J.~Funct.\  Anal.\  {\textbf{53}} (2004), 461--513.
\bibitem{B10}
B. Blackadar, {\emph{Comparison theory for simple C*-algebras}}, pages 21–54 in:
``Operator Algebras and Applications", D. E. Evans and M. Takesaki (eds.)
London Math. Soc. Lecture Notes Series 135, Cambridge University Press,
Cambridge, New York, 1988.
%
\bibitem{BH82} B.~Blackadar and D.~Handelman,
{\emph{Dimension functions and traces on C*-algebras}},
J.~Funct.\  Anal.\  {\textbf{45}} (1982), 297--340.
\bibitem{BlaBo} B.~Blackadar,
{\emph{Operator Algebras
Theory of C*-Algebras and von Neumann
Algebras}},
Encyclopaedia of Mathematical Sciences, vol. 122: Operator Algebras and Non-commutative Geometry, III , Springer-Verlag, Berlin, 2006.

\bibitem{BRTW12} B.~Blackadar, L.~Robert, A. P.~Tikuisis, A. S.~Toms, and W. Winter,
{\emph{An algebraic approach to the radius of comparison}},
Amer.\  J.\  Math.\  {\textbf{18}} (2012), 3657--3674.
\bibitem{BTZ19} J.~Bosa,  
G.~Tornetta, and J.~Zacharias, 
{\emph{A bivariant theory for the Cuntz semigroup}},
J. \ Funct. \ Anal.\  {\textbf{277}} (2019), 1061–1111.
\bibitem{BP18}
J.Bosa and H. Petzka, 
{\emph{Comparison properties of the Cuntz semigroup and applications to C*-algebras}},
Canadian J. of Mathematics\  {\textbf{70}} (2018), 26--52.
%
\bibitem{CE08} 
A.\ Ciuperca and  G.~A.\  Elliott,
{\emph{A remark on invariants for C*-algebras of stable rank one}},
IMRN\  {\textbf{33}} (2008), Art. ID rnm 158.
%
\bibitem{CowEllIva08CuInv}
K.~T. Coward, G.~A.
  Elliott, and C.~Ivanescu, {\emph{ The {C}untz
  semigroup as an invariant for \ca{s} }},  
  J.\ Reine Angew.\ Math.
  \textbf{623} (2008), 161--193.
%
\bibitem{Cun78} J.~Cuntz,
{\emph{Dimension functions on simple C*-algebras}},
Math.\  Ann.\  {\textbf{233}} (1978), 145--153.
%
\bibitem{ElHoTm}
G.~A.\  Elliott, T.~M.\  Ho, and A.~S.\  Toms,
{\emph{A class of simple C*-algebras with stable rank one}},
J.~Funct.\  Anal.\  {\textbf{256}} (2009), 307--322.

\bibitem{EN13}
G.~A.\  Elliott and Z.~Niu,
{\emph{On the radius of comparison of a commutative C*-algebra}},
Canad.\  Math.\  Bull.\  {\textbf{56}} (2013), 737--744.

\bibitem{ERS11}
G.~A.\  Elliott, L.~Robert, and L.~Santiago,
{\emph{The cone of lower semicontinuous traces on a C*-algebra}},
Amer.\  J.\  Math.\  {\textbf{133}} (2011), 969--1005.
\bibitem{ET08}
G.~A.\  Elliott and A.\, S. Toms, 
{\emph{Regularity properties in the classification program for separable amenable C*-algebras}}, 
Bull.\ Amer.\ Math.\  Soc.\ (N.S.)\  {\textbf{45}} (2008), 229--245.
%
\bibitem{FG20}
M.~Forough and N.~Golestani,
{\emph{Weak tracial Rokhlin property for finite group actions
 on simple C*-algebras}},
Doc. Math. {\textbf{25}} (2020), 2507–2552.
%
\bibitem{GS17}
E.\ Gardella and L.\ Santiago, {\emph{The equivariant Cuntz semigroup}}, Proc. Lond. Math. Soc. (3) {\textbf{114}} (2017), 189–241.
%
\bibitem{GP24}
E.\ Gardella and F.\ Perera, {\emph{The modern theory of Cuntz semigroups of C*-algebras}}, arXiv:2212.02290v2 [math.OA].
%
\bibitem{GK10}
J. Giol and D. Kerr, 
{\emph{Subshifts and perforation}}, J. Reine Angew. Math.
{\textbf{639}} (2010), 107–119.

%
\bibitem{GKPT18}
T.~Giordano, D.~Kerr, N.~C.~Phillips, and A.~Toms,
{\emph{Crossed Products of C*-Algebras,
 Topological Dynamics, and Classification}},
edited by Francesc Perera,
Advanced Courses in Mathematics, CRM Barcelona,
Birkh\"{a}user/Springer, Cham, 2018.
\bibitem{GLN20}
G. Gong, H. Lin, Z. Niu,
{\emph{A Classification of Finite Simple Amenable $\cZ$-stable C*-algebras, I: C*-algebras with Generalized Tracial Rank One}},
C.~R.\  Math.\  Acad.\  Sci.\  Soc.\   Canada {\textbf{42}} (2020), 63–450.
\bibitem{GLN20'}
G. Gong, H. Lin, Z. Niu,
{\emph{A Classification of Finite Simple Amenable $\cZ$-stable C*-algebras, II: C*-algebras with rational generalized tracial rank one}},
C.~R.\  Math.\  Acad.\  Sci.\  Soc.\   Canada {\textbf{42}} (2020), 451–539.
%
\bibitem{Hag14}
U.~Haagerup,
{\emph{Quasitraces on exact C*-algebras are traces}},
C.~R.\  Math.\  Acad.\  Sci.\  Soc.\   Canada {\textbf{36}} (2014),
67--92.
\bibitem{HPTInPre}
T. Hines, N. C. Phillips, and A. S. Toms,
{\emph{Mean dimension and radius of comparison for minimal homeomorphisms with Cantor factors}}, in preparation.
\bibitem{H24}
I.~Hirshberg,
{\emph{Exotic circle actions on classifiable C*-algebras}},
J.~Funct.\  Anal.\  {\textbf{286}} (2024),  Paper No. 110296, 12 pp.
%
\bibitem{HO13}
I.~Hirshberg and J.~Orovitz,
{\emph{Tracially ${\mathcal{Z}}$-absorbing C*-algebras}},
J.~Funct.\  Anal.\  {\textbf{265}} (2013), 765--785.
\bibitem{HP22}
I.~Hirshberg and N. \ C. Phillips,
{\emph{Radius of comparison and mean cohomological independence dimension}},
Adv.\ Math.\  {\textbf{406}} (2022), Paper No. 108563, 32 pp.
%
\bibitem{HrsPh1} I.~Hirshberg and N.~C.\  Phillips,
{\emph{Rokhlin dimension: obstructions and permanence properties}},
Doc.\  Math.\  {\textbf{20}} (2015), 199--236.

\bibitem{HP24E}
I.~Hirshberg and N.~C.\  Phillips,
{\emph{Simple AH algebras with the same Elliott invariant and radius of comparison}}, 
preprint (arXiv:2312.11203 [math.OA]).
\bibitem{IvKu19} C. Ivanescu and D.  Kucerovsky, 
{\emph{Traces and Pedersen ideals of tensor
products of non-unital C*-algebras}},
New York J. Math.\  {\textbf{25}} (2019), 423--450.
%
\bibitem{Iz04} M. \ Izumi, 
{\emph{Finite group actions on C*-algebras with the Rohlin property, I}},
Duke Math.\ J. \  {\textbf{122}} (2004),  233--280.
%
\bibitem{Bi13} B. Jacelon,
{\emph{A simple, monotracial, stably projectionless C*-algebra}},
J. \ Lond.\  Math. \ Soc. (2) {\textbf{87}} (2013), 
 365--383.
\bibitem{KR95}
 E.~Kirchberg, 
{\emph{Exact C*-algebras, tensor products, and the classification of purely infinite algebras}},
In Proceedings of the International Congress of Mathematicians, Vol. 1, 2 (Zurich,
1994), 943–954, Birkhuser, Basel, 1995.
\bibitem{KR02}
 E.~Kirchberg and M. R{\o}rdam,
{\emph{Infinite non-simple C*-algebras: absorbing the Cuntz algebra $O_\infty$}},
Adv. Math.\  {\textbf{167}} (2002), 195--264.
%
\bibitem{KR00} E.~Kirchberg and M.~R{\o}rdam,
{\emph{Non-simple purely infinite C*-algebras}},
Amer.\  J.\  Math.\  {\textbf{122}} (2000), 637--666.
\bibitem{Naw16}
N. \ Nawata,
{\emph{ Finite group actions on certain stably projectionless  C*-algebras with the Rohlin property}},
Trans. Amer. Math. Soc. \  {\textbf{368 }} (2016),  471--493.
\bibitem{Naw21}
N. \ Nawata,
{\emph{ Rohlin actions of finite groups on the Razak-Jacelon algebra}},
Int. Math. Res. Not. IMRN \  {\textbf{}}(2021),  2991--3020.
%
\bibitem{Ni22}
Z.~Niu,
{\emph{Comparison radius and mean topological dimension: Rokhlin property, comparison of open sets, and subhomogeneous  C*-algebras}},
J. \ Anal. Math.\  {\textbf{146}} (2022),   595--672.
\bibitem{LnBook} H.~Lin,
{\emph{An Introduction to the Classification of Amenable
C*-Algebras}},
World Scientific, River Edge, NJ, 2001.
%
\bibitem{Ped66}
Gert K. Pedersen, {\emph{Measure theory for C*-algebras}},   Math. Scand.  {\textbf{19}} (1966),
 131--145.
 \bibitem{PedBo}
Gert K. Pedersen, {\emph{C*-Algebras and Their Automorphism
Groups}},   Pure Appl. Math. (Amst.)
Academic Press, London, 2018, xviii+520 pp.
%
\bibitem{PT}
F.~Perera and A.~S.\  Toms,
{\emph{Recasting the Elliott conjecture}},
Math.\  Ann.\  {\textbf{338}} (2007), 669--702.

\bibitem{PhT1} N.~C.\  Phillips,
{\emph{The tracial Rokhlin property
     for actions of finite groups on C*-algebras}},
Amer.\  J.\  Math.\  {\textbf{133}} (2011), 581--636.

\bibitem{Ph_FrSrv}
N.~C.\  Phillips,
{\emph{Freeness of actions of finite groups on C*-algebras}},
pages 217--257 in: {\emph{Operator structures and dynamical systems}},
M.~de Jeu, S.~Silvestrov, C.~Skau, and J.~Tomiyama (eds.),
Contemporary Mathematics vol.~503,
Amer.\  Math.\  Soc., Providence RI, 2009.

\bibitem{PhT4} N.~C.\  Phillips,
{\emph{Finite cyclic group actions with the
 tracial Rokhlin property}},
Trans.\  Amer.\  Math.\  Soc.\  {\textbf{367}} (2015), 5271--5300.
\bibitem{Ph00} N.~C.\  Phillips,
{\emph{A classification theorem for nuclear purely infinite simple C*-algebras}},
 Doc. Math. {\textbf{5}} (2000), 49–114.
%
\bibitem{Ph14}
N.~C.\  Phillips,
{\emph{Large subalgebras}},
preprint (arXiv: 1408.5546v2 [math.OA]).
%
%
\bibitem{Ph23}
N.~C.\  Phillips,
{\emph{The radius of comparison of $C(X)$}},
preprint (arXiv: 2309.08786 [math.OA]).
%
\bibitem{Raz02}
S. \ Razak,
{\emph{On the classification of simple stably projectionless C*-algebras}},
Canad. \ J.\ Math.\  {\textbf{54}} (2002),  138--224.
%
\bibitem{Rffl}
M.~A.\  Rieffel,
{\emph{Actions of finite groups on C*-algebras}},
Math.\  Scand.\  {\textbf{47}} (1980), 157--176.
\bibitem{Rob13}
L.~Robert, {\emph{The cone of functionals on the {C}untz
  semigroup}},  Math.\ Scand. \textbf{113} (2013), 161--186.
  %
  \bibitem{Rob13Cu(X)}
L.~Robert, {\emph{The Cuntz semigroup of some spaces of dimension at most two}},  C. R. Math. Acad. Sci. Soc. R. Can. \ \textbf{35} (2013),  22--32.
  %
  \bibitem{Rob16}
L.~Robert, {\emph{Remarks on $\cZ$-stable projectionless C*-algebras}},  Glasgow Math. J. \ \textbf{58} (2016), 273--277.
%
\bibitem{Ros79}
J.~Rosenberg,
{\emph{Appendix to O.~Bratteli's paper on
``Crossed products of UHF algebras''}},
Duke Math.~J.\  {\textbf{46}} (1979), 25--26.
%
\bibitem{Ror92} M.~R{\o}rdam,
{\emph{On the structure of simple C*-algebras tensored with
   a UHF-algebra. II}},
J.~Funct.\  Anal.\  {\textbf{107}} (1992), 255--269.
\bibitem{S15}
 L. Santiago, 
 {\emph{Crossed product by actions of finite groups with the Rokhlin property}}, 
 Internat. J. Math. 
 {\textbf{26}} (2015), 1550042.
\bibitem{Thi17} 
H. Thiel. The Cuntz semigroup. 2017. Lecture notes, available at \url{http://hannesthiel.org/wp-content/OtherWriting/CuScript.pdf}.
%
\bibitem{ThiVil23} H. \ Thiel   and  E. \ Vilalta,,
{\emph{The Global Glimm Property}},
Trans. Amer. Math. Soc.\  {\textbf{376}} (2023),  4713--4744.
%
\bibitem{ThiVil24} H. \ Thiel   and  E. \ Vilalta,,
{\emph{Soft operators in  C*-algebras}},
J. Math. \ Anal. Appl.\  {\textbf{536}} (2024),  Paper No. 128167, 35 pp.
%
\bibitem{TWW17}
 A. Tikuisis, S. White, and W. Winter, 
{\emph{uasidiagonality of nuclear C*-algebras}},
Ann.\  Math.\  {\textbf{185}} (2017), 229--284.
%
\bibitem{Tom06} A.~S.\  Toms,
{\emph{Flat dimension growth for C*-algebras}},
J.~Funct.\  Anal.\  {\textbf{238}} (2006), 678--708.
%
\bibitem{Tom08}
 A.\, S. Toms,
{\emph{On the classification problem for nuclear C*-algebras}},
Ann.\  Math.\  {\textbf{167}} (2008), 1059--1074.
%
\bibitem{Tom09} A.~S.\  Toms,
{\emph{Comparison theory and smooth minimal C*-dynamics.}},
Comm. Math. Phys.\  {\textbf{289}} (2009), 401--433.
\end{thebibliography}
\end{document}